\documentclass[a4paper,10pt]{amsart}
\usepackage[english]{babel}
\usepackage[utf8]{inputenc}
\usepackage[T1]{fontenc}
\usepackage{csquotes}
\usepackage[style=numeric,
    useprefix,%
    giveninits=true,%
    hyperref,%
    doi=false,%
    url=false,%
    isbn=false,%
    backend=bibtex,%
    maxbibnames=99%
    ]{biblatex}
\bibliography{./BIB}

\usepackage{amssymb}
\usepackage{mathrsfs}
\usepackage{hyperref}
\usepackage[all]{xy} 
\usepackage[usenames,dvipsnames]{xcolor}
\hypersetup{colorlinks,%
citecolor=Black,%
filecolor=Black,%
linkcolor=Black,%
urlcolor=Black}
\usepackage{enumitem}   
\usepackage{mathtools} 
\mathtoolsset{showonlyrefs=true} 
%
\newcommand{\scr}[1]{\mathscr{#1}}
\newcommand{\frk}[1]{\mathfrak{#1}}
\newcommand{\bb}[1]{\mathbb{#1}}
\newcommand{\cal}[1]{\mathcal{#1}}
\newcommand{\N}{\mathbb{N}} 
\newcommand{\Q}{\mathbb{Q}} 
\newcommand{\Z}{\mathbb{Z}} 
\newcommand{\R}{\mathbb{R}} 
\newcommand{\Id}{\mathrm{Id}}  
\newcommand{\dd}{\,\mathrm{d}}  
\newcommand{\did}{\,\mathrm{d}}  
\newcommand{\de}{\partial}      
\newcommand{\into}{\hookrightarrow}        
\newcommand{\THEN}{\Rightarrow} 
\newcommand{\IFF}{\Leftrightarrow}  


\newcommand{\dist}{\operatorname{dist}}
\newcommand{\spt}{\operatorname{spt}}
\newcommand{\capacity}{\operatorname{Cap}}

\newcommand{\interior}{\operatorname{int}}
\newcommand{\closure}{\operatorname{cl}}
\newcommand{\Length}{\operatorname{Length}}

\newcommand{\Per}{\operatorname{Per}}
\newcommand{\loc}{\mathrm{loc}}
\newcommand{\Diff}{\mathrm{D}}	

\newcommand{\dimpar}{\dim_{\rm par}}
\newcommand{\dimgr}{\dim_{\rm grt}}
\newcommand{\dimisp}{\dim_{\rm isp}}

\usepackage{bbm}
\newcommand{\ferrdHyp}{\mathbbmss h}
\newcommand{\ferrg}{\scr P}
\newcommand{\ferrdPar}{\mathbbmss p}


\usepackage{dsfont}
\newcommand{\one}{{\mathds 1\!}}

\def\Xint#1{\mathchoice
      {\XXint\displaystyle\textstyle{#1}}%
      {\XXint\textstyle\scriptstyle{#1}}%
      {\XXint\scriptstyle\scriptscriptstyle{#1}}%
      {\XXint\scriptscriptstyle\scriptscriptstyle{#1}}%
      \!\int}
   \def\XXint#1#2#3{{\setbox0=\hbox{$#1{#2#3}{\int}$}
        \vcenter{\hbox{$#2#3$}}\kern-.5\wd0}}
   \def\fint{\Xint-}

\theoremstyle{plain}
\newtheorem{proposition}{Proposition}[section]
\newtheorem{theorem}[proposition]{Theorem}
\newtheorem{lemma}[proposition]{Lemma}
\newtheorem{corollary}[proposition]{Corollary}
\newtheorem{thm}{Theorem}[section]

\theoremstyle{definition}
\newtheorem{definition}[proposition]{Definition}
\newtheorem{remark}[proposition]{Remark}

\theoremstyle{remark}



\title[Quasi-conformal VS quasi-isometric equivalence]{Quasi-conformal VS quasi-isometric equivalence in spaces with controlled growth}

\author[Fässler]{Katrin Fässler}
\address[Fässler]{Department Of Mathematics And Statistics, P.O. Box 35 (Mad), Fi-40014 University Of Jyväskylä, Finland}
\email{katrin.s.fassler@jyu.fi}

\author[Le Donne]{Enrico Le Donne}
\address[Le Donne]{Department of Mathematics, University of Fribourg, Ch. du Musée 23, 1700 Fribourg}
\email{enrico.ledonne@unifr.ch}

\author[Nicolussi Golo]{Sebastiano Nicolussi Golo}
\address[Nicolussi Golo]{Department of Mathematics, University of Fribourg, Ch. du Musée 23, 1700 Fribourg, Orcid ID: https://orcid.org/0000-0002-3773-6471}
\email{sebastiano2.72@gmail.com}

\author[Ottazzi]{Alessandro Ottazzi}
\address[Ottazzi]{School of Mathematics and Statistics, University of New South Wales, UNSW Sydney 2052, Australia}
\email{a.ottazzi@unsw.edu.au}

\author[Pansu]{Pierre Pansu}
\address[Pansu]{Laboratoire de Mathématiques d’Orsay, CNRS, Université Paris-Saclay, 91405 Orsay, France}
\email{pierre.pansu@math.u-psud.fr}

\date{October 13, 2025}

\begin{document}
\maketitle

\begin{abstract}
	We study conditions under which quasi-conformal homeomorphisms are quasi-isometries.
	We show that if two nilpotent geodesic Lie groups are quasi-conformally homeomorphic, then they are quasi-isometrically equivalent.
	We also give more general results beyond the nilpotent case.
	In particular, we show that quasi-conformal homeomorphisms between geodesic Lie groups are quasi-isometries whenever the spaces have strict parabolic or hyperbolic conformal type.
	As a consequence, quasi-conformal homeomorphisms between geodesic Lie groups with infinite fundamental group are quasi-isometries.
	The statements for Lie groups are deduced from a more general study on metric measure spaces with uniformly locally bounded geometry.
\end{abstract}

\setcounter{tocdepth}{2}
\phantomsection
\addcontentsline{toc}{section}{Contents}
\tableofcontents

\section{Introduction}

\subsection{Overview}
In Geometry and Analysis, quasi-conformal homeomorphisms and quasi-isometries play a widespread role; see for instance~\cite{zbMATH01599991,zbMATH01965442,zbMATH05076257,zbMATH06729338,MR1683160,MR1654771,MR1334873,MR0236383,MR979599,zbMATH06696882,zbMATH02076292,MR3310079,MR3912638,MR3581902}.
Quasi-conformality (QC) is an infinitesimal property, while quasi-isometries (QI) preserve large-scale structures.
A typical interplay between these two types of maps is found in a crucial step of Mostow Rigidity Theorem: there, a quasi-isometry between hyperbolic spaces induces a quasi-conformal map between the visual boundaries.

It turns out there is a deeper connection:
In \cite[Theorem 9.8]{zbMATH01599991}, Bonk, Heinonen, and Koskela proved
that, under specific geometric conditions\footnotemark, quasi-conformal homeomorphisms between Gromov hyperbolic spaces are themselves quasi-isometries.
As an application, they obtained a boundary extension result for quasi-conformal maps between Gromov hyperbolic $n$-dimensional Hadamard manifolds ($n\ge 2$) with Ricci bounded geometry; see \cite[Theorem 1.15]{zbMATH01599991}. 
A special case of \cite[Theorem 9.8]{zbMATH01599991} (for quasi-conformal diffeomorphisms between Hadamard manifolds of pinched negative sectional curvature) was obtained earlier in~\cite{zbMATH03951479}.
\footnotetext{The geometric conditions on the space are: having bounded geometry, being roughly starlike, and having a Gromov boundary that is a nondegenerate continuum.}
The Bonk--Heinonen--Koskela result suggests a ``QC-implies-QI Principle'' for some types of geodesic spaces.
More recently, Pansu studied a notion of large-scale conformal maps and showed that such maps are quasi-isometries in the setting of manifolds with bounded geometry; see \cite[Theorem 2]{zbMATH07548718}.

We investigate similar results for Lie groups endowed with left-invariant (Riemannian or non-Riemannian) geodesic distances.
We show the validity of the ``QC-implies-QI Principle'' in a large class of geodesic Lie groups; see Theorem~\ref{thm68760387}.
On each geodesic Lie group, the top-dimensional Hausdorff measure is a Haar measure, which is locally Ahlfors regular and supports a local Poincaré inequality.
Hence, we consider each geodesic Lie group as a metric measure space with \emph{uniformly locally bounded geometry}, in the sense of PI spaces; see, for instance, \cite{MR1683160,MR1654771,MR1869604,MR3363168}, and Definition~\ref{def6803c8e2} and Proposition~\ref{prop682dc2dc} below.

The common core of our arguments is to obtain bounds on the conformal capacity of connected sets, compact or unbounded, which lead to bornologous comparisons between the original distance and some \emph{conformal gauges}, that is, distance-like functions that are bi-Lipschitz preserved by quasi-conformal maps.
Our main inspiration is the profound work by J.~Ferrand on Riemannian manifolds; see for instance~\cite{zbMATH00883879,zbMATH03431466}, and~\cite{MR1749853} for a survey.
Once we have such comparisons, we obtain that quasi-conformal maps are bi-bornologous.
The conclusion that quasi-conformal maps are quasi-isometries follows from the observation that bi-bornologous maps between geodesic spaces are quasi-isometries.

The strategy we have just outlined clearly cannot work in every geodesic Lie group.
For example, the Euclidean plane admits plenty of quasi-conformal homeomorphisms that are not quasi-isometries.
This example suggests that we need additional conditions on the space, such as being Gromov hyperbolic, as mentioned in the result by Bonk, Heinonen, and Koskela above.
In the non-Riemannian case, curvature bounds are too restrictive.
However, other large-scale conditions allow us to run the appropriate capacity estimates.
These extra conditions are of two kinds, depending on whether the space has \emph{parabolic} or \emph{hyperbolic conformal type}, as in Definition~\ref{def687743ae}.
In the parabolic case, we use bounds on the volume growth at infinity and the existence of \emph{quasi-straight sequences} of points, as in Definition~\ref{def68766881}, to obtain bounds on the conformal capacity of condensers made by pairs of unbounded connected sets.
In the hyperbolic case, we use the isoperimetric inequality at large scale to obtain bounds on the conformal capacity of so-called Grötzsch condensers, that is, compact connected sets paired with $\infty$.

In geodesic Lie groups, these different conditions are governed by the difference between the exponent $N$ of volume growth and the Hausdorff dimension $Q$; see, for instance, \cite{MR1749853}, or \cite{zbMATH01309448,zbMATH00999672,zbMATH01517666}.
Our methods apply whenever $N\neq Q$, while they leave open the situation when $N=Q$, as expected.
The Euclidean space $\R^n$ is an example where $N=Q=n$.\\


\subsection{Main results}
We next present our main results in detail.
Our focus will be on geodesic Lie groups.
However, we try to pinpoint minimal assumptions for each argument: for this reason, we prove most of our results in the setting of metric measure spaces.
Besides geodesic Lie groups, other spaces to which our Theorems~\ref{thm685ea294} and~\ref{thm6877508e} apply are locally compact isometrically homogeneous geodesic spaces, such as metric quotients of Lie groups, or Lie groups endowed with quasi-geodesic left-invariant distances, only to mention a few examples.

We call \emph{geodesic Lie group} $(G,d,\mu)$ every Lie group $G$ endowed with a geodesic left-invariant distance $d$, inducing the manifold topology, and a left Haar measure $\mu$.
From the work of Berestowski~\cite{zbMATH04098226}, every geodesic left-invariant distance on a Lie group is a Carnot-Carathéodory distance, also referred to as a sub-Finsler metric.
Examples of geodesic Lie groups are Riemannian and sub-Riemannian connected Lie groups.

For nilpotent Lie groups, we show that quasi-conformal equivalence implies quasi-isometric equivalence, with no extra conditions.

\begin{thm}[see Theorem~\ref{thm67f935d5}]\label{thm1}
	Let $G, H$ be nilpotent geodesic Lie groups.
	If there is a metrically quasi-conformal map $G\to H$, then there is a quasi-isometry $G\to H$.
\end{thm}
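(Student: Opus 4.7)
The strategy I would adopt is to distinguish two regimes within the class of nilpotent geodesic Lie groups, depending on whether the volume growth exponent $N$ at infinity coincides with the Hausdorff dimension $Q$ of the group, as flagged in the introduction. By Berestowski's theorem, $(G,d)$ is sub-Finsler, so its asymptotic cone (by Pansu's theorem for nilpotent groups) is a Carnot group of homogeneous dimension $N$, whereas $Q$ is governed by the sub-Riemannian geometry at a point. Equality $N=Q$ holds exactly when $G$ is itself a Carnot group and $d$ is a Carnot metric; otherwise the comparison is strict, and the same dichotomy applies to $H$.

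In the regime $N \neq Q$, I would verify that $G$ and $H$ satisfy the hypotheses of the general theorem~\ref{thm68760387}, applied in its parabolic branch (the polynomial volume growth placing the groups in parabolic conformal type). The polynomial volume growth with exponent $N$ is classical (Guivarc'h--Pansu). What remains is to produce a quasi-straight sequence in the sense of Definition~\ref{def68766881}: for this I would sample a one-parameter subgroup $s \mapsto \exp(sX)$, for a suitably chosen element $X$ of the Lie algebra, at integer multiples of a fixed scale, using the fact that such a curve is a quasi-geodesic in $(G,d)$ and that consecutive sampled points drift apart at a linear rate. Once these hypotheses are in place, the general theorem directly yields a quasi-isometry $G \to H$.

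In the borderline regime $N = Q$, the parabolic/hyperbolic machinery of the paper does not apply, and I would instead invoke Pansu's differentiability theorem for quasi-conformal maps between Carnot groups: the given quasi-conformal homeomorphism is Pansu-differentiable almost everywhere, and its differential at a generic point is a graded Lie group isomorphism between the tangent Carnot structures of $G$ and $H$. Since the tangent Carnot structure of a Carnot group is the group itself, this forces $G$ and $H$ to be isomorphic as stratified Lie groups, and any such isomorphism is bi-Lipschitz between the two left-invariant sub-Finsler metrics (up to rescaling), hence a quasi-isometry.

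The principal obstacle, in my view, lies in the $N \neq Q$ case: verifying the existence of a quasi-straight sequence in an arbitrary non-Carnot nilpotent geodesic Lie group is delicate, since the relation between algebraic exponential coordinates and the geodesic metric is subtle at large scales. A clean way to produce such a sequence should go via the Pansu blow-down to the asymptotic Carnot group, where horizontal lines are genuine geodesics, and then pull back to $G$ with controlled distortion.
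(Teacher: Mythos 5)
Your two-case skeleton (split on $N=Q$ versus $N\neq Q$, Pansu differentiation in the borderline case, the general theorem otherwise) matches the paper's, but your identification of which branch of Theorem~\ref{thm68760387} applies when $N\neq Q$ is wrong, and this derails the verification. Polynomial volume growth does not place a group in parabolic conformal type: by Theorem~\ref{thm685e39f2} (equivalently Corollary~\ref{cor67f056ce}), parabolicity is the condition $\mu(B(x,r))\le Cr^{Q}$ for $r\ge 1$, i.e.\ $N\le Q$. For a \emph{simply connected} nilpotent geodesic Lie group one always has $Q\le N$ (Proposition~\ref{prop67f92fca}), so $N\neq Q$ there forces $Q<N$ and the group is of \emph{hyperbolic} conformal type --- e.g.\ the Riemannian Heisenberg group has $Q=3$ and $N=4$. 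In that regime the hypothesis to verify is the large-scale isoperimetric inequality of order $\frac{N-1}{N}$ with $N>Q$ (obtained in the paper by transferring Pittet's theorem from Riemannian to geodesic metrics via the Kanai-type argument, Theorem~\ref{thm684c4f0a}), not quasi-straightenability. The quasi-straight sequences you labor over are only needed in the strictly parabolic branch $N<Q$, which for nilpotent groups occurs only when the group is \emph{not} simply connected (e.g.\ $\R^n\times\bb S^1$, with $N=n<Q=n+1$); there your construction via one-parameter subgroups is indeed the paper's (Lemma~\ref{lem6874bbe0} plus quasi-isometry invariance), but you present it as the main obstacle in exactly the case where it is not what is required.

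Relatedly, you never treat non-simply connected nilpotent groups in the borderline regime. Your $N=Q$ argument invokes Pansu differentiability between Carnot groups, but Carnot groups are simply connected by definition, and the implication ``$N=Q$ implies Carnot'' (Proposition~\ref{prop67f92fca}) is proved only for simply connected groups. The paper closes this case by passing to the universal cover: a non-simply connected nilpotent Lie group has infinite fundamental group, so the cover has growth dimension strictly larger than $Q$ by Lemma~\ref{lem68db76be}, hence is of hyperbolic type; the lifted map is then a quasi-isometry, and this descends through the covering submetries (Theorem~\ref{thm68d55353} and Lemma~\ref{lem68d44a76}). Your sketch has no substitute for this step. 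The remaining ingredients --- that the trichotomy is quasi-conformally invariant so that $G$ and $H$ fall in the same case, and that in the simply connected liminal case Pansu differentiation produces an isomorphism and hence a bi-Lipschitz (so quasi-isometric) equivalence even though $f$ itself need not be one --- are consistent with the paper.
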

To prove Theorem~\ref{thm1}, we distinguish two cases: spaces of parabolic and of hyperbolic conformal type.
A metric measure space has \emph{parabolic conformal type} if all compact sets have zero $Q$-capacity, $Q$ being the Hausdorff dimension of the space; otherwise the space has \emph{hyperbolic conformal type}.
The conformal type depends on the metric measure structure and, for spaces with bounded geometry, it is preserved under quasi-conformal maps.

In the nilpotent case, the starting point is to show that a simply connected nilpotent geodesic Lie group has parabolic conformal type if and only if it is a Carnot group; see Proposition~\ref{prop67f92fca}.
By the Differentiation Theorem \cite{MR1334873}, at every point of intrinsic differentiability, the differential of a quasi-conformal map $G\to H$ between Carnot groups is a bi-Lipschitz map $G\to H$, and thus, in particular, we recover a quasi-isometry $G\to H$ between the same spaces.

In the hyperbolic case, we show that every quasi-conformal map $G\to H$ is a quasi-isometry, while the non-simply connected case is studied by passing to the universal covering.
In fact, the hyperbolic case is treated in a more general setting, beyond nilpotent Lie groups.

The distinction between hyperbolic conformal type and parabolic conformal type in geodesic Lie groups needs to be made finer.
On a geodesic metric measure space $(X,d,\mu)$, one may define the $p$-capacity $\capacity_p(E)$ of each subset $E$ of $X$, for every $p\in[1,+\infty)$; see Definition~\ref{def6803ccb2}.
If $(X,d,\mu)$ has locally $Q$-bounded geometry, then the $Q$-capacity is preserved by quasi-conformal maps: for this reason, the distinction between hyperbolic and parabolic conformal type is done in terms of the $Q$-capacity.
For $p\neq Q$, the $p$-capacity is not necessarily preserved by quasi-conformal maps.
If we define the \emph{parabolic dimension} of $X$ by
\begin{equation}
	\dimpar(X) := \inf\{p\in[1,+\infty):\capacity_p(E)=0,\ \forall E\Subset X\} ,
\end{equation} then this number may not be a quasi-conformal invariant.
Surprisingly, it turns out that, for geodesic Lie groups, the value $\dimpar(X)$ is a quasi-conformal invariant; see Corollary~\ref{cor67f6897a}.

We prove that a geodesic Lie group with Hausdorff dimension $Q$ has parabolic conformal type if and only if $\dimpar(G) \le Q$, and has hyperbolic conformal type if and only if $Q<\dimpar(G)$; see Corollary~\ref{cor67f056ce}.
We say that $G$ has \emph{strictly parabolic conformal type} if $\dimpar(G) < Q$, and we call the threshold case when $\dimpar(G) = Q$ as \emph{liminal parabolic conformal type}.
We are in the latter case exactly when the geodesic Lie group is globally Ahlfors regular.

\begin{thm}[see Theorem~\ref{thm68780db5}]\label{thm68760387}
	Quasi-conformal maps between non-compact geodesic Lie groups preserve both the Hausdorff and the parabolic dimensions of the groups; in particular, they maintain the distinction between hyperbolic, parabolic, and strictly parabolic conformal types.
	
	Moreover, quasi-conformal maps between geodesic Lie groups of hyperbolic or strictly parabolic conformal type are quasi-isometries.
\end{thm}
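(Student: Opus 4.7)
The plan is to split the theorem into (a) quasi-conformal invariance of the Hausdorff dimension $Q$ and of the parabolic dimension $\dimpar$, and (b) the QC-to-QI implication in the hyperbolic and strictly parabolic cases. For (a) I would argue as follows. Since every geodesic Lie group has uniformly locally bounded geometry, the Hausdorff measure in its own dimension $Q$ is Ahlfors regular at bounded scales, and the $Q$-capacity is a quasi-conformal invariant; this pins down $Q$ itself as QC-invariant. For $\dimpar$ I would directly invoke Corollary~\ref{cor67f6897a}, which asserts that $\dimpar$ is QC-invariant on the class of geodesic Lie groups despite not being so in general. Combining the two, the sign of $\dimpar(G)-Q$ is preserved, and by Corollary~\ref{cor67f056ce} this sign records exactly the trichotomy hyperbolic / liminal parabolic / strictly parabolic.

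For (b) I would follow the capacity-gauge strategy outlined in the introduction. In the hyperbolic case ($\dimpar(G)>Q$), compact sets have positive $Q$-capacity, and I would use the isoperimetric inequality at large scale to produce two-sided bounds, in terms of $d(x,y)$, for the $Q$-capacity of Grötzsch condensers $(K,\infty)$ with $K$ a curve joining $x$ to $y$. In the strictly parabolic case ($\dimpar(G)<Q$), Grötzsch condensers have zero $Q$-capacity, so I would instead estimate the $Q$-capacity of condensers made of pairs of unbounded connected sets extracted from a quasi-straight sequence (Definition~\ref{def68766881}); the existence of such sequences, together with upper volume-growth bounds along them, enables an analogous two-sided estimate. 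In both situations these capacity estimates furnish a distance-like function $\rho$ on $G$ which (i) is bi-Lipschitz preserved by any quasi-conformal homeomorphism $f\colon G\to H$ because it is built entirely out of $Q$-capacities, and (ii) is bornologously equivalent to $d$ at large scale by the two-sided capacity bounds.

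Combining (i) and (ii), $f$ is bi-bornologous with respect to $d$, and the general observation that a bi-bornologous map between geodesic metric spaces is a quasi-isometry closes the argument. The main obstacle is the bornologous \emph{lower} bound on $\rho$ by $d$, i.e.\ the lower estimate on the $Q$-capacity of the relevant condensers: in the hyperbolic case it rests on the large-scale isoperimetric inequality, and in the strictly parabolic case on the explicit construction of quasi-straight sequences together with sharp volume-growth control along them. Upper bounds, by contrast, can be produced by radial cutoff test functions on tubular neighbourhoods and are more routine. This asymmetry between the two bounds is also exactly the reason why the liminal case $Q=\dimpar(G)$ must be excluded from the statement.
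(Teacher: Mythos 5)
Your plan for the second half of the statement matches the paper's: the hyperbolic case via the Gr\"otzsch gauge $\ferrdHyp$ with the large-scale isoperimetric inequality supplying the hard lower capacity bound (Theorem~\ref{thm685cec65}), and the strictly parabolic case via the Teichm\"uller-type gauge $\ferrdPar$ with quasi-straight sequences and volume growth supplying the hard bound (Theorem~\ref{thm68755ab4}); the bi-bornologous-implies-QI step is exactly Corollary~\ref{cor67f68188}. (One small sign slip: in the parabolic case the quasi-straight sequences give an \emph{upper} bound on the capacity of the condensers, hence a lower bound on $\ferrdPar$; the reverse bound comes from the monotone-function oscillation estimates, not from cutoff functions.)

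There is, however, a genuine circularity in your part (a). You invoke Corollary~\ref{cor67f6897a} for the quasi-conformal invariance of $\dimpar$, but that corollary is deduced in the paper \emph{from} Theorem~\ref{thm68780db5}, i.e.\ from the very statement you are proving: its proof reads off the invariance of the three dimensions from the fact that the map is either a quasi-isometry or both groups are liminal parabolic. The point you actually need, and cannot get this way, is that a strictly parabolic geodesic Lie group is never quasi-conformally equivalent to a liminal parabolic one; QC-invariance of the $Q$-capacity only preserves the parabolic/hyperbolic dichotomy, not the finer strict/liminal one, and indeed this is precisely where your case analysis in (b) requires knowing that \emph{both} endpoints are strictly parabolic before Theorem~\ref{thm68755ab4} applies. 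The paper closes this gap by showing that on a liminal parabolic group (which is globally Ahlfors regular with a global Poincar\'e inequality, hence Loewner) the gauge $\ferrdPar$ vanishes identically (Proposition~\ref{prop687658f1}), whereas on a strictly parabolic group $\ferrdPar(x,y)\gtrsim d(x,y)^{1-N/Q}\to\infty$ (Proposition~\ref{prop68755b1a}); since $\ferrdPar$ is bi-Lipschitz preserved by QC maps (Proposition~\ref{prop68755c04}), the two classes cannot mix. You must supply this (or an equivalent) argument rather than cite the downstream corollary. Once the trichotomy is preserved, the equality $N_1=N_2$ in the non-liminal cases then follows, as in the paper, from the QI-invariance of the growth dimension (Proposition~\ref{prop6866b133}) applied \emph{after} the map is shown to be a quasi-isometry.
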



Theorem~\ref{thm68760387} implies in particular that the sub-Riemannian Heisenberg group and the sub-Riemannian rototranslation group are not quasi-conformally equivalent, as was already shown in~\cite{MR3428952}.

In the second part of Theorem~\ref{thm68760387}, we have not included the liminal parabolic case, because it would not be true.
An example of a geodesic Lie group with liminal parabolic conformal type is the Euclidean space $\R^n$: we know that in this case, there are plenty of quasi-conformal maps that are not quasi-isometries.
One can find more examples among Carnot groups: in particular, the sub-Riemannian Heisenberg group, which also admits quasi-conformal transformations that are not quasi-isometries.

An example of a geodesic Lie group with strictly parabolic conformal type is the direct product $\R^n\times\bb S^1$, since the Hausdorff dimension is $n+1$, while its parabolic dimension is $n$, as shown in Theorem~\ref{thm6862ad98} below.
Theorem~\ref{thm68760387} implies that every quasi-conformal map from $\R^n\times\bb S^1$ onto itself is a quasi-isometry.
This fact was not known before, to the authors' knowledge.
We generalize this example as follows:

\begin{thm}[see Theorem~\ref{thm68d55353}]
	Let $f: G_1\to G_2$ be a quasi-conformal map between geodesic Lie groups.
	If the fundamental group of $G_1$ is infinite, then $f$ is a quasi-isometry.
\end{thm}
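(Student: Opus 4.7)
By Theorem~\ref{thm68760387}, it suffices to show that $G_1$ is of hyperbolic or strictly parabolic conformal type, equivalently, that $G_1$ is not globally Ahlfors $Q$-regular, where $Q = Q(G_1)$ denotes the Hausdorff dimension. The intuition is that an infinite fundamental group forces a ``short closed direction'' in $G_1$, thinning its asymptotic volume growth below $Q$. I would begin by passing to the universal cover $\pi \colon \tilde{G}_1 \to G_1$, equipped with the left-invariant geodesic distance $\tilde{d}$ making $\pi$ a local isometry; then $\tilde{G}_1$ is a simply connected geodesic Lie group of the same Hausdorff dimension $Q$, and $\Gamma := \ker \pi \cong \pi_1(G_1)$ is a discrete, central, finitely generated abelian subgroup of $\tilde{G}_1$. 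Since $\Gamma$ is infinite it contains some $\tilde{\gamma}$ of infinite order; set $\ell := \tilde{d}(\tilde{e}, \tilde{\gamma} \tilde{e}) > 0$. By centrality, $\tilde{d}(\tilde{e}, \tilde{\gamma}^n \tilde{e}) \leq |n|\ell$, so the orbit $\{\tilde{\gamma}^n \tilde{e}\}_{n \in \Z}$ is $\ell$-Lipschitz in $n$.

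The heart of the argument is a volume estimate. Given $y \in B_{G_1}(e, r)$ and any lift $\tilde{y} \in \tilde{B}(\tilde{e}, r)$, each point $\tilde{\gamma}^n \tilde{y}$ with $|n| \leq r/\ell$ lies in $\tilde{B}(\tilde{e}, 2r)$ and projects to $y$ under $\pi$. Applying the area formula for the local isometry $\pi$ then yields
\begin{equation}
\mu_{G_1}\bigl(B_{G_1}(e, r)\bigr) \leq \frac{C \ell}{r}\, \mu_{\tilde{G}_1}\bigl(\tilde{B}(\tilde{e}, 2r)\bigr) \qquad (r \geq \ell)
\end{equation}
for some absolute $C>0$. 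The extra factor $1/r$ is the crucial gain coming from the infinite-order central orbit, and shows that the volume growth of $G_1$ is strictly smaller than that of $\tilde{G}_1$.

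To conclude, I would split on the growth type of $G_1$, which is the same as that of $\tilde{G}_1$ since they share the Lie algebra. If $G_1$ has exponential growth, it is of hyperbolic conformal type. If $G_1$ has polynomial growth, the classical work of Pansu and Mitchell implies that the simply connected group $\tilde{G}_1$ has asymptotic volume growth exponent equal to its Hausdorff dimension $Q$, i.e.\ $\mu_{\tilde{G}_1}(\tilde{B}(\tilde{e}, r)) \leq C' r^Q$ for all $r$; combined with the estimate above, this forces $\mu_{G_1}(B_{G_1}(e, r)) \leq C'' r^{Q-1}$, which is incompatible with global $Q$-Ahlfors regularity and places $G_1$ in the strictly parabolic class. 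Either way, Theorem~\ref{thm68760387} yields that $f$ is a quasi-isometry. I expect the main obstacle to be the polynomial-growth case, where one must invoke the Pansu--Mitchell identification of the asymptotic volume exponent with the Hausdorff dimension for simply connected type-R geodesic Lie groups, rather than any elementary estimate.
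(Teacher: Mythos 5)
Your covering--volume estimate is correct and is, up to rearrangement, exactly Lemma~\ref{lem68db76be} of the paper (with $k=1$): the orbit of an infinite-order central element shows $\tilde\mu(\tilde B(\tilde e,2r))\ge \frac{r}{\ell}\,\mu_{G_1}(B(e,r))$. The exponential-growth case is also fine. The polynomial-growth case, however, contains a genuine error, and the intermediate goal you set yourself is unattainable. The claim that Pansu--Mitchell gives $\mu_{\tilde G_1}(\tilde B(\tilde e,r))\le C'r^{Q}$ conflates the \emph{tangent} cone (Mitchell: local structure, Hausdorff dimension $Q$) with the \emph{asymptotic} cone (Pansu/Bass--Guivarc'h: volume growth exponent $N$). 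By Proposition~\ref{prop67f92fca} these exponents satisfy $Q\le N$ with equality only for Carnot groups, so for a non-Carnot simply connected cover the correct upper bound is $r^{N}$ with $N>Q$, and your chain of inequalities only yields $\mu_{G_1}(B(e,r))\lesssim r^{N-1}$, which need not beat $r^{Q}$. Worse, the statement you are trying to prove — that an infinite fundamental group forces $G_1$ out of the liminal parabolic class — is false: take $G_1=(\bb H^1,d_{\rm Riem})\times\bb S^1$. Its Hausdorff dimension is $3+1=4$, and its growth dimension is that of the Riemannian Heisenberg group, namely $\dim\frk g+\dim\frk g^2=4$ by Bass--Guivarc'h; so $N=Q=4$, $G_1$ is liminal parabolic, yet $\pi_1(G_1)=\Z$ is infinite. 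Hence no volume computation downstairs can place $G_1$ in the strictly parabolic or hyperbolic class, and Theorem~\ref{thm68760387} cannot be applied to $f$ directly.

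The missing idea is to lift the \emph{map}, not only the volume. After reducing by Theorem~\ref{thm68780db5} to the liminal case $N_1=Q$, lift $f$ to a quasi-conformal homeomorphism $\tilde f\colon\tilde G_1\to\tilde G_2$ between universal covers. In the liminal case $\mu_{G_1}(B(e,R))\gtrsim R^{Q}$ for $R\ge1$, so your inequality, read in the other direction, gives $\tilde\mu(\tilde B(\tilde e,R))\gtrsim R^{Q+1}$ (this is Lemma~\ref{lem68db76be}); thus $\dimgr(\tilde G_1)>Q$ while the Hausdorff dimension of $\tilde G_1$ is still $Q$, so the \emph{cover} is of hyperbolic conformal type. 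Theorem~\ref{thm68780db5} applied to $\tilde f$ shows $\tilde f$ is a quasi-isometry, and since the covering projections are submetries, the quasi-isometry descends to $f$ by Lemma~\ref{lem68d44a76}. In your example above this is exactly what happens: $\bb H^1_{\rm Riem}\times\R$ has $Q=4$ and growth exponent $5$, hence is hyperbolic, even though the quotient is liminal.
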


For maps between nilpotent geodesic Lie groups, Theorem~\ref{thm68760387} with a few algebraic and metric observations implies Theorem~\ref{thm1}.
Indeed, for simply connected nilpotent geodesic Lie groups, the Hausdorff dimension $Q$ and the degree $N$ of volume growth at large scale are computable algebraically from the structure of the Lie algebra.
With this fact, we show in Proposition~\ref{prop67f92fca} that $Q\le N$ on simply connected nilpotent geodesic Lie groups, with equality holding only for Carnot groups.
See Section~\ref{subs68768d93} for more details.

However, beyond nilpotent Lie groups, it is not clear that QC equivalence implies QI equivalence for geodesic Lie groups of liminal parabolic conformal type. 
An example is the $\ell^2$ product $(G,d) = (\bb H^1,d_R)\times (\scr R,d_{sR})$ of the Riemannian Heisenberg group $(\bb H^1,d_R)$ and the sub-Riemannian rototranslation group $(\scr R,d_{sR})$: $G$ has parabolic and Hausdorff dimension 7, so, it is a liminal parabolic geodesic Lie group but it is not nilpotent.
This scenario will be studied in a forthcoming work.

Theorem~\ref{thm68760387} is proven in several steps, which we describe in the following sections.

\subsection{Conformal type, volume growth, and isoperimetric inequalities}

Quasi-conformal maps between spaces with $Q$-bounded geometry preserve both topological properties and $Q$-capacities.
It is reasonable, then, to define distance-like functions on metric measure spaces that combine these two types of information.
We will use in particular two of them.
In the hyperbolic case, we use what we call the \emph{hyperbolic Ferrand distance} $\ferrdHyp$:
for every two points $x,y$, the quantity $\ferrdHyp(x,y)$ is the infimum of $Q$-capacity of closed and connected sets $E$ that contain $x$ and $y$; see Definition~\ref{def of ferrdHyp}.
These sets $E$ are also known as \emph{Grötzsche capacitors}.
In the parabolic case, we use instead what we call the \emph{parabolic Ferrand distance} $\ferrdPar$:
for every two points $x,y$, the quantity $\ferrdPar(x,y)$ is the $\frac1Q$-power of the inverse of the infimum of $Q$-capacity of capacitors $(E,F)$ made of closed connected unbounded sets with $x\in E$ and $y\in F$; see Definition~\ref{eq68da4e38}.
These capacitors $(E,F)$ are also known as \emph{Teichmüller\footnotemark capacitors}.
\footnotetext{Disclaimer: Teichmüller was an active Nazi.}

To get estimates of Ferrand distances, we use two geometric properties: the isoperimetric profile at large scale and the volume growth at large scale.

We say that a metric measure space $(X,d,\mu)$ supports the \emph{isoperimetric inequality at large scale of order $\frac{N-1}{N}$}, for some $N\ge1$, if there exists $C\in\R$ such that,
for every $E\subset X$ measurable,
\begin{equation}
	\mu(E) \ge 1
	\qquad\THEN\qquad
	\mu(E)^{\frac{N-1}{N}} \le C \Per(E) ,
\end{equation}
where $\Per(E)$ denotes the perimeter, i.e., the total variation of the characteristic function of $E$.
The supremum of all such $N$ is the \emph{isoperimetric dimension} $\dimisp(X)$ of $X$.

We say that a metric measure space $(X,d,\mu)$ has \emph{degree of large-scale growth at most~$N$}, for some $N\ge1$ if there exists $C\in\R$ such that
\begin{equation}\label{eq67c86af8_intro}
	\mu(B(x,R)) \le C R^N ,
	\qquad\forall x\in X,\ \forall R>1 .
\end{equation}
The \emph{growth dimension} $\dimgr(X)$ of $X$ is the infimum of all such $N$.
If $\dimgr(X)<\infty$, then $X$ has \emph{polynomial (volume) growth}.
Under some mild additional hypothesis on the spaces, we have that $\dimgr(X)$ is a quasi-isometric invariant; see Proposition~\ref{prop6866b133}.

Using the coarea inequality, one easily checks that $\dimisp(X) \le \dimgr(X)$.
We will show in Corollary~\ref{cor6863f5f0} and Proposition~\ref{prop67f0559a} that,
{\it if $X$ is a metric measure space with compact sets of arbitrarily large measure, then
\begin{equation}\label{eq68765d7d}
	\dimisp(X) \le \dimpar(X) \le \dimgr(X) .
\end{equation}
}


To prove Theorem~\ref{thm68760387}, we first characterize the conformal type of geodesic Lie groups, in terms of their growth and isoperimetric dimensions.
Sobolev and isoperimetric inequalities and their relation with heat diffusion and volume growth on Riemannian Lie groups have been studied thoroughly; see, for instance~\cite{MR839109,zbMATH05521653, MR1749853}.
In particular, Pittet proved in~\cite[Theorem 2.1]{zbMATH01782641} 
that the isoperimetric profile at large scale of a Riemannian Lie group
 is determined by its volume-growth type and whether it is amenable and unimodular or not.
We need a similar statement for geodesic Lie groups:
since every two geodesic left-invariant distances on a Lie group are quasi-isometric equivalent to each other, our task is to show that the isoperimetric dimension is preserved under quasi-isometries.
We will do this following a strategy of Kanai from~\cite{zbMATH03883148}; see Theorem~\ref{thm684c4f0a}.
As a consequence, we obtain the following characterization of isoperimetric profiles at large scale for geodesic Lie groups.

\begin{thm}[see Theorem~\ref{thm68655128}]\label{thm6862ad98}
	Let $(G,d,\mu)$ be a non-compact geodesic Lie group.
	Then exactly one of the following cases happens:
	\begin{enumerate}[label=(\alph*)]
	\item
	$G$ is non-amenable or non-unimodular, it has exponential volume growth, 
	and there is $C$ such that, for every measurable $E\subset G$, 
	if $\mu(E) \ge 1$ then $\mu(E) \le C \Per(E)$.
	In particular, $\dimisp(X) = \dimgr(X) = \infty$.
	\item
	$G$ is amenable, unimodular, it has exponential volume growth,
	and for every $N>1$ there is $C$ such that, for every measurable $E\subset G$,
	if $\mu(E) \ge 1$ then $\mu(E)^{\frac{N-1}{N}}  \le C \Per(E)$.
	In particular, $\dimisp(X) = \dimgr(X) = \infty$.
	\item
	$G$ is amenable, unimodular, it has polynomial growth of degree $N$, for some $N\in\N$, 
	and there is $C$ such that, for every measurable $E\subset G$, 
	if $\mu(E) \ge 1$, then $\mu(E)^{\frac{N-1}{N}} \le C \Per(E)$.
	In particular, $\dimisp(X) = \dimgr(X) < \infty$.
	\end{enumerate}
	As a consequence, for each non-compact geodesic Lie group $G$, one has equalities in~\eqref{eq68765d7d}, that is,
	\begin{equation}
		\dimisp(G) = \dimpar(G) = \dimgr(G) ,
	\end{equation}
\end{thm}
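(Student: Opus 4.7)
The plan is to reduce to the Riemannian trichotomy proved by Pittet in~\cite[Theorem~2.1]{zbMATH01782641} and then transport each of its conclusions to an arbitrary geodesic left-invariant distance via the quasi-isometric invariance of the isoperimetric profile at large scale announced in Theorem~\ref{thm684c4f0a}. First I would fix on $G$ an auxiliary left-invariant Riemannian metric $d_R$, so that $(G,d_R,\mu)$ is a Riemannian Lie group sharing the Haar measure $\mu$ with $(G,d,\mu)$. The algebraic dichotomies entering the statement (amenable versus not, unimodular versus not, polynomial versus exponential volume growth) are intrinsic to $(G,\mu)$ and independent of the choice of geodesic left-invariant distance; in particular, both $\dimgr(G)$ and the growth type coincide for $d$ and $d_R$. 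Pittet's theorem then yields the trichotomy (a)-(c) for $(G,d_R,\mu)$.

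The second step is to promote the isoperimetric inequalities of (a)-(c) from $(G,d_R)$ to $(G,d)$. Any two geodesic left-invariant distances on a connected Lie group are bi-Lipschitz equivalent on a compact neighbourhood of the identity, and the geodesic property globalises this comparison into a quasi-isometry; thus it suffices to show that each of the three specific isoperimetric profiles at large scale is a quasi-isometric invariant in the class of geodesic Lie groups. Following Kanai, I would discretise $G$ by a maximal $\epsilon$-separated net $\Sigma=\{x_i\}\subset G$ endowed with the graph structure $x_i\sim x_j \IFF d(x_i,x_j)\le 3\epsilon$. The uniform local Ahlfors regularity of $\mu$ and the local Poincaré inequality granted by Proposition~\ref{prop682dc2dc} yield, for measurable sets $E$ of large volume, two-sided comparisons of the form $\mu(E)\asymp \#\{i:x_i\in E\}$ and, through a coarea argument, $\Per(E)\asymp \#\{(i,j)\in\text{edges}:x_i\in E,\, x_j\notin E\}$. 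Since quasi-isometric distances produce bi-Lipschitz graphs, this dictionary transfers every inequality of the form $\mu(E)^{(N-1)/N}\le C\,\Per(E)$, as well as the linear variant $\mu(E)\le C\,\Per(E)$, from $(G,d_R,\mu)$ to $(G,d,\mu)$, up to enlarging the constants and the threshold volume.

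Combining these two steps, $(G,d,\mu)$ falls into exactly one of (a), (b), (c). The final equality $\dimisp(G)=\dimpar(G)=\dimgr(G)$ then follows by collapsing the sandwich~\eqref{eq68765d7d}: in case (c) both extremes equal the growth degree $N$; in cases (a) and (b) the exponential growth forces $\dimgr(G)=\infty$, while the respective isoperimetric conclusion forces $\dimisp(G)=\infty$, squeezing $\dimpar(G)=\infty$ as well. The main obstacle I anticipate is the Kanai step itself in the genuinely sub-Finsler setting: Kanai's original Riemannian argument relies on smooth cut-offs and partitions of unity, tools that are not at hand on a sub-Finsler group. The replacement must be built entirely on the PI-space structure, and the delicate point is to control uniformly, in both directions, the constants in the coarea-type comparison between $\Per$ in $G$ and the combinatorial boundary on the net $\Sigma$, so that no scale-dependent loss accumulates when one passes back and forth between the continuous space and its discretisation.
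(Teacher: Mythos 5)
Your proposal is correct and follows essentially the same route as the paper: fix an auxiliary left-invariant Riemannian metric, invoke Pittet's trichotomy for it, transfer the large-scale isoperimetric inequalities to $(G,d,\mu)$ via the quasi-isometric invariance of Theorem~\ref{thm684c4f0a} together with the quasi-isometric invariance of volume growth (Proposition~\ref{prop6866b133}), and then collapse the sandwich $\dimisp\le\dimpar\le\dimgr$. The obstacle you anticipate in the Kanai step is handled in the paper exactly as you suggest, replacing smooth cut-offs by the PI-space tools (Miranda's local isoperimetric inequality, the coarea formula, and a mean-value choice of ball radii), though note that the paper's Proposition~\ref{prop684afea2} does not establish a two-sided comparison $\Per(E)\asymp\#\partial_1 S$ for one and the same set, but rather transfers the functional inequality by constructing a different set in each direction.
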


Recall that all Lie groups of polynomial growth are amenable and unimodular; see~\cite[II.4.5]{MR2000440}.
Note that Theorem~\ref{thm6862ad98} provides a complete characterization of the parabolic conformal type of geodesics Lie groups in terms of their volume growth.

\begin{thm}[see Corollary~\ref{cor67f056ce}]\label{thm685e39f2}
	A geodesic Lie group $(G,d,\mu)$ has parabolic conformal type if and only if there exists $C>0$ such that
	\begin{equation}
		\mu(B(x,r)) \le C r^Q ,
		\qquad \forall r\ge 1,\ \forall x\in G,
	\end{equation}
	where $Q$ is the Hausdorff dimension of $(G,d)$.
\end{thm}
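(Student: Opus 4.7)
The plan is to derive the biconditional as a direct consequence of Theorem~\ref{thm6862ad98} once parabolic conformal type is reformulated in terms of the parabolic dimension. The compact case is immediate (the volume bound holds with $C=\mu(G)$, and every capacitor is trivial), so I focus on non-compact $G$.

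First, I would establish the reformulation \emph{$G$ has parabolic conformal type if and only if $\dimpar(G)\le Q$}. The monotonicity of $p$-capacity in $p$ gives the easy implication: if $\dimpar(G)<Q$, pick any $p$ with $\dimpar(G)<p<Q$; then $\capacity_p(E)=0$ for every compact $E$, and a standard Hölder/truncation argument upgrades this to $\capacity_Q(E)=0$. Conversely, if $\dimpar(G)>Q$, there is a compact $E$ with $\capacity_Q(E)>0$ by definition of $\dimpar$, so $G$ is of hyperbolic type. The delicate borderline is $\dimpar(G)=Q$, which I would handle by invoking case (c) of Theorem~\ref{thm6862ad98}: in that case $G$ has polynomial growth of degree exactly $Q$, and a logarithmic cutoff $u_R(x)=\min(1,\max(0,\log(R/d(x,x_0))/\log(R/R_0)))$ supported on the annulus $B(x_0,R)\setminus B(x_0,R_0)$, combined with the volume bound $\mu(B(x_0,r))\le Cr^Q$ and a dyadic decomposition of the annulus, gives $\int |\nabla u_R|^Q\,d\mu \lesssim (\log(R/R_0))^{1-Q}\to 0$ as $R\to\infty$, proving $\capacity_Q(E)=0$ for any compact $E\subset B(x_0,R_0)$.

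With that reformulation, the conclusion is a straightforward chain of equivalences using Theorem~\ref{thm6862ad98}. That theorem says $\dimpar(G)=\dimgr(G)$, and furthermore the classification into cases (a)--(c) shows that $\dimgr(G)$ is either $+\infty$ (exponential growth) or a positive integer $N$ for which $\mu(B(x,r))\le Cr^N$ holds uniformly for all $x\in G$ and $r\ge 1$. Hence
\begin{equation}
\dimpar(G)\le Q \;\Longleftrightarrow\; \dimgr(G)\le Q \;\Longleftrightarrow\; \mu(B(x,r))\le Cr^{Q}\ \forall x\in G,\, r\ge 1,
\end{equation}
where the final equivalence uses that $r^{N}\le r^{Q}$ whenever $N\le Q$ and $r\ge 1$, together with the fact that exponential growth is incompatible with any polynomial upper bound. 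Combined with the first step, this yields the claimed biconditional.

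The main obstacle is the critical/liminal case $\dimpar(G)=Q$ in the reformulation step, because this is exactly where parabolic and hyperbolic type could a priori separate inside a single equivalence class of $\dimpar$; the logarithmic cutoff argument works cleanly only once the polynomial volume bound of degree $Q$ and the isoperimetric inequality of order $(Q-1)/Q$ from Theorem~\ref{thm6862ad98}(c) are in hand, which is precisely what makes the result a genuine corollary rather than a trivial consequence of the definition of $\dimpar$.
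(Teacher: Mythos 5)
Your argument follows essentially the same route as the paper: the statement is the content of Corollary~\ref{cor67f056ce}, which the paper obtains by combining Theorem~\ref{thm68655128} (the Pittet--Kanai classification, giving $\dimisp(G)=\dimpar(G)=\dimgr(G)$ and, in the polynomial case, the uniform bound $\mu(B(x,r))\le Cr^N$ with $N=\dimgr(G)$) with the logarithmic-cutoff estimate of Lemma~\ref{lem67f8cd3b}/Proposition~\ref{prop67f0559a} for the parabolic direction, and with the isoperimetric-to-Sobolev-to-hyperbolicity chain (Corollary~\ref{cor6863f5f0}, via Propositions~\ref{prop6855a6f8} and~\ref{prop67f055d9}) hidden inside the equality $\dimisp=\dimpar$ for the converse. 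Your chain $\dimpar(G)\le Q\Leftrightarrow\dimgr(G)\le Q\Leftrightarrow \mu(B(x,r))\le Cr^Q$ packages exactly these ingredients.

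One step, as written, would not go through. For $\dimpar(G)<p<Q$ you claim that $\capacity_p(E)=0$ upgrades to $\capacity_Q(E)=0$ by ``a standard H\"older/truncation argument''. H\"older runs the wrong way here: from $\int_X g^p\,d\mu<\epsilon$ one cannot control $\int_X g^Q\,d\mu$ for $Q>p$ unless $g$ is uniformly bounded (take $g=M\one_A$ with $\mu(A)=M^{-p}\epsilon$: the $L^p$-norm is small while $\int_X g^Q\,d\mu=M^{Q-p}\epsilon\to\infty$), and there is no reason the near-optimal upper gradients are bounded. Monotonicity of $p$-parabolicity in $p$ does hold under bounded geometry, but it requires a discretization argument in the style of Troyanov, not H\"older, and the paper deliberately avoids it. Fortunately the step is superfluous: the logarithmic-cutoff computation you already use for the liminal case $\dimpar(G)=Q$ applies verbatim whenever $\dimgr(G)=N\le Q$, since Theorem~\ref{thm6862ad98}(c) gives $\mu(B(x,r))\le Cr^N\le Cr^Q$ for $r\ge1$; running that single argument for both sub-cases (which is precisely Proposition~\ref{prop67f0559a}) closes the gap. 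A final caveat affecting both your proof and the paper's: the cutoff estimate $(\log(R/R_0))^{1-Q}\to0$ needs $Q>1$, so the case $Q=1$ (i.e.\ $G=\R$, which is $1$-hyperbolic despite satisfying the volume bound) must be excluded from the statement.
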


Once we have established the conformal type of geodesic Lie groups, we can use the quasi-conformal invariance of capacity to prove Theorem~\ref{thm68760387}.
The two cases, parabolic and hyperbolic, are treated quite differently.

\subsection{QC maps in the hyperbolic conformal type}
As we have described earlier, on each metric measure space, one can consider the hyperbolic Ferrand distance $\ferrdHyp$.
By construction, the function $\ferrdHyp$ satisfies the triangular inequality and quasi-conformal maps are bi-Lipschitz maps for $\ferrdHyp$.
In the parabolic case, we have $\ferrdHyp\equiv0$.
However, temporarily omitting some extra condition, if $(X,d,\mu)$ is a metric measure space with hyperbolic conformal type,
the identity map $(X,d)\to(X,\ferrdHyp)$ is \emph{bi-bornologous}, that is, $\lim_{n\to\infty}d(x_n,y_n)=\infty$ if and only if $\lim_{n\to\infty}\ferrdHyp(x_n,y_n)=\infty$,
for every sequences $\{x_n\}_{n\in\N}$ and $\{y_n\}_{n\in\N}$ in $X$.

In all crucial capacity estimates, we make use of \emph{monotone functions}.
Following the strategy outlined by Ferrand in \cite{zbMATH00883879}, we observe that the capacity can be obtained by infimising over monotone functions; see Proposition~\ref{prop67e505c3}.
Moreover, we recover from the work \cite{MR1683160} of Haj\l{}asz and Koskela upper bounds for the oscillation of monotone functions; see Section~\ref{subs67ef8b4f}.

At this point, we can regard each quasi-conformal map $f:(X,d)\to (Y,d)$ as the composition of bornologous functions
\begin{equation}
	(X,d) \overset{\Id}\longrightarrow 
	(X,\ferrdHyp) \overset{f}\longrightarrow 
	(Y,\ferrdHyp) \overset{\Id}\longrightarrow 
	(Y,d) .
\end{equation}
Consequently, $f:(X,d)\to (Y,d)$ is bornologous.
If $X$ and $Y$ are quasi-geodesic spaces, bornologous functions are large-scale Lipschitz;
see Proposition~\ref{prop6803b35d}.

The above strategy needs extra assumptions on the space $(X,d,\mu)$, which will be satisfied for geodesic Lie groups.
See Section~\ref{sec6866e144} for the definition of uniformly locally bounded geometry.
A more general result is the following:	

\begin{thm}[see Theorem~\ref{thm685cec65}]\label{thm685ea294}
	Let $X_1$ and $X_2$ be metric measure spaces such that:
	\begin{enumerate}
	\item 
	they are quasi-geodesic and boundedly compact;
	\item
	they have uniformly locally bounded geometry, with Hausdorff dimension $Q>1$;
	\item
	for each $r>0$, the measure of balls of radius $r$ is uniformly bounded from above.
	\end{enumerate}
	If $\dimisp(X_1)>Q$ and $\dimisp(X_2)>Q$, 
	then every quasi-conformal map $X_1\to X_2$ is a quasi-isometry.
\end{thm}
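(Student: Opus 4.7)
The strategy, following the introduction, is to factor $f$ as a composition
\[
(X_1,d) \xrightarrow{\Id} (X_1,\ferrdHyp) \xrightarrow{f} (X_2,\ferrdHyp) \xrightarrow{\Id} (X_2,d),
\]
show that each arrow is bornologous in both directions, and then invoke Proposition~\ref{prop6803b35d} to upgrade bi-bornologous to large-scale Lipschitz on each side, obtaining a quasi-isometry (surjectivity being automatic since $f$ is a homeomorphism). The middle arrow is bi-Lipschitz for $\ferrdHyp$ because quasi-conformality combined with locally $Q$-bounded geometry makes the $Q$-capacity quasi-invariant, and $\ferrdHyp$ is defined purely in terms of $Q$-capacities of closed connected sets joining the two endpoints, a quantity $f$ preserves up to a multiplicative constant.

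The remaining task is to prove that each identity map $(X_i,d)\to(X_i,\ferrdHyp)$ is bi-bornologous. The ``bounded-to-bounded'' direction is the soft one: given $x,y$ with $d(x,y)\le R$, quasi-geodesicity gives a curve $\gamma$ from $x$ to $y$ contained in a ball whose radius depends only on $R$; the uniform upper bound on $\mu(B(\cdot,r))$ from hypothesis~(3), together with local Ahlfors regularity and local Poincaré inequality from locally $Q$-bounded geometry, allows to build an admissible function supported in a tube around $\gamma$ with bounded $Q$-energy. Uniformity of the resulting bound over the base point is precisely what ``uniformly locally bounded geometry'' guarantees.

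The hard direction is to show that $d(x_n,y_n)\to\infty$ implies $\ferrdHyp(x_n,y_n)\to\infty$, and this is where $\dimisp(X_i)>Q$ enters. Fix an exponent $N$ with $Q<N<\dimisp(X_i)$. For a closed connected set $E$ containing $x$ and $y$, one invokes Proposition~\ref{prop67e505c3} to represent $\capacity_Q(E)$ as an infimum over \emph{monotone} admissible functions $u$. Monotonicity forces every superlevel set $\{u>t\}$ to be connected and to span a $d$-diameter comparable to $d(x,y)$ when $t$ is small; combining the co-area inequality with the large-scale isoperimetric inequality of exponent $(N-1)/N$, and with the Haj\l{}asz--Koskela oscillation bounds for monotone functions recalled in Section~\ref{subs67ef8b4f} to control $\mu(\{u>t\})$ in terms of $d(x,y)$, one recovers a lower bound of the form $\capacity_Q(E)\ge \Phi(d(x,y))$ with $\Phi(r)\to\infty$. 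Taking the infimum over $E$ yields the claim.

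The main obstacle is this last capacity lower bound: connected sets of large diameter can be ``thin'' and of small $\mu$-measure, so the naive chain ``large diameter $\Rightarrow$ large volume $\Rightarrow$ large perimeter'' fails and one must genuinely exploit monotonicity of extremals. The strict inequality $N>Q$ is what compensates, ensuring that the contributions of concentric annular level sets accumulate to something divergent in $d(x,y)$. The delicate point is to carry out this estimate uniformly in the base point, without assuming any global Ahlfors regularity, relying only on uniformly locally bounded geometry and the large-scale isoperimetric inequality.
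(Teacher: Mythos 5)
Your proposal follows essentially the same route as the paper: factor $f$ as $(X_1,d)\to(X_1,\ferrdHyp)\to(X_2,\ferrdHyp)\to(X_2,d)$, show each identity map is bi-bornologous (the soft direction via quasi-convexity and the uniform bound on $\mu(B(\cdot,r))$, which is exactly Lemma~\ref{lem685ce33b}; the hard direction via monotone admissible functions, the Haj\l{}asz--Koskela oscillation estimates, and the large-scale isoperimetric inequality with exponent $N>Q$, which is Corollary~\ref{cor6855a606}), and conclude with Proposition~\ref{prop6803b35d} and Corollary~\ref{cor67f68188}. Two small caveats on your sketch of the crucial capacity lower bound: monotonicity does not make superlevel sets connected, nor is that needed (they have large diameter simply because they contain the continuum $K$); and the coarea inequality only controls the $L^1$-norm of the upper gradient, so to reach the $Q$-energy the paper first upgrades the isoperimetric inequality to a Gagliardo--Nirenberg--Sobolev inequality (Proposition~\ref{prop6855a6f8}) and then runs a dichotomy over a separated net of points on $K$ --- either many local energies are already large, or $u\ge\tfrac12$ on many disjoint balls of definite measure, which feeds into the Sobolev inequality (Theorem~\ref{thm67e655fa}); this is the precise form of the ``accumulation'' your last paragraph alludes to.
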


\subsection{QC maps in the parabolic conformal type}

In the parabolic case, the above hyperbolic Ferrand distance $\ferrdHyp$ is the zero function.
So, we use the parabolic Ferrand distance $\ferrdPar$.
We show that, under some additional hypothesis on the space, for every pair of sequences $\{x_n\}_{n\in\N}$ and $\{y_n\}_{n\in\N}$, 
we have $\lim_{n\to\infty} d(x_n,y_n) = \infty$ if and only if $\lim_{n\to\infty} \ferrdPar(x_n,y_n) = \infty$.
Hence, a strategy similar to the one applied in the hyperbolic case allows us to show that quasi-conformal maps are bornologous, with bornologous inverse, and thus quasi-isometries.

For the argument, we need both lower and upper bounds to $\ferrdPar$.
Upper bounds are based on properties of monotone functions; see Proposition~\ref{prop68720f23}.
For the lower bounds, we need to construct, for every pair of points $x$ and $y$, two appropriate connected and unbounded sets $E$ and $F$ such that $x\in E$ and $y\in F$.
The intuitive idea is to take a straight line passing through the points and consider the two unbounded half-lines separated by the points.

To do this, we use what we call \emph{quasi-straight sequences}:
roughly speaking, a quasi-straight sequence is a function $\zeta:\Z\to X$ unbounded in both directions, so that its image is made of roughly evenly distributed points that are almost aligned, quantitatively; see Definition~\ref{def68766881}.
We note that quasi-straight sequences form a larger class than infinite quasi-geodesics; see Remark~\ref{rem6876686d}.

A metric space is \emph{quasi-straightenable} if every two points lie close to a quasi-straight sequence, quantitatively with constants that are independent of the points;
see Definition~\ref{def68766881}.
We can show that every non-compact geodesic Lie group with polynomial growth is quasi-straightenable, because simply connected nilpotent Lie groups are quasi-straightenable (using straight lines in exponential coordinates), and because every geodesic Lie group with polynomial growth is quasi-isometric to a simply connected nilpotent Lie group.

With these notions, we can prove the second part of Theorem~\ref{thm68760387}, when both spaces are strictly parabolic.

\begin{thm}[see Theorem~\ref{thm68755ab4}]\label{thm6877508e}
	Let $X_1$ and $X_2$ be metric measure spaces such that:
	\begin{enumerate}
	\item 
	they are quasi-geodesic and boundedly compact;
	\item
	they have uniformly locally bounded geometry, with Hausdorff dimension $Q>1$;
	\item
	they are quasi-straightenable.
	\end{enumerate}
	If $\dimgr(X_1)<Q$ and $\dimgr(X_2)<Q$, 
	then every quasi-conformal map $X_1\to X_2$ is a quasi-isometry.
\end{thm}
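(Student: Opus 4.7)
The plan is to apply the parabolic Ferrand distance $\ferrdPar$ (Definition~\ref{eq68da4e38}) as a conformal gauge through which any quasi-conformal map $f \colon X_1 \to X_2$ can be factored. Since both spaces have uniformly locally bounded geometry of Hausdorff dimension $Q$, $f$ preserves $Q$-capacities of Teichmüller capacitors up to uniform multiplicative constants, so $f \colon (X_1, \ferrdPar) \to (X_2, \ferrdPar)$ is bi-Lipschitz. The theorem then reduces to showing that for each $X \in \{X_1, X_2\}$, the identity $(X, d) \to (X, \ferrdPar)$ is \emph{bi-bornologous}: the factorization
\begin{equation}
	(X_1, d) \overset{\Id}\longrightarrow (X_1, \ferrdPar) \overset{f}\longrightarrow (X_2, \ferrdPar) \overset{\Id}\longrightarrow (X_2, d)
\end{equation}
is then bi-bornologous, and the quasi-geodesic hypothesis combined with Proposition~\ref{prop6803b35d} upgrades this to a quasi-isometry.

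For the implication ``$d$ bounded $\Rightarrow$ $\ferrdPar$ bounded'', we need a uniform positive lower bound on $\operatorname{Cap}_Q(E, F)$ over Teichmüller capacitors containing $x, y$ with $d(x,y) \leq D$. By Proposition~\ref{prop67e505c3} the capacity infimum may be restricted to $Q$-monotone admissible functions $u$, and the monotone oscillation estimate behind Proposition~\ref{prop68720f23} (of Haj\l asz--Koskela type), applied on a ball of radius $\asymp D$ containing $x$, $y$, and a connecting quasi-geodesic, yields $1 = |u(x) - u(y)|^Q \leq C(D) \int g^Q \dd\mu$ for any upper gradient $g$ of $u$. Hence $\operatorname{Cap}_Q(E, F) \geq 1/C(D) > 0$, uniformly in the capacitor.

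The converse implication ``$d(x_n, y_n) \to \infty \Rightarrow \ferrdPar(x_n, y_n) \to \infty$'' is the core of the proof. Given $x, y$ with $R := d(x, y)$ large, the quasi-straightenable hypothesis furnishes a quasi-straight sequence $\zeta \colon \Z \to X$ passing within bounded distance of both points, at indices $k_0 < k_1$ with $|k_1 - k_0| \asymp R$. Take $E$ to be the closure of a tubular neighborhood of fixed thickness (exceeding $\sup_k d(\zeta(k), \zeta(k+1))$) around $\zeta((-\infty, k_0] \cap \Z)$, and $F$ the analogous neighborhood around $\zeta([k_1, \infty) \cap \Z)$; calibrated this way, they are closed, connected, unbounded, contain $x$ and $y$ up to bounded offset, and satisfy $\operatorname{dist}(E, F) \asymp R$ thanks to the control furnished by Definition~\ref{def68766881}. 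One then builds an admissible cutoff $u$ that equals $0$ on $E$, $1$ on $F$, has Lipschitz constant $\lesssim 1/R$, and whose transition set $\{0 < u < 1\}$ is contained in a ball of radius $O(R)$ centered between $x$ and $y$. Combining $\dimgr(X) < Q$ with the growth bound~\eqref{eq67c86af8_intro} yields
\begin{equation}
	\operatorname{Cap}_Q(E, F) \leq \int |Du|^Q \dd\mu \lesssim \frac{\mu(B(x, O(R)))}{R^Q} \lesssim R^{N-Q} \xrightarrow[R \to \infty]{} 0
\end{equation}
for some $N < Q$, hence $\ferrdPar(x, y) \to \infty$.

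The main obstacle is precisely this last construction. The naive distance-based cutoff $u = \min\bigl(1, \operatorname{dist}(\cdot, E)/R\bigr)$ has gradient supported in an unbounded shell, where $\int|Du|^Q$ diverges along the infinite tubes and the capacity estimate is lost. Instead, $u$ must be (essentially) a function of a Lipschitz ``arc-length projection'' onto $\zeta$, truncated transversally so that only the bounded interpolation zone carries gradient mass. Making this rigorous requires matching carefully the quasi-straight constants in Definition~\ref{def68766881} against the tube thickness, the quasi-geodesic constants of the ambient space, and the large-scale growth bound, and is where the bulk of the technical effort should be expected to lie.
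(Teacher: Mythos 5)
Your overall architecture is exactly the paper's: factor $f$ through the parabolic Ferrand gauge, prove the identity maps $(X_j,d)\to(X_j,\ferrdPar)$ are bi-bornologous (the bounded direction via monotone admissible functions and the Haj\l{}asz--Koskela oscillation estimate, as in Proposition~\ref{prop68720f23}; the unbounded direction via quasi-straight sequences), and conclude with Proposition~\ref{prop6803b35d} and Corollary~\ref{cor67f68188}. However, there is a genuine gap in the step you yourself flag as the main obstacle, and the fix you sketch cannot work. You propose an admissible function $u$ for the capacitor $(E;F)$ whose ``transition set $\{0<u<1\}$ is contained in a ball of radius $O(R)$,'' i.e.\ whose upper gradient is compactly supported. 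This is impossible when both plates are unbounded: outside any ball $B$ the sets $E\setminus B$ and $F\setminus B$ are nonempty, and in these (quasi-convex, connected) spaces they are generically joined by curves avoiding $B$; an upper gradient vanishing there would force $u$ to be constant along such curves, contradicting $u=0$ on $E$ and $u=1$ on $F$. So the ``arc-length projection truncated transversally'' cannot produce what you claim, and your capacity estimate, which only accounts for gradient mass in $B(x,O(R))$, omits an unavoidable far-field contribution.

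The paper's Proposition~\ref{prop68720d05} resolves this differently: it takes $u(z)=d_E(z)/(d_E(z)+d_F(z))$ with upper gradient $g(z)=1/(d_E(z)+d_F(z))$, which is positive \emph{everywhere} but, by the quasi-straightness inequalities, satisfies $d_E(z)+d_F(z)\gtrsim d(x,y)$ on a ball $B(o,R_{xy})$ with $R_{xy}\asymp d(x,y)$ and $d_E(z)+d_F(z)\gtrsim d(o,z)$ outside it. The near-field term gives $R^{-Q}\mu(B(o,R_{xy}))\lesssim R^{N-Q}$ as in your display, but one must also integrate $d(o,z)^{-Q}$ over $X\setminus B(o,R_{xy})$ by Cavalieri, and it is precisely the hypothesis $N=\dimgr(X)<Q$ that makes this tail converge and contribute another $R_{xy}^{N-Q}$. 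You should replace your compact-support construction with this (or an equivalent) decaying-gradient argument and add the far-field term to your estimate; with that repair the proof matches the paper's.
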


Observe that the above Theorem~\ref{thm6877508e} can only be applied to strictly parabolic geodesic Lie groups.
This hypothesis is used in the lower bound for $\ferrdPar$; see Proposition~\ref{prop68720d05}.
However, we can show that geodesic Lie groups that are liminal parabolic are globally Loewner spaces and thus the parabolic Ferrand distance $\ferrdPar$ is $0$ at every pair of points;
see Proposition~\ref{prop687658f1}.
Therefore, a strictly parabolic geodesic Lie group cannot be quasi-conformal equivalent to a liminal parabolic geodesic Lie group.
The latter argument is shadowing ideas from~\cite{MR3428952}.

\subsection{Acknowledgments}
The authors thank Jeremy Tyson and Gioacchino Antonelli for fruitful discussions.
We also thank Elia Bubani and Andrea Tettamanti for proofreading a preliminary version of this work.

KF was partially supported by the Research Council of Finland (formerly, Academy of Finland), grants 321696, 328846, 352649.
ELD and SNG were partially supported 
by the Swiss National Science Foundation (grant 200021-204501 ‘{\it Regularity of sub-Riemannian geodesics and applications}’), 
by the European Research Council (ERC Starting Grant 713998 GeoMeG `{\it Geometry of Metric Groups}'), 
by the Academy of Finland 
(grant 288501 `{\it Geometry of subRiemannian groups}', 
grant 322898 `{\it Sub-Riemannian Geometry via Metric-geometry and Lie-group Theory}'. 

\section{Spaces and maps}\label{sec6866e144}
In this section, we introduce various properties of metric spaces and of their maps.

A metric space is said to be \emph{locally compact} if every point has a compact neighborhood.
A metric space is \emph{boundedly compact} if bounded sets are pre-compact.
If $(X,d)$ is a metric space, we denote the \emph{(open) ball} with center $x\in X$ and radius $r>0$ as $B(x,r) := \{y\in X: d(x,y)<r\}$,
and the \emph{closed ball} as $\bar B(x,r) := \{y\in X: d(x,y)\le r\}$.
If $E$ is a subset of a metric space and $r>0$, we define
\begin{equation}
	B(E,r):= \bigcup_{x\in E} B(x,r) .
\end{equation}
A subset $\scr K$ of a metric space $(X,d)$ is \emph{$r$-separated} for some $r\in(0,+\infty)$ if $d(x,y)\ge r$ for all $x,y\in\scr K$ distinct.
An $r$-separated subset $\scr K\subset X$ is \emph{maximal} if it is maximal among $r$-separated sets with respect to inclusion of sets.
Equivalently, an $r$-separated subset $\scr K\subset X$ is maximal if for every $z\in X$ there exists $x\in\scr K$ with $d(x,z)<r$.
In every metric space, for every $r\in(0,+\infty)$, there exists a maximal $r$-separated set by Zorn's Lemma.

A \emph{metric measure space} is a triple $(X,d,\mu)$ where $(X,d)$ is a metric space and $\mu$ is a Borel regular measure on $X$.
Given a metric measure space $(X,d,\mu)$, the \emph{volume functions of $X$} are
$\mu_X^+,\mu_X^-:(0,+\infty) \to [0,+\infty]$, defined by
\begin{equation}\label{eq6866b26f}
\begin{aligned}
	\mu_X^+(r) := \sup\{\mu(B(x,r)): x\in X\} ,\\
	\mu_X^-(r) := \inf\{\mu(B(x,r)): x\in X\} ,
\end{aligned}
\end{equation}
for all $r\in (0,+\infty)$.

A Borel function $g:X\to[0,+\infty]$ is an \emph{upper gradient} of a function $u:X\to\R$ if, for every rectifiable curve $\gamma:[0,1]\to X$, we have
\begin{equation}\label{eq6804a338}
	|u(\gamma(1)) - u(\gamma(0))| \le \int_\gamma g.
\end{equation}
Notice that the constant function $g\equiv+\infty$ is an upper gradient for every measurable function $X\to\R$.
If $u: A \to \mathbb{R}$ is defined only on a Borel subset $A$ of $X$, we speak about upper gradients of $u$ with self-explanatory meaning, considering $A$ as a metric measure space itself. In this way, if $g$ is an upper gradient for a function  $u$ on $X$, then $g|_A$ is an upper gradient for $u|_A$.

\begin{definition}[Uniformly locally bounded geometry]\label{def6803c8e2}
	A metric measure space $(X,d,\mu)$ has \emph{locally $Q$-bounded geometry} with data 
	$(Q,R,C_{\rm A}, C_{\rm P},\sigma)$,
	where $Q\ge1$, $R:X\to(0,+\infty]$, $C_{\rm A}\ge1$, $C_{\rm P}\geq0$, and $\sigma\ge1$,
	if $X$ is separable, pathwise connected, and locally compact, and if the following two conditions hold:
	First,  
	\begin{equation}\label{eq67f646a8}
		C_{\rm A}^{-1} r^Q \le \mu(B(x,r)) \le C_{\rm A} r^Q ,
		\qquad\forall x\in X,\ \forall r\in(0,R(x)) .
	\end{equation}
	Second, for every $x\in X$ and for all $r\in(0,R(x)/\sigma)$,	if $u:B(x,\sigma r)\to\R$ is a locally integrable function with upper gradient $g$, then
	\begin{equation}\label{eq67f646d6}
		\fint_{B(x,r)} |u-u_B| \did\mu \le C_{\rm P} r \left( \fint_{B(x,\sigma r)} g^Q \did \mu\right)^{1/Q} ,
	\end{equation}
	where $u_B := \fint_{B(x,r)} u \did\mu$.
	
	In addition, we say that $(X,d,\mu)$ has \emph{uniformly locally $Q$-bounded geometry},
	if it has locally $Q$-bounded geometry with a constant function $R$.
\end{definition}

The above definition is in line with the literature; see also \cite[p.~150]{zbMATH01787110}, \cite[Def.~9.1]{MR1869604}, and in particular the definition of $Q$-BG space from~\cite[Chapter~9]{zbMATH01599991}.

By the previous remark about the restriction of functions and upper gradients, if $(X,d,\mu)$ has uniformly locally $Q$-bounded geometry in the sense of Definition~\ref{def6803c8e2}, then~\eqref{eq67f646d6} holds (with the indicated collection of balls) for all $u: X\to \R$ with upper gradient $g$ on $X$.

\begin{definition}\label{def687378f2}
    A metric space $(X,d)$ is \emph{$C$-quasi-convex} for some $C\ge1$ if for every $x,y\in X$ there is a curve $\gamma$ from $x$ to $y$    such that the length $\Length(\gamma)$ is at most $Cd(x,y)$. 
    It is \emph{uniformly locally $C$-quasi-convex} if 
    for every $x,y\in X$ with $d(x,y) \le 1/C$ there exists a curve $\gamma$ from $x$ to $y$ with $\Length(\gamma) \le C d(x,y)$.
\end{definition}

\begin{proposition}\label{prop686fb4ae-uniform}
    Let $(X,d,\mu)$ be a boundedly compact metric measure space with uniformly locally bounded geometry   with data $(Q,R,C_{\rm A}, C_{\rm P},\sigma)$, as in Definition~\ref{def6803c8e2}.  
    Then there is $C\ge1$, depending only on the data of $(X,d,\mu)$, such that $(X,d)$ is uniformly locally $C$-quasi-convex.
	If $R=\infty$, then $(X,d)$ is quasi-convex.
\end{proposition}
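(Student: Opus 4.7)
My strategy is to apply the classical principle---going back to Semmes and formalized by Cheeger and Heinonen--Koskela---that a locally doubling metric measure space supporting a local $(1,Q)$-Poincar\'e inequality is locally quasi-convex.  The hypotheses of Definition~\ref{def6803c8e2} provide precisely these ingredients: the two-sided mass bound~\eqref{eq67f646a8} yields local $Q$-Ahlfors regularity (hence the doubling property) at scales below $R(x)$, and~\eqref{eq67f646d6} is a local $(1,Q)$-Poincar\'e inequality.  The target is an estimate $\rho(p,q) \le C\,d(p,q)$ whenever $d(p,q) \le 1/C$, where $\rho$ denotes the length pseudo-metric obtained as the infimum of lengths of rectifiable curves joining two points, and $C$ depends only on $(Q, C_{\rm A}, C_{\rm P}, \sigma)$.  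Once such an estimate is established, the very definition of $\rho$ produces a rectifiable curve from $p$ to $q$ of length at most $2C\,d(p,q)$, yielding quasi-convexity after absorbing the factor $2$ into $C$.

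For the quantitative step, fix $p,q \in X$ with $s := d(p,q)$ sufficiently small, and consider $\rho(y) := \inf\{\Length(\gamma) : \gamma \text{ rectifiable from } p \text{ to } y\}$, which admits the constant function $1$ as an upper gradient by curve concatenation.  Suppose for contradiction that $\rho(q) > \lambda s$ for a large constant $\lambda$ to be chosen, and set $u := \min(\rho, \lambda s)$.  Then $g := \chi_{\{\rho < \lambda s\}}$ is an upper gradient of $u$: on every rectifiable curve $\gamma$, the function $u \circ \gamma$ is constant on the preimage of $\{u = \lambda s\}$ and $1$-Lipschitz with respect to arc-length on the preimage of $\{u < \lambda s\}$, hence $|u(\gamma(1)) - u(\gamma(0))| \le \Length(\gamma \cap \{\rho < \lambda s\}) = \int_\gamma g$.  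Applying~\eqref{eq67f646d6} on $B := B(p, 2s)$, and combining with the Ahlfors lower mass bound $\mu(B) \gtrsim s^Q$, converts the Poincar\'e inequality into a bound on $\fint_B |u - u_B|\,d\mu$ that, when set against the separation of values $u(p) = 0$ and $u(q) = \lambda s$, becomes incompatible once $\lambda$ is chosen too large in terms of the data.

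The step I expect to be the main obstacle is converting the mean-oscillation bound produced by a single application of the PI into a pointwise length control: one isolated ball yields only an estimate of the form $\fint_B |u - u_B|\,d\mu \le C_{\rm P}\, s \cdot \bigl(\mu(B \cap \{\rho < \lambda s\})/\mu(\sigma B)\bigr)^{1/Q}$, which is too crude as it stands.  To close the contradiction one typically runs a telescoping chain of PI estimates on dyadic concentric balls centered at $p$ and at $q$, summing the Poincar\'e contributions through the Ahlfors regularity; this classical but technical chain-of-balls argument is the genuinely delicate ingredient of the proof.

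Finally, when $R \equiv \infty$, the inequalities~\eqref{eq67f646a8} and~\eqref{eq67f646d6} hold at every scale with fixed constants, so the same quantitative argument applies to every pair $p, q \in X$ with no smallness restriction on $d(p,q)$, giving global $C$-quasi-convexity.
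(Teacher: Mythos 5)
Your overall strategy (doubling plus a local $(1,Q)$-Poincar\'e inequality forces quasi-convexity) is indeed the principle behind this proposition; the paper does not reprove it but cites the standard results (\cite[Proposition 4.4]{MR1683160}, \cite[Theorem 8.3.2]{MR3363168} and localized variants). However, your implementation has a genuine gap, and it is not the ``telescoping chain of balls'' step you flag as the delicate point. The problem is that you run the Poincar\'e inequality on the pair $(u,g)$ with $u=\min(\rho,\lambda s)$ and $g=\chi_{\{\rho<\lambda s\}}$, where $\rho$ is the rectifiable-length function based at $p$. Definition~\ref{def6803c8e2} only assumes pathwise connectedness, not rectifiable pathwise connectedness, so before the proof is complete it is consistent with the hypotheses and with the contradiction assumption $\rho(q)>\lambda s$ that $\{\rho<\lambda s\}$ has $\mu$-measure zero (in the extreme case $\{\rho<\lambda s\}=\{p\}$). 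In that scenario $u=\lambda s$ almost everywhere, so $\fint_B|u-u_B|\,\did\mu=0$, while $\int_{\sigma B}g^Q\,\did\mu=\mu(\{\rho<\lambda s\}\cap\sigma B)=0$: the Poincar\'e inequality is satisfied trivially and no choice of $\lambda$ produces a contradiction. The same degeneracy defeats the telescoping argument, since concluding $|u(p)-u(q)|\lesssim s$ from dyadic balls requires $u_{B(p,2^{-i}r)}\to u(p)=0$, i.e.\ that $\{\rho<\delta\}$ has positive density at $p$ for every $\delta>0$ --- which is essentially the rectifiable connectivity you are trying to prove. The pointwise separation $u(p)=0$, $u(q)=\lambda s$ is invisible to integral averages. (A secondary issue: $\rho$ need not be Borel measurable or locally integrable with respect to $d$, which you need even to invoke~\eqref{eq67f646d6}.)

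The proofs the paper cites avoid this circularity by replacing $\rho$ with the discrete $\epsilon$-chain quasi-metrics $d_\epsilon(p,y)=\inf\sum_i d(x_{i-1},x_i)$, the infimum over chains from $p$ to $y$ with gaps smaller than $\epsilon$. These functions are finite, locally $1$-Lipschitz with respect to $d$ (hence measurable, with every point a Lebesgue point), and admit the constant $1$ as an upper gradient; the telescoping/maximal-function argument then yields $d_\epsilon(p,q)\le C\,d(p,q)$ with $C$ independent of $\epsilon$, and letting $\epsilon\to0$ one extracts an actual rectifiable curve of controlled length via Arzel\`a--Ascoli. This last step is exactly where the bounded compactness hypothesis of the proposition enters; your proposal never uses it, which is a symptom of the gap. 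To repair the argument, either carry out the $\epsilon$-chain scheme (localizing the radii below $R/\sigma$ for the first statement) or simply cite the references as the paper does.
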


The first part of Proposition \ref{prop686fb4ae-uniform} holds according to \cite[Lemma 4.9]{zbMATH06951611};
see also \cite[Lemma 2.17]{zbMATH08075610}. 
If $R=\infty$, then $(X,d)$ is quasiconvex; see for instance \cite[Proposition 4.4]{MR1683160} or \cite[Theorem 8.3.2]{MR3363168}.

\begin{definition}\label{def6803ccb2}
	Let $(X,d,\mu)$ be a metric measure space.
	A \emph{capacitor} is a pair $(E;F)$ of subsets $E,F\subset X$.
	The set of \emph{admissible functions for the capacitor $(E;F)$} is
	\begin{equation}
		\cal A(E;F) :=
		\left\{ u:X\to\R : \text{$u$ measurable, } E\subset\{u\ge1\} \text{ and }F\subset\{u\le 0\} \right\} .
	\end{equation}
	For $p\in[1,\infty)$, the \emph{$p$-capacity} of a capacitor $(E;F)$ is
	\begin{equation}
		\capacity_p(E;F)
		:= \inf\left\{
		\int_X g^p \did\mu : 
		\text{$g$ upper gradient of $u\in\cal A(E;F)$}
		\right\} .
	\end{equation}
	The set of \emph{admissible functions for the set $E\subset X$} is
	\begin{equation}
		\cal A(E;\infty) :=
		\left\{ u:X\to\R : 
		\begin{array}{c}
			\text{$u$ measurable, with compact support,} \\
			\text{and } E\subset\{u\ge1\}
		\end{array}
		\right\} .
	\end{equation}
	For $p\in[1,\infty)$, the \emph{$p$-capacity (at infinity)} of a set $E\subset X$ is
	\begin{eqnarray*}
		\capacity_p(E) &:= \capacity_p(E;\infty) &:=  \inf\left\{
		\int_X g^p \did\mu : 
		\text{$g$ upper gradient of $u\in\cal A(E;\infty)$}
		\right\} .\\
		&&:=  \inf\left\{
		\capacity_p(E;F) : X\setminus F
		\text{ is compact}
		\right\} .
	\end{eqnarray*}
\end{definition}

\begin{remark}\label{rem67f8c4ee}
	The capacity $E\mapsto \capacity_p(E)$ is sub-additive and monotone.
	Indeed, if $E_1,E_2\subset X$ and $u_1,u_2:X\to \R$ are functions with compact support and $E_j\subset\{u_j\ge 1\}$, then $u=\max\{u_1,u_2\}$ is such that $E_1\cup E_2\subset\{u\ge1\}$ and an upper gradient of $u$ is $g=\max\{g_1,g_2\}$, where $g_j$ is an upper gradient of $u_j$.
	Thus $\capacity_p(E_1\cup E_2) \le \capacity_p(E_1) + \capacity_p(E_2)$.
	The monotonicity is clear from the definition as an infimum: $E_1\subset E_2$ implies $\capacity_p(E_1) \le \capacity_p(E_2)$.
\end{remark}

We use the capacity to introduce a first notion of quasi-conformal maps.

\begin{definition}[Quasi-conformal map: geometric definition]\label{def68cc0a5c}
	A map $f:X\to Y$ between metric measure spaces is \emph{geometrically quasi-conformal of index $Q$} or, simply, \emph{quasi-conformal},	if it is a homeomorphism 	and if there exists $K\in\R$ such that,	for every $E,F\subset X$,
	\begin{equation}\label{bi-Lipschitz preserve the capacity}
		\frac1K \capacity_Q(E;F) \le
		\capacity_Q(f(E);f(F)) 
		\le K \capacity_Q(E;F) .
	\end{equation}
\end{definition}

A related and more elementary definition is the so-called \emph{metric definition of quasi-conformal maps}:

\begin{definition}[Quasi-conformal map: metric definition]\label{QC metric definition}
	A map $f:X\to Y$ between metric spaces is \emph{metrically quasi-conformal} if it is a homeomorphism 	and if there exists $C\in\R$ such that,	for every $x\in X$,
	\begin{equation}
		\limsup_{r\to0} \frac{\sup\{ d(f(x),f(x')) : x'\in B(x,r) \} }{ \inf\{ d(f(x),f(x')) : x'\notin B(x,r) \}  } \le C .
	\end{equation}
\end{definition}

\begin{remark}\label{equiv_def_QC}In the literature, it is known that the metric definition of quasi-conformal map implies the above geometric definition for homeomorphisms between spaces with $Q$-bounded geometry; see~\cite[Theorem~9.10]{MR1869604} or \cite[Theorem~9.8]{MR1869604} with \cite[Proposition~2.17]{MR1654771} and \cite[Proposition~9.4]{zbMATH01599991}.
See also \cite{zbMATH06028338}.
The above Definition~\ref{def68cc0a5c} is a priori weaker than a more standard geometric definition, which is stated in terms of the modulus of curve families. Our definition would correspond to considering only families of curves connecting two sets (instead of arbitrary curve families); for the equivalence in the Euclidean setting, see \cite[Corollary~36.2]{zbMATH03350289}. 
In our work, we will mostly use the geometric Definition~\ref{def68cc0a5c}.
Later, in the case of nilpotent geodesic Lie groups (i.e., in Theorem~\ref{thm67f935d5}), we will consider the metric Definition~\ref{QC metric definition} and make use of the fact that it implies the geometric Definition~\ref{def68cc0a5c}.
\end{remark}

\begin{definition}[Quasi-isometry]
	A function $f:X\to Y$ between metric spaces is \emph{large-scale Lipschitz} if there exist $L,C\in\R$ such that
	\begin{equation}\label{eq67f620da}
		d(f(p),f(q)) \le L d(p,q) + C,
		\qquad\forall p,q\in X.
	\end{equation}
	A function $f:X\to Y$ between metric spaces is a \emph{quasi-isometry} if there exist $L,C\in\R$ such that
	\begin{equation}\label{eq68ce76a1}
		\frac1L d(x_1,x_2) - C \le
		d(f(x_1),f(x_2)) 
		\le L d(x_1,x_2) + C ,
		\qquad \forall x_1,x_2\in X ,
	\end{equation}
	and such that, for every $y\in Y$ there exists $x\in X$ with $d(f(x),y)\le C$.
\end{definition}

\begin{definition}[Quasi-geodesic space]\label{def6874b30e}
	A metric space $(X,d)$ is \emph{$(\ell,r)$-quasi-geodesic} for some $\ell,r\in\R$,
	if, for every $p,q\in X$ there are $x^0 = p, x^1,\dots,x^k = q$ with $d(x^{j-1},x^j) \le r$ 
	and $\sum_{j=1}^k d(x^{j-1},x^j) \le \ell d(p,q)$.
\end{definition}

\begin{definition}[Bornologous function]
	A function $f:X\to Y$ between metric spaces is \emph{bornologous} if the following holds:
	if $\{p_n\}_{n\in\N},\{q_n\}_{n\in\N}\subset X$ are sequences in $X$ such that $\lim_{n\to\infty}d(f(p_n),f(q_n))=\infty$, then $\lim_{n\to\infty}d(p_n,q_n)=\infty$.
	For extra information, see~\cite[p.6]{zbMATH02012373}.
\end{definition}
Compositions of bornologous maps are bornologous.
Large-scale Lipschitz maps are bornologous.
It is classic that for geodesic spaces the inverse is also valid; see~\cite[Lemma~1.10]{zbMATH02012373}.
For completeness, we include a proof of this exercise.

\begin{proposition}\label{prop6803b35d}
	Let $X$ and $Y$ be metric spaces.
	If $X$ is quasi-geodesic and	if $f: X\to Y$ is bornologous, then $f$ is large-scale Lipschitz.
\end{proposition}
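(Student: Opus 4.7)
The plan is to chain together quasi-geodesic steps of controlled length and apply the bornologous hypothesis to each link. First, I would unpack the bornologous hypothesis into its equivalent ``uniformly expansive'' form: for every $R > 0$,
\[
S(R) := \sup\{ d(f(p), f(q)) : p,q \in X,\ d(p,q) \le R \}
\]
is finite. Indeed, if $S(R)$ were infinite one could extract sequences $\{p_n\}, \{q_n\}$ with $d(p_n, q_n) \le R$ and $d(f(p_n), f(q_n)) \to \infty$, contradicting the bornologous property.

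Next, let $(\ell, r)$ be the quasi-geodesic constants of $X$ provided by Definition~\ref{def6874b30e}. For $p, q \in X$, choose a quasi-geodesic chain $x^0 = p, x^1, \dots, x^k = q$ with $d(x^{i-1}, x^i) \le r$ and $\sum_{i=1}^k d(x^{i-1}, x^i) \le \ell\, d(p,q)$. The key step is to \emph{thin} this chain into a shorter subsequence whose consecutive points stay at uniformly bounded distance while the total count grows only linearly in $d(p,q)$. Writing the partial lengths $L_j := \sum_{i=1}^j d(x^{i-1}, x^i)$, I would extract indices $0 = j_0 < j_1 < \cdots < j_m = k$ inductively: given $j_i$, if $L_k - L_{j_i} \le 2r$ set $j_{i+1} = k$ and stop, otherwise let $j_{i+1}$ be the smallest index with $L_{j_{i+1}} - L_{j_i} \ge r$. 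Since every single increment $L_j - L_{j-1}$ is at most $r$, this yields $L_{j_{i+1}} - L_{j_i} \in [r, 2r]$ for $i < m-1$, and in all cases $d(x^{j_i}, x^{j_{i+1}}) \le L_{j_{i+1}} - L_{j_i} \le 2r$. Moreover $(m-1) r \le L_k \le \ell\, d(p,q)$, so $m \le \ell\, d(p,q)/r + 1$.

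Finally, applying the triangle inequality along the thinned sequence and invoking the finiteness of $S(2r)$ gives
\[
d(f(p), f(q)) \le \sum_{i=0}^{m-1} d(f(x^{j_i}), f(x^{j_{i+1}})) \le m \cdot S(2r) \le \frac{\ell\, S(2r)}{r}\, d(p,q) + S(2r),
\]
which is the desired large-scale Lipschitz inequality~\eqref{eq67f620da} with $L = \ell\, S(2r)/r$ and $C = S(2r)$.

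I do not anticipate a serious obstacle. The only subtlety is that the thinning must simultaneously control the step size (so that a single finite constant $S(2r)$ suffices via bornologousness) and the number of steps (so as to produce an estimate linear in $d(p,q)$); this is precisely why the thinning must be done in terms of partial arclength along the chain rather than by naively counting consecutive points of the original quasi-geodesic, whose individual steps could be arbitrarily small.
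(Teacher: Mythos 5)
Your proof is correct and follows essentially the same route as the paper's: take the quasi-geodesic chain, thin it so that consecutive points are at distance comparable to $r$ while the number of links stays linear in $d(p,q)$, and apply bornologousness to the uniformly bounded steps. The only difference is packaging — you argue directly via the equivalent reformulation that $S(R)=\sup\{d(f(p),f(q)):d(p,q)\le R\}$ is finite, with explicit constants $L=\ell S(2r)/r$ and $C=S(2r)$, whereas the paper runs the same thinning inside a proof by contradiction and extracts a single bad link by pigeonhole.
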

\begin{proof}
	Suppose $X$ is $(\ell,r)$-quasi-geodesic but $f$ does not satisfy~\eqref{eq67f620da} for every $L,C>0$.
	Then, for every $n\in\N$ there are $p_n,q_n\in X$ such that
	\begin{equation}\label{eq67f62298}
		d(f(p_n),f(q_n)) > n d(p_n,q_n) + n .
	\end{equation}
	
	Since $X$ is $(\ell,r)$-geodesic, 
	for each $n\in\N$ there are $k_n\in\N$ and
	$x_n^0 = p_n, x_n^1,\dots,x_n^{k_n} = q_n$ such that $d(x_n^{j-1},x_n^j) \le r$
	for all $j\in\{1,\dots,k_n\}$,
	and $\sum_{j=1}^{k_n} d(x_n^{j-1},x_n^j) \le \ell d(p_n,q_n)$.
	
	Since $f$ is bornologous and by the assumption~\eqref{eq67f62298}, we know that $d(p_n,q_n)\to\infty$ as $n\to\infty$: in particular, up to passing to a subsequence, we can assume $d(p_n,q_n) \ge r$ for all $n$.
	
	If, for some $j$, we have $d(x_n^{j-1},x_n^j)<r/2$, then we can skip a few points and still obtain a chain of points so that
	$r/2\le d(x_n^{j-1},x_n^j) \le 3r$ for all $j\in\{1,\dots,k_n\}$.
	It follows that we can assume
	\begin{equation}\label{eq67f62053}
		\ell d(p_n,q_n) \ge \frac{k_n r}{2} .
	\end{equation}
	Further, by the triangle inequality,
	\begin{equation}
		\sum_{j=1}^{k_n} d\left(f(x_n^{j-1}),f(x_n^j)\right)
		\ge d(f(p_n),f(q_n)) .
	\end{equation}
	Consequently, for each $n$ there exists $j_n$ such that $d(f(x_n^{j_n-1}),f(x_n^{j_n})) \ge \frac{d(f(p_n),f(q_n))}{k_n}$.
	For such points, we have
	\begin{equation}
		d(f(x_n^{j_n-1}),f(x_n^{j_n}))
		\ge \frac{d(f(p_n),f(q_n))}{k_n} 
		\overset{\eqref{eq67f62298}}> n \frac{d(p_n,q_n)}{k_n} + \frac{n}{k_n} 
		\overset{\eqref{eq67f62053}}\ge \frac{rn}{2\ell} + \frac{n}{k_n} .
	\end{equation}
	We conclude that 
	$\lim_{n\to\infty}d(f(x_n^{j_n-1}),f(x_n^{j_n})) = \infty$
	even though $d(x_n^{j_n-1},x_n^{j_n}) \le 3r$,
	in contradiction with $f$ being bornologous.
\end{proof}

\begin{corollary}\label{cor67f68188}
	A bijection between quasi-geodesic metric spaces that is bornologous and with inverse bornologous, is a quasi-isometry.
\end{corollary}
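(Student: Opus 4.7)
The plan is to extract the two-sided bi-Lipschitz-plus-additive-error bound of the quasi-isometry definition directly from Proposition~\ref{prop6803b35d} applied twice, once to $f$ and once to $f^{-1}$, and then to observe that the coarse surjectivity requirement is free because $f$ is already a bijection.

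In detail, let $f\colon X\to Y$ be a bornologous bijection with bornologous inverse, where both $X$ and $Y$ are quasi-geodesic. Applying Proposition~\ref{prop6803b35d} to $f$ yields constants $L_1,C_1\in\R$ such that
\begin{equation}
    d(f(p),f(q))\le L_1\,d(p,q)+C_1, \qquad \forall p,q\in X.
\end{equation}
Applying the same proposition to $f^{-1}\colon Y\to X$ (which is bornologous by hypothesis, and whose domain $Y$ is quasi-geodesic) yields constants $L_2,C_2\in\R$ with $d(f^{-1}(x),f^{-1}(y))\le L_2\,d(x,y)+C_2$ for all $x,y\in Y$. Substituting $x=f(p)$, $y=f(q)$ and rearranging gives
\begin{equation}
    \tfrac{1}{L_2}\,d(p,q)-\tfrac{C_2}{L_2}\le d(f(p),f(q)),\qquad \forall p,q\in X.
\end{equation}
Setting $L:=\max\{L_1,L_2\}$ and $C:=\max\{C_1,C_2/L_2\}$, both inequalities in~\eqref{eq68ce76a1} are then satisfied with these constants. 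Finally, the coarse density condition in the definition of quasi-isometry holds with constant $0$, since every $y\in Y$ is of the form $f(x)$ for some $x\in X$ by surjectivity. Hence $f$ is a quasi-isometry.

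There is really no obstacle here: the corollary is essentially a packaging step that combines Proposition~\ref{prop6803b35d}, applied in both directions, with the trivial observation that a bijection is coarsely surjective. The only thing to keep track of is arithmetic bookkeeping of the two pairs of constants into a single pair.
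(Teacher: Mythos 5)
Your proof is correct and is exactly the argument the paper intends: the corollary is stated without proof as an immediate consequence of Proposition~\ref{prop6803b35d} applied to both $f$ and $f^{-1}$, combined with the observation that a bijection is (exactly, hence coarsely) surjective. The only minor bookkeeping point is that one should take $L_2\ge 1$ (harmless, since the large-scale Lipschitz condition only improves when $L$ increases) so that the rearrangement and the choice $L=\max\{L_1,L_2\}$ go through.
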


\section{Monotone functions}

\subsection{Topological notions}
Given a subset $Y$ of a topological space $X$, we denote by 
by $\closure(Y)$ the closure of $Y$ within $X$,
by $\interior Y$ the interior of $Y$ within $X$, and 
by $\de Y$ the topological boundary of $Y$ within $X$.

\begin{definition}
	Let $X$ be a topological space.
	A subset $K\subset X$ is a \emph{continuum} if it is compact and connected. 
	A subset $C\subset X$ is an \emph{$\infty$-continuum} if it is closed and every connected component of $C$ is not compact.
	Equivalently, a closed set $C\subset X$ is an {$\infty$-continuum} if the closure of $C$ in the Alexandrov 1-point compactification of $X$ is a continuum.
\end{definition}

In \cite{zbMATH00883879}, Ferrand called $\infty$-continua with the name ``r-continuum'', where ``r'' stands for ``relative''.
From Ferrand's work, we isolated the following remark, which will be useful to study monotone functions.

\begin{lemma}\label{lem67ed2a8f}
	Let $X$ be a Hausdorff topological space and $C\subset X$ an $\infty$-continuum.
	If $Y\subset X$ is compact with $\interior Y\cap C\neq\emptyset$, then $\de Y\cap C\neq\emptyset$.
\end{lemma}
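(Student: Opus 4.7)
The plan is to argue by contradiction: assume that $\de Y\cap C=\emptyset$ and derive a contradiction with the defining property that every connected component of the $\infty$-continuum $C$ is non-compact.

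First, I would recall that since $X$ is Hausdorff and $Y\subset X$ is compact, $Y$ is closed in $X$, so $\closure Y=Y$ and $\de Y=Y\setminus\interior Y$. Hence under the contradiction hypothesis,
\begin{equation}
 Y\cap C \;=\; (\interior Y\cap C)\;\cup\;(\de Y\cap C)\;=\;\interior Y\cap C.
\end{equation}
The left-hand side is closed in $C$ (intersection of two sets that are closed in $X$), while the right-hand side is open in $C$ (intersection of an open subset of $X$ with $C$). Thus $Y\cap C$ is a non-empty clopen subset of $C$.

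Next, I would choose any point $x\in\interior Y\cap C$ and let $K$ denote the connected component of $C$ containing $x$. Since $Y\cap C$ is clopen in $C$ and meets $K$ at $x$, connectedness of $K$ forces $K\subset Y\cap C\subset Y$. Because connected components of any topological space are closed in that space, $K$ is closed in $C$; as $C$ is closed in $X$, $K$ is closed in $X$ as well. Being a closed subset of the compact set $Y$, the component $K$ is compact, contradicting the assumption that $C$ is an $\infty$-continuum, in which every component is non-compact.

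The argument is short and topological, and I do not anticipate any essential obstacle: the only minor point worth being careful about is the identification $\de Y=Y\setminus\interior Y$, which requires that $Y$ be closed (guaranteed here by compactness in the Hausdorff space $X$). Everything else is a direct use of the clopen-in-a-connected-component principle together with the definition of $\infty$-continuum.
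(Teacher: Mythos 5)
Your proof is correct and rests on the same ingredients as the paper's: compactness in a Hausdorff space makes $Y$ closed, so $\interior Y\cap C$ and $C\setminus Y$ are both open in $C$, and connectedness forces either a boundary point or a component trapped in $Y$ (hence compact, contradicting the definition of $\infty$-continuum). The paper argues directly (exhibiting a point of $C$ outside $Y_1\sqcup Y_2$ that must lie in $\de Y$) while you argue by contradiction via the clopen set $Y\cap C$, but this is only a cosmetic difference.
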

\begin{proof}
	Up to passing to a connected component of $C$ that contains a point of $\interior Y\cap C$, we can assume that $C$ is connected.
	Define $Y_1 = \interior Y\cap C$ and $Y_2 = C \setminus Y = C\cap (X\setminus Y)$.
	Since $Y$ is a compact subset of a Hausdorff space, then $Y$ is closed;
	thus, $Y_1$ and $Y_2$ are both open in $C$.
	The set $Y_1$ is not empty by assumption.
	The set $Y_2$ is not empty, because otherwise $C=C\cap Y$ would be a closed set in a compact set, thus compact; this is impossible since $\infty$-continua are assumed not compact.
	Moreover, $Y_1\cap Y_2=\emptyset$.
	Since $C$ is connected, then $C\neq Y_1\cup Y_2$, i.e., there exists $c\in C\setminus(Y_1\cup Y_2)$.
	This means that $c\in \de Y$,
	because $X = \interior Y \sqcup \de Y \sqcup (X\setminus Y)$.
\end{proof}

\subsection{Monotone functions on topological spaces}\label{subs68556338}

\begin{definition}
	Let $X$ be a topological space.
	A function $u:X\to\R$ is \emph{monotone} if, for every compact $K\subset X$,
	\begin{equation}\label{eq67ed2b45}
		\sup_{\de K} u = \sup_K u
		\quad\text{ and }\quad
		\inf_{\de K} u = \inf_K u .
	\end{equation}
\end{definition}

Further references for monotone functions are \cite{MR0236383,zbMATH00883879,zbMATH03431466}.
Notice that, since $\sup\emptyset=-\infty$, if there exists $K\subset X$ compact with $\de K=\emptyset$, then there are no monotone functions $X\to\R$.

The following two lemmas are due to Ferrand, who proved the analog for Riemannian manifolds.
For completeness, we include her argument showing the validity for topological spaces.

\begin{lemma}[Ferrand]\label{lem67e51b03}
	Let $X$ be a Hausdorff topological space.
	Let $C\subset X$ be an $\infty$-continuum and $u:X\to\R$ continuous.
	Suppose that $u$ is monotone in $X\setminus C$ and that $C\subset\{u=\sup_Xu\}\cup\{u=\inf_Xu\}$.
	Then, the function $u$ is monotone on $X$.
\end{lemma}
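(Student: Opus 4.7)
The plan is to establish the supremum identity in~\eqref{eq67ed2b45} for every compact $K\subset X$; the infimum identity then follows by applying the result to $-u$, which satisfies the same hypotheses with the roles of $m:=\inf_X u$ and $M:=\sup_X u$ exchanged. Fix a compact $K\subset X$, set $M_\partial := \sup_{\partial K}u$ and $M_K := \sup_K u$, and assume toward a contradiction that $M_\partial < M_K$; the case $m=M$ (i.e.\ $u$ constant) is immediate, so also assume $m<M$. Choose any level $t\in(\max\{M_\partial,m\},M_K)$ and set
\begin{equation}
    L := K\cap\{u\ge t\},
\end{equation}
which is compact, nonempty (since $M_K>t$ is attained on $K$), and disjoint from $\partial K$, hence contained in $\interior K$. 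The argument then splits according to whether $L$ meets $C$.

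In the case $L\cap C=\emptyset$, the set $L$ is a compact subset of the open set $X\setminus C$ (open because $C$, as an $\infty$-continuum, is closed). The topological boundary of $L$ computed in $X$ coincides with that computed in $X\setminus C$, so the monotonicity hypothesis on $u|_{X\setminus C}$ yields $\sup_{\partial L}u = \sup_L u = M_K$. On the other hand, any $x\in\partial L\subset L$ satisfies $u(x)\ge t$, and since $x\in\interior K$ it is accumulated by points of $K\cap\{u<t\}$, so continuity gives $u(x)\le t$. Thus $u\equiv t$ on $\partial L$, forcing $M_K=t$, which contradicts $t<M_K$.

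In the case $L\cap C\neq\emptyset$, pick $y\in L\cap C$: then $u(y)\ge t>m$, and the hypothesis $C\subset\{u=m\}\cup\{u=M\}$ forces $u(y)=M$. Since $\{u=m\}$ and $\{u=M\}$ are disjoint closed subsets of $X$ (by continuity and $m<M$), $u$ is constant on each connected component of $C$, so $u\equiv M$ on the component $C_0$ of $C$ containing $y$. By the definition of $\infty$-continuum, $C_0$ is closed and noncompact, while $K$ is compact; hence $C_0\not\subset K$, and since also $y\in C_0\cap\interior K$, Lemma~\ref{lem67ed2a8f} applied to the $\infty$-continuum $C_0$ produces $z\in C_0\cap\partial K$. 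Then $u(z)=M\ge M_K>M_\partial$, contradicting the definition of $M_\partial$.

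The main subtle point I expect is in Case A: converting the monotonicity of $u$ on $X\setminus C$ into a statement about $\partial L$ computed in $X$. This hinges on $C$ being closed, so that $X\setminus C$ is open and interior/boundary operators agree for subsets of $X\setminus C$. Once this identification and the observation that $u$ is constant on connected components of $C$ are in place, the rest is a routine level-set argument combined with Lemma~\ref{lem67ed2a8f}.
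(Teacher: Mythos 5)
Your proof is correct. It rests on the same two ingredients as the paper's argument --- Lemma~\ref{lem67ed2a8f} and a level set of $u$ intersected with $K$ --- but the logical organization is genuinely different. The paper first treats $C\subset\{u=\sup_Xu\}$ alone, splits on whether $\de K$ meets $C$, gets the supremum identity for free when it does, proves the infimum identity via the sublevel set $K\cap\{u\le M-\epsilon\}$, and finally handles the mixed case $C=C_{\rm s}\sqcup C_{\rm i}$ by a two-stage reduction (first removing $C_{\rm i}$, then $C_{\rm s}$). You instead prove only the supremum identity (recovering the infimum identity by replacing $u$ with $-u$, which is legitimate since monotonicity and the hypothesis on $C$ are symmetric under negation), argue by contradiction with the superlevel set $L=K\cap\{u\ge t\}$ for an intermediate value $t$, and split on whether $L$ meets $C$; the mixed case is then absorbed in a single pass by observing that $u$ is locally constant on $C$, hence constant on the component $C_0\ni y$, and applying Lemma~\ref{lem67ed2a8f} directly to $C_0$. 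Your version is somewhat cleaner on the mixed case, at the cost of the bookkeeping you already flagged (that $\de L$ computed in $X$ and in the open set $X\setminus C$ agree because $C$ is closed and $L$ is closed in $X$). Two small points worth making explicit: the interval $(\max\{M_\de,m\},M_K)$ is nonempty, since $m\ge M_K$ would force $u\equiv M_K$ on $K$ and hence $M_\de=M_K$, contradicting the standing assumption (the degenerate situation $\de K=\emptyset$ being excluded because it would preclude any monotone function on $X\setminus C$); and in Case~A the contradiction persists if $\de L=\emptyset$, since then monotonicity would give $\sup_L u=-\infty$ for a nonempty $L$.
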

\begin{proof}
	The argument for this proof is inspired by \cite[(2.4)]{zbMATH00883879}.
	We start considering the case $C\subset\{u=\sup_Xu\}$.
	Set $M:=\sup_Xu$ and let $K\subset X$ compact.
	
	First, assume $\de K\cap C = \emptyset$.
	Then $K\cap C = \emptyset$ by the contrapositive of Lemma~\ref{lem67ed2a8f}.
	So, we have the identities~\eqref{eq67ed2b45} on $K$ because $u$ is monotone on $X\setminus C$.
	
	Second, assume $\de K\cap C \neq\emptyset$.
	Since $M = \sup_X u \ge \sup_K u \ge \sup_{\de K}u \ge \sup_{\de K\cap C} u = M$, then we have $\sup_Ku = \sup_{\de K}u$.
	
	We need to show that $\inf_Ku = \inf_{\de K}u$.
	If $\inf_Ku = M$, then $K\subset\{u=M\}$ and thus $\inf_{\de K}u = M = \inf_Ku$.
	So, suppose that $\inf_Ku < M-\epsilon$ for some $\epsilon>0$.
	Define $K_\epsilon := K\cap\{u\le M-\epsilon\}$.
	Since $u$ is continuous, then $K_\epsilon$ is the intersection of a compact set and a closed set, so $K_\epsilon$ is compact.
	Since $\de K_\epsilon\subset \{u\le M-\epsilon\}$ and $\{u\le M-\epsilon\}\cap C = \emptyset$,
	then $\de K_\epsilon \cap C = \emptyset$ and thus, by the first part of the proof, 
	$\inf_K u = \inf_{K_\epsilon} u = \inf_{\de K_\epsilon} u$.
	Let $x\in \de {K_\epsilon}$ with $u(x) = \inf_{\de K_\epsilon} u$.
	If $x\in\interior(K)$, then 
	$x\in \interior(K)\cap\{u<M-\epsilon\}\subset \interior(K_\epsilon)$: contradiction.
	Thus, $x\in\de K$ and we conclude that $\inf_Ku = \inf_{\de K} u$.
	
	We have shown the lemma whenever $C\subset\{u=\sup_Xu\}$.
	Clearly, we directly obtain the same statement for $C\subset\{u=\inf_Xu\}$,
	by substituting $u$ with $-u$.
	If $C\subset\{u=\sup_Xu\}\cup\{u=\inf_Xu\}$, then $C=C_{\rm s}\sqcup C_{\rm i}$ with 
	$C_{\rm s}:= C\cap\{u=\sup_Xu\}$ and $C_{\rm i}:=C\cap\{u=\inf_Xu\}$.
	Then, if a continuous function $u:X\to\R$ is monotone in $X\setminus C = (X\setminus C_{\rm i})\setminus C_{\rm s}$,
	then it is monotone on $X\setminus C_{\rm i}$ be the first part of the proof, so $u$ is monotone on $X$ by the subsequent observation.
\end{proof}

\begin{lemma}[Ferrand]\label{lem67e51b8c}
	Let $X$ be a Hausdorff topological space.
	Let $K\subset X$ be a continuum and $u:X\to\R$ continuous such that $K\subset\{u=\sup_Xu\}$ or $K\subset\{u=\inf_Xu\}$.
	If $u$ is monotone in $X\setminus K$, then, for every $x\in K$, the function $u$ is monotone in $X\setminus\{x\}$.
\end{lemma}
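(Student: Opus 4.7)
The plan is to reduce the statement to Lemma~\ref{lem67e51b03} applied inside the subspace $Y := X\setminus\{x\}$, with $C := K\setminus\{x\}$ playing the role of the $\infty$-continuum there. First I would handle the trivial case $K = \{x\}$, where the conclusion is already part of the hypothesis. Henceforth assume $K \neq \{x\}$, and, after replacing $u$ with $-u$ if necessary, that $K \subset \{u = \sup_X u\}$.

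Next I would verify, on the Hausdorff subspace $Y$, the hypotheses of Lemma~\ref{lem67e51b03} for the continuous restriction $u|_Y$ and the set $C$. The set $C$ is closed in $Y$ because $K$ is closed in $X$ (a compact subset of a Hausdorff space). The restriction $u|_Y$ is monotone on $Y\setminus C = X\setminus K$ directly from the hypothesis, since $X\setminus K$ carries the same subspace topology whether viewed inside $X$ or inside $Y$, so the relevant compact sets and their topological boundaries coincide. Because $C$ is non-empty by the case assumption, there exists $y \in C$ with $u(y) = \sup_X u$, whence $\sup_Y u = \sup_X u$ and $C \subset \{u|_Y = \sup_Y u\}$.

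The main obstacle is to show that $C$ is an $\infty$-continuum in $Y$, i.e., that every connected component $C_0$ of $C$ is non-compact. For this I would invoke the boundary bumping theorem for continua: if $K$ is a continuum and $E$ is a non-empty open proper subset of $K$, then the closure in $K$ of every connected component of $E$ meets the topological boundary $\partial E$. Applied to $E = K\setminus\{x\}$, whose boundary in $K$ is exactly $\{x\}$ (since in a non-degenerate continuum no point is isolated), this gives $x \in \overline{C_0}$, with closure taken in $K$, and hence in $X$. If $C_0$ were compact in $Y$, it would be compact and hence closed in $X$ (as $X$ is Hausdorff), forcing $x \in \overline{C_0} = C_0 \subset Y$, a contradiction. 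Therefore every component of $C$ is non-compact, and $C$ is an $\infty$-continuum in $Y$.

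With all the hypotheses of Lemma~\ref{lem67e51b03} verified for $Y$, $C$, and $u|_Y$, that lemma yields that $u|_Y$ is monotone on $Y = X\setminus\{x\}$, which is exactly the desired conclusion. The only non-elementary ingredient is the boundary-bumping statement, which is classical; if a self-contained argument is preferred, it can be obtained by showing that if $x \notin \overline{C_0}$ then $C_0 = \overline{C_0}$ is closed in $K$, and then using normality of $K$ together with the component-equals-quasi-component principle in compact Hausdorff spaces to contradict the connectedness of $K$.
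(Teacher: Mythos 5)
Your proof is correct and follows the same overall strategy as the paper: pass to $Y=X\setminus\{x\}$, show that $C=K\setminus\{x\}$ is an $\infty$-continuum there, and invoke Lemma~\ref{lem67e51b03}. The only point of divergence is how the non-compactness of the components of $K\setminus\{x\}$ is established. The paper argues that a compact component would be simultaneously open and closed in $K$, asserting along the way that each component of $K\setminus\{x\}$ is open in $K\setminus\{x\}$ --- a claim that is not automatic for components of an open subset of a (non locally connected) continuum. You instead invoke the boundary bumping theorem to place $x$ in the closure of every component of $K\setminus\{x\}$, which immediately rules out compactness; this is the classical and fully rigorous route (valid for compact Hausdorff continua via the component-equals-quasi-component principle, as you note), and your verification of the remaining hypotheses of Lemma~\ref{lem67e51b03} (closedness of $C$ in $Y$, agreement of the subspace topologies on $X\setminus K$, and $\sup_Y u=\sup_X u$) is careful and complete.
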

\begin{proof}
	The argument is taken from \cite[(2.5)]{zbMATH00883879}.
	We claim that, in the topological space $X\setminus\{x\}$, the set $K\setminus \{x\}$ is an $\infty$-continuum.
	Indeed, let $C$ be a connected component of $ K\setminus\{x\}$.
	Since $X$ is Hausdorff, then $X\setminus\{x\}$ is open and thus $C$, which is open in $ K\setminus\{x\}$, is open in $K$.
	If $C$ were compact, it would be closed in $K$ because $K$ is Hausdorff: since $K$ is connected, we would conclude $C=K$, but $x\in K\setminus C$.
	Therefore, we conclude that $C$ is not compact, and this shows the claim.
	The statement finally follows from Lemma~\ref{lem67e51b03}.	
\end{proof}

\subsection{Estimate of the oscillation for monotone functions}\label{subs67ef8b4f}
\newcommand{\oscillation}{\operatorname{osc}}
\begin{definition}
	Given a space $X$,	define the \emph{oscillation} of a function $u:X\to \R$ over a set $D\subset X$ as
	\begin{equation}
		\oscillation_D u := \sup_D u - \inf_D u = {\rm diam}(u(D)) .
	\end{equation}
\end{definition}

We will prove the following result, which is an adaptation of~\cite[Theorem~7.2]{MR1683160}.

\begin{theorem}[after Hajłasz and Koskela]\label{thm685442cd}
    Let $(X,d,\mu)$ be a  metric measure space with uniformly locally $Q$-bounded geometry with data $(Q,R,C_{\rm A}, C_{\rm P},\sigma)$ and with $Q>1$.
    There is a constant $C_1\ge 1$ such that the following holds.
    For every $x_0\in X$, for every $r\in(0,R/C_1)$, for every $u:B(x_0,C_1r)\to\R$ continuous with upper gradient $g:B(x_0,C_1r)\to[0,+\infty]$,
    there is $t\in[r/2,r]$ such that for every $x_1,x_2\in X$ with $d(x_0,x_1)=d(x_0,x_2)=t$
    \begin{equation}\label{eq68544262}
        |u(x_1)-u(x_2)| \le C_1 \left( \frac{d(x_1,x_2)}{r} \right)^{1/Q}
        \left(\int_{B(x_0,C_1r)\setminus B(x_0,r/C_1)} g^Q \did\mu \right)^{\frac1Q} .
    \end{equation}
\end{theorem}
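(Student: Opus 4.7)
\emph{Proposal.} The plan is to adapt Hajłasz and Koskela's argument (\cite[Theorem 7.2]{MR1683160}) to our uniformly local setting, where all integrations and geometric constructions are confined to the annulus $A := B(x_0, C_1 r) \setminus B(x_0, r/C_1)$, with $C_1$ fixed large in terms of the data $(Q, C_{\mathrm A}, C_{\mathrm P}, \sigma)$ and the quasi-convexity constant from Proposition~\ref{prop686fb4ae-uniform}. The first step is to select a ``good'' radius $t \in [r/2, r]$ via a Fubini/averaging argument: for each $s \in [r/2, r]$, consider a thin annular tube $T_s \subset A$ around the sphere $S(x_0, s)$; averaging $\int_{T_s} g^Q \did \mu$ over $s$ and applying Chebyshev's inequality yields $t \in [r/2, r]$ such that $\int_{T_t} g^Q \did \mu \lesssim r^{-1} \int_A g^Q \did \mu$, which controls the upper gradient in a neighborhood of the chosen sphere.

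The second step estimates oscillation on $S(x_0, t)$: given $x_1, x_2 \in S(x_0, t)$, I would invoke the uniformly local quasi-convexity (valid because $r < R/C_1$) to produce a rectifiable curve $\gamma$ from $x_1$ to $x_2$ with $\Length(\gamma) \le C d(x_1, x_2)$ lying inside $T_t$, after a small radial adjustment. Then I would cover $\gamma$ by a chain of balls $B_k$ with controlled overlap and radii adapted to the distances from $x_1$ and $x_2$, all with $\sigma B_k \subset A$; apply the $(1,Q)$-Poincaré inequality on each $\sigma B_k$, and use the Ahlfors regularity of $\mu$ to rewrite each Poincaré estimate in absolute form. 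Telescoping the differences $|u_{B_k} - u_{B_{k+1}}|$ and passing to the limit at the endpoints (using the continuity of $u$) yields a bound of the form
\[
|u(x_1) - u(x_2)| \lesssim \sum_k \Bigl( \int_{\sigma B_k} g^Q \did \mu \Bigr)^{1/Q}.
\]

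The final step extracts the factor $(d(x_1, x_2) / r)^{1/Q}$ by Hölder's inequality with exponent $Q$ across the chain: the union $\bigcup_k \sigma B_k$ is contained in a tube of measure $\sim d(x_1, x_2) \cdot r^{Q-1}$ while the reference annular scale is $r^Q$, producing the claimed Hölder gain after rearrangement. The main obstacle is the middle step: arranging the chain of balls so that (i) it covers $\gamma$ with bounded overlap, (ii) all dilates $\sigma B_k$ remain inside the tube $T_t \subset A$ where the Poincaré inequality and the Step~1 control simultaneously apply, and (iii) the telescoping sum reassembles with a constant depending only on the data. The restriction $r < R / C_1$ with $C_1$ large is precisely what allows these geometric constructions to stay within the regime of $Q$-bounded geometry, and the assumption $Q > 1$ ensures the Hölder summation is meaningful.
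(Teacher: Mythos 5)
Your overall plan -- reprove the Hajłasz--Koskela chaining argument from scratch inside the annulus -- diverges from the paper's route, which instead cites \cite[Theorem~7.2]{MR1683160} as a black box (Theorem~\ref{thmHK7-2}), observes in Remark~\ref{r:HaKoLocal} that its proof only ever uses the doubling property, the mass bound~\eqref{eq:DoublingLower}, and the Poincaré inequality on balls of radius $\lesssim r_0$ centered near $x_0$ (so the hypotheses localize for free once $r<R/C_1$), verifies the annular quasiconvexity hypothesis via the proof of \cite[Proposition~4.5]{MR1683160}, and finally converts the average integral in the conclusion into the absolute integral of~\eqref{eq68544262} by bounding $\mu$ of the annulus from below using path-connectedness and local Ahlfors regularity. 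Reproving the chaining argument is legitimate in principle, but as written your sketch has two genuine gaps precisely at the step you yourself flag as ``the main obstacle.''

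First, the confinement of the chain to the thin tube $T_t$ is neither achievable nor needed. Two points $x_1,x_2\in S(x_0,t)$ with $d(x_1,x_2)$ comparable to $r$ cannot, in a general space of this type, be joined by a short curve staying in a thin neighborhood of the sphere; what one actually has (after invoking the Poincaré inequality and Ahlfors regularity, not mere quasiconvexity -- this is \cite[Proposition~4.5]{MR1683160}) is \emph{annular} quasiconvexity: a connecting curve inside $B(x_0,Cr)\setminus B(x_0,r/C)$. This is consistent with the fact that the right-hand side of~\eqref{eq68544262} integrates $g^Q$ over the full annulus, not over a tube. Your ``small radial adjustment'' does not produce a curve avoiding $B(x_0,r/C_1)$, and plain quasiconvexity alone does not either.

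Second, the mechanism you propose for extracting the factor $(d(x_1,x_2)/r)^{1/Q}$ does not work. The heuristic that $\bigcup_k\sigma B_k$ lies in a tube of measure $\sim d(x_1,x_2)\,r^{Q-1}$ has no analogue in a general Ahlfors $Q$-regular metric measure space (there is no codimension-one structure), and even granting it, Hölder's inequality across the chain gives $\sum_k(\int_{\sigma B_k}g^Q)^{1/Q}\le(\#\text{balls})^{1-1/Q}(\int_A g^Q)^{1/Q}$, which produces a factor that \emph{grows} with the number of balls rather than the decaying factor $(d(x_1,x_2)/r)^{1/Q}$. In \cite{MR1683160} the gain at the borderline exponent $p=Q$ comes from a different and more delicate interplay between the telescoping chains shrinking toward $x_1,x_2$, the mass bound~\eqref{eq:DoublingLower} normalized at scale $r_0$, and the Chebyshev selection of the good radius; this is the actual content of their Theorems~7.1--7.2 and cannot be replaced by the tube-measure heuristic. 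I would recommend following the paper's route: localize the hypotheses of the known theorem rather than re-deriving its conclusion.
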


Before the proof of Theorem~\ref{thm685442cd}, we state a special case of \cite[Theorem~7.2]{MR1683160} (for continuous $u$, with $Q=p=s$), and then discuss the adaptations needed to deduce Theorem \ref{thm685442cd}.

\begin{theorem}[Theorem 7.2 in \cite{MR1683160}]\label{thmHK7-2}
	Let $(X,d,\mu)$ be a metric measure space, and $Q>1$. 
	Assume that $\mu$ is a doubling measure with the property that there is a constant $C_{\rm b}$ such that
	\begin{equation}\label{eq:DoublingLower}
	    \mu(B(z,s))\ge C_{\rm b} \mu(B(z_0,s_0)) \left(\frac{s}{s_0}\right)^Q,
	\end{equation}
	for all $z_0 \in X$, $s_0>0$, $z\in B(z_0,s_0)$, and $0<s\le s_0$. Let $u:X\to\R$ be a continuous function and $g:X\to [0,+\infty]$ a measurable function such that there exist constants $C_{\rm P}>0$ and $\sigma\ge 1$ with
	\begin{equation}\label{eq:PIPair}
		\fint_{B(z,s)} |u-u_{B(z,s)}| \did\mu \le C_{\rm P} s \left( \fint_{B(z,\sigma s)} g^Q \did\mu\right)^{1/Q} ,
		\qquad\forall z\in X,\ \forall s>0 .
    \end{equation}	
	Fix $x_0\in X$ and $r_0>0$.
	Assume that there is a constant $C$ such that every pair of points $x,y\in B(x_0,r_0)\setminus B(x_0,r_0/2)$ can be joined by a continuum $F$ in $B(x_0,C r_0)\setminus B(x_0,r_0/C)$ with $\operatorname{diam}(F)\le C d(x,y)$. 
	Then there exist a radius $r\in [r_0/2,r_0]$ and a constant $C_1$, depending only on $Q$, $C_{\rm b}$, $C_{\rm P}$, and the doubling constant of $\mu$, such that
	\begin{equation}
	    |u(x_1)-u(x_2)| \le C_1 d(x_1,x_2)^{\frac{1}{Q}}\,r_0^{1-\frac{1}{Q}} \left(\fint_{B(x_0,C_1r_0)\setminus B(x_0,r_0/C_1)} g^Q \did\mu \right)^{\frac1Q} .
	\end{equation}
	for every $x_1,x_2\in X$ with $d(x_0,x_1)=d(x_0,x_2)=r$.
\end{theorem}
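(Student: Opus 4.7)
My plan is to follow the classical chain-of-balls plus Poincaré strategy underlying Hajłasz--Koskela's argument in the borderline Sobolev regime, where the Poincaré exponent coincides with the doubling exponent $Q$. The key ingredients are: (i) a Chebyshev-type choice of radius $r\in[r_0/2,r_0]$; (ii) a chain of balls constructed from the continuum $F$ joining $x_1$ to $x_2$ inside the annulus; (iii) the $(1,Q)$-Poincaré inequality to control the oscillation of $u$ along the chain; (iv) the lower volume bound \eqref{eq:DoublingLower} to rescale $L^Q$ averages into absolute integrals over the annulus.

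First, I would select $r\in[r_0/2,r_0]$ by averaging the weighted function $t\mapsto \int_{\text{thin shell around }t} g^Q \did\mu$ over $t\in[r_0/2,r_0]$ and invoking Chebyshev: this produces $r$ at which the sphere-level $g^Q$-content is bounded by $C r_0^{-1}$ times the full annular integral, which will be used to control the endpoint error terms in the chaining. For $x_1,x_2$ on the sphere of radius $r$, set $\rho:=d(x_1,x_2)$ and apply the hypothesis to obtain a continuum $F\subset B(x_0,Cr_0)\setminus B(x_0,r_0/C)$ with $\operatorname{diam}(F)\le C\rho$. Cover $F$ by a chain $B_1,\dots,B_N$ of balls of radius comparable to $\rho$ with bounded overlap and with $x_1\in B_1$, $x_2\in B_N$; by doubling, $N$ is bounded by a constant depending only on the data.

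The main estimate then splits as
\begin{equation*}
|u(x_1)-u(x_2)|\le |u(x_1)-u_{B_1}|+\sum_{k=1}^{N-1}|u_{B_k}-u_{B_{k+1}}|+|u_{B_N}-u(x_2)|.
\end{equation*}
The middle sum is controlled by the $(1,Q)$-Poincaré inequality applied on common enlarged containing balls. The endpoint terms are handled by dyadic telescoping
$|u(x_j)-u_{B_j}|\le\sum_{i\ge 0}|u_{B(x_j,2^{-i}\rho)}-u_{B(x_j,2^{-i-1}\rho)}|$,
which is legitimate since continuity of $u$ makes $x_j$ a Lebesgue point, so $u_{B(x_j,s)}\to u(x_j)$ as $s\to 0$. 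Each summand is bounded by Poincaré, and the lower volume bound \eqref{eq:DoublingLower} then lets me replace $\big(\fint_B g^Q\big)^{1/Q}$ by $(r_0/\mathrm{rad}(B))\,\mu(B(x_0,r_0))^{-1/Q}\big(\int_B g^Q\big)^{1/Q}$. Applying discrete Hölder with exponents $Q$ and $Q/(Q-1)$ to the telescoping sum, and using bounded overlap of the involved balls inside $B(x_0,C_1r_0)\setminus B(x_0,r_0/C_1)$, produces the factor $\rho^{1/Q} r_0^{1-1/Q}$ in front of $\big(\fint g^Q\big)^{1/Q}$.

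The hard part will be handling the borderline exponent: term by term, after applying the lower volume bound, each dyadic contribution is of the same order $\mu(B(x_0,r_0))^{-1/Q}r_0 \big(\int_{B_i} g^Q\big)^{1/Q}$, so naive summation diverges. The Hölder reorganization that extracts a factor $N^{(Q-1)/Q}$ from an aggregated integral is therefore essential; its tuning relies on the specific choice of $r$ from Step 1, which ensures that the sphere-level integrals entering the bound are dominated by the annular average. This careful matching of dyadic scales, lower volume bound, and averaged $g^Q$-content on the chosen sphere is the technical heart of Hajłasz--Koskela's argument, and explains why the conclusion is an oscillation estimate at a specific radius $r$ rather than a pointwise Hölder estimate throughout $X$.
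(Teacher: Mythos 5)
The paper does not actually prove this statement: it is quoted (in the special case $p=s=Q$, $u$ continuous) from Hajłasz--Koskela \cite[Theorem 7.2]{MR1683160}, and the surrounding text only discusses how to localize its hypotheses. So the comparison below is against the original Hajłasz--Koskela argument rather than an internal proof.

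Your sketch has the right architecture (good-radius selection, chaining along the continuum, the $(1,Q)$-Poincar\'e inequality, the lower mass bound \eqref{eq:DoublingLower} to convert averages into absolute integrals over the annulus), and you correctly identify the borderline exponent as the crux. But the mechanism you propose for producing the factor $d(x_1,x_2)^{1/Q}r_0^{1-1/Q}$ does not work, and the decisive idea is missing. Concretely: the dyadic telescoping balls $B(x_j,2^{-i}\rho)$, $\rho=d(x_1,x_2)$, are nested, so they do not have bounded overlap; and discrete H\"older gives $\sum_i a_i\le(\#\{i\})^{(Q-1)/Q}\bigl(\sum_i a_i^Q\bigr)^{1/Q}$ with the cardinality entering with a \emph{positive} power, whereas after applying \eqref{eq:DoublingLower} each term $a_i$ is of size $r_0\,\mu(B(x_0,r_0))^{-1/Q}\bigl(\int_{B_i}g^Q\did\mu\bigr)^{1/Q}$ with no decay in $i$ and there are infinitely many scales. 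So this reorganization cannot yield the gain $(\rho/r_0)^{1/Q}$, which would require the cardinality to enter with a negative exponent. Likewise, a Chebyshev selection over thin shells controls only the $g^Q$-content of a single shell around radius $r$, which is not what the chaining needs.

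What Hajłasz--Koskela actually do is choose $r\in[r_0/2,r_0]$ so that a \emph{restricted radial fractional maximal function} is controlled at all scales simultaneously on the sphere: for all $x$ with $d(x_0,x)=r$ and all $0<s<cr_0$ one has $\int_{B(x,s)}g^Q\did\mu\le C\,(s/r_0)\int_{A}g^Q\did\mu$, where $A$ is the annulus $B(x_0,C_1r_0)\setminus B(x_0,r_0/C_1)$. This is a weak-type estimate proved by a one-dimensional Vitali covering of radii: a ball $B(x,s)$ with $d(x_0,x)=t$ meets the sphere $S(x_0,t')$ only for $|t'-t|<s$, so witness balls whose radius-intervals are disjoint are themselves pairwise disjoint, and summing over them bounds the Lebesgue measure of bad radii $t$ by $Cr_0\lambda^{-1}\int_A g^Q\did\mu$. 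With this bound, the $i$-th telescoping term becomes $\lesssim 2^{-i/Q}\rho^{1/Q}r_0^{1-1/Q}\mu(B(x_0,r_0))^{-1/Q}\bigl(\int_A g^Q\did\mu\bigr)^{1/Q}$, the series converges geometrically, and the exponent $1/Q=1-(Q-1)/Q$ appears as the Morrey exponent of a $Q$-Poincar\'e inequality on a sphere of effective dimension $Q-1$. Without this ingredient your argument does not close.
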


\begin{remark}\label{r:HaKoLocal}  
	Theorem \ref{thmHK7-2} is local in nature. 
	Since the conclusion concerns a fixed ball centered at $x_0$, the assumptions can be suitably localized, as we now discuss. 
	
	First, an inspection of the proofs of Theorems 7.1 and 7.2 in \cite{MR1683160} reveals that the doubling property of $\mu$ and the mass bound \eqref{eq:DoublingLower} are only applied to balls centered in $B(x_0, C r_0)$, where $C\ge 2$ is a constant for which the connectivity assumption holds. 
	The radii of the relevant balls all have upper bounds of the form $C(\sigma)r_0$.
	Thus,  if we require the doubling property of $\mu$ and the mass bound~\eqref{eq:DoublingLower} to hold only for balls up to a certain radius $R_0$, the conclusion of Theorem \ref{thmHK7-2} still remains valid provided that we assume that $r_0 < R_0/C_1$, for a large enough constant $C_1$.
	
	Similarly, also the Poincaré inequality~\eqref{eq:PIPair} is only applied with balls of radii at most $2r_0$ on the left-hand side of the inequality, and with centers in $B(x_0, C r_0)$. 
	Therefore, for the conclusion of Theorem~\ref{thmHK7-2}, the values of $u$ and $g$ will only be relevant in a ball $B(x_0, C_1 r_0)$ for a sufficiently large $C_1$, depending on $C$. 
	Moreover, if we only require~\eqref{eq:PIPair} to hold for radii $s\le R_0/\sigma$, the conclusion of the theorem remains valid provided that  $r_0 < R_0/C_1$ for large enough $C_1$. 
\end{remark}

Theorem \ref{thm685442cd} is an application of Theorem \ref{thmHK7-2}, with localized assumptions as explained in Remark \ref{r:HaKoLocal}. 
The uniform locally $Q$-bounded geometry of $(X,d,\mu)$ and a local version of \cite[Proposition 4.5]{MR1683160} will ensure that the connectivity requirement in Theorem \ref{thmHK7-2} is met.
 
\begin{proof}[Proof of Theorem~\ref{thm685442cd}] 
	According to Proposition \ref{prop686fb4ae-uniform},
	the metric space $(X,d)$ is uniformly locally quasi-convex (or quasi-convex, if $R=\infty$) with constants depending only on the data of $(X,d,\mu)$.

	Assume now that $x_0\in X$ and $r\in (0, R/C_1)$ for a large enough constant $C_1\ge 1$ to be determined depending only on the data of $(X,d,\mu)$. 
	In particular, we may, and will, assume that $C_1$ is so large that the quasi-convexity condition holds for points in $B(x_0,r)$.

	Using verbatim the proof of \cite[Proposition 4.5]{MR1683160}, one can then further choose $C \ge 4$ large enough, depending only on the data of $(X,d,\mu)$,
	such that the following \emph{annular quasiconvexity} condition holds.
	If  $x,y\in B(x_0,r)\setminus B(x_0,r/2)$, then $x,y$ are joined in $B(x_0,Cr)\setminus B(x_0,r/C)$ by a curve whose length does not exceed $C d(x,y)$. 

	This yields one of the assumptions needed for the application of Theorem~\ref{thmHK7-2}. 
	Regarding the other assumptions, we note that the local Ahlfors regularity implies that the measure $\mu$ is uniformly locally doubling and condition~\eqref{eq:DoublingLower} holds uniformly for small enough radii, depending on the data of $(X,d,\mu)$. 
	Similarly,~\eqref{eq:PIPair} holds for $u$ and $g$ on $B(x_0,C_1 r)$ as in the assumptions of Theorem~\ref{thm685442cd}.

	It follows by Theorem~\ref{thmHK7-2} and Remark~\ref{r:HaKoLocal} that we can choose $C_1$ large enough, depending only on the data of $(X,d,\mu)$, so that there is a radius $t\in [r/2,r]$, with the property that 
	\begin{equation}
    |u(x_1)-u(x_2)| \le C_1 d(x_1,x_2)^{\frac{1}{Q}} r^{1-\frac{1}{Q}} \left(\fint_{B(x_0,C_1r)\setminus B(x_0,r/C_1)} g^Q \did\mu \right)^{\frac1Q} .
	\end{equation}
	for every $x_1,x_2\in X$ with $d(x_0,x_1)=d(x_0,x_2)=t$.
 
	To conclude the proof, it remains to observe that $\mu(B(x_0, C_1 r)\setminus B(x_0,r/C_1))$ can be controlled from below in terms of $r^Q$, so that the average integral on the right-hand side can be rewritten to obtain the desired inequality. 
	To bound the measure of the annulus from below, we note that the path-connectedness of $(X,d)$ ensures that $B(x_0,r)\setminus B(x_0,r/2)$ is non-empty. 
	Therefore, the annulus $B(x_0,C_1 r )\setminus B(x_0,r/C_1 )$ contains a small ball of radius comparable to $r$. 
	The claim then follows from the uniform local Ahlfors regularity assumption of $\mu$.
\end{proof}

The following proposition is a generalization of Ferrand's result \cite[(1.4)]{zbMATH00883879} to metric measure spaces.

\begin{theorem}\label{thm68545111}
	Let $(X,d,\mu)$ be a metric measure space with uniformly locally $Q$-bounded geometry with data $(Q,R,C_{\rm A}, C_{\rm P},\sigma)$, with $Q>1$.
	There is a constant $C\ge1$ such that, for every $x_0\in X$ and $r_0\in (0,R/C)$,
	and for every $u:B(x_0, Cr_0)\to\R$ monotone with upper gradient $g:B(x_0,Cr_0)\to[0,+\infty]$, 
	\begin{equation}\label{eq685442fd}
		\int_0^{r_0} \left( \oscillation_{S(x_0,t)} u \right)^Q \frac{\did t}{t} 
		\le C \int_{B(x_0, C r_0)} g^Q \did \mu .
	\end{equation}
	In particular, if $r\in(0, r_0]$, then
	\begin{equation}\label{eq685451d6}
		\oscillation_{S(x_0,r)} u 
		\le C (\log(r_0/r))^{-\frac1Q} \| g \|_{L^Q(B(x_0,Cr_0))} ,
	\end{equation}
	where $S(x_0,r) := \{y\in X:d(x_0,y)=r\}$.
\end{theorem}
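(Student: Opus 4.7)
The plan is to reduce the integrated oscillation estimate to the per-scale Hölder inequality of Theorem~\ref{thm685442cd} via a dyadic decomposition, using the monotonicity of $u$ to bridge the gap between the ``good'' radii produced by that theorem and all intermediate radii. The first preparatory observation is that, since $u$ is monotone, applying the identities~\eqref{eq67ed2b45} to the compact ball $\bar B(x_0,r)$ (whose topological boundary lies inside $S(x_0,r)$) gives $\oscillation_{S(x_0,r)} u = \oscillation_{\bar B(x_0,r)} u$ for every $r\in(0,Cr_0)$; in particular, the map $t\mapsto\oscillation_{S(x_0,t)} u$ is non-decreasing.

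Next I would fix the constant $C_1$ from Theorem~\ref{thm685442cd} and introduce dyadic scales $r_k:=r_0/2^k$, $k\ge 0$. For each $k$ with $r_k<R/C_1$, Theorem~\ref{thm685442cd} provides a radius $t_k\in[r_k/2,r_k]$ and a Hölder estimate for $u$ on $S(x_0,t_k)$ in terms of $(\int_{A_k} g^Q\did\mu)^{1/Q}$, where $A_k:=B(x_0,C_1r_k)\setminus B(x_0,r_k/C_1)$. Choosing two points of $S(x_0,t_k)$ that nearly achieve the sup and the inf of $u$ and using $d(x,y)\le 2t_k\le 2r_k$, I obtain the per-scale bound
\begin{equation}
    (\oscillation_{S(x_0,t_k)} u)^Q \le C' \int_{A_k} g^Q \did\mu.
\end{equation}

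The main computational step is to stitch these per-scale bounds into an integral in $\did t/t$. By the monotonicity recorded above, on each dyadic band $(t_{k+1},t_k]$ the integrand $(\oscillation_{S(x_0,t)} u)^Q$ is dominated by $(\oscillation_{S(x_0,t_k)} u)^Q$, while the logarithmic length $\log(t_k/t_{k+1})$ is uniformly bounded by $\log 4$. Summing the resulting series and observing that the annuli $\{A_k\}_{k\ge 0}$ have overlap multiplicity depending only on $C_1$ (since consecutive $r_k$ differ by a factor $2$) and jointly lie inside $B(x_0,Cr_0)$ for a sufficiently enlarged $C$, produces~\eqref{eq685442fd}; a short separate estimate handles the tail band $[t_0,r_0]$ not covered by the dyadic union.

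For the pointwise bound~\eqref{eq685451d6}, I would again invoke the monotonicity of $t\mapsto\oscillation_{S(x_0,t)} u$: the inequality $\int_r^{r_0}(\oscillation_{S(x_0,t)} u)^Q\,\did t/t\ge (\oscillation_{S(x_0,r)} u)^Q\log(r_0/r)$ combined with~\eqref{eq685442fd} gives~\eqref{eq685451d6} after taking $Q$-th roots. I expect the main obstacle to be the bookkeeping of the overlap multiplicity for the dilated annuli and the verification that their union is contained in a single ball $B(x_0,Cr_0)$ whose radius absorbs into $C$; beyond that, the argument is Ferrand's classical scheme on Riemannian manifolds transposed to the PI-space setting via Theorem~\ref{thm685442cd}.
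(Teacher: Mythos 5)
Your proposal is correct and follows essentially the same route as the paper's proof: the per-scale Hölder bound from Theorem~\ref{thm685442cd} with $d(x_1,x_2)\le 2r$, the monotonicity of $t\mapsto\oscillation_{S(x_0,t)}u$ to compare spheres, a dyadic sum with bounded overlap of the annuli $A_k$, and the final $\log(r_0/r)$ estimate for~\eqref{eq685451d6}. The only cosmetic difference is that you integrate over the bands $(t_{k+1},t_k]$ determined by the good radii, whereas the paper uses the fixed dyadic bands $(2^{-k-1}r_0,2^{-k}r_0]$; both are handled by the same monotonicity observation.
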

\begin{proof}
	For $x_0\in X$ and $r,s>0$, define the annulus
	\begin{equation}
		A(x_0;r,s) := \{y\in X: r\le d(x_0,y) < s \} .
	\end{equation}
	
	Let $C_1$ be the constant given by Theorem~\ref{thm685442cd}.
	Let $r_0>0$ be such that $C_1r_0<R$ and let $u:B(x_0,C_1r_0)\to\R$ be monotone with upper gradient $g:B(x_0,C_1r_0)\to[0,+\infty]$.
	
	By Theorem~\ref{thm685442cd},
	for every $r\in(0,r_0]$,
	there is $t\in[r/2,r]$ such that,
	for every $x_1,x_2\in S(x_0,t)$
	\begin{equation}
	\begin{aligned}
		|u(x_1)-u(x_2)| 
		&\le C_1 \left( \frac{d(x_1,x_2)}{r} \right)^{1/Q} 
		\left(\int_{ A(x_0;r/C_1,rC_1) } g^Q \did\mu \right)^{\frac1Q} \\
		&\le C_1 2^{1/Q} \left(\int_{ A(x_0;r/C_1,rC_1) } g^Q \did\mu \right)^{\frac1Q} .
	\end{aligned}
	\end{equation}
	It follows that for every $r\in(0,r_0]$ there is $t\in[r/2,r]$ such that
	\begin{equation}\label{eq68544b4d}
		\oscillation_{S(x_0,r/2)}u
		\overset{({\rm m})}\le \oscillation_{S(x_0,t)}u
		\le C_1 2^{1/Q} \left(\int_{ A(x_0;r/C_1,rC_1) } g^Q \did\mu \right)^{\frac1Q} ,
	\end{equation}
	where in the first inequality, and later in the proof, we mark with $({\rm m})$
	bounds that are consequences of the monotonicity of $u$ and the fact that
	$\de B(x_0,r) \subset S(x_0,r)$, for all $x_0\in X$ and $r>0$.
	We then estimate
	\begin{align}
		\int_0^{r_0} \left( \oscillation_{S(x_0,t)} u \right)^Q \frac{\did t}{t} 
		&= \sum_{k=0}^\infty \int_{2^{-k-1}r_0}^{2^{-k}r_0} \left( \oscillation_{S(x_0,t)} u \right)^Q \frac{\did t}{t} \\
		&\overset{({\rm m})}\le \sum_{k=0}^\infty \int_{2^{-k-1}r_0}^{2^{-k}r_0} \left( \oscillation_{S(x_0,2^{-k}r_0)} u \right)^Q \frac{\did t}{2^{-k-1}r_0} \\
		&= \sum_{k=0}^\infty \frac{2^{-k-1}r_0}{2^{-k-1}r_0} \left( \oscillation_{S(x_0,2^{-k}r_0)} u \right)^Q \\
		&\overset{\eqref{eq68544b4d}}\le \sum_{k=0}^\infty C_1^Q 2 \int_{ A(x_0;2^{-k+1}r_0/C_1,2^{-k+1}r_0C_1) } g^Q \did\mu .
	\end{align}
	Notice that, since for every $t>0$ we have
	\begin{equation}
		\#\{k\in\N: t\in[ 2^{-k}r_0/C_1 , 2^{-k}r_0C_1 ] \}
		\le 2\log_2(C_1) + 1 ,
	\end{equation}
	the family of annuli $\{A(x_0;2^{-k+1}r_0/C_1,2^{-k+1}r_0C_1)\}_{k\in\N}$ has multiplicity bounded by $2\log_2(C_1) + 1$.
	Therefore, 
	\begin{align}
		\int_0^{r_0} \left( \oscillation_{S(x_0,t)} u \right)^Q \frac{\did t}{t}
		&\le \sum_{k=0}^\infty C_1^Q 2 \int_{ A(x_0;2^{-k+1}r_0/C_1,2^{-k+1}r_0C_1) } g^Q \did\mu \\
		&\le C_1^Q 2 (2\log_2(C_1) + 1) \int_{ B(x_0,2C_1r_0) } g^Q \did\mu .
	\end{align}
	We have thus proven~\eqref{eq685442fd} with $C := C_1^Q 2 (2\log_2(C_1) + 1)$, 
	which is larger than $C_1$ and $2C_1$.
	For~\eqref{eq685451d6}, we simply estimate
	\begin{align}
		\left( \oscillation_{S(x_0,r)} u  \right)^Q \log(r_0/r)
		&= \left( \oscillation_{S(x_0,r)} u  \right)^Q \int_r^{r_0} \frac{\did t}{t} \\
		&\overset{({\rm m})}\le \int_r^{r_0} \left( \oscillation_{S(x_0,t)} u  \right)^Q \frac{\did t}{t} \\
		&\overset{\eqref{eq685442fd}}\le C \| g \|_{L^Q(B(x_0,Cr_0))}^Q .
	\end{align}
	So, we obtain~\eqref{eq685451d6}.
\end{proof}

\begin{corollary}\label{cor67efffbc}
	Let $(X,d,\mu)$ be a metric measure space with uniformly locally $Q$-bounded geometry with data $(Q,R,C_{\rm A}, C_{\rm P},\sigma)$, with $Q>1$.
	There is a constant $C\ge1$ such that, for every $x_0\in X$ and $r\in(0,R/C)$,	and for every $u:B(x_0,Cr)\to\R$ monotone with upper gradient $g:B(x_0,Cr)\to[0,+\infty]$, 
	\begin{equation}\label{eq67efd746}
		\oscillation_{B(x,r)} u 
		\le C \left( \int_{B(x,Cr)} g^Q \did \mu \right)^{1/Q} .
	\end{equation}
\end{corollary}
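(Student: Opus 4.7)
The plan is to deduce Corollary~\ref{cor67efffbc} directly from the sphere oscillation bound~\eqref{eq685451d6} in Theorem~\ref{thm68545111}, using monotonicity of $u$ to convert a bound on $\oscillation_{S(x_0,\rho)} u$ into a bound on $\oscillation_{B(x_0,r)} u$.

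I would let $C_0$ denote the constant in Theorem~\ref{thm68545111}, and apply that theorem with $r_0 := 2r$, which requires $r < R/(2C_0)$. Since $\log(2r/\rho) \ge \log 2$ for every $\rho \in (0, r]$, the estimate~\eqref{eq685451d6} gives
\begin{equation*}
\oscillation_{S(x_0,\rho)} u \le C_0\,(\log 2)^{-1/Q} \left( \int_{B(x_0, 2C_0 r)} g^Q \did\mu \right)^{1/Q}, \qquad \forall \rho \in (0, r].
\end{equation*}

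Next, I would use the monotonicity of $u$ to pass from spheres to balls. For any $\rho < r$ such that $\bar B(x_0,\rho)$ is compact, applying the monotonicity identities~\eqref{eq67ed2b45} to this compact set yields $\oscillation_{\bar B(x_0,\rho)} u = \oscillation_{\partial \bar B(x_0,\rho)} u \le \oscillation_{S(x_0,\rho)} u$, since $\partial \bar B(x_0,\rho) \subset S(x_0,\rho)$. Because $B(x_0, r) = \bigcup_{\rho < r} \bar B(x_0,\rho)$ is an increasing union of sets, the oscillation on the ball equals the supremum of the oscillations on these closed sub-balls, so
\begin{equation*}
\oscillation_{B(x_0,r)} u \;\le\; \sup_{\rho < r} \oscillation_{S(x_0,\rho)} u \;\le\; C_0\,(\log 2)^{-1/Q} \left( \int_{B(x_0, 2C_0 r)} g^Q \did\mu \right)^{1/Q}.
\end{equation*}
Choosing $C := \max\{2C_0,\, C_0(\log 2)^{-1/Q}\}$ then produces the claimed inequality~\eqref{eq67efd746}, and the hypothesis $r \in (0, R/C)$ implies the requirement $r < R/(2C_0)$ used above.

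The only point that requires care is ensuring that $\bar B(x_0,\rho)$ is compact, so that the defining property of monotone functions applies. In the setting of uniformly locally $Q$-bounded geometry, local compactness of $X$ combined with the local Ahlfors regularity guarantees that sufficiently small closed balls are compact; since the statement already restricts $r$ to lie below $R/C$, this is not a substantial obstacle and merely fixes the allowable range of $\rho$.
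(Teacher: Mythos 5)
Your proof is correct and follows essentially the same route as the paper, which simply applies~\eqref{eq685451d6} with the ratio $r_0/r=2$ so that $\log(r_0/r)=\log 2$. You additionally spell out the sphere-to-ball conversion (monotonicity on the nested compact balls $\bar B(x_0,\rho)$, whose boundaries lie in $S(x_0,\rho)$), a step the paper leaves implicit; this is a welcome clarification rather than a deviation.
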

\begin{proof}
	If $C$ is the constant from Theorem~\ref{thm68545111}, take the constant $2C$ and apply~\eqref{eq685451d6} to $r:=r_0/2$, so that $\log(r_0/r)=\log(2)$.
\end{proof}

\subsection{Lebesgue Straightening Lemma}
The aim of this subsection is Proposition~\ref{prop67e3cff4}, which is an integration of the Lebesgue Straightening Lemma as proven by Mostow in~\cite{MR0236383} with a statement about upper-gradients.
For this, we need to revise the notions introduced by Mostow, so as to prove Lemma~\ref{lem6803e6c4}.
Ferrand proved Proposition~\ref{prop67e3cff4} in the context of Riemannian manifolds; see \cite[\S3.5]{zbMATH03431466} and \cite[\S1.8]{zbMATH00883879}.

Let $X$ be a topological space, $U\subset X$ open, and $u:U\to\R$ continuous.
For $a\in\R$, denote $aUu$ the union of all connected components of $U\setminus u^{-1}(a)$ whose closures lie in $U$.
For $a\in\R$, define the function $u.a:U\to\R$,
\begin{equation}\label{eq6803e67d}
	(u.a)(x) := \begin{cases}
		a &\text{ if }x\in aUu ,\\
		u(x) &\text{ if }x\notin aUu .
	\end{cases}
\end{equation}

\begin{lemma}\label{lem6803e6c4}
	Let $X$ be a metric space, $u:X\to\R$ continuous, and $U\subset X$ open.
	Let $a\in\R$ and $(u.a):X\to\R$ be the function modified as in~\eqref{eq6803e67d} on $U$.
	If $g:X\to[0,+\infty]$ is an upper gradient for $u$, then $g$ is an upper gradient also for~$(u.a)$. 
\end{lemma}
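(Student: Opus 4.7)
The plan is to verify, for every rectifiable curve $\gamma:[0,1]\to X$, the upper-gradient inequality $|(u.a)(\gamma(1))-(u.a)(\gamma(0))|\le\int_\gamma g$, by case analysis on whether the endpoints lie in $aUu$. Two cases are immediate: if both $\gamma(0),\gamma(1)\in aUu$, then the left-hand side equals $|a-a|=0$; if neither endpoint lies in $aUu$, then $(u.a)$ agrees with $u$ at both endpoints and the inequality is the upper-gradient property for $u$. The work is therefore in the mixed case, and for this I would first record a boundary identity: if $V$ is a connected component of $U\setminus u^{-1}(a)$ with $\overline{V}\subset U$, then $\overline{V}\setminus V\subset u^{-1}(a)$. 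Indeed, such a point $p$ lies in $U$ by hypothesis; if $u(p)\ne a$, then $p$ belongs to a distinct component $V'$ of $U\setminus u^{-1}(a)$, and since $p\in V'\cap\overline{V}$ the union $V\cup V'\subset U\setminus u^{-1}(a)$ would be connected and strictly larger than $V$, contradicting maximality. A consequence is that whenever $\gamma(t)\in\overline{V}\setminus V$, one has $(u.a)(\gamma(t))=u(\gamma(t))=a$.

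Next I would prove the key reachability claim: if $\gamma(0)\in V\subset aUu$ and $\gamma(1)\notin aUu$, then some $t^*\in(0,1]$ satisfies $\gamma(t^*)\in\overline{V}\setminus V$. Working inside the continuum $K:=\gamma([0,1])$, assume for contradiction that $K\cap(\overline{V}\setminus V)=\emptyset$. Then $K\cap V=K\cap\overline{V}$ is closed in $K$, while its complement $K\setminus V=K\setminus\overline{V}$ is open in $K$, so $K\cap V$ is clopen in $K$. Since $K$ is connected and contains $\gamma(0)\in V$, this forces $K\subset V\subset aUu$, contradicting $\gamma(1)\notin aUu$. With such a $t^*$ in hand, $(u.a)(\gamma(t^*))=a=(u.a)(\gamma(0))$, so
\begin{equation*}
|(u.a)(\gamma(1))-(u.a)(\gamma(0))|=|u(\gamma(1))-u(\gamma(t^*))|\le\int_{\gamma|_{[t^*,1]}} g\le\int_\gamma g,
\end{equation*}
using that $g$ is an upper gradient for $u$ on the subcurve $\gamma|_{[t^*,1]}$. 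The remaining mixed case is symmetric after reversing $\gamma$.

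The main obstacle is that in a general metric space a connected component $V$ of the open set $U\setminus u^{-1}(a)$ need not itself be open in $X$ (no local connectedness is assumed), so the na\"ive reasoning ``$\gamma$ must cross $\partial V$'' is not directly available; in particular the topological boundary $\partial V$ may intersect $V$. The clopen argument inside the compact connected image $K=\gamma([0,1])$ is what circumvents this, replacing any ambient local connectedness hypothesis with the self-contained topology of a continuum.
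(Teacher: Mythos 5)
Your overall architecture is the same as the paper's: split into the two trivial endpoint configurations and the mixed case, and in the mixed case locate a parameter $t^*$ where the modified function takes the value $a$, then apply the upper-gradient inequality for $u$ on the tail subcurve. Your boundary identity $\overline{V}\setminus V\subset u^{-1}(a)$ and the final chain of inequalities are correct, and you rightly flag that components of $U\setminus u^{-1}(a)$ need not be open. The paper is terser here (it simply invokes a point of $\de(aUu)$ on the curve), so your extra care is welcome in principle.

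However, the reachability step contains a genuine gap. You write that $K\cap V=K\cap\overline{V}$ is closed in $K$ ``while its complement $K\setminus V=K\setminus\overline{V}$ is open in $K$, so $K\cap V$ is clopen.'' These two assertions are one and the same statement: a set is closed precisely when its complement is open. To conclude that $K\cap V$ is clopen you would need the other half, namely that $K\cap V$ is \emph{open} in $K$ (equivalently $K\setminus V$ closed in $K$), and nothing in your argument supplies this; indeed this is exactly the point where the possible non-openness of $V$ bites, so the clopen argument on the abstract continuum $K$ does not close. The claim is nevertheless true, but its proof must use the parametrization and the openness of $W:=U\setminus u^{-1}(a)$ rather than only the connectedness of $K$. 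A clean repair: set $t^*:=\inf\{t\in[0,1]:\gamma(t)\notin V\}$, which exists since $\gamma(1)\notin V$. If $\gamma(t^*)$ belonged to $V$, then, since $W$ is open and $\gamma$ is continuous, some arc $\gamma([t^*,t^*+\delta])$ would be a connected subset of $W$ containing $\gamma(t^*)\in V$, hence contained in $V$ by maximality of the component; combined with $\gamma(t)\in V$ for $t<t^*$ this contradicts the definition of the infimum. Therefore $\gamma(t^*)\notin V$, forcing $t^*>0$, and $\gamma(t^*)=\lim_{t\uparrow t^*}\gamma(t)\in\overline{V}$, so $\gamma(t^*)\in\overline{V}\setminus V$ as desired. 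With this substitution the rest of your proof goes through unchanged.
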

\begin{proof}
	Let $\gamma:[0,1]\to X$ be a rectifiable curve.
	We need to show~\eqref{eq6804a338} for $u.a$, that is,
	\begin{equation}\label{eq6804a3e8}
		|(u.a)(\gamma(1)) - (u.a)(\gamma(0))| \le \int_\gamma g .
	\end{equation}
	We have three cases: 
	If $\gamma(1),\gamma(0)\in aUu$, then~\eqref{eq6804a3e8} is trivial, because the left-hand side of~\eqref{eq6804a3e8} is zero.
	If $\gamma(1),\gamma(0)\notin aUu$, then~\eqref{eq6804a3e8} is trivial, because $g$ is an upper gradient of $u$.
	Up to reversing the direction of $\gamma$, the only remaining case is $\gamma(0)\in aUu$ and $\gamma(1)\notin aUu$.
	Then, there is $t\in[0,1]$ such that $\gamma(t) \in \de(aUu)$, so that $(u.a)(\gamma(0)) = a = u(\gamma(t))$.
	Therefore,
	\[
	|(u.a)(\gamma(1)) - (u.a)(\gamma(0))|
	= |u(\gamma(1)) - u(\gamma(t))|
	\le \int_{\gamma|_{[t,1]}} g
	\le \int_\gamma g .
	\]
\end{proof}

Mostow proved the following statement.

\begin{lemma}[Lebesgue Straightening Lemma -- {\cite[p.67]{MR0236383}}]\label{lem6803e50c}
	Let $U$ be an open proper subset of a connected locally connected topological space and let $u:\closure U\to [m, M]$ be a continuous function, for some $m, M\in\R$.
	Let $a:\N\to\Q\cap[m, M]$ be an enumeration of the rational numbers in $[m, M]$.
	For every $n\in\N$, set
	\begin{equation}\label{eq6803e602}
	u_n = (\dots ((u.a_1).a_2)\dots).a_n .
	\end{equation}
	Then $u_n$ converges uniformly on $\closure U$ to a monotone function.
\end{lemma}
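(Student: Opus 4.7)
The plan is to first establish basic regularity of the sequence $(u_n)$, then show uniform Cauchyness, and finally verify that the limit is monotone. To begin, I would check by induction on $n$ that each $u_n$ is continuous on $\closure U$, takes values in $[m,M]$, and agrees with $u$ on $\de U$. For the one-step operation $u\mapsto u.a$, continuity glues correctly because on the topological boundary (inside $\closure U$) of any component $C$ of $U\setminus u^{-1}(a)$ with $\closure C\subset U$, continuity of $u$ together with the defining property of $C$ forces $u=a$; hence the values on $C$ (namely $a$) and off $C$ (namely $u$) match on $\de C$. Since $a\in[m,M]$ and since $aUu$ does not meet $\de U$ (its closure lying in $U$), both the range $[m,M]$ and the boundary values of $u$ are preserved. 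Along the way I would record the elementary pointwise inequality $|u.a-u|\le|u-a|$, which will feed into the Cauchy estimate.

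The core step is to show that $(u_n)$ is uniformly Cauchy on $\closure U$. Fix $\epsilon>0$ and choose $N$ so that $\{a_1,\dots,a_N\}$ is $\epsilon$-dense in $[m,M]$. The target bound is
\[
\|u_{n+1}-u_n\|_\infty\le 2\epsilon \qquad\text{for every } n\ge N.
\]
To prove it, fix a flattened component $C$ of $U\setminus u_n^{-1}(a_{n+1})$ on which, say, $u_n>a_{n+1}$; by the previous paragraph applied to $u_n$, one has $u_n=a_{n+1}$ on $\de C$, so $|u_{n+1}-u_n|$ on $C$ is at most $\sup_C u_n-a_{n+1}$. If this oscillation exceeded $2\epsilon$, density provides $k\le N$ with $a_{n+1}+\epsilon/2<a_k<\sup_C u_n-\epsilon/2$, and inside $C$ one extracts a nested component $C'$ of $U\setminus u_n^{-1}(a_k)$ with $\closure{C'}\subset C\subset U$. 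Careful bookkeeping of how the operations $v\mapsto v.a_j$ for $j=1,\dots,n$ have acted on such nested structures, together with the pointwise bound $|v.a_j-v|\le|v-a_j|$ applied at every step, yields a contradiction with the inductively propagated control. This tracking of how flattenings at later levels cannot recreate large bumps at already-processed levels is the main obstacle and is the combinatorial/topological heart of Mostow's argument.

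Once uniform Cauchyness is established, let $u_\infty$ denote the uniform limit; it is automatically continuous on $\closure U$, takes values in $[m,M]$, and agrees with $u$ on $\de U$. For monotonicity, I would argue by contradiction: suppose there exist $a\in[m,M]$ and a bounded connected component $C$ of $U\setminus u_\infty^{-1}(a)$ with $\closure C\subset U$ and $\delta:=\sup_C u_\infty-\inf_C u_\infty>0$. Pick a rational $a_k$ lying strictly inside the open interval of values attained by $u_\infty$ on $C$ (available by density and $\delta>0$). For $n$ sufficiently large, uniform convergence transfers this to an analogous nontrivial component of $U\setminus u_n^{-1}(a_k)$ contained in $C$, which forces a flattening of size $\gtrsim\delta$ at an arbitrarily late step, contradicting the Cauchy estimate. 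The absence of such forbidden components for $u_\infty$ is precisely the level-set reformulation of the monotonicity condition, completing the proof.
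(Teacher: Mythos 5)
The paper does not prove this lemma: it is quoted from Mostow \cite{MR0236383} and used as a black box, so there is no in-paper proof to compare against. Judged on its own, your proposal is a plan rather than a proof, and the missing piece is precisely the content of Mostow's argument. You explicitly defer the crucial step (``careful bookkeeping of how the operations $v\mapsto v.a_j$ \dots\ yields a contradiction''), namely the claim that once the levels $a_1,\dots,a_N$ have been processed, no later flattening can recreate, inside a component with closure in $U$, an oscillation straddling one of those levels. This is delicate because a later operation $.a_j$ can shrink $u^{-1}(a_k)$ for $k<j$ and thereby create \emph{new} relatively compact components of $U\setminus u^{-1}(a_k)$; ruling this out is the whole theorem. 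Your preliminary observations (continuity of each $u_n$, the bound $|u.a-u|\le|u-a|$, and $u_n=a_{n+1}$ on the boundary of each flattened component) are correct but routine and do not by themselves control $\sup_C u_n-a_{n+1}$.

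There are two further problems even granting that step. First, the bound you target, $\|u_{n+1}-u_n\|_\infty\le2\epsilon$ for all $n\ge N$, does not imply uniform Cauchyness (a sequence of constants increasing by $2\epsilon$ at each step satisfies it and diverges); what is needed is the uniform estimate $\|u_m-u_n\|_\infty\le2\epsilon$ for all $m\ge n\ge N$, or summability of the increments. Second, the concluding monotonicity argument is off: each rational $a_k$ is flattened exactly once, at the fixed step $k$, so a non-constant relatively compact component of $U\setminus u_\infty^{-1}(a_k)$ cannot ``force a flattening at an arbitrarily late step''; one must instead show directly that such a component for the limit would contradict the fact that $a_kUu_n=\emptyset$ for all $n\ge k$, and the passage from level sets of $u_\infty$ to level sets of $u_n$ under uniform convergence is not automatic. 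Finally, you work with a level-set characterization of monotonicity, whereas the paper's definition in Section~\ref{subs68556338} is the $\sup/\inf$-over-compact-sets one; the equivalence needs an argument, and in a bare topological space $\closure C\subset U$ does not even guarantee that $C$ is relatively compact.
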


Lemma~\ref{lem6803e50c} can be strengthened by keeping track of the upper gradients of the functions $u_n$.

\begin{proposition}[Lebesgue Straightening Lemma -- version 2]\label{prop67e3cff4}
	Let $(X,d,\mu)$ be a connected locally connected metric measure space.
	Let $U\subset X$ be open. Let $u: X\to\R$ be continuous, and bounded on $U$.
	Then there exists $v: X\to\R$ continuous such that
	\begin{enumerate}
	\item
	$v$ is monotone in $U$;
	\item
	$u=v$ on $X\setminus U$;
	\item
	upper gradients of $u$ are upper gradients of $v$ too.
	\end{enumerate}
\end{proposition}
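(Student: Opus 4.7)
The plan is to apply Mostow's Lebesgue Straightening Lemma (Lemma~\ref{lem6803e50c}) to the restriction $u|_{\closure U}$ and to propagate upper gradients through the iterative construction by means of Lemma~\ref{lem6803e6c4}.

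First, I would use that $u$ is continuous on $X$ and bounded on $U$ to conclude, by continuity, that $u$ is also bounded on $\closure U$, and fix $m, M\in\R$ with $u(\closure U)\subset [m,M]$ together with an enumeration $(a_n)_{n\in\N}$ of $\Q\cap[m,M]$. Then, on $X$, I would define iteratively $v_0:=u$ and $v_n:=v_{n-1}.a_n$, with the modification in~\eqref{eq6803e67d} performed relative to the open set $U$. The crucial observation is that every modification region $a_nUv_{n-1}$ is contained in $U$ by construction, so that $v_n\equiv u$ on $X\setminus U$ for all $n$. Mostow's lemma (applied to $u|_{\closure U}$) asserts that the sequence $v_n|_{\closure U}$ converges uniformly on $\closure U$ to a continuous function $\tilde v$ that is monotone on $U$, and I would set $v:=\tilde v$ on $\closure U$ and $v:=u$ on $X\setminus U$. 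The two definitions match on $\de U$ since $v_n\equiv u$ there and uniform limits preserve values, whence $v$ is continuous on $X$; properties (1) and (2) hold by construction.

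For (3), given any upper gradient $g$ of $u$, Lemma~\ref{lem6803e6c4} applied inductively yields that $g$ is an upper gradient of every $v_n$. Then, for any rectifiable curve $\gamma:[0,1]\to X$, the bound $|v_n(\gamma(1))-v_n(\gamma(0))|\le\int_\gamma g$ passes to the limit using the uniform convergence on $\closure U$ and the identity $v_n\equiv u$ on $X\setminus U$, giving the same bound for $v$. The main, but modest, technical point will be the continuous matching across $\de U$, and this is handled for free by the very fact that each modification region lies in $U$, which freezes $(v_n)$ on $X\setminus U$ and so ensures that the monotone limit on $\closure U$ coincides with $u$ on $\de U$. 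Apart from this bookkeeping, the proof is a direct combination of Mostow's construction with Lemma~\ref{lem6803e6c4}.
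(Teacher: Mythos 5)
Your proposal is correct and follows essentially the same route as the paper: apply Mostow's Lemma~\ref{lem6803e50c} to obtain the monotone limit, and use Lemma~\ref{lem6803e6c4} inductively together with the (uniform) convergence of the iterates to transfer upper gradients to the limit. The only difference is that you spell out details the paper leaves implicit (boundedness of $u$ on $\closure U$, the fact that the modification regions lie in $U$ so the iterates are frozen on $X\setminus U$, and the continuous matching across $\de U$), all of which are handled correctly.
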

\begin{proof}
	Let $v$ be the function given by Lemma~\ref{lem6803e50c}.
	Then $v$ is monotone in $U$ and $u=v$ on $X\setminus U$.
	We claim that, if $g: X\to[0,+\infty]$ is an upper gradient for $u$, then $g$ is an upper gradient also for $v$.
	Indeed, by Lemma~\ref{lem6803e6c4}, the function $g$ is an upper gradient for each approximating function $u_n$ defined in~\eqref{eq6803e602}.
	Since $u_n$ converges uniformly to $v$ on $X$, then $g$ is an upper gradient of $v$ as well.
	This proves the last part of the proposition.	
\end{proof}

Using Proposition~\ref{prop67e3cff4}, we show that the capacity can be estimated using monotone admissible functions.
For this reason, define for $E\subset X$ and $F\subset X$ or $F=\infty$,
\begin{equation}
	\cal A_{\rm m}(E;F) := \{ u\in \cal A(E;F) : u\text{ monotone on }X\setminus (E\cup F) \} .
\end{equation}
\begin{proposition}\label{prop67e505c3}
	Let $(X,d,\mu)$ be a metric measure space and $p\in[1,+\infty)$.
	
	For every $E,F\subset X$ closed, or $E\subset X$ closed and $F=\infty$,
	\begin{equation}
		\capacity_p(E;F) = \inf\left\{
		\int_X g^p \did\mu : 
		\text{$g$ upper gradient of $u\in\cal A_{\rm m}(E;F)$}
		\right\} .
	\end{equation}
\end{proposition}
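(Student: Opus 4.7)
Since $\cal A_{\rm m}(E;F) \subset \cal A(E;F)$, the inequality $\capacity_p(E;F) \le \inf\{\int_X g^p \did\mu : g \text{ upper gradient of } u \in \cal A_{\rm m}(E;F)\}$ is immediate. For the reverse direction, the plan is: given an arbitrary $u \in \cal A(E;F)$ with upper gradient $g$, produce $v \in \cal A_{\rm m}(E;F)$ admitting $g$ (or a comparable upper gradient) at no extra $L^p$-cost.

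First, I would truncate: replace $u$ by $u' := \max(0, \min(1, u))$, which is still admissible (with compact support preserved in the $F=\infty$ case, since truncation does not enlarge support) and for which $g$ remains an upper gradient, as post-composition with a $1$-Lipschitz map can only shorten differences of values along any curve. Second, I would reduce to the case where $u'$ is continuous, invoking the standard density result that continuous (in fact Lipschitz) admissible functions suffice to compute the $p$-capacity — a consequence of approximation in the Newtonian Sobolev space $N^{1,p}(X)$ together with the fact that upper gradients pass to limits with no loss in $L^p$-energy (up to an $\varepsilon$ that is sent to $0$ at the end).

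Third, with continuous admissible $u'$ in hand, I would apply the Lebesgue Straightening Lemma (Proposition~\ref{prop67e3cff4}) on the open set $U := X \setminus (E \cup F)$, which is open since $E,F$ are closed. This produces a continuous $v: X \to \R$ that agrees with $u'$ on $X \setminus U = E \cup F$ (so $v \ge 1$ on $E$ and $v \le 0$ on $F$, hence $v$ is admissible), is monotone on $U$, and retains $g$ as an upper gradient. Therefore $v \in \cal A_{\rm m}(E;F)$ with the $L^p$-energy of its upper gradient equal to $\int_X g^p \did\mu$, which yields the reverse inequality. For the case $F=\infty$, I would reduce to the previous case using the identity $\capacity_p(E) = \inf\{\capacity_p(E;F') : X \setminus F' \text{ compact}\}$ from Definition~\ref{def6803ccb2}; monotonicity of $v$ on $X \setminus (E \cup F')$ then yields an element of $\cal A_{\rm m}(E;\infty)$ of compact support. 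The principal obstacle is the continuity reduction in step two, since the Straightening Lemma crucially requires continuous input while the capacity's definition allows arbitrary measurable admissible functions; one also tacitly uses connectedness and local connectedness of $X$, which is consistent with the paper's standing hypotheses on the spaces.
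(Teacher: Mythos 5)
Your proposal is correct and follows essentially the same route as the paper: truncate $u$ to $[0,1]$ and then apply the Lebesgue Straightening Lemma (Proposition~\ref{prop67e3cff4}) on $U = X\setminus(E\cup F)$ to obtain a monotone admissible competitor that keeps the same upper gradient. The only difference is that the paper applies the straightening lemma directly to the truncated function and handles $F=\infty$ in the same stroke, without the intermediate continuity reduction you flag as the principal obstacle --- and you are right that this is the delicate point, since the straightening lemma needs continuous input while $\cal A(E;F)$ only requires measurability.
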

\begin{proof}
	Let $u\in\cal A(E;F)$.
	Up to substituting $u$ with $x\mapsto \min\{ 1, \max\{ 0, u(x)\}\}$, which is an element of $\cal A(E;F)$ with possible smaller upper gradients,
	we assume $u:X\to[0,1]$.
	Let $U = X\setminus (E\cup F)$.
	We apply Proposition~\ref{prop67e3cff4} to the function $u$
	on the open set $U = X\setminus (E\cup F)$.
	We get a function $v\in\cal A_{\rm m}(E;F)$ so that upper gradients of $u$ are also upper gradients of $v$.
\end{proof}

\begin{remark}\label{rem68e4b268}
	Among the properties of quasi-conformal maps, we will strongly use two properties:
	First, they bi-Lipschitz preserve the capacity as by \eqref{bi-Lipschitz preserve the capacity}.
Second, they are homeomorphisms, and hence they preserve the topology. 
	 	In particular, quasi-conformal maps preserve monotone functions.
	For this reason, we could define the capacity using only monotone admissible functions, and consider maps that preserve this reduced capacity.
	Proposition~\ref{prop67e505c3} ensures that this choice is not restrictive.
\end{remark}

\section{Conformal type}

\subsection{Parabolicity and hyperbolicity}\label{subs67c86ab7}

For more information about the following notions, we suggest Troyanov's work \cite[p.20]{MR1749853}.
\begin{definition}\label{def687743ae}
	Let $X$ be a metric measure space and $p\in[1,\infty)$.
	We say that $X$ is \emph{$p$-parabolic} if $\capacity_p(E)=0$ for all bounded $E\subset X$.
	We say that a metric space $X$ is \emph{$p$-hyperbolic} if it not $p$-parabolic.
	The \emph{parabolic dimension} of a metric measure space $X$ is
	\begin{equation}
		\dimpar(X) 
		:= \inf\{p\in[1,\infty):\text{ $X$ is $p$-parabolic}\} .
	\end{equation}
	A metric measure space $X$ with Hausdorff dimension $Q$ is of \emph{parabolic conformal type} if $X$ is $Q$-parabolic, that is, if $\capacity_Q(E)=0$ for all bounded $E\subset X$.
	A metric measure space $X$ is of \emph{hyperbolic conformal type} if it is not of parabolic type, that is, if there exists $E\subset X$ bounded with $\capacity_Q(E)>0$.
\end{definition}

\begin{remark}\label{rem689ee744}
	By the geometric Definition~\ref{def68cc0a5c} of QC map, with index $Q$ equal to the Hausdorff dimension, the conformal type is a quasi-conformal invariant;
see also~\cite[Theorem 4.5]{zbMATH01943361}.
\end{remark}

\subsection{Elementary estimate of the capacity of a ball}\label{subs67f8e0f2}

The following Lemma~\ref{lem67f8cd3b} is a standard exercise, and we will use this result for capacity estimates.
We include the proof for the sake of completeness.

\begin{lemma}\label{lem67f8cd3b}
	Let $(X,d,\mu)$ be a metric measure space and $o\in X$.
	Let $0<\bar r<\bar R<\infty$ and suppose that there are $q>1$ and $C>0$ such that
	\begin{equation}\label{eq67f8cd13}
		\mu(B(o,r)) \le C r^q ,
		\qquad \forall r\in[\bar r,\bar R] .
	\end{equation}
	Then
	\begin{align*}
		\capacity_q(B(o,\bar r) ; X \setminus B(o,\bar R) ) 
			&\le C \frac{1+ q(\log(\bar R)-\log(\bar r)) }{(\log(\bar R) - \log(\bar r))^q }  
			&& \text{ and} \\
		\capacity_p(B(o,\bar r) ; X \setminus B(o,\bar R) ) 
			&\le C \frac{ \left( - \frac{q}{p-q} \bar R^{q-p}  + \frac{p}{p-q} \bar r^{q-p} \right)  }{ (\log(\bar R) - \log(\bar r))^p  }  ,
			&& \forall p\in[1,+\infty)\setminus\{q\} .
	\end{align*}
\end{lemma}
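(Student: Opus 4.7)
The plan is to test the capacity against an explicit radial admissible function built from $\log d(o,\cdot)$, and then estimate its $L^p$-energy by integrating the volume bound~\eqref{eq67f8cd13} against the radial weight.

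First I would define
\begin{equation}
	u(x) := \min\!\left\{1,\max\!\left\{0,\frac{\log \bar R - \log d(o,x)}{\log \bar R - \log \bar r}\right\}\right\} ,
\end{equation}
which belongs to $\cal A(B(o,\bar r);X\setminus B(o,\bar R))$. Since $x\mapsto d(o,x)$ is $1$-Lipschitz, the function $x\mapsto \log d(o,x)$ has the upper gradient $x\mapsto 1/d(o,x)$ on $X\setminus\{o\}$ (one checks \eqref{eq6804a338} along any rectifiable curve from the elementary inequality $|\log a - \log b|\le \int_{[a,b]} t^{-1}\did t$). Truncating, an upper gradient of $u$ is
\begin{equation}
	g(x) := \frac{1}{(\log \bar R-\log \bar r)\,d(o,x)}\,\one_{B(o,\bar R)\setminus B(o,\bar r)}(x) .
\end{equation}

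Next I would bound $\int_X g^p\,\did\mu$ using the radial distribution of $\mu$. Writing $V(r):=\mu(B(o,r))$ and using the Stieltjes representation of the radial integral,
\begin{equation}
	\int_{B(o,\bar R)\setminus B(o,\bar r)} \frac{1}{d(o,x)^p}\did\mu
	= \int_{\bar r}^{\bar R} \frac{\did V(r)}{r^p}
	= \frac{V(\bar R)}{\bar R^p}-\frac{V(\bar r)}{\bar r^p} + p\!\int_{\bar r}^{\bar R}\!\frac{V(r)}{r^{p+1}}\did r ,
\end{equation}
after an integration by parts. Since $V(\bar r)/\bar r^p\ge 0$, I drop that term and insert the hypothesis $V(r)\le C r^q$ on $[\bar r,\bar R]$ to estimate
\begin{equation}
	\int_{B(o,\bar R)\setminus B(o,\bar r)} \frac{\did\mu}{d(o,x)^p}
	\le C\bar R^{q-p} + C p\!\int_{\bar r}^{\bar R} r^{q-p-1}\did r .
\end{equation}

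Finally I would split into the two cases. When $p=q$, the last integral equals $\log\bar R-\log\bar r$ and the bound becomes $C(1+q(\log\bar R-\log\bar r))$. When $p\ne q$, I evaluate $\int_{\bar r}^{\bar R}r^{q-p-1}\did r = (\bar R^{q-p}-\bar r^{q-p})/(q-p)$ and after collecting terms obtain
\begin{equation}
	C\bar R^{q-p} + \frac{Cp}{q-p}(\bar R^{q-p}-\bar r^{q-p}) = C\!\left(-\frac{q}{p-q}\bar R^{q-p} + \frac{p}{p-q}\bar r^{q-p}\right).
\end{equation}
Dividing by $(\log\bar R-\log\bar r)^p$ yields the two claimed inequalities. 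The only delicate book-keeping is the signs in the integration-by-parts step and checking that the formula in the displayed statement is recovered in both the $p<q$ and $p>q$ cases (the sign of $p-q$ is absorbed correctly so that both right-hand sides are positive); this is the only mildly nontrivial point of the argument.
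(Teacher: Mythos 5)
Your proposal is correct and takes essentially the same approach as the paper: the identical logarithmic test function and radial upper gradient, with the $L^p$-energy reduced to the same one-dimensional integral. The only difference is bookkeeping --- you compute $\int_X g^p\,\did\mu$ via Stieltjes integration by parts in the radial variable, whereas the paper uses Cavalieri's principle $\int_X g^p\,\did\mu=\int_0^\infty\mu(\{g^p>t\})\,\did t$; both yield the same bound.
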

\begin{proof}
	Define the function
	\begin{equation}
		u(x) := 
		\begin{cases}
			1 & \text{ if }x\in B(o,\bar r) , \\
			\frac{\log \bar R - \log(d(o,x)) }{ \log \bar R - \log \bar r } & \text{ if }x\in B(o,\bar R)\setminus B(o,\bar r) , \\
			0 & \text{ if }x\in X\setminus B(o,\bar R) .
		\end{cases}
	\end{equation}
	The function $u$ is an admissible function for the capacitor $B(o,\bar r) ; X \setminus B(o,\bar R)$.
	Using the chain rule and the fact that the distance function is 1-Lipschitz, 
	one can easily show that an upper gradient of $u$ is the function $g:X\to[0,+\infty]$,
		\begin{equation}
		g(x) := 
		\begin{cases}
			0 & \text{ if }x\in B(o,\bar r) , \\
			\frac{1}{ \log \bar R - \log\bar r } \frac{1}{d(o,x)} & \text{ if }x\in B(o,\bar R)\setminus B(o,\bar r) , \\
			0 & \text{ if }x\in X\setminus B(o,\bar R) .
		\end{cases}
	\end{equation}
	Set $L:= \frac{1}{ \log \bar R - \log \bar r }$ and let $p\in[1,\infty)$.
	Notice that $(L/\bar R)^p \le g^p \le (L/\bar r)^p$ on $B(o,\bar R) \setminus B(o,r)$,
	and that $\{x:g(x)^p>t\} = B(o,L/t^{1/p})$ for every $t\in [(L/\bar R)^p , (L/\bar r)^p)$.
	Therefore
	\begin{align*}
		&\int_X g(x)^p \did\mu(x) \\
		[\text{Cavalieri}]
		&= \int_0^\infty \mu(g^p>t) \did t \\
		&= \int_0^{L^p/\bar R^p} (\mu(B(o,\bar R)) - \mu(B(o,\bar r)) \did t + \int_{L^p/\bar R^p}^{L^p/\bar r^p} \mu(B(o,L/t^{1/p})) \did t \\
		[\text{by~\eqref{eq67f8cd13}}]
		&\le C L^p \bar R^{q-p} + C L^q \int_{L^p/\bar R^p}^{L^p/\bar r^p} t^{-q/p} \did t \\
		[\text{set }s^p = t] 
		&= C L^p \bar R^{q-p} + pC L^q \int_{L/\bar R}^{L/\bar r} s^{p-1-q} \did s \\
		&= C L^p \bar R^{q-p} + pC L^q \begin{cases}
			\log(\bar R/\bar r) & \text{ if }p=q  \\
			\frac{L^{p-q}}{p-q} (1/\bar r^{p-q} - 1/\bar R^{p-q}) & \text{ otherwise} 
		\end{cases} \\
		&= \begin{cases}
		C \frac{1+ q(\log(\bar R)-\log(\bar r)) }{(\log(\bar R) - \log(\bar r))^q } & \text{ if }p=q , \\
		\frac{C}{ (\log(\bar R) - \log(\bar r))^p  } \left( - \frac{q}{p-q} \bar R^{q-p}  + \frac{p}{p-q} \bar r^{q-p} \right)  & \text{ otherwise.}
		\end{cases}
	\end{align*}
\end{proof}

\subsection{Volume growth VS parabolicity}\label{subs67dbe653}
There are several results on Riemannian manifolds that link volume growth at large scale with parabolicity.
See for instance \cite[\S5.2]{zbMATH00999672}, or \cite[Cor.5.6]{MR1749853}.
The next result is a general one:

\begin{proposition}\label{prop67f0559a} 
	Suppose that $(X,d,\mu)$ is a metric measure space with
	degree of large-scale growth at most~$N\in(1,+\infty)$,
	that is, 
	there exist $C,N>1$ such that
	\begin{equation}
		\mu(B(x,R)) \le C R^N 
		\qquad\forall x\in X, \forall R>1 .
	\end{equation}
	Then $X$ is $p$-parabolic for every $p\ge N$,	and thus $\dimpar(X)\le N$.
\end{proposition}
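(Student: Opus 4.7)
The plan is to show that every bounded subset $E\subset X$ satisfies $\capacity_p(E)=0$ for all $p\ge N$, which is the definition of $p$-parabolicity, and this immediately yields $\dimpar(X)\le N$. The strategy is to reduce to a ball and then apply the explicit estimate from Lemma~\ref{lem67f8cd3b} with the exponent $q=N$, letting the outer radius go to infinity.

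First, I would fix an arbitrary point $o\in X$. Since $E$ is bounded and since, by the monotonicity of capacity noted in Remark~\ref{rem67f8c4ee}, we have $\capacity_p(E)\le \capacity_p(B(o,\bar r))$ for any $\bar r$ large enough that $E\subset B(o,\bar r)$ (say $\bar r>1$), it suffices to show $\capacity_p(B(o,\bar r))=0$. Next, using the definition of $\capacity_p$ at infinity as an infimum over capacitors $(B(o,\bar r);F)$ with $X\setminus F$ relatively compact (taking $F=X\setminus B(o,\bar R)$ for $\bar R$ suitably large and arguing that the admissible function constructed in Lemma~\ref{lem67f8cd3b} has support in a suitable relatively compact set, or first exhausting by such sets), I would bound
\begin{equation}
    \capacity_p(B(o,\bar r)) \le \capacity_p\bigl(B(o,\bar r);X\setminus B(o,\bar R)\bigr).
\end{equation}

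The hypothesis $\mu(B(x,R))\le CR^N$ for all $R>1$ is exactly the growth assumption~\eqref{eq67f8cd13} of Lemma~\ref{lem67f8cd3b} with $q=N$, valid on any interval $[\bar r,\bar R]\subset(1,\infty)$. Applying that lemma and letting $\bar R\to\infty$, I would check two cases:
\begin{itemize}
\item If $p=N$, the bound is $C\,\dfrac{1+N(\log\bar R-\log\bar r)}{(\log\bar R-\log\bar r)^N}$, which behaves like $(\log\bar R)^{1-N}\to 0$ because $N>1$.
\item If $p>N$, the bound is $\dfrac{C}{(\log\bar R-\log\bar r)^p}\bigl(-\tfrac{N}{p-N}\bar R^{N-p}+\tfrac{p}{p-N}\bar r^{N-p}\bigr)$; here the term $\bar R^{N-p}$ vanishes (since $N-p<0$), the term $\bar r^{N-p}$ stays constant, and the denominator $(\log\bar R-\log\bar r)^p$ diverges, so the quotient tends to $0$.
\end{itemize}
In both cases, $\capacity_p(B(o,\bar r))=0$, and hence $\capacity_p(E)=0$.

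The main technical nuisance, rather than a conceptual obstacle, is reconciling the two equivalent formulations of the capacity at infinity in Definition~\ref{def6803ccb2}: the radial function built in Lemma~\ref{lem67f8cd3b} is supported in $\bar B(o,\bar R)$, which is only guaranteed to be \emph{bounded}, not compact, without additional assumptions on $X$. I would therefore verify that for our purposes it is enough to realize $\capacity_p(E)$ as $\inf\{\capacity_p(E;F):X\setminus F\text{ bounded}\}$, or to approximate via an exhaustion, so that the estimate from Lemma~\ref{lem67f8cd3b} is directly applicable. Once this is in place, the computation above finishes the proof, and the bound $\dimpar(X)\le N$ follows from the infimum definition of the parabolic dimension.
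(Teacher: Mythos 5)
Your proposal matches the paper's proof: both reduce to a ball by monotonicity and apply Lemma~\ref{lem67f8cd3b} with $q=N$, letting the outer radius tend to infinity and checking the two cases $p=N$ (where $N>1$ makes the logarithmic bound vanish) and $p>N$ (where the numerator stays bounded while the denominator diverges). The compactness-versus-boundedness subtlety you flag is real but is silently shared by the paper's own argument, which likewise passes from the capacitor $(B(o,\bar r); X\setminus B(o,\bar R))$ to the capacity at infinity without further comment.
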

\begin{proof}
	Let $x\in X$, $r>0$, and $p\ge N$.
	We give an upper bound to $\capacity_p(B(x,r))$
	using the function given by Lemma~\ref{lem67f8cd3b}.
	If $p=N>1$, then 
	\begin{align}
		\capacity_p(B(o,r))
		\le \liminf_{R\to \infty} C \frac{1+ N(\log(R)-\log(r)) }{(\log(R) - \log(r))^N }  
		= 0.
	\end{align} 
	If $p>N$, i.e., $N-p<0$, then 
	\begin{align}
		\capacity_p(B(o,r))
		\le \liminf_{R\to \infty} C \frac{ \left( R^{N-p} \frac{N}{N-p} - r^{N-p}\frac{p}{N-p} \right) }{ (\log(R) - \log(r))^p  }  
		= 0 .
	\end{align}
\end{proof}

\subsection{Sobolev inequality VS hyperbolicity}\label{subs6862aabb}

We give a sufficient condition for hyperbolicity of a metric measure space.
In Proposition~\ref{prop67f055d9}, we show that a Sobolev inequality at large scale implies hyperbolicity.
Since we can deduce a Sobolev inequality from the isoperimetric inequality, we will show in Corollary~\ref{cor6863f5f0} that a large-scale isoperimetric inequality implies hyperbolicity.

\begin{proposition}\label{prop67f055d9}
	Let $(X,d,\mu)$ be a metric measure space
	with compact sets of arbitrarily large measure.
	Suppose also that there are $N\ge1$, $C_N>0$, and $C_\infty\ge0$ such that, 
	for every $u:X\to\R$ measurable with upper gradient $g:X\to[0,+\infty]$,
	\begin{equation}\label{eq67f05467}
		\left( \int_X |u|^{\frac{N}{N-1}} \did\mu \right)^{\frac{N-1}{N}}
		 \le C_N \int_X g \did\mu + C_\infty \|u\|_{L^\infty} .
	\end{equation}
	Then $X$ is $p$-hyperbolic for every $p\in[1,N)$, and thus $\dimpar(X)\ge N$.
\end{proposition}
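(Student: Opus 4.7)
The plan is to show, for each $p \in [1, N)$, that there exists a bounded $E \subset X$ with $\capacity_p(E) > 0$; this gives $p$-hyperbolicity of $X$, and hence $\dimpar(X) \ge N$. I will exploit the hypothesis on compact sets of arbitrarily large measure to take $E$ compact with $\mu(E) = M$ for some $M$ to be fixed below. The key device is to apply the $L^1$-type Sobolev inequality~\eqref{eq67f05467} not to an admissible function $u$ itself but to a well-chosen power $v = u^q$; Hölder's inequality will then convert the resulting gradient integral into $\|g\|_{L^p}$. This is the classical Moser-type upgrade from an $L^1$- to an $L^p$-Sobolev inequality.

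For the execution, fix $u \in \cal A(E;\infty)$ with upper gradient $g$, and truncate to arrange $0 \le u \le 1$, $u = 1$ on $E$, and $\spt(u)$ compact (this only decreases $\int g^p \did\mu$). For $p \in (1, N)$, set
\begin{equation}
q := \frac{p(N-1)}{N-p} > 1, \qquad p^{\ast} := \frac{pN}{N-p},
\end{equation}
chosen so that the identities $qN/(N-1) = (q-1)p/(p-1) = p^{\ast}$ hold. The chain rule for upper gradients applied to the Lipschitz map $t \mapsto t^q$ on $[0,1]$ will then give $q u^{q-1} g$ as an upper gradient of $v = u^q$, with $\|v\|_\infty \le 1$. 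Applying~\eqref{eq67f05467} to $v$ and then Hölder with exponents $p$ and $p/(p-1)$, one will arrive at
\begin{equation}
A^{(N-1)/N} \le C_N q A^{(p-1)/p} \left( \int g^p \did\mu \right)^{1/p} + C_\infty, \qquad A := \int u^{p^{\ast}} \did\mu \ge M .
\end{equation}
Choosing $M$ so that $M^{(N-1)/N} \ge 2 C_\infty$ will allow absorbing the $C_\infty$ term into half of the left-hand side; since $(N-1)/N - (p-1)/p = (N-p)/(Np)$ and $A \ge M$, the resulting bound
\begin{equation}
\left(\int g^p \did\mu\right)^{1/p} \ge \frac{M^{(N-p)/(Np)}}{2 C_N q}
\end{equation}
is independent of $u$, so $\capacity_p(E) > 0$ follows upon taking the infimum. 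The case $p = 1$ is easier and handled directly by~\eqref{eq67f05467} applied to $u$, with no Hölder step.

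The main technical obstacle will be the chain rule for upper gradients, i.e., justifying that $q u^{q-1} g$ is an upper gradient of $u^q$. For continuous $u$ this follows from the absolute continuity of $u \circ \gamma$ along any rectifiable curve $\gamma$ on which $g$ is integrable, together with the classical one-variable chain rule for $(u \circ \gamma)^q$; for merely measurable admissible $u$, one either regularizes or invokes the Lipschitz chain rule in the Newtonian-Sobolev framework. One should also verify that the cases $A = +\infty$ or $\|g\|_{L^p} = +\infty$ do not disrupt the chain of inequalities, but both situations only make the desired lower bound trivially true.
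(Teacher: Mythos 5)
Your proof is correct and follows essentially the same route as the paper's: apply the $L^1$-Sobolev inequality \eqref{eq67f05467} to the power $u^q$ with $q=\frac{(N-1)p}{N-p}$, use Hölder to produce $\|g\|_{L^p}$, and exploit compact sets of arbitrarily large measure to make the resulting lower bound positive. The only cosmetic difference is that you absorb the $C_\infty$ term by fixing $M$ in advance, whereas the paper keeps it and observes the left-hand side is positive for $K$ of large enough measure.
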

\begin{proof}
    Let $K\Subset X$.
	Let $u:X\to[0,1]$ with $K\subset\{u=1\}$, and $g:X\to[0,+\infty]$ an upper gradient of $u$.
    For $q\ge1$,
    we apply~\eqref{eq67f05467} to $u^q$,
	noticing that an upper gradient of $u^q$ is $q u^{q-1}g$.
	Therefore, for $q\in(1,+\infty)$ and $p\in(1,N)$,
    \begin{align*}
    \left( \int_X (u^q)^{\frac{N}{N-1}} \dd\mu \right)^{\frac{N-1}{N}}
    &\overset{\eqref{eq67f05467}}\le C_N \int_X q u^{q-1} g \dd\mu + C_\infty\\
    [\text{by Hölder inequality}]
    &\le C_N q \left( \int_X u^{(q-1)\frac{p}{p-1}} \dd\mu \right)^{\frac{p-1}{p}} \left(\int_X g^p \dd\mu \right)^{\frac1p} + C_\infty.
    \end{align*}
    We want $(q-1)\frac{p}{p-1} = q\frac{N}{N-1}$, which means
    \[
    q = \frac{1}{1-\frac{N(p-1)}{(N-1)p}}
    = \frac{(N-1)p}{(N-1)p - N(p-1)} 
    = \frac{(N-1)p}{N-p} .
    \]
    Since $p\in[1,N)$, then $q\ge1$.
    So, we get
    \[
    \left( \int_X u^{\frac{qN}{N-1}} \dd\mu \right)^{\frac{N-1}{N} - \frac{p-1}{p}}
    \le C_N q \left(\int_X g^p \dd\mu \right)^{\frac1p} 
    	+ C_\infty \left( \int_X u^{\frac{qN}{N-1}} \dd\mu \right)^{-\frac{p-1}{p}} .
    \]
    Since 
    \[
    \int_X u^{\frac{qN}{N-1}} \dd\mu \ge \mu(K) ,
    \]
    we obtain
    \[
    \mu(K)^{\frac{N-p}{Np}} - C_\infty \frac{1}{\mu(K)^{\frac{p-1}{p}}} 
    \le C_N q \left(\int_X g^p \dd\mu \right)^{\frac1p} .
    \]
    Since we can choose $K$ with arbitrarily large measure,
    we conclude that, for every $p\in[1,N)$, there exists $K\subset X$ compact with $\capacity_p(K)>0$.
\end{proof}

\begin{remark}
	If~\eqref{eq67f05467} holds with $C_\infty=0$, then the proof of Proposition~\ref{prop67f055d9}
	does not need compact sets of arbitrarily large measure,
	but only one compact set with positive measure.
\end{remark}

\section{The parabolic case}

\subsection{Volume growth and growth dimension}

In this subsection, we turn to volume growth conditions and their interaction with quasi-isometries.

\begin{definition}[Growth at infinity]\label{def6863fac9}
	A metric measure space $(X,d,\mu)$ has \emph{degree of large-scale growth at most~$N$}, for $N\in[0,\infty)$, if there exists $C\in\R$ such that
	\begin{equation}\label{eq67c86af8}
		\mu(B(x,R)) \le C R^N ,
		\qquad\forall x\in X,\ \forall R>1 .
	\end{equation}
	The \emph{growth dimension} of $X$ is 
	\begin{equation}
		\dimgr(X) 
		:= \inf\left\{N\ge0 : \text{$X$ has degree of large-scale growth at most~$N$}\right\} .
	\end{equation}
\end{definition}

In Proposition~\ref{prop67f0559a}, we have seen that, if a metric measure space $X$ has degree of large-scale growth at most~$N\in(1,\infty)$, then $X$ is $p$-parabolic for every $p\in[1,N]$.
In particular,
\begin{equation}\label{eq68cf049a}
	  \dimpar(X) \le \dimgr(X) .
\end{equation}

In Proposition~\ref{prop6866b133} below, we will show that, under certain geometric conditions, the growth dimension is a quasi-isometric invariant.
This is a standard exercise, and we include the proof for completeness.

\begin{proposition}\label{prop6866b133}
	Let $X$ and $Y$ be separable metric measure spaces,
	and suppose that their volume functions, defined as in~\eqref{eq6866b26f}, are valued in $(0,+\infty)$.
	Assume $X$ and $Y$ are quasi-isometric and let $N\in[1,+\infty)$.
	Then, the space $X$ has degree of large-scale growth at most~$N$ if and only if $Y$ does.
\end{proposition}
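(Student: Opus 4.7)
The quasi-isometric relation is symmetric: any quasi-isometry $f\colon X\to Y$ admits a coarse inverse that is again a quasi-isometry $Y\to X$. The plan is therefore to establish only one implication and obtain the other by applying the same argument to the coarse inverse. So assume $f\colon X\to Y$ is a quasi-isometry with constants $L,C$ as in~\eqref{eq68ce76a1}, and suppose $Y$ has degree of large-scale growth at most $N$ with constant $C_Y$; the task is to bound $\mu(B_X(x_0,R))$ by a constant multiple of $R^N$, uniformly in $x_0\in X$ and $R>1$.

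The main step is a counting argument based on a maximal separated set. Fix $r>2LC$ and, for each $x_0\in X$ and $R>1$, pick a maximal $r$-separated set $\mathcal{K}\subset B_X(x_0,R)$, which exists by Zorn's lemma as recalled in Section~\ref{sec6866e144}. By maximality, the family $\{B_X(k,r)\}_{k\in\mathcal{K}}$ covers $B_X(x_0,R)$, so
\[
\mu(B_X(x_0,R)) \le |\mathcal{K}|\cdot \mu_X^+(r).
\]
To control $|\mathcal{K}|$ from above, transfer the separation to $Y$ via $f$: for distinct $k_1,k_2\in\mathcal{K}$ the lower bound in~\eqref{eq68ce76a1} gives $d_Y(f(k_1),f(k_2))\ge r/L-C \ge r/(2L)$, so $f(\mathcal{K})$ is an $r/(2L)$-separated subset of $B_Y(f(x_0),LR+C)$. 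Hence the balls $B_Y(f(k),r/(4L))$, $k\in\mathcal{K}$, are pairwise disjoint and contained in $B_Y(f(x_0),LR+2C)$, provided $R>1$ is large enough to absorb the additive constants. The growth hypothesis on $Y$ then yields
\[
|\mathcal{K}|\cdot \mu_Y^-(r/(4L)) \le \mu(B_Y(f(x_0),LR+2C)) \le C' R^N
\]
for some $C'=C'(L,C,C_Y,N,r)$. Combining the two inequalities gives $\mu(B_X(x_0,R))\le C''R^N$ uniformly in $x_0$ and $R>1$, as required.

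The hypothesis that the volume functions $\mu_X^+$ and $\mu_Y^-$ take values in $(0,+\infty)$ is precisely what is needed to make the prefactor $C''=C'\mu_X^+(r)/\mu_Y^-(r/(4L))$ finite and positive; without it, the packing argument would degenerate. No serious obstacle is anticipated: the argument is a routine packing/counting computation, and the only care is to fix $r$ large enough in terms of $L,C$ so that $r$-separation in $X$ transfers through $f$ to strictly positive separation in $Y$.
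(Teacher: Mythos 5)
Your proof is correct and is essentially the paper's argument: both rest on a maximal-separated-set packing/counting estimate combined with the two quasi-isometry bounds, and both reduce to a single implication via the coarse inverse. The only (immaterial) difference is the direction in which the estimate is run: you transfer separation to $Y$ via the lower bound in~\eqref{eq68ce76a1} and keep the covering in $X$, concluding directly that growth of $Y$ controls growth of $X$, whereas the paper's Lemma~\ref{lem6837521d} pushes a covering forward to $Y$ via the upper bound and derives $\mu_Y^+(R)\le \cal C_f(1,2L+2C)\,\mu_X^+(LR+3LC)$.
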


The proof is postponed after the following lemma.

\begin{lemma}\label{lem6837521d}
	Let $(X,d_X,\mu_X)$ and $(Y,d_Y,\mu_Y)$ be separable metric measure spaces.
	Let $f:X\to Y$ be a map such that there are $L,C\in\R$ with 
	\begin{equation}\label{eq6866a4ee}
		d_Y(f(a),f(b)) \le L d_X(a,b) + C ,
		\qquad \forall a,b\in X .
	\end{equation}
	For $r,t\in(0,\infty)$, define
	\begin{equation}\label{eq6837593c}
	\begin{aligned}
		\cal C_f(r,t) 
		&:= \sup\left\{ \frac{ \mu_Y(B(f(x),t)) }{ \mu_X(B(x,r)) } : x\in X \right\} .
	\end{aligned}
	\end{equation}
	Then, for every $E\subset X$ and every $r,s\in(0,\infty)$,
	\begin{equation}\label{eq68377070}
		\mu_Y(B(f(E),s)) \le \cal C_f(r, 2Lr+C+s ) \mu_X(B(E,r)) .
	\end{equation}
\end{lemma}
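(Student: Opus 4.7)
The plan is a standard covering-and-disjointification argument, with the separation scale chosen so that the constants in $\cal C_f$ come out correctly. First, using Zorn's Lemma (as recalled in Section~\ref{sec6866e144}), I would pick a maximal $2r$-separated subset $\scr K$ of $E$; since $X$ is separable, the set $\scr K$ is countable, which matters for the countable subadditivity step below.

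For the covering step, given $y \in B(f(E), s)$ there exists $e \in E$ with $d_Y(y, f(e)) < s$, and by maximality of $\scr K$ there exists $x \in \scr K$ with $d_X(e, x) < 2r$. The large-scale Lipschitz hypothesis~\eqref{eq6866a4ee} then yields $d_Y(f(e), f(x)) \le 2Lr + C$, so $d_Y(y, f(x)) < 2Lr + C + s$. Hence
\begin{equation*}
	B(f(E), s) \subset \bigcup_{x \in \scr K} B(f(x), 2Lr + C + s),
\end{equation*}
and by countable subadditivity together with the definition~\eqref{eq6837593c} of $\cal C_f$,
\begin{equation*}
	\mu_Y(B(f(E), s)) \le \cal C_f(r, 2Lr + C + s) \sum_{x \in \scr K} \mu_X(B(x, r)).
\end{equation*}

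The disjointification step closes the argument: because $\scr K$ is $2r$-separated, the open balls $B(x, r)$ with $x \in \scr K$ are pairwise disjoint; because $\scr K \subset E$, each such ball is contained in $B(E, r)$. Therefore $\sum_{x \in \scr K} \mu_X(B(x, r)) \le \mu_X(B(E, r))$, which gives~\eqref{eq68377070}. There is no genuine obstacle; the only real decision is the separation scale, and $2r$ is precisely what is needed so that radius-$r$ balls around $\scr K$ are simultaneously disjoint and contained in $B(E, r)$, while still providing a covering of $B(f(E), s)$ by $d_Y$-balls of the prescribed radius $2Lr + C + s$ matching the second argument of $\cal C_f$ in the conclusion.
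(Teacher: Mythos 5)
Your proof is correct and follows essentially the same route as the paper: a maximal $2r$-separated subset of $E$, the covering of $B(f(E),s)$ by the balls $B(f(x),2Lr+C+s)$ with $x\in\scr K$, and the disjointness of the balls $B(x,r)$ inside $B(E,r)$. The only detail the paper adds is an explicit treatment of the degenerate cases $\mu_X(B(x,r))=0$ or $\mu_Y(B(f(x),2Lr+C+s))=\infty$, where the conclusion holds trivially under the convention $0\cdot\infty=\infty$; the step where you invoke the definition of $\cal C_f$ implicitly divides by $\mu_X(B(x,r))$ and so tacitly assumes these cases are excluded.
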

\begin{proof}
	Fix $E\subset X$ and $r\in(0,+\infty)$.
	Let $\scr K\subset E$ be a maximal set of points that are $2r$-separated.
	Since $X$ is separable, then $\scr K$ is countable. 
	Since the points are $2r$-separated, then
	\begin{equation}\label{eq683755ad}
		\bigsqcup_{k\in\scr K} B(k,r) 
		\subset B(E,r) .
	\end{equation}
	By maximality, we have,
	\begin{equation}\label{eq68ce54d8}
		E\subset \bigcup_{k\in\scr K} B(k,2r) .
	\end{equation}
	Hence, for every $s\ge0$ we have
	\begin{equation}\label{eq68375526}
	\begin{aligned}
		B(f(E),s)
		&\overset{\eqref{eq68ce54d8}}\subset  \bigcup_{k\in\scr K} B(f(B(k,2r)),s) 
		\overset{\eqref{eq6866a4ee}}\subset \bigcup_{k\in\scr K} B(B(f(k),2Lr+C),s) \\
		&\subset \bigcup_{k\in\scr K} B( f(k), 2Lr+C+s ) .
	\end{aligned}
	\end{equation}
	In what follows, we assume that, for all $k\in\scr K$, we have $\mu_Y( B( f(k),2Lr+C+s ) )<\infty$ and $\mu_X(B(k,r))>0$,
	because such special cases imply directly~\eqref{eq68377070} using the convention $0\cdot\infty=\infty$.
	Therefore,
	\begin{align}
		\mu_Y(B(f(E),s))
		&\overset{\eqref{eq68375526}}\le \sum_{k\in\scr K} \mu_Y( B( f(k),2Lr+C+s ) ) \\
		&= \sum_{k\in\scr K} \frac{ \mu_Y( B( f(k),2Lr+C+s ) ) }{ \mu_X(B(k,r)) } \mu_X(B(k,r)) \\
		&\le \cal C_f(r, 2Lr+C+s ) \sum_{k\in\scr K} \mu_X(B(k,r)) \\
		&\overset{\eqref{eq683755ad}}\le \cal C_f(r, 2Lr+C+s ) \mu_X(B(E,r)) . \qedhere
	\end{align}
\end{proof}

\begin{proof}[Proof of Proposition~\ref{prop6866b133}]
	Let $f:X\to Y$ be a $(L,C)$-quasi-isometry.
	It is immediate to show that, for every $x\in X$ and $R>0$,
	\begin{equation}\label{eq6866ab6e}
		B(f(x),R) \subset B(f(B(x,L(R+2C))) , C) .
	\end{equation}
	Indeed, if $z\in B(f(x),R)$, then there exists $y\in X$ with $d(f(y),z)\le C$.
	It follows that $d(x,y) \le Ld(f(x),f(y)) +LC \le L(d(f(x),z) + d(z,f(y))) + LC \le L(R + C) + LC = L(R + 2C)$.
	
	Using Lemma~\ref{lem6837521d}, we get
	\begin{equation}\label{eq6866afe4}
	\begin{aligned}
		\mu_Y(B(f(x),R)) 
		&\overset{\eqref{eq6866ab6e}}\le \mu_Y(B(f(B(x,L(R+2C))) , C)) \\
		&\overset{\eqref{eq68377070}}\le \cal C_f(1, 2L + 2C) \mu_X( B(x,L(R+2C)) ) \\
		&\le \cal C_f(1, 2L + 2C) \mu_X^+(L(R+2C)) .
	\end{aligned}
	\end{equation}
	Since the volume functions are valued in $(0,+\infty)$, $\cal C_f(1, 2L + 2C)<\infty$.
	
	If $z\in Y$ and $R>0$, then there is $x\in X$ with $d(f(x),z)\le C$.
	Hence, 
	\begin{equation}
		\mu_Y(B(z,R)) 
		\le \mu_Y(B(f(x),R+C)) 
		\overset{\eqref{eq6866afe4}}\le \cal C_f(1, 2L + 2C) \mu_X^+(L(R+3C)) .
	\end{equation}
	We conclude that 
	\begin{equation}
		\mu_Y^+(R) \le \cal C_f(1, 2L + 2C) \mu_X^+(LR + 3LC) ,
		\qquad\forall R\in(0,\infty).
	\end{equation}
	The latter estimate easily implies that if $X$ has degree of large-scale growth at most~$N\in[1,+\infty)$, then so does $Y$.
	Since quasi-isometries have quasi-inverses, the reversed implication also holds.
\end{proof}

\subsection{The parabolic Ferrand distance}

We study quasi-conformal maps between spaces of parabolic conformal type using what we call the parabolic Ferrand distance $\ferrdPar$, which we soon introduce.
This gauge-type function is defined in terms of capacitors made by pairs of $\infty$-continua.
The parabolic Ferrand distance is a power of a function,    denoted below by $\ferrg$, that has been studied by Ferrand in \cite{zbMATH03431466,zbMATH00883879}.

\begin{definition}\label{def686fc1ad}
	Let $(X,d,\mu)$ be a metric measure space with Hausdorff dimension $Q\ge1$.
	For $x,y\in X$, we define the \emph{Ferrand gauge} by
	\begin{equation}
		\ferrg(x,y) := \inf\left\{ \capacity_Q(E;F) : 
		\begin{array}{c}
		x\in E,\ y\in F,\\
		\text{$E$ and $F$ $\infty$-continua} 
		\end{array}
		 \right\} .
	\end{equation}
	For $x,y\in X$, we define the \emph{parabolic Ferrand distance} by
	\begin{equation}\label{eq68da4e38}
		\ferrdPar(x,y) 
		:= \frac1{\ferrg(x,y)^{\frac1Q} }
		= \sup\left\{ \frac{1}{\capacity_Q(E;F)^{\frac1Q}} : 
		\begin{array}{c}
		x\in E,\ y\in F,\\
		\text{$E$ and $F$ $\infty$-continua} 
		\end{array}
		 \right\} .
	\end{equation}
\end{definition}

\begin{remark}
	Despite the name, we do not claim that the function $\ferrdPar:X\times X\to [0,+\infty]$ in Definition~\ref{def686fc1ad} is a distance in complete generality.
	Ferrand proved for Riemannian manifolds in \cite[Théorème 7.1 \& \S8.2]{zbMATH03431466} that,
	if $X$ is a Riemannian manifold of dimension $n$, then $\ferrdPar:(x,y) \mapsto \frac1{\ferrg(x,y)^{1/n}}$ satisfies the triangle inequality.
	In \cite[Proposition 3.2]{zbMATH00950191}, Ferrand improved the result showing that 
	$\ferrdPar^{\frac{n}{n-1}}$ satisfies the triangle inequality.	
\end{remark}

We will show that $\ferrdPar$ is bi-Lipschitz preserved by quasi-conformal maps,
and then, under certain conditions on the space, we will give an upper bound for $\ferrdPar$
in Proposition~\ref{prop68720f23} and later a lower bound to $\ferrdPar$ as a consequence of the upper bounds in Proposition~\ref{prop68720d05}.
The final result will be to show that, under specific conditions on the spaces, quasi-conformal maps are bornologous; see Theorem~\ref{thm68755ab4}.

\begin{proposition}\label{prop68755c04}
	Let $X$ and $Y$ be metric measure spaces with locally $Q$-bounded geometry,
	and let $f:X\to Y$ be a quasi-conformal map.
Then there exists $K>1$ such that the parabolic Ferrand distance $\ferrdPar$ satisfies
	\begin{equation}\label{eq68d143da}
		\frac1{K } \ferrdPar(x_1,x_2)
		\le \ferrdPar(f(x_1),f(x_2)) 
		\le K \ferrdPar(x_1,x_2),\qquad \forall x_1,x_2\in X .
	\end{equation}
\end{proposition}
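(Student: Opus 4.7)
The proof is essentially a direct combination of two facts contained in the definition of a (geometric) quasi-conformal map: that $f$ is a homeomorphism, and that $f$ bi-Lipschitz preserves the $Q$-capacity of every pair of subsets.

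The plan is as follows. First I would check that $f$ induces a bijection between the class of $\infty$-continua in $X$ and the class of $\infty$-continua in $Y$. Since being closed, having non-compact connected components, and the assignment $E \mapsto \text{connected components of } E$ are purely topological notions, and since $f$ and $f^{-1}$ are continuous, it follows that $E \subset X$ is an $\infty$-continuum if and only if $f(E) \subset Y$ is an $\infty$-continuum. In particular, the correspondence $(E,F) \leftrightarrow (f(E), f(F))$ gives a bijection between pairs of $\infty$-continua with $x_1 \in E$, $x_2 \in F$ on the $X$-side and pairs of $\infty$-continua with $f(x_1) \in E'$, $f(x_2) \in F'$ on the $Y$-side.

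Second, I would apply the capacity estimate from Definition~\ref{def68cc0a5c}: there exists $K_0 \ge 1$ such that
\begin{equation}
\frac{1}{K_0}\capacity_Q(E;F) \le \capacity_Q(f(E); f(F)) \le K_0 \capacity_Q(E;F)
\end{equation}
for every pair of subsets $E, F \subset X$, and in particular for every pair of $\infty$-continua containing $x_1$ and $x_2$ respectively. Taking $-\frac{1}{Q}$-th powers reverses the inequalities and introduces the constant $K_0^{1/Q}$:
\begin{equation}
\frac{1}{K_0^{1/Q}}\frac{1}{\capacity_Q(E;F)^{1/Q}} \le \frac{1}{\capacity_Q(f(E);f(F))^{1/Q}} \le \frac{K_0^{1/Q}}{\capacity_Q(E;F)^{1/Q}}.
\end{equation}

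Third, I would take the supremum on both sides over all $\infty$-continua $E \ni x_1$ and $F \ni x_2$ in $X$. By the bijection from the first step, the middle quantity, when the supremum is taken, becomes exactly $\ferrdPar(f(x_1), f(x_2))$, while the outer quantities yield $K_0^{\mp 1/Q}\ferrdPar(x_1, x_2)$. Setting $K := K_0^{1/Q}$ gives \eqref{eq68d143da}.

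I do not expect any serious obstacle. The only point to verify carefully is that the homeomorphism $f$ really does preserve the class of $\infty$-continua in both directions, which is an elementary topological observation (closedness, non-compactness of components). Note also that the assumption of locally $Q$-bounded geometry is not needed here: this proposition follows solely from the homeomorphism property of $f$ and the bi-Lipschitz preservation of $Q$-capacity built into Definition~\ref{def68cc0a5c}. The geometric hypotheses become relevant only later, when one wants to compare $\ferrdPar$ with the original distance $d$.
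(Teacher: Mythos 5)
Your proposal is correct and follows essentially the same route as the paper: use that the homeomorphism $f$ sends $\infty$-continua to $\infty$-continua, apply the capacity comparison from the geometric Definition~\ref{def68cc0a5c} to the competitors, and pass to the infimum/supremum (the paper phrases this via the gauge $\ferrg$ and an $\epsilon$-approximating pair, then invokes $f^{-1}$ for the reverse inequality, which is the same content). Your side remark that the bounded-geometry hypothesis is not actually used is also consistent with the paper's argument.
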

\begin{proof} We equivalently show that 	  there exists $K>1$ such that for the Ferrand gauge $\ferrg$ we have
	\begin{equation}\label{eq68755d66}
		\frac1K \ferrg(x_1,x_2)
		\le \ferrg(f(x_1),f(x_2)) 
		\le K \ferrg(x_1,x_2) ,\qquad \forall x_1,x_2\in X.
	\end{equation}
	We focus on the second inequality in~\eqref{eq68755d66}.
	If $\ferrg(x_1,x_2)=\infty$, then the inequality is true.
	Suppose $\ferrg(x_1,x_2)<\infty$ and let $\epsilon>0$.
	Then there are $\infty$-continua $E,F\subset X$ such that $x\in E$, $y\in F$ and  
	$\ferrg(x_1,x_2) + \epsilon \ge \capacity_Q(E;F)$.

	Since $f$ is a homeomorphism, then $f(E),f(F)\subset Y$ are $\infty$-continua
	with $f(x_1)\in f(E)$, $f(x_2)\in f(F)$.
	Therefore, $\ferrg(f(x_1),f(x_2))\le \capacity_Q(f(E);f(F))$.
	
	By the Geometric Definition~\ref{def68cc0a5c} of quasi-conformal map, there is $K>0$ such that
	\begin{equation}
		\ferrg(f(x_1),f(x_2))
		\le \capacity_Q(f(E);f(F))
		\le K \capacity_Q(E;F)
		\le K \ferrg(x_1,x_2) + K\epsilon .
	\end{equation}
	Since $\epsilon$ is an arbitrary positive number, we obtain the second inequality in~\eqref{eq68755d66}.
	The first inequality in~\eqref{eq68755d66} follows from the fact that the above statement also applies to $f^{-1}$.
\end{proof}

The main application of the Ferrand distance will be Theorem~\ref{thm68755ab4}, which is itself based on the above Proposition~\ref{prop68755c04} and Proposition~\ref{prop68755b1a}.
One of the hypothesis of Proposition~\ref{prop68755b1a}, which then reverberate in Theorem~\ref{thm68755ab4}, is that the growth dimension $N$ is strictly smaller than the Hausdorff dimension $Q$.
The case $N=Q$ has to be singled out with the following proposition, which is a direct application of the study of Loewner spaces made in \cite{MR1654771}.
In these spaces, we have a global Poincaré inequality and a global two-sided volume-growth bound.

\begin{proposition}\label{prop687658f1}
	Let $(X, d, \mu)$ be a metric measure space with $Q$-bounded geometry, as
	in Definition~\ref{def6803c8e2} with $R \equiv +\infty$.
	Suppose also that $X$ is geodesic and boundedly compact, and that $Q>1$.
	Then, the parabolic Ferrand distance $\ferrdPar$ satisfies
	\begin{equation}\label{eq689ee2d3}
		\ferrdPar(x,y) = 0,
		\qquad\forall x,y\in X.
	\end{equation}
\end{proposition}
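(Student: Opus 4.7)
The plan is to show, via Ferrand's monotonicity Lemma~\ref{lem67e51b03} and the oscillation estimate of Theorem~\ref{thm68545111}, that $\capacity_Q(E;F)=+\infty$ for every pair $(E,F)$ of $\infty$-continua with $x\in E$ and $y\in F$. By Definition~\ref{def686fc1ad}, this immediately gives $\ferrg(x,y)=+\infty$ and hence $\ferrdPar(x,y)=\ferrg(x,y)^{-1/Q}=0$.

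First, if $E\cap F\ne\emptyset$, the admissible class $\cal A(E;F)$ is empty (no function can be simultaneously $\ge 1$ and $\le 0$ at a common point), so $\capacity_Q(E;F)=+\infty$ by the convention $\inf\emptyset=+\infty$. In the complementary case $E\cap F=\emptyset$, Proposition~\ref{prop67e505c3} lets me restrict attention to (continuous) $u\in\cal A_{\rm m}(E;F)$ with $0\le u\le 1$, $u=1$ on $E$, $u=0$ on $F$, and $u$ monotone on $X\setminus(E\cup F)$. I would then check that $E\cup F$ is itself an $\infty$-continuum: any component of $E\cup F$ meeting $E$ contains the whole $E$-component through that point (which is non-compact by assumption), and similarly for $F$. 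Since additionally $E\cup F\subset\{u=\sup_X u\}\cup\{u=\inf_X u\}$, Lemma~\ref{lem67e51b03} promotes the monotonicity of $u$ to the whole of $X$.

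Let $E_0\ni x$ and $F_0\ni y$ be the unbounded closed connected components. Continuity and connectedness of $d(x,\cdot)|_{E_0}$, together with $E_0$ unbounded, force its image to equal $[0,\infty)$; similarly, the image of $d(x,\cdot)|_{F_0}$ is $[m,\infty)$ with $m:=\dist(x,F_0)\le d(x,y)$. Thus $S(x,t)\cap E_0\ne\emptyset$ for every $t\ge 0$ and $S(x,t)\cap F_0\ne\emptyset$ for every $t\ge d(x,y)$, giving $\oscillation_{S(x,t)}u\ge 1$ for all $t\ge d(x,y)$. For any $r_0>d(x,y)$ (the condition $r_0<R/C$ in Theorem~\ref{thm68545111} is vacuous since $R\equiv+\infty$) and any upper gradient $g$ of $u$, the oscillation estimate yields
\[
    \log\frac{r_0}{d(x,y)} = \int_{d(x,y)}^{r_0}\frac{\did t}{t} \le \int_0^{r_0}\left(\oscillation_{S(x,t)}u\right)^Q\frac{\did t}{t} \le C\int_X g^Q\did\mu.
\]
Letting $r_0\to\infty$ forces $\int_X g^Q\did\mu=+\infty$, so $\capacity_Q(E;F)=+\infty$ and therefore $\ferrdPar(x,y)=0$.

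The main obstacle is the monotonicity upgrade: Theorem~\ref{thm68545111} requires a globally monotone $u$, whereas admissible monotone functions are only required to be monotone on $X\setminus(E\cup F)$. Lemma~\ref{lem67e51b03} of Ferrand precisely bridges this gap, once one verifies that $E\cup F$ is an $\infty$-continuum contained in $\{u=\sup_X u\}\cup\{u=\inf_X u\}$. A secondary technicality is the continuity of the admissible function, needed both for Ferrand's lemma and for the oscillation estimate; this is furnished by the Lebesgue Straightening Lemma underlying Proposition~\ref{prop67e505c3}. Note that the hypothesis $R\equiv+\infty$ is used in an essential way: it is what allows the oscillation estimate to be applied on arbitrarily large balls, hence the integral on the right above to remain $\le C\|g\|_{L^Q(X)}^Q$ as $r_0\to\infty$.
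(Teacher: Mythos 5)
Your proof is correct, but it follows a genuinely different route from the paper's. The paper's argument is a one-line appeal to the Loewner machinery of Heinonen--Koskela: under the stated hypotheses $(X,d,\mu)$ is a $Q$-Loewner space, the Loewner function satisfies $\psi_Q(t)\approx\log\frac1t$ as $t\to0$, and truncating the $\infty$-continua to $E_R=E\cap B(x,R)$, $F_R=F\cap B(y,R)$ gives capacitors with $\dist(E_R,F_R)\le d(x,y)$ and diameters tending to $\infty$, whence $\capacity_Q(E;F)\ge\limsup_R\psi_Q(\dist/\min\operatorname{diam})=\infty$. You instead reprove the needed lower capacity bound directly from the paper's internal toolbox: reduction to continuous monotone admissible functions via Proposition~\ref{prop67e505c3}, the upgrade to global monotonicity via Ferrand's Lemma~\ref{lem67e51b03} (your verification that $E\cup F$ is an $\infty$-continuum when $E\cap F=\emptyset$ is the right observation, since each component of the disjoint union is a component of $E$ or of $F$), the connectedness argument showing both continua meet every sphere $S(x,t)$ for $t\ge d(x,y)$, and the logarithmic divergence coming from Theorem~\ref{thm68545111}. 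What the paper's route buys is brevity, at the cost of an external citation; what yours buys is self-containedness and a slightly weaker explicit use of geodesicity (you only need what Theorem~\ref{thm68545111} and bounded compactness require, whereas the Loewner theorem is quoted for geodesic spaces). The only soft spot — whether an arbitrary measurable admissible function can be replaced by a \emph{continuous} monotone one — is inherited from the paper's own Proposition~\ref{prop67e505c3} and is used identically elsewhere in the paper (e.g.\ in Proposition~\ref{prop68720f23}), so it is not a gap specific to your argument.
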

\begin{proof}
	A geodesic, boundedly compact space with $Q$-bounded geometry, $Q>1$, is a Loewner space by \cite[Theorem 5.7]{MR1654771}; see also \cite[Theorem 9.10]{MR1800917}.
	Let $\psi_Q:(0,+\infty)\to[0,+\infty]$ be the Lowener function of $X$, that is,
	\begin{equation}\label{eq67cb0444}
		\psi_Q(t) = 
		\inf\left\{
			\capacity_Q(E;F) : 
			\begin{array}{c}
			E,F\subset X \text{ continua with } \\
			0 < \dist(E,F) \le t \min \{\operatorname{diam} E,\operatorname{diam} F\}
			\end{array}
		\right\} .
	\end{equation}
	By \cite[Theorem 3.6]{MR1654771}, $\psi_Q(t) \approx \log\frac1t$ as $t\approx 0$; see also \cite[Theorem 8.23]{MR1800917}.
	
	Let $x,y\in X$ and $E,F\subset X$ be $\infty$-continua with $x\in E$ and $y\in F$.
	For every $R>1$, let $E_R$ and $F_R$ be the connected components of $E\cap B(x,R)$ and $F\cap B(y,R)$ containing $x$ and $y$, respectively.
	Then $\dist(E_R,F_R) \le d(x,y)$ for all $R$, and $\lim_{R\to\infty} \operatorname{diam} E_R = \lim_{R\to\infty} \operatorname{diam} F_R = \infty$.
	It follows that
	\begin{align}
		\capacity_Q(E;F)
		&\ge \limsup_{R\to\infty} \capacity_Q(E_R;F_R) \\
		&\ge \limsup_{R\to\infty} \psi_Q\left( \frac{ \dist(E_R,F_R) }{ \min \{\operatorname{diam} E_R,\operatorname{diam} F_R\} } \right)
		= \infty .
	\end{align}
	We infer that $			\ferrg(x,y) = +\infty$, for all $x,y\in X$, and hence we conclude~\eqref{eq689ee2d3}.
\end{proof}

\subsection{Upper bound to the parabolic Ferrand distance}

\begin{proposition}\label{prop68720f23}
	Let $(X,d,\mu)$ be a metric measure space with uniformly locally $Q$-bounded geometry, with $Q>1$.
	Suppose that $X$ is quasi-convex.
	Then, there exists $C>0$ such that
	\begin{equation}\label{eq686fb287_bis}
		\ferrdPar(x,y) \leq C \max\left\{ 1,  {d(x,y) } \right\},\qquad \forall x,y\in X .
	\end{equation}
	The constant $C$ only depends on the geometric data of $X$.
\end{proposition}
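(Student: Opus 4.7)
The plan is to establish the equivalent lower bound $\ferrg(x,y)\ge c/\max\{1,d(x,y)\}^Q$ for a constant $c>0$ depending only on the data; since $\ferrdPar=\ferrg^{-1/Q}$, this yields the claim. Fix $\infty$-continua $E,F\subset X$ with $x\in E$ and $y\in F$; I want to bound $\capacity_Q(E;F)$ from below by $c/\max\{1,d(x,y)\}^Q$. By Proposition~\ref{prop67e505c3}, together with the standard truncation $u\mapsto\min\{1,\max\{0,u\}\}$, it is enough to consider continuous $u\in\cal A_m(E;F)$ with $u\colon X\to[0,1]$, $u\equiv 1$ on $E$, $u\equiv 0$ on $F$, and $u$ monotone on $X\setminus(E\cup F)$; let $g$ be an upper gradient of such a $u$.

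The key preparation is to extend monotonicity of $u$ to the full space. The set $E\cup F$ is closed, and each of its connected components contains a connected component of either $E$ or $F$, which is a non-compact closed subset of $X$; hence the ambient component of $E\cup F$ is non-compact as well, so $E\cup F$ is itself an $\infty$-continuum. Since $\sup_X u=1$ is attained on $E$ and $\inf_X u=0$ on $F$, one has $E\cup F\subset\{u=\sup_X u\}\cup\{u=\inf_X u\}$, and Lemma~\ref{lem67e51b03} applied to $C=E\cup F$ gives that $u$ is monotone on all of $X$.

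I then run a chain-of-balls argument. Let $C_1$ be the constant of Corollary~\ref{cor67efffbc} and fix $r_0\in(0,R/C_1)$, where $R$ is the uniform radius in the data. By the quasi-convexity hypothesis, pick a curve $\gamma$ from $x$ to $y$ with $\Length(\gamma)\le L\,d(x,y)$, and discretize $\gamma$ as $p_0=x,p_1,\dots,p_n=y$ with $d(p_{i-1},p_i)\le r_0/2$ and $n\le 2L\,d(x,y)/r_0+1$. Applying Corollary~\ref{cor67efffbc} to each ball $B(p_{i-1},r_0)$, valid by the previous step since $u$ is monotone on the larger ball $B(p_{i-1},C_1 r_0)$, and noting $p_i\in B(p_{i-1},r_0)$, I get
\begin{equation*}
|u(p_i)-u(p_{i-1})|\le\oscillation_{B(p_{i-1},r_0)}u\le C_1\Bigl(\int_X g^Q\,\did\mu\Bigr)^{1/Q}.
\end{equation*}
Telescoping,
\begin{equation*}
1=|u(y)-u(x)|\le\sum_{i=1}^n|u(p_i)-u(p_{i-1})|\le C_1\,n\,\Bigl(\int_X g^Q\,\did\mu\Bigr)^{1/Q},
\end{equation*}
so $\int_X g^Q\,\did\mu\ge(C_1 n)^{-Q}\ge c\,\max\{1,d(x,y)\}^{-Q}$ with $c>0$ depending only on the data. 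Taking the infimum first over $u$ and $g$, then over $E$ and $F$, yields the lower bound on $\ferrg(x,y)$.

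The expected main obstacle is the monotonicity extension step: Corollary~\ref{cor67efffbc} requires $u$ to be monotone on $B(p_{i-1},C_1 r_0)$, but near the endpoints these balls unavoidably meet $E$ or $F$, so monotonicity on $X\setminus(E\cup F)$ alone is not enough. Verifying that $E\cup F$ is itself an $\infty$-continuum, which lets Lemma~\ref{lem67e51b03} promote monotonicity to all of $X$, is the key topological input that makes the chain-of-balls oscillation estimate available everywhere along $\gamma$.
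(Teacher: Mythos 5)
Your proof is correct and follows essentially the same route as the paper's: reduce to a lower bound on the Ferrand gauge, pass to monotone admissible functions via Proposition~\ref{prop67e505c3}, upgrade monotonicity to all of $X$ via Lemma~\ref{lem67e51b03} (you spell out why $E\cup F$ is an $\infty$-continuum, which the paper leaves implicit), and run a chain-of-balls oscillation estimate along a quasi-convex curve using Corollary~\ref{cor67efffbc}. The only cosmetic difference is that the paper splits into the cases $d(x,y)<r_0$ and $d(x,y)\ge r_0$, whereas your single chain argument with $n\le 2Ld(x,y)/r_0+1$ handles both at once.
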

\begin{proof}To show \eqref{eq686fb287_bis} we equivalently show 
	\begin{equation}\label{eq686fb287}
		\ferrg(x,y) \ge C \min\left\{ 1, \frac1{d(x,y)^Q} \right\} .
	\end{equation}
	By Corollary~\ref{cor67efffbc}, there exist $C_0,r_0>0$ depending only on the data of $X$ such that, for every $u:X\to\R$ monotone with upper gradient $g$ we have
	\begin{equation}\label{eq686fb595}
		\oscillation_{B(z,r)} u \le C_0 \|g\|_{L^Q(X)} ,
		\qquad\forall z\in X,\ \forall r\in(0,r_0) .
	\end{equation}
	
	Let $x,y\in X$ and assume $\ferrg(x,y)<\infty$.
	Then there are $\infty$-continua $E,F\subset X$ with $x\in E$ and $y\in F$
	such that, $\ferrg(x,y) \ge \capacity_Q(E;F) /2$.
	By Proposition~\ref{prop67e505c3}, there is $u:X\to[0,1]$ with upper gradient $g$ such that $E\subset\{u\le 0\}$, $F\subset\{u\ge1\}$, $u$ is monotone on $X\setminus(E\cup F)$, and $\capacity_Q(E;F) \ge \frac12 \int_X g^Q \did\mu$.
	In particular,
	$u$ is monotone on $X$ by Lemma~\ref{lem67e51b03} and 
	\begin{equation}\label{eq686fbccd}
		\ferrg(x,y) \ge \frac14 \|g\|_{L^Q(X)}^Q .
	\end{equation}
	
	We separate two cases, depending on the size of $d(x,y)$.
	First case: $d(x,y)<r_0$.
	Then
	\begin{align}
		1&= u(y) - u(x)
		\overset{\eqref{eq686fb595}}\le C_0 \|g\|_{L^Q(X)} 
		\overset{\eqref{eq686fbccd}}\le C_0 4^{1/Q} \ferrg(x,y)^{1/Q} .
	\end{align}
	This shows~\eqref{eq686fb287} with $C=\frac1{4C_0^Q}$.
	
	Second case: $d(x,y)\ge r_0$.
	Since $X$ is assumed to be $C_1$-quasi-convex for some $C_1\ge1$,
	there is a Lipschitz $\gamma:[0,1]\to X$ from $x$ to $y$ with length at most $\Length(\gamma) \le C_1d(x,y)$.
	Let $n = \lceil \frac{\Length(\gamma)}{ r_0 } \rceil\ge 1$, so that
	\begin{equation}\label{eq68737758}
		n
		\le \frac{\Length(\gamma)}{ r_0 } + 1
		\le  \frac{C_1 d(x,y)}{ r_0 } + \frac{d(x,y)}{r_0}
		= d(x,y) \frac{C_1+1}{r_0},
	\end{equation}
	and let $0=t_0<t_1<\dots< t_n=1$ such that, for each $j\in\{1,\dots,n\}$, 
	we have $d(\gamma(t_{j-1}) , \gamma(t_j) ) \le \Length(\gamma|_{[t_{j-1},t_j]}) < r_0$.
	Then
	\begin{align}
		1&= u(\gamma(1)) - u(\gamma(0))
		\le \sum_{j=1}^n |u(\gamma(t_{j-1})) - u(\gamma(t_j))| 
		\overset{\eqref{eq686fb595}}\le n C_0 \|g\|_{L^Q(X)} \\
		&\overset{\eqref{eq68737758}}\le d(x,y) C_0 \frac{C_1+1}{r_0} \|g\|_{L^Q(X)} 
		\overset{\eqref{eq686fbccd}}\le d(x,y) C_0\frac{C_1+1}{r_0} 4^{1/Q} \ferrg(x,y)^{1/Q} .
	\end{align}
	We conclude~\eqref{eq686fb287} with $C = \frac14\left(\frac{r_0}{C_0(C_1+1) }\right)^Q$, which only depends on the geometric data of $X$.
\end{proof}

\begin{remark}
	By Proposition~\ref{prop686fb4ae-uniform}, a space with locally bounded geometry is locally $C$-convex for some $C$.
	However, in the above Proposition~\ref{prop68720f23}, we need $X$ to be globally $C$-convex.
	In fact, this hypothesis is used in the proof exactly for the case when the two points are far apart.
	
	Quasi-convexity can be replaced with $(\ell,r)$-quasi-geodesity as in Definition~\ref{def6874b30e}, for appropriate $\ell$ and $r$.
	Indeed, in the proof of Proposition~\ref{prop68720f23},
	$C$-quasi-convexity is used to recover a sequence $\{x_j\}_{j=0}^n$ with $x_0=x$, $x_n=y$, $d(x_{j-1},x_j) < r_0$ for all $j\in\{1,\dots,n\}$, and $n\le C_2 d(x,y)$,
	where $r_0$ is the radius given by Corollary~\ref{cor67efffbc}.
	In our case, we had $C_2=\frac{C_1+1}{r_0}$; see~\eqref{eq68737758}.
	So, we need $X$ to be $(\ell,r)$-quasi-geodesic for some $r\in(0,r_0)$ and for some $\ell\in\R$.
\end{remark}

\subsection{Quasi-straight sequences}\label{subs68cf0748}
In order to give a lower bound to the Ferrand distance $\ferrdPar(x,y)$ of two points $x$ and $y$, we need to find a ``good'' pair of $\infty$-continua $E$ and $F$ containing $x$ and $y$, respectively; see Proposition~\ref{prop68720d05}.
We will construct these continua from bi-infinite sequences of points that are quasi-aligned to straight lines, and that pass close to $x$ and $y$.
We call such sequences \emph{quasi-straight sequences}; see Definition~\ref{def68766881}.
We call \emph{quasi-straightenable} a space where every pair of points lies close to a quasi-straight sequence, quantitatively and uniformly; see Definition~\ref{def68ce7443}.
The advantage of quasi-straight sequences is that their existence is (quantitatively) preserved by quasi-isometries.
For this reason, we can prove their existence in geodesic Lie groups with polynomial growth; see Proposition~\ref{prop687570ac}.

\begin{definition}\label{def68766881}
	Let $(X,d)$ be a metric space.
	A (bi-infinite) sequence $\zeta:\Z\to X$ is a \emph{$K$-quasi-straight sequence}, with $K\ge0$,
	if both $\zeta(\Z_{\le0})$ and $\zeta(\Z_{\ge0})$ are unbounded sets in $X$,
	\begin{equation}\label{eq6870e51e}
		d(\zeta_k , \zeta_{k+1}) \le 1 + K, 
		\qquad\forall k\in\Z ,
	\end{equation}
	and
	\begin{equation}\label{eq6870e535}
	\begin{gathered}
		d(\zeta_i,\zeta_j) + d(\zeta_j, \zeta_k) - d(\zeta_i,\zeta_k)
		\le K d(\zeta_i,\zeta_k) + K , \\
		\qquad\hspace{5cm}\forall i,j,k\in\Z \text{ with } i\le j\le k .
	\end{gathered}
	\end{equation}
\end{definition}
Notice that~\eqref{eq6870e535} is equivalent to 
\begin{equation}\label{eq68ce7545}
\tag{\ref{eq6870e535}'}
\begin{gathered}
	d(\zeta_i,\zeta_j) + d(\zeta_j, \zeta_k)
	\le (1+K) d(\zeta_i,\zeta_k) + K , \\
	\qquad\hspace{5cm}\forall i,j,k\in\Z \text{ with } i\le j\le k ,
\end{gathered}
\end{equation}
which we will use often.

\begin{remark}\label{rem6876686d}
	If a function $\zeta:\Z\to X$ is a quasi-isometry, where $\Z$ is endowed with the Euclidean distance, then $\zeta$ is a quasi-geodesic and a quasi-straight sequence.
	However, there are quasi-straight sequences that do not arise from quasi-geodesics.
	For example, consider $\Z\into\R$ where $\R$ is endowed with the snowflaked Euclidean distance: $d(x,y) := \sqrt{|x-y|}$, for $x,y\in\R$.
	One can easily check that $\Z$ is a quasi-straight sequence in $(\R,d)$.
	Such example appears crucially in the first Heisenberg group.
	See Lemma~\ref{lem6874bbe0} for a generalization of this example.
\end{remark}

\begin{lemma}\label{lem68711041}
	Quasi-isometries preserve quasi-straight sequences, quantitatively.
\end{lemma}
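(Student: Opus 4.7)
The plan is to take an $(L,C)$-quasi-isometry $f\colon X\to Y$ and a $K$-quasi-straight sequence $\zeta\colon\Z\to X$, and check that the composition $\eta := f\circ\zeta$ is a $K'$-quasi-straight sequence in $Y$ for some $K' = K'(L,C,K)$. Three conditions from Definition~\ref{def68766881} must be verified for $\eta$: unboundedness of the two tails, the spacing bound \eqref{eq6870e51e}, and the quasi-straightness inequality \eqref{eq6870e535}.

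The first two are essentially immediate. Unboundedness of $\eta(\Z_{\ge 0})$ and $\eta(\Z_{\le 0})$ follows from the lower estimate $d(f(x),f(y))\ge \frac{1}{L}d(x,y)-C$ in \eqref{eq68ce76a1}, applied to the already unbounded tails of $\zeta$. The spacing bound is just $d(\eta_k,\eta_{k+1}) \le L\,d(\zeta_k,\zeta_{k+1}) + C \le L(1+K) + C$, which has the form $1+K'$ as soon as $K' \ge L(1+K)+C-1$.

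The substantive step is the quasi-straightness inequality, which I would verify in its equivalent form \eqref{eq68ce7545}. For $i\le j\le k$, combining the upper quasi-isometric bounds on $d(\zeta_i,\zeta_j)$ and $d(\zeta_j,\zeta_k)$ with \eqref{eq68ce7545} applied to $\zeta$ gives
\[
d(\eta_i,\eta_j)+d(\eta_j,\eta_k) \le L(1+K)\,d(\zeta_i,\zeta_k) + LK + 2C.
\]
I would then convert $d(\zeta_i,\zeta_k)$ into $d(\eta_i,\eta_k)$ via the inverse estimate $d(\zeta_i,\zeta_k)\le L\,d(\eta_i,\eta_k)+LC$, and choose $K'$ large enough to absorb the resulting multiplicative factor $L^2(1+K)$ (so that it becomes $1+K'$) as well as the additive terms $L^2(1+K)C + LK + 2C$, taking the maximum with the constant from the spacing bound.

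There is no serious obstacle: the proof amounts to bookkeeping how constants compose under a quasi-isometry. The one point that deserves verification is that the coefficient multiplying $d(\zeta_i,\zeta_k)$ is nonnegative before substituting in the quasi-isometric lower bound, which is automatic since $L\ge 1$ and $1+K\ge 1$.
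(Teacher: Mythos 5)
Your proposal is correct and follows essentially the same route as the paper's proof: verify the three defining conditions for $f\circ\zeta$, using the lower quasi-isometric bound for unboundedness, the upper bound for the spacing estimate, and the chain $d(\eta_i,\eta_j)+d(\eta_j,\eta_k)\le L(1+K)d(\zeta_i,\zeta_k)+LK+2C\le L^2(1+K)d(\eta_i,\eta_k)+L^2C(1+K)+LK+2C$ for the quasi-straightness inequality, then taking $K'$ as the maximum of the resulting constants. The paper's proof is the same bookkeeping, ending with $K'=\max\{L(1+K)+C,\ L^2(K+1)-1,\ L^2C(K+1)+LK+2C\}$.
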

\begin{proof}
	Let $(X,d)$ and $(Y,d)$ be metric spaces, $f:X\to Y$ a $(L,C)$-quasi-isometry
	and $\zeta:\Z\to X$ a $K$-straight sequence, for some $L,C,K\ge0$.
	
	First, for every $m\in \Z$ we have
	\begin{equation}
		d(f(\zeta_m),f(\zeta_0))
		\overset{\eqref{eq68ce76a1}}\ge \frac1L d(\zeta_m,\zeta_0) - C .
	\end{equation}
	Since $\limsup_{m\to+\infty}d(\zeta_m,\zeta_0) = +\infty$
	and $\limsup_{m\to-\infty}d(\zeta_m,\zeta_0) = +\infty$,
	then $f\circ\zeta(\Z_{\ge0})$ and $f\circ\zeta(\Z_{\le0})$ are unbounded.
	
	Second, for every $k\in \Z$ we estimate
	\begin{align}
		d(f(\zeta_k) , f(\zeta_{k+1})) 
		\overset{\eqref{eq68ce76a1}}\le L d(\zeta_k,\zeta_{k+1}) + C
		\overset{\eqref{eq6870e51e}}\le L(1+K) + C.
	\end{align}
	
	Third, if $i,j,k\in \Z$ with $i\le j\le k$, then
	\begin{align}
		&d(f(\zeta_i),f(\zeta_j)) + d(f(\zeta_j),f(\zeta_k)) \\
		&\overset{\eqref{eq68ce76a1}}\le L d(\zeta_i,\zeta_j) + L d(\zeta_j, \zeta_k) + 2C \\ 
		&\overset{\eqref{eq68ce7545}}\le L (K+1) d(\zeta_i,\zeta_k) + LK + 2C \\
		&\overset{\eqref{eq68ce76a1}}\le L (K+1) (L d(f(\zeta_i),f(\zeta_k)) + LC) + LK + 2C \\
		&= L^2 (K+1) d(f(\zeta_i),f(\zeta_k)) + L^2C (K+1) + LK + 2C .
	\end{align}
	
	We conclude that $f\circ\zeta:\Z\to Y$ is a $K'$-quasi-straight sequence with
	\begin{equation}
		K' = \max\{ L(1+K) + C , L^2 (K+1) - 1 , L^2C (K+1) + LK + 2C \} .
		\qedhere
	\end{equation}
\end{proof}

\begin{definition}\label{def68ce7443}
	A metric space $(X,d)$ is \emph{$K$-quasi-straightenable} with $K\ge0$ if, for every $x,y\in X$, there exists an $K$-quasi-straight sequence $\zeta:\Z\to X$ 	with $x,y\in B(\zeta(\Z),K)$.
	We say that $X$ is \emph{quasi-straightenable} if it is $K$-quasi-straightenable for some $K\ge0$.
\end{definition}

\begin{lemma}\label{lem6874bc72}
	Quasi-straightenability is preserved under quasi-isometry, quantitatively.
\end{lemma}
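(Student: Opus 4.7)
The plan is to combine the preceding Lemma~\ref{lem68711041}, which shows that quasi-isometries push quasi-straight sequences to quasi-straight sequences with controlled constants, together with the quasi-surjectivity property of a quasi-isometry. Let $f\colon X \to Y$ be an $(L,C)$-quasi-isometry and assume $(X,d)$ is $K$-quasi-straightenable. The aim is to produce a constant $K''$, depending only on $K$, $L$, and $C$, so that $(Y,d)$ is $K''$-quasi-straightenable.

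Given any pair $y_1,y_2 \in Y$, the plan is first to use quasi-surjectivity of $f$ to choose $x_1,x_2 \in X$ with $d(f(x_i),y_i)\le C$ for $i=1,2$. Since $X$ is $K$-quasi-straightenable, there is a $K$-quasi-straight sequence $\zeta\colon \Z\to X$ and indices $k_1,k_2 \in \Z$ with $d(x_i,\zeta_{k_i}) < K$. Pushing forward by $f$, Lemma~\ref{lem68711041} gives that $f\circ\zeta\colon \Z\to Y$ is a $K'$-quasi-straight sequence, where $K'$ depends only on $K$, $L$, $C$ (as computed explicitly in the proof of that lemma). It then remains to estimate the distance from $y_i$ to the image $f\circ\zeta(\Z)$: by the triangle inequality and the upper bound of~\eqref{eq68ce76a1},
\begin{equation}
d(y_i, f(\zeta_{k_i})) \le d(y_i,f(x_i)) + d(f(x_i),f(\zeta_{k_i})) \le C + L K + C = L K + 2C.
\end{equation}
Setting $K'' := \max\{K',\, LK + 2C\}$ then yields a $K''$-quasi-straight sequence $f\circ \zeta$ with $y_1,y_2 \in B(f\circ\zeta(\Z),K'')$, proving that $Y$ is $K''$-quasi-straightenable.

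There is no real obstacle in this argument: the substantive content (stability of the quasi-straightness inequalities under a quasi-isometry) is already encapsulated in Lemma~\ref{lem68711041}, and what remains is simply bookkeeping of the quasi-surjectivity error. The only point requiring a small amount of care is to make sure the final constant $K''$ is taken as the maximum of the constant supplied by Lemma~\ref{lem68711041} and the approximation radius $LK+2C$, so that both conditions in Definition~\ref{def68ce7443} are simultaneously satisfied.
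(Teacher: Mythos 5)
Your proof is correct and follows essentially the same route as the paper: pick approximate preimages via quasi-surjectivity, straighten in $X$, push forward with Lemma~\ref{lem68711041}, and take $K''=\max\{K', LK+2C\}$. The paper additionally re-verifies unboundedness of $f\circ\zeta(\Z_{\ge0})$ and $f\circ\zeta(\Z_{\le0})$, but this is already guaranteed by Lemma~\ref{lem68711041}, so your argument is complete.
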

\begin{proof}
	Let $(X,d)$ and $(Y,d)$ be metric spaces and $f:X\to Y$ a $(L,C)$-quasi-isometry.
	Suppose that $X$ is $K$-quasi-straightenable and let $y_1,y_2\in Y$.
	Then there are $x_1,x_2\in X$ such that $d(f(x_j),y_j)\le C$ for both $j\in\{1,2\}$.
	Since $X$ is $K$-quasi-straightenable, there is a $K$-quasi-straight sequence $\zeta:\Z\to X$, and $m_1,m_2\in\Z$ with $d(\zeta(m_j),x_j)\le K$ for both $j\in\{1,2\}$.
	By Lemma~\ref{lem68711041}, the function $f\circ\zeta:\Z\to Y$ is a $K'$-quasi-straight sequence in $Y$, for some $K'\in\R$ depending on $K, L, C$.
	Notice that, for both $j\in\{1,2\}$,
	\begin{equation}
		d(f\circ\zeta(m_j),y_j)
		\le d(f\circ\zeta(m_j),f(x_j)) + d(f(x_j) , y_j) 
		\overset{\eqref{eq68ce76a1}}\le L d(\zeta(m_j),x_j) + 2C
		\le LK + 2C .
	\end{equation}
	It remains to show that $f\circ\zeta(\Z_{\ge0})$ and $f\circ\zeta(\Z_{\le0})$ are unbounded.
	Notice that, if $m\in\Z$, then
	\begin{equation}
		d(f\circ\zeta(m) , f\circ\zeta(0))
		\overset{\eqref{eq68ce76a1}}\ge \frac1L d(\zeta(m),\zeta(0)) - C.
	\end{equation}
	Since $\limsup_{m\to+\infty}d(\zeta(m),\zeta(0)) = +\infty$
	and $\limsup_{m\to-\infty}d(\zeta(m),\zeta(0)) = +\infty$,
	then $f\circ\zeta(\Z_{\ge0})$ and $f\circ\zeta(\Z_{\le0})$ are unbounded.
	
	We conclude that $Y$ is $K''$-quasi-straightenable with 
	\begin{equation}
		K'' = \max\{K',LK + 2C\} ,
	\end{equation}
	where $K'$ is given by Lemma~\ref{lem68711041}.
\end{proof}

\subsection{Lower bound to the parabolic Ferrand distance}
We use quasi-straight sequences to give a lower bound to the parabolic Ferrand distance $\ferrdPar$. The key property is an upper bound in the capacities of some distinguished sets; see  Proposition~\ref{prop68720d05}.
For the proof, we will need the following lemma.

\begin{lemma}\label{lem68712318}
	Let $(X,d)$ be a metric space and $\zeta:\Z\to X$ a $K$-quasi-straight sequence, with $K\ge0$.
	Let $m,n\in\Z$ with $m\le n$.
	Then, for every $i,j\in\Z$ with $i\le m$ and $n\le j$,
	\begin{equation}\label{eq68712307}
		d(\zeta_i,\zeta_j) 
		\ge \frac{1}{(1+K)^2} d(\zeta_m,\zeta_n) - \frac{K(K+2)}{(1+K)^2} .
	\end{equation}
\end{lemma}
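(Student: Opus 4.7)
The plan is a straightforward two-step application of the quasi-straightness triangle condition \eqref{eq68ce7545}, which for $a\le b\le c$ in $\Z$ reads
\[
d(\zeta_a,\zeta_b) + d(\zeta_b,\zeta_c) \le (1+K)\,d(\zeta_a,\zeta_c) + K.
\]
The key observation is that under the hypothesis $i\le m\le n\le j$, we have two nested chains of ordered indices: $(m,n,j)$ and $(i,m,j)$. Applying \eqref{eq68ce7545} to each of them, and discarding a nonnegative term, will contract $d(\zeta_m,\zeta_n)$ down to $d(\zeta_i,\zeta_j)$ at the cost of two factors of $(1+K)$ and two additive errors of $K$.

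Concretely, first I would apply \eqref{eq68ce7545} to the triple $m\le n\le j$ to get
\[
d(\zeta_m,\zeta_n) \le d(\zeta_m,\zeta_n)+d(\zeta_n,\zeta_j) \le (1+K)\,d(\zeta_m,\zeta_j) + K,
\]
so that $d(\zeta_m,\zeta_j)\ge \tfrac{1}{1+K}d(\zeta_m,\zeta_n) - \tfrac{K}{1+K}$. Then I would apply \eqref{eq68ce7545} a second time to the triple $i\le m\le j$, obtaining $d(\zeta_m,\zeta_j) \le (1+K)\,d(\zeta_i,\zeta_j) + K$, hence $d(\zeta_i,\zeta_j)\ge \tfrac{1}{1+K}d(\zeta_m,\zeta_j) - \tfrac{K}{1+K}$. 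Chaining these two inequalities yields
\[
d(\zeta_i,\zeta_j) \ge \frac{1}{(1+K)^2}\,d(\zeta_m,\zeta_n) - \frac{K}{(1+K)^2} - \frac{K}{1+K}.
\]

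The only remaining step is an algebraic simplification of the additive error: combining the two terms over the common denominator $(1+K)^2$ gives $K + K(1+K) = K(K+2)$, producing exactly $-\tfrac{K(K+2)}{(1+K)^2}$ as desired. There is no real obstacle here; the lemma is a purely combinatorial consequence of iterating the approximate additivity \eqref{eq68ce7545}, and the fact that we only need to use the weakened forms (discarding the nonnegative terms $d(\zeta_i,\zeta_m)$ and $d(\zeta_n,\zeta_j)$, and not using \eqref{eq6870e51e}) means the bound depends solely on $K$ and not on the gaps $m-i$ or $j-n$, which is precisely what makes the estimate useful later when building unbounded continua containing $\zeta_m$ and $\zeta_n$.
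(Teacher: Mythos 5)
Your proof is correct and follows essentially the same route as the paper's: two applications of the approximate additivity \eqref{eq68ce7545} together with discarding the nonnegative terms $d(\zeta_i,\zeta_m)$ and $d(\zeta_n,\zeta_j)$, yielding the identical constants $(1+K)^2$ and $K(K+2)$. The only (immaterial) difference is that you apply \eqref{eq68ce7545} to the triples $(m,n,j)$ and $(i,m,j)$, whereas the paper uses $(i,m,n)$ and $(i,n,j)$.
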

\begin{proof}
	Let $i\le m\le n\le j$ be integers.
	Then
	\begin{align}
		d(\zeta_m,\zeta_n)
		&\le d(\zeta_i,\zeta_m) + d(\zeta_m,\zeta_n) + d(\zeta_n,\zeta_j) \\
		[\text{by~\eqref{eq68ce7545} and $K\ge0$}]
		&\le (K+1) d(\zeta_i,\zeta_n) + K + (K+1) d(\zeta_n,\zeta_j) \\
		[\text{by~\eqref{eq68ce7545}}]
		&\le (K+1) ( (K+1)d(\zeta_i,\zeta_j) + K) + K \\
		&= (K+1)^2 d(\zeta_i,\zeta_j) + K(K+2) .
	\end{align}
	Rearranging, we get~\eqref{eq68712307}.
\end{proof}


\begin{proposition}\label{prop68720d05}
	Let $(X,d,\mu)$ be a metric measure. 
	Let $N\in(\dimgr(X),+\infty)$, that is, there is $C_N>0$ such that
	\begin{equation}\label{eq686f6ed9}
		\mu(B(x,R)) \le C_N R^N , \qquad 	\forall x\in X,\ \forall R\ge1 .
	\end{equation}
	For every $K\ge0$ and $Q\in(N,+\infty)$
	there are $\bar R>0$ and $C>0$ such that the following holds:
	Let $x,y\in X$ with $d(x,y)\ge \bar R$.
	Let $\zeta:\Z\to X$ be a $K$-straight sequence with $d(\zeta_m,x)\le K$ and $d(\zeta_n,y)\le K$ for some $m,n\in\Z$, $m\le n$.	
	Let $E,F\subset X$ be closed sets with $x\in E$, $y\in F$, and with 
	$E\subset B(\zeta(\Z_{\le m}),K)$ and $F\subset B(\zeta(\Z_{\ge n}), K)$.
	Then 
	\begin{equation}\label{eq68713725}
		\capacity_Q(E;F) \le C \frac1{d(x,y)^{Q-N}} .
	\end{equation}
	Both $\bar R$ and $C$ depend only on the constants $Q,N,C_N,K$.
\end{proposition}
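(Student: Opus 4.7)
The plan is to construct an explicit admissible function for the capacitor $(E;F)$ and estimate its $Q$-energy via a layer-cake argument. Write $D := d(x,y)$ and introduce the enlarged sets
\[
E^{-} := E \cup \zeta(\Z_{\le m}), \qquad F^{+} := F \cup \zeta(\Z_{\ge n}),
\]
which are contained in the $K$-neighborhoods of the two halves of the sequence. Combining Lemma~\ref{lem68712318} with $d(\zeta_m,x),\, d(\zeta_n,y)\le K$ and $d(\zeta_m,\zeta_n)\ge D-2K$, I fix a constant $c_1 = c_1(K) > 0$ and a threshold $\bar R_0 = \bar R_0(K)$ such that $D \ge \bar R_0$ implies $\dist(E^{-}, F^{+}) \ge c_1 D$. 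Writing $\rho_E(z) := d(z, E^{-})$ and $\rho_F(z) := d(z, F^{+})$, the candidate admissible function is
\[
u(z) := \frac{\rho_E(z)}{\rho_E(z) + \rho_F(z)}.
\]
Since $\rho_E+\rho_F \ge c_1 D>0$ everywhere on $X$, the function $u:X\to[0,1]$ is well defined and locally Lipschitz, and satisfies $u\equiv 0$ on $E$ and $u\equiv 1$ on $F$, hence $u\in \cal A(E;F)$.

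A direct computation using that $\rho_E, \rho_F$ are $1$-Lipschitz yields, for $z_1,z_2\in X$ with $a_i := \rho_E(z_i)$ and $b_i := \rho_F(z_i)$,
\[
|u(z_1) - u(z_2)| = \frac{|a_1 b_2 - a_2 b_1|}{(a_1+b_1)(a_2+b_2)} \le \frac{(a_1+b_1)\,d(z_1,z_2)}{(a_1+b_1)(a_2+b_2)} = \frac{d(z_1,z_2)}{\rho_E(z_2) + \rho_F(z_2)},
\]
so $g(z) := (\rho_E(z) + \rho_F(z))^{-1}$ is an upper gradient of $u$.

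The crux is the measure estimate
\[
\mu\bigl(\{\rho_E + \rho_F < t\}\bigr) \le C'\, t^N \qquad \text{for all } t \ge c_1 D,
\]
with $C'$ depending only on $C_N$ and $K$. Indeed, if $z$ lies in this set, then $d(z,\zeta_i) < t+K$ for some $i\le m$ and $d(z,\zeta_j) < t+K$ for some $j\ge n$, so $d(\zeta_i,\zeta_j) < 2(t+K)$; feeding this into the quasi-straight inequality~\eqref{eq68ce7545} applied to $i\le m\le j$ produces the key confinement $d(\zeta_m,\zeta_i) \le 2(1+K)(t+K) + K$, so that $z \in B(\zeta_m, C_K\, t)$ for a constant $C_K$ depending only on $K$ (after possibly enlarging $\bar R_0$ so that $C_K t \ge 1$ throughout $t\ge c_1 D$). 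The growth condition~\eqref{eq686f6ed9} then yields the claimed bound. Since $\{\rho_E + \rho_F < t\}$ is empty for $t < \dist(E^{-},F^{+})$, the layer-cake formula together with the change of variable $s = 1/t$ gives
\[
\int_X g^Q \did\mu = Q \int_0^{\infty} t^{-Q-1}\, \mu\bigl(\{\rho_E+\rho_F<t\}\bigr)\, \did t \le Q\, C' \int_{c_1 D}^{\infty} t^{N-Q-1}\, \did t = \frac{Q\, C'\, c_1^{N-Q}}{Q-N}\, D^{N-Q},
\]
where convergence is exactly due to $Q > N$. This proves~\eqref{eq68713725}, with all constants depending only on $Q, N, C_N, K$.

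The main obstacle is the measure estimate above: without the quasi-additivity~\eqref{eq68ce7545} forced by the quasi-straight hypothesis, the sequence $\zeta$ could wander arbitrarily and the one-sided volume bound $\mu(B(x,R))\le C_N R^N$ alone would not suffice; the quasi-straight property is precisely what confines $\{\rho_E+\rho_F<t\}$ to a single ball $B(\zeta_m, O(t))$ around the chosen center.
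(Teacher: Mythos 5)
Your proof is correct and rests on the same ingredients as the paper's argument: the distance-quotient test function with upper gradient $(\rho_E+\rho_F)^{-1}$, and the two consequences of quasi-straightness (via Lemma~\ref{lem68712318} and~\eqref{eq68ce7545}) that the two tails of $\zeta$ are $\gtrsim d(x,y)$ apart and that any point close to both tails is confined to a ball of comparable radius centered on the sequence. The only difference is bookkeeping: you enlarge the capacitor to $(E^{-};F^{+})$ and run a single layer-cake computation over the sublevel sets of $\rho_E+\rho_F$, whereas the paper keeps $(E;F)$, proves the corresponding pointwise lower bounds~\eqref{eq686f5a9d} and~\eqref{eq686f617d}, and splits the energy integral into $B(o,R_{xy})$ and its complement; both routes give~\eqref{eq68713725} with constants depending only on $Q,N,C_N,K$.
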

\begin{proof}
	By Lemma~\ref{lem68712318}, we have $E\cap F = \emptyset$, for $\bar R$ large enough,
	say $\bar R$ such that
	\begin{equation}\label{eq6871305c}
		\frac{1}{(1+K)^2} (\bar R -2K) - \frac{K(K+2)}{(1+K)^2} > 2K .
	\end{equation}
	
	For $S\subset X$ and $z\in X$, define $d_S(z) := \dist(z,S) = \inf\{d(z,k):k\in S\}$.
	Thus, define $u:X\to[0,1]$ by
	\begin{equation}
		u(z) := \frac{ d_E(z) }{d_E(z) + d_F(z)} ,
		\qquad\forall z\in X.
	\end{equation}
	Notice that, since $E\cap F=\emptyset$, then $u$ is well defined for all $z\in X$,
	with $E\subset\{u=0\}$ and $F\subset\{u=1\}$.
	By Lemma~\ref{lem6842ac4a}, an upper gradient of $u$ is the function $g:X\to(0,+\infty)$,
	\begin{equation}
		g(z) := \frac{1}{d_E(z) + d_F(z)} ,
		\qquad\forall z\in X.
	\end{equation}
	Let $\ell\in[m,n]\cap\Z$ and set $o:=\zeta(\ell)$.
	
	We claim that, for every $z\in X$,
	\begin{equation}\label{eq686f5a9d}
		d_E(z) + d_F(z) 
		\ge \frac{2}{K+2} d(o,z) - \frac{K(2K+5)}{K+2} .
	\end{equation}
	Indeed, if $z\in X$, $x'\in E$ and $y'\in F$, then there are $i\in\Z_{\le m}$ and $j\in\Z_{\ge n}$ with $d(\zeta_i,x') \le K$ and $d(\zeta_j,y')\le K$.
	Hence,
	\begin{align*}
	2d(o,z)
		&\le d(o,\zeta_i) + d(\zeta_i,x') + d(x',z)
			+ d(o,\zeta_j) + d(\zeta_j,y') + d(y',z) \\
		&\overset{\eqref{eq6870e535}}\le (K+1) d(\zeta_i,\zeta_j) + 3K + d(x',z) + d(y',z) \\
		&\le (K+1) ( d(\zeta_i,x') + d(x',z) + d(z,y') + d(y',\zeta_j) ) + 3K + d(x',z) + d(y',z) \\
		&\le (K+2) (d(x',z) + d(y',z)) + (3K + (K+1)2K) .
	\end{align*}
	Taking the infimum over $x'\in E$ and $y'\in F$, and then rearranging the terms, we get~\eqref{eq686f5a9d}.
	
	Next, we claim that, for every $z\in X$,
	\begin{equation}\label{eq686f617d}
		d_E(z) + d_F(z) 
		\ge \frac{d(x,y)}{K+1} - \left(2K + \frac{4K}{K+1}\right) - 2 d(o,z) .
	\end{equation}
	Indeed,  if $z\in X$, $x'\in E$ and $y'\in F$, then there are $i\in\Z_{\le m}$ and $j\in\Z_{\ge n}$ with $d(\zeta_i,x') \le K$ and $d(\zeta_j,y')\le K$.
	Hence,
	\begin{align*}
		&d(z,x') + d(z,y') \\
		&\ge d(\zeta_i,o) - d(x',\zeta_i) - d(o,z)
			+ d(\zeta_j,o) - d(y',\zeta_j) - d(o,z) \\
		&\overset{\eqref{eq6870e535}}\ge \frac{ d(\zeta_i,\zeta_m) + d(\zeta_m,o) }{ K+1 } - \frac{K}{K+1}
			+ \frac{ d(\zeta_j,\zeta_n) + d(\zeta_n,o) }{ K+1 } - \frac{K}{K+1}
			- 2 d(o,z) - 2K \\
		&= \frac{ d(\zeta_i,\zeta_m) + d(\zeta_m,o) + d(\zeta_j,\zeta_n) + d(\zeta_n,o) }{ K+1 }
			- 2 \frac{K}{K+1} - 2K - 2 d(o,z) \\
		&\ge \frac{ d(\zeta_m,\zeta_n) }{ K+1 } - 2 \frac{K}{K+1} - 2K - 2 d(o,z) \\
		&\ge \frac{ d(x,y) }{ K+1 } - \left(2K + \frac{4K}{K+1}\right) - 2 d(o,z) .
	\end{align*}
	Taking the infimum over $x'\in E$ and $y'\in F$, we get~\eqref{eq686f617d}.
		
	From~\eqref{eq686f617d} we obtain that, if
	\begin{equation}\label{eq686f6311}
		R_{xy} := \frac{d(x,y)}{4(K+1)} - \frac12\left(2K + \frac{4K}{K+1}\right),
	\end{equation}
	then
	\begin{equation}\label{eq686f636e}
		\forall z\in B(o,R_{xy}),\qquad
		d_E(z) + d_F(z) \ge \frac{d(x,y)}{2(K+1)} .
	\end{equation}
	
	Finally, we compute $\int_X g^Q\did\mu$ to get an upper bound to $\capacity_Q(E;F)$.
	For this, we assume
	\begin{equation}\label{eq686f7409}
		d(x,y) \ge \bar R 
	\end{equation}
	for some $\bar R>0$ to be chosen large enough.
	First, $\bar R$ must satisfy~\eqref{eq6871305c}, so that $E\cap F = \emptyset$.
	Second, $\bar R$ must ensure
	\begin{equation}\label{eq686f7351}
		R_{xy} \ge 1,
	\end{equation}
	thus $\bar R \ge 2(K+1)\left(2K + \frac{4K}{K+1}\right) $.
	Third, we require $\bar R$ to be large enough to ensure
	\begin{equation}\label{eq686f7391}
		R_{xy} \ge K(2K+5) ,
	\end{equation}
	hence $\bar R \ge 4(K+1) (K(2K+5) + K + 2K/(K+1))$.
	Condition~\eqref{eq686f7391} applied to~\eqref{eq686f5a9d} implies that
	\begin{equation}\label{eq687133d6}
		\forall z\in X\setminus B(o,R_{xy}),\qquad
		d_E(z) + d_F(z) \ge \frac{1}{K+2} d(o,z) .
	\end{equation}
	
	Therefore,  we have
	\begin{align}
		&\int_X g(z)^Q\did\mu(z) \\
		&= \int_{B(o,R_{xy})} \frac1{(d_E(z)+d_F(z))^Q} \did \mu(z)
			+ \int_{X\setminus B(o,R_{xy})} \frac1{(d_E(z)+d_F(z))^Q} \did \mu(z) \\
		&\overset{\eqref{eq686f636e}\&\eqref{eq687133d6}}\le  
			\int_{B(o,R_{xy})} \left( \frac{2(K+1)}{d(x,y)} \right)^Q \did \mu(z)
			+  \int_{X\setminus B(o,R_{xy})} \left(\frac{K+2}{d(o,z)}\right)^Q \did\mu(z) \\
		&= \left( \frac{2(K+1)}{d(x,y)} \right)^Q \mu(B(o,R_{xy}))
			+ (K+2)^Q \int_0^{1/R_{xy}^Q} \mu(\{z\in X\setminus B(o,R_{xy}) : \frac1{d(o,z)^Q} >t \}) \did t \\
		&= \left( \frac{2(K+1)}{d(x,y)} \right)^Q \mu(B(o,R_{xy}))
			+ (K+2)^Q \int_0^{1/R_{xy}^Q} \mu(\{z : R_{xy}\le d(o,z) < 1/t^{1/Q} \}) \did t \\
		&\le \left( \frac{2(K+1)}{d(x,y)} \right)^Q \mu(B(o,R_{xy}))
			+ (K+2)^Q \int_0^{1/R_{xy}^Q} \mu(B(o,t^{-1/Q})) \did t \\
		&\overset{\eqref{eq686f6ed9}\&\eqref{eq686f7351}}\le \left( \frac{2(K+1)}{d(x,y)} \right)^Q C_N R_{xy}^N
			+ (K+2)^Q \int_0^{R_{xy}^{-Q}} C_N t^{-N/Q} \did t \\
		&\overset{[N<Q]}= \left( \frac{2(K+1)}{d(x,y)} \right)^Q C_N R_{xy}^N
			+ (K+2)^Q \frac{Q}{Q-N} C_N \left( t^{1-N/Q} \right|_{t=0}^{R_{xy}^{-Q}} \\
		&= \left( \frac{2(K+1)}{d(x,y)} \right)^Q C_N R_{xy}^N
			+ (K+2)^Q \frac{Q}{Q-N} C_N R_{xy}^{N-Q} \\
		&\le \left( 2^Q(K+1)^Q  C_N \left( \frac{R_{xy}}{d(x,y)} \right)^N
			+ (K+2)^Q \frac{Q}{Q-N} C_N \left( \frac{R_{xy}}{d(x,y)} \right)^{N-Q} \right) d(x,y)^{N-Q} .
	\end{align}
	Since
	\begin{equation}
		\frac{1}{4(K+1)} - \frac12\left(2K + \frac{4K}{K+1}\right) \frac{1}{\bar R}
		\le \frac{R_{xy}}{d(x,y)} 
		\le \frac{1}{4(K+1)} ,
	\end{equation}
	we can also ensure 
	$\frac{1}{8(K+1)}\le \frac{R_{xy}}{d(x,y)} \le \frac{1}{4(K+1)}$ if $\bar R$ is large enough, and thus
	we have $C$ and $\bar R$ such that, if $d(x,y)\ge\bar R$, then
	\begin{equation}
		\capacity_Q(E,F) 
		\le \int_X g(z)^Q\did\mu(z) 
		\le C d(x,y)^{N-Q} ,
	\end{equation}
	that is,~\eqref{eq68713725} holds.
\end{proof}

\begin{lemma}\label{lem6842ac4a}
	Let $(X,d)$ be a metric space.
	For $K\subset X$ and $x\in X$, define $d_K(x) := \dist(x,K) = \inf\{d(x,k):k\in K\}$.
	Let $E,F\subset X$ be such that $\closure(E)\cap\closure(F)=\emptyset$.
	Define $u:X\to[0,1]$ by
	\begin{equation}\label{eq6842ac15}
		u(x) := \frac{d_E(x)}{d_E(x) + d_F(x)} ,
		\qquad\forall x\in X.
	\end{equation}
	Then $u$ is locally Lipschitz, and in particular, for every $x,y\in X$
	\begin{equation}\label{eq6842a137}
		|u(x) - u(y)| \le  d(x,y) \max\left\{ \frac{1}{d_E(y) + d_F(y)} , \frac{1}{d_E(x) + d_F(x)} \right\} .
	\end{equation}
	Moreover, for every $x\in X$,
	$u(x) = 0$ if and only if $x\in \closure(E)$, 
	and $u(x) = 1$ if and only if $x\in \closure(F)$.
	Finally, the function $g(x) := \frac{1}{d_E(x) + d_F(x)}$
	is an upper gradient of~$u$.
\end{lemma}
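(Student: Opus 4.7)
The plan is to verify the three claims of the lemma in order. First, the well-definedness of $u$ and the characterization $u^{-1}(0)=\closure(E)$, $u^{-1}(1)=\closure(F)$ both rely on the observation that, since distance functions vanish exactly on the closure of the set, $d_E(x)+d_F(x)=0$ would force $x\in\closure(E)\cap\closure(F)=\emptyset$; hence $d_E+d_F>0$ everywhere on $X$, and $u(x)=0\iff d_E(x)=0\iff x\in\closure(E)$, and symmetrically $u(x)=1\iff d_F(x)=0\iff x\in\closure(F)$.

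For the quantitative bound \eqref{eq6842a137}, I would perform a direct algebraic manipulation:
\begin{equation}
u(x)-u(y) = \frac{d_E(x)\,d_F(y)-d_E(y)\,d_F(x)}{(d_E(x)+d_F(x))(d_E(y)+d_F(y))}.
\end{equation}
Adding and subtracting $d_E(x)d_F(x)$ in the numerator yields
\begin{equation}
d_E(x)d_F(y)-d_E(y)d_F(x)=d_E(x)(d_F(y)-d_F(x))+d_F(x)(d_E(x)-d_E(y)).
\end{equation}
Using the standard $1$-Lipschitz estimates $|d_E(x)-d_E(y)|,\,|d_F(x)-d_F(y)|\le d(x,y)$, the numerator is bounded by $(d_E(x)+d_F(x))\,d(x,y)$, so
\begin{equation}
|u(x)-u(y)|\le\frac{d(x,y)}{d_E(y)+d_F(y)}.
\end{equation}
By the symmetric manipulation (adding and subtracting $d_E(y)d_F(y)$ instead), one gets $|u(x)-u(y)|\le d(x,y)/(d_E(x)+d_F(x))$. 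Taking either of these bounds (or the minimum) gives the claimed inequality with the maximum. Local Lipschitz continuity follows immediately since $d_E+d_F$ is continuous and bounded below on compact subsets of $X$.

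For the upper gradient property, I would take an arbitrary rectifiable curve $\gamma:[0,1]\to X$ and, up to reparameterization by arclength, assume $\gamma:[0,L]\to X$ is $1$-Lipschitz with $L=\Length(\gamma)$. The composition $s\mapsto d_E(\gamma(s))+d_F(\gamma(s))$ is continuous on the compact set $[0,L]$ and strictly positive, since a zero would place $\gamma(s)\in\closure(E)\cap\closure(F)=\emptyset$; hence $g\circ\gamma$ is continuous and bounded. For any partition $0=s_0<s_1<\cdots<s_n=L$, bound \eqref{eq6842a137} combined with $d(\gamma(s_i),\gamma(s_{i-1}))\le s_i-s_{i-1}$ gives
\begin{equation}
|u(\gamma(s_i))-u(\gamma(s_{i-1}))|\le (s_i-s_{i-1})\,\max\{g(\gamma(s_i)),g(\gamma(s_{i-1}))\}.
\end{equation}
Summing over $i$ and letting the mesh tend to zero, the right-hand side is a Riemann-type sum that converges to $\int_0^L g(\gamma(s))\,\did s=\int_\gamma g$ by continuity of $g\circ\gamma$. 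The triangle inequality yields $|u(\gamma(L))-u(\gamma(0))|\le\int_\gamma g$, which is the upper gradient inequality \eqref{eq6804a338}.

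The only real subtlety is the passage from the discrete inequality to the integral: one must know that $g\circ\gamma$ is a bounded continuous function on $[0,L]$, so that both the Riemann sums converge and the comparison with $\int_\gamma g$ is rigorous. This is guaranteed by the hypothesis $\closure(E)\cap\closure(F)=\emptyset$ together with the compactness of the image of $\gamma$. Once this is in place, all three parts of the lemma follow routinely from the algebraic identity and the $1$-Lipschitzness of the distance functions.
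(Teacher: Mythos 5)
Your proof is correct, and the core of it — the algebraic identity for $u(x)-u(y)$ combined with the $1$-Lipschitzness of $d_E$ and $d_F$ to get \eqref{eq6842a137}, and the lower bound on $d_E+d_F$ near any fixed point for local Lipschitzness — is the same computation as in the paper. The only divergence is the final step: the paper disposes of the upper-gradient claim by citing the fact that the pointwise Lipschitz constant of a locally Lipschitz function is an upper gradient (and \eqref{eq6842a137} bounds that pointwise constant by $g$), whereas you prove it from scratch by arclength reparameterization, the telescoping estimate over partitions, and uniform continuity of $g\circ\gamma$ on the compact image of the curve. Your route is more self-contained and correctly identifies the one point needing care (positivity and continuity of $d_E+d_F$ along the curve, guaranteed by $\closure(E)\cap\closure(F)=\emptyset$); the paper's route is shorter at the cost of an external reference.
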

\begin{proof}
	For every $x,y\in X$, we have
	\begin{align}
		u(x) - u(y)
		&= \frac{d_E(x)}{d_E(x) + d_F(x)} - \frac{d_E(y)}{d_E(y) + d_F(y)} \\
		&= \frac{ d_E(x) (d_E(y) + d_F(y)) - d_E(y) (d_E(x) + d_F(x)) }{ (d_E(x) + d_F(x)) (d_E(y) + d_F(y)) } \\
		&= \frac{ d_E(x)  d_F(y) - d_E(y) d_F(x) }{ (d_E(x) + d_F(x)) (d_E(y) + d_F(y)) } \\
		&\le \frac{ d_E(x)  (d_F(x)+d(x,y)) - (d_E(x)-d(x,y)) d_F(x) }{ (d_E(x) + d_F(x)) (d_E(y) + d_F(y)) } \\
		&= d(x,y) \frac{ d_E(x) + d_F(x) }{ (d_E(x) + d_F(x)) (d_E(y) + d_F(y)) } \\
		&= d(x,y) \frac{1}{d_E(y) + d_F(y)} .
	\end{align}
	This shows~\eqref{eq6842a137}.
	
	If $z\in X$, $r>0$ and $x\in B(z,r)$, then $d_E(x) + d_F(x) \ge d_E(z) + d_F(z) - 2r \ge \max\{d_E(z) , d_F(z)\} - 2r $.
	Since $\closure(E)\cap\closure(F)=\emptyset$, then $\max\{d_E(z) , d_F(z)\}>0$.
	Therefore, there are $r>0$ and $c>0$ such that $d_E(x) + d_F(x)\ge c$ for all $x\in B(z,r)$.
	From~\eqref{eq6842a137}, we conclude that $u$ is $1/c$ Lipschitz on $B(z,r)$.
	
	The last statements are easy.
	In particular, recall that the pointwise Lipschitz constant is an upper gradient; see~\cite[Proposition 1.11]{MR1708448}.
\end{proof}


\subsection{QC implies QI in the strict parabolic case}

\begin{proposition}\label{prop68755b1a}
	Let $(X,d,\mu)$ be a metric measure space with uniformly locally $Q$-bounded geometry and growth dimension $N$.
	Assume $1\le N<Q$ and that $X$ is quasi-convex and quasi-straightenable.
Then, there is $C>1$ such that the parabolic Ferrand distance $\ferrdPar$ satisfies
	\begin{equation}\label{eq68d0ee66}
		\frac1C d(x,y)^{1-\frac{N}{Q}}
		\le \ferrdPar(x,y) 
		\le C d(x,y) ,
		\qquad\forall x,y\in X\text{ with }d(x,y)\ge C.
	\end{equation}
\end{proposition}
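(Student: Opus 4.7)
The plan is to prove the upper and lower bounds in~\eqref{eq68d0ee66} separately, each reducing to one of the propositions already established.

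The upper bound is essentially immediate from Proposition~\ref{prop68720f23}: since $(X,d,\mu)$ has uniformly locally $Q$-bounded geometry with $Q>1$ and is globally quasi-convex, there is a constant $C_0>0$, depending only on the geometric data, such that $\ferrdPar(x,y) \le C_0\max\{1,d(x,y)\}$ for every $x,y\in X$. Restricting to $d(x,y)\ge 1$ yields $\ferrdPar(x,y) \le C_0\, d(x,y)$, which is the desired half of~\eqref{eq68d0ee66}.

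For the lower bound, I will build, for each pair $x,y$ with $d(x,y)$ large, a pair of $\infty$-continua $E\ni x$, $F\ni y$ whose $Q$-capacity is controlled by Proposition~\ref{prop68720d05}. By the $K$-quasi-straightenability of $X$ (for a universal $K\ge 0$), there exist a $K$-quasi-straight sequence $\zeta:\Z\to X$ and indices $m\le n$ with $d(\zeta_m,x)\le K$ and $d(\zeta_n,y)\le K$. Let $C_1\ge 1$ be a global quasi-convexity constant of $X$. Join consecutive points $\zeta_k,\zeta_{k+1}$ by curves of length at most $C_1(1+K)$, and $x$ to $\zeta_m$ (respectively $y$ to $\zeta_n$) by a curve of length at most $C_1K$. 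Define $E$ as the closure of the union of the $x$-to-$\zeta_m$ curve together with all curves indexed by $k\le m-1$, and $F$ analogously for $k\ge n$. By construction, $E$ and $F$ are each closed, connected, and unbounded (since $\zeta(\Z_{\le m})$ and $\zeta(\Z_{\ge n})$ are unbounded), hence $\infty$-continua; moreover, $E\subset B(\zeta(\Z_{\le m}),K')$ and $F\subset B(\zeta(\Z_{\ge n}),K')$ with $K':=\max\{K,C_1(1+K)\}+1$, which depends only on $K$ and $C_1$.

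Since $\zeta$ is trivially also a $K'$-quasi-straight sequence, Proposition~\ref{prop68720d05} applied with parameter $K'$ yields $\bar R>0$ and $C_2>0$ (depending only on $Q,N,K'$ and the growth constants) such that $\capacity_Q(E;F) \le C_2\, d(x,y)^{N-Q}$ whenever $d(x,y)\ge\bar R$. By Definition~\ref{def686fc1ad},
\begin{equation}
\ferrdPar(x,y) \ge \capacity_Q(E;F)^{-1/Q} \ge C_2^{-1/Q}\, d(x,y)^{1-N/Q}.
\end{equation}
Choosing $C>1$ large enough to dominate $C_0$, $C_2^{1/Q}$, and $\bar R$ yields~\eqref{eq68d0ee66}. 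The delicate point of the argument is verifying that the assembled sets $E,F$ are genuinely $\infty$-continua with the required neighborhood inclusions: the interplay between global quasi-convexity (supplying short curves between consecutive points of $\zeta$) and quasi-straightenability (supplying the alignment required by Proposition~\ref{prop68720d05}) makes this routine, while passing from $K$ to $K'$ absorbs the neighborhood expansion caused by the connecting curves.
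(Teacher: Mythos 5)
Your proof is correct and follows essentially the same route as the paper: the upper bound is read off from Proposition~\ref{prop68720f23}, and the lower bound is obtained by thickening the two tails of a quasi-straight sequence into $\infty$-continua via quasi-convexity and then invoking Proposition~\ref{prop68720d05} with an enlarged quasi-straightness parameter. Your construction of $E$ and $F$ is in fact spelled out in more detail than in the paper, which simply asserts their existence from quasi-convexity.
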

\begin{proof}
We equivalently check that there is $C>1$ such that for the Ferrand gauge  $\ferrg$ we have
	\begin{equation}\label{eq689ef272}
		\frac1C d(x,y)^{-Q}
		\le \ferrg(x,y) 
		\le C d(x,y)^{N-Q} ,
		\qquad\forall x,y\in X\text{ with }d(x,y)\ge C.
	\end{equation}
	The first inequality in~\eqref{eq689ef272} follows from Proposition~\ref{prop68720f23}.
	The second inequality is shown as follows.
	Let $C\ge1$ be such that $X$ is $C$-quasi-convex.
	Let $K\ge0$ be the constant for which $X$ is $K$-quasi-straightenable.
	Let $\bar R$ and $\bar C$ be the constants given by Proposition~\ref{prop68720d05} for $C(K+1)$-quasi-straight sequences, with $N$ being the growth dimension and $Q$ the Hausdorff dimension.
	Fix $x,y\in X$ with $d(x,y)\ge\bar R$.
	Let $\zeta$ be a $K$-straight sequence with $x,y\in B(\zeta(\Z),K)$.
	Quasi-convexity easily imply that there are $\infty$-continua $E$ and $F$ with $x\in E\subset B(\zeta(\Z_{\le m}),C(K+1))$ and $y\in F\subset B(\zeta(\Z_{\ge n}),C(K+1))$ for some $m,n$ in $\Z$.
	Proposition~\ref{prop68720d05} implies that
	\begin{equation}
		\ferrg(x,y) 
		\le \capacity_Q(E,F) 
		\overset{\eqref{eq68713725}}\le \bar C d(x,y)^{N-Q} .
	\end{equation}
\end{proof}

\begin{theorem}\label{thm68755ab4}
	Let $X_1$ and $X_2$ be metric measure spaces with uniformly locally $Q$-bounded geometry,
	with growth dimensions $N_1$ and $N_2$, respectively,
	and both spaces quasi-straightenable, and quasi-geodesic.
	
	If $Q>1$, $N_1<Q$ and $N_2<Q$, then every quasi-conformal map $X_1\to X_2$ is a quasi-isometry.
\end{theorem}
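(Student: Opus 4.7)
The plan is to deduce Theorem~\ref{thm68755ab4} by routing the original metric through the parabolic Ferrand distance $\ferrdPar$, exploiting that $\ferrdPar$ is bi-Lipschitz preserved by $f$ and bi-bornologously comparable to $d$ on both $X_1$ and $X_2$ under the strict parabolicity hypothesis $N_i<Q$.

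First, I would verify that the two-sided estimate
\begin{equation}
\frac{1}{C_i} d(x,y)^{1-N_i/Q} \le \ferrdPar(x,y) \le C_i \max\{1,d(x,y)\}
\end{equation}
of Proposition~\ref{prop68755b1a} and Proposition~\ref{prop68720f23} holds on each $X_i$. The lower bound relies only on the growth assumption $N_i<Q$ and quasi-straightenability, which are both in the hypotheses. The upper bound is stated for quasi-convex spaces, but the remark following Proposition~\ref{prop68720f23} explicitly observes that quasi-convexity can be replaced by $(\ell,r)$-quasi-geodesity for suitable parameters, which is what we have. So the bounds hold for each $X_i$ with some constant $C_i$.

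Next, given a quasi-conformal map $f\colon X_1\to X_2$, I factor the identity on underlying sets through $\ferrdPar$:
\begin{equation}
(X_1,d) \xrightarrow{\Id} (X_1,\ferrdPar) \xrightarrow{f} (X_2,\ferrdPar) \xrightarrow{\Id} (X_2,d).
\end{equation}
The middle arrow is bi-Lipschitz by Proposition~\ref{prop68755c04}, hence bornologous in both directions. The first arrow is bornologous because $\ferrdPar \lesssim \max\{1,d\}$ on $X_1$, and the third arrow is bornologous because on $X_2$ the lower bound gives $d \lesssim \max\{1,\ferrdPar^{Q/(Q-N_2)}\}$, using $1-N_2/Q>0$. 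Composing, $f\colon (X_1,d)\to(X_2,d)$ is bornologous. Since $f^{-1}\colon X_2\to X_1$ is also a quasi-conformal homeomorphism in the sense of Definition~\ref{def68cc0a5c}, the symmetric argument (using the upper bound on $\ferrdPar$ in $X_2$ and the lower bound in $X_1$, which requires $N_1<Q$) shows that $f^{-1}$ is bornologous as well.

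Finally, since $X_1$ and $X_2$ are quasi-geodesic, Proposition~\ref{prop6803b35d} upgrades both $f$ and $f^{-1}$ from bornologous to large-scale Lipschitz, and Corollary~\ref{cor67f68188} concludes that $f$ is a quasi-isometry. The main obstacle in this scheme is not in the final assembly but already absorbed into Proposition~\ref{prop68720d05}: producing admissible $\infty$-continua along a quasi-straight sequence with a sharp enough Dirichlet integral to yield the bound $\ferrg(x,y)\lesssim d(x,y)^{N-Q}$. Once that capacity estimate is granted, together with the upper-bound argument of Proposition~\ref{prop68720f23} and the invariance Proposition~\ref{prop68755c04}, the theorem reduces to a short bornologous-composition argument as outlined above.
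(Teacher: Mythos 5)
Your proposal is correct and follows essentially the same route as the paper: factor $f$ as $(X_1,d)\to(X_1,\ferrdPar)\to(X_2,\ferrdPar)\to(X_2,d)$, use the two-sided bounds of Proposition~\ref{prop68755b1a} for the outer arrows, the quasi-conformal invariance of Proposition~\ref{prop68755c04} for the middle one, apply the same reasoning to $f^{-1}$, and conclude with Corollary~\ref{cor67f68188}. Your explicit remark reconciling the quasi-convexity hypothesis of Proposition~\ref{prop68720f23} with the quasi-geodesic hypothesis of the theorem is a point the paper's own proof glosses over.
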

\begin{proof}
	Let $f:X_1\to X_2$ be a quasi-conformal map.
	We claim that $f$ is bornologous.
	Indeed, by~\eqref{eq68d0ee66}, the identity maps $(X_1,d)\to(X_1,\ferrdPar)$ and $(X_2,\ferrdPar)\to (X_2,d)$ are bornologous.
	By Proposition~\ref{prop68755c04}, since $f$ is quasi-conformal, we have that $f:(X_1,\ferrdPar)\to(X_2,\ferrdPar)$ is also bornologous.
	Thus, $f:(X_1,d)\to (X_2,d)$ is composition of bornologous functions, hence bornologous.
	
	Since $f^{-1}:X_2\to X_1$ is also quasi-conformal, $f^{-1}$ is also bornologous.
	Finally, being a bornologous map with bornologous inverse between quasi-geodesic spaces, $f$ is a quasi-isometry by Corollary~\ref{cor67f68188}.
\end{proof}

\section{Isoperimetric inequalities at large scale}

\subsection{Perimeter and coarea inequality}

For generalizing the theory of BV functions and of sets of finite perimeter,
in~\cite{MR2005202,zbMATH06320681},
Miranda Jr., Ambrosio, and Di~Marino
developed a notion of 
total variation for locally integrable functions
in the setting of metric measure spaces.
We refer to those two papers for a complete introduction.
If $X$ is a metric measure space, we denote by 
$\|\Diff f\|(U)$
the \emph{total variation} of $f\in L^1_\loc(X;\R)$ on $U\subset X$.
Of the total variation, we only recall the following facts:
The map $U\mapsto\|\Diff f\|(U)$ is a Borel outer measure on $X$
and, if $g$ is an upper gradient of $f$ then, for every open set $U\subset X$,
\begin{equation}\label{eq6864deed}
	\|\Diff f\|(U) \le \int_U g(x)\did\mu(x) .
\end{equation}
The \emph{perimeter measure} of a set $E$ is the total variation of the characteristic function $\one_E$ of $E$, that is, the measure $\Per(E;\cdot) := \|\Diff\one_E\|$.
We will denote by $\Per(E)$ the \emph{perimeter} of a measurable set $E$, i.e., 
$\Per(E) := \Per(E;X) = \|\Diff\one_E\|(X)$.

The following coarea formula is proven by Miranda Jr.~in \cite[Proposition 4.2]{MR2005202} for spaces of locally bounded geometry, and extended to general metric measure spaces by Ambrosio and Di Marino in \cite{zbMATH06320681} (see the end of the introduction there).
\begin{theorem}[Coarea formula, {\cite[Proposition 4.2]{MR2005202}}]\label{thm68458049}
	Let $(X,d,\mu)$ be a metric measure space and $f\in L^1_\loc(X;\R)$.
	For $t\in\R$, define $E_t:=\{x\in X:f(x)>t\}$.
	Then, for every $U\subset X$ open,
	\begin{equation}\label{eq684ad148}
		\int_{-\infty}^\infty \Per(E_t;U) \did t = \|\Diff f\|(U) .
	\end{equation}
	In particular, by~\eqref{eq6864deed}, if $g:X\to[0,+\infty]$ is an upper gradient of $f$, then, for every $U\subset X$ open,
	\begin{equation}\label{eq68559ec1}
		\int_{-\infty}^\infty \Per(E_t;U) \did t \le \int_U g\did\mu .
	\end{equation}
\end{theorem}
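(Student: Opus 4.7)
The statement is the classical Fleming--Rishel coarea formula in the BV setting of a metric measure space, and I would prove~\eqref{eq684ad148} by establishing the two opposite inequalities between $\|\Diff f\|(U)$ and $\int_\R \Per(E_t;U)\did t$. Once~\eqref{eq684ad148} is known, the bound~\eqref{eq68559ec1} follows immediately from~\eqref{eq6864deed} applied to $f$ on $U$.

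For the inequality $\|\Diff f\|(U) \le \int_\R \Per(E_t;U)\did t$, the plan is to use a layer-cake decomposition. Splitting $f = f^+ - f^-$ and writing $f^+(x) = \int_0^\infty \one_{E_t}(x)\did t$ (with an analogous formula for $f^-$), one approximates these integrals by finite Riemann sums $s_n = \sum_i (t_{i+1} - t_i)\one_{E_{t_i}}$, so that $s_n \to f$ in $L^1_\loc(U)$ as the mesh of the partition tends to zero. By the subadditivity of the total variation applied to finite linear combinations of characteristic functions, $\|\Diff s_n\|(U) \le \sum_i (t_{i+1} - t_i)\Per(E_{t_i};U)$. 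The $L^1_\loc$-lower semicontinuity of the functional $\|\Diff(\cdot)\|(U)$ then yields the desired bound upon passing to the Riemann-sum limit.

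For the reverse inequality $\int_\R \Per(E_t;U)\did t \le \|\Diff f\|(U)$, I would use the definition of $\|\Diff f\|(U)$ as a relaxation: choose a sequence of locally Lipschitz functions $f_n \to f$ in $L^1_\loc(U)$ with $\int_U \mathrm{lip}(f_n)\did\mu \to \|\Diff f\|(U)$. For Lipschitz $f_n$, the coarea inequality $\int_\R \Per(\{f_n > t\};U)\did t \le \int_U \mathrm{lip}(f_n)\did\mu$ is available because $\mathrm{lip}(f_n)$ is a (weak) upper gradient of $f_n$, together with the standard slicing identity. A Fubini computation
\begin{equation}
	\int_\R \int_U \bigl|\one_{\{f_n > t\}} - \one_{E_t}\bigr|\did\mu\did t = \int_U |f_n - f|\did\mu \longrightarrow 0
\end{equation}
gives, along a subsequence, $L^1_\loc$-convergence $\one_{\{f_n > t\}} \to \one_{E_t}$ for almost every $t\in\R$. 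Combined with $L^1_\loc$-lower semicontinuity of $\Per(\cdot;U)$ and Fatou's lemma, this yields the claim.

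The main obstacle is the reverse direction, and within it the passage to the limit of perimeters of superlevel sets. The essential tool is the lower semicontinuity of $\Per(\cdot;U)$ under $L^1_\loc$-convergence of characteristic functions, which follows directly from the definition of $\|\Diff(\cdot)\|$ via relaxation of the Lipschitz slope norm. The exceptional set of $t$ where superlevel-set convergence could fail is controlled by the Fubini identity above, which ensures $L^1_\loc$-convergence for a.e.~$t$ along a suitable subsequence, so that Fatou's lemma closes the argument.
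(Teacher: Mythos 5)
The paper does not prove this statement: it is quoted verbatim from the literature (Miranda Jr.\ for spaces of locally bounded geometry, Ambrosio--Di Marino for general metric measure spaces), so there is no in-paper argument to compare against. Your two-inequality plan --- layer-cake Riemann sums plus subadditivity and $L^1_{\loc}$-lower semicontinuity for one direction, and a minimizing Lipschitz sequence, the coarea inequality for Lipschitz functions, Fubini, and Fatou for the other --- is precisely the standard proof carried out in those references, and it is correct (modulo routine care with the Riemann-sum sampling and with localizing the Fubini identity to an exhaustion of $U$ when $f\notin L^1(U)$).
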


In \cite[Theorem 4.5]{MR2005202},
Miranda Jr.~proved also the following local isoperimetric inequality.

\begin{theorem}[Local isoperimetric inequality]\label{thm684c1dff}
	Let $(X,d,\mu)$ be a metric measure space with locally $Q$-bounded geometry with data 
	$(Q,R,C_{\rm A}, C_{\rm P},\sigma)$, with $Q>1$.
	Then there is $C>0$ (depending only on the geometrical data of $X$) such that, 
	for every $x\in X$, every $r\in(0,R(x)/\sigma)$, and every $E\subset X$ measurable,
	\begin{equation}\label{eq684c1e2e}
		 \min\{\mu(B\cap E) , \mu(B\setminus E)\}^{\frac{Q-1}{Q}} \le C \Per(E;\sigma B) ,
	\end{equation}
	where $B = B(x,r)$ and $\sigma B = B(x,\sigma r)$.
\end{theorem}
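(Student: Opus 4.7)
The plan is to derive this local isoperimetric inequality from the two-sided Ahlfors regularity and the $(1,Q)$-Poincaré inequality built into the locally $Q$-bounded geometry assumption, by applying a Sobolev--Poincaré improvement to the characteristic function $\one_E$ and extending it to BV functions via the coarea formula.

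First, I would Hölder-upgrade the $(1,Q)$-Poincaré inequality~\eqref{eq67f646d6} to a $(1,1)$-Poincaré
\[
\fint_{B(x,r)} |u-u_B|\did\mu \le C r \fint_{B(x,\sigma r)} g\did\mu,
\]
valid for locally integrable $u$ with upper gradient $g$, on balls up to the scale $R(x)/\sigma$. The two-sided Ahlfors regularity, restricted to balls of radius at most $R(x)$, provides a local doubling property. Combining the local doubling with the $(1,1)$-Poincaré via the standard Hajłasz--Koskela truncation-and-chaining argument (see e.g.~\cite[Theorem~5.1]{MR1683160}) yields, at a slightly larger dilation $\sigma' \ge \sigma$, the localized Sobolev--Poincaré inequality
\[
\left(\fint_{B(x,r)} |u-u_B|^{Q^*}\did\mu\right)^{1/Q^*} \le C r \fint_{B(x,\sigma' r)} g\did\mu, \qquad Q^* := \tfrac{Q}{Q-1}.
\]

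Next, I would extend this inequality to BV functions: approximating $\one_E$ by Lipschitz functions $u_n \to \one_E$ in $L^1_\loc$ with upper gradients $g_n$ whose integrals converge to $\Per(E;\sigma' B)$ (by the relaxation-based definition of total variation together with the coarea formula in Theorem~\ref{thm68458049}), one passes the limit and obtains
\[
\left(\fint_B |\one_E - u_B|^{Q^*}\did\mu\right)^{1/Q^*} \le \frac{C r}{\mu(\sigma' B)}\, \Per(E;\sigma' B),
\]
with $u_B = \mu(B\cap E)/\mu(B)$. Setting $m := \min\{\mu(B\cap E),\mu(B\setminus E)\}$ and using that $\max\{u_B,1-u_B\}\ge 1/2$, a direct calculation gives
$\int_B|\one_E-u_B|^{Q^*}\did\mu \ge 2^{-Q^*} m$.
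Plugging this lower bound into the Sobolev--Poincaré inequality and invoking the two-sided Ahlfors regularity $\mu(B) \approx \mu(\sigma' B) \approx r^Q$, the factors of $r$ and $\mu$ telescope to give exactly $m^{(Q-1)/Q} \le C\,\Per(E;\sigma' B)$; after possibly enlarging $\sigma$ to $\sigma'$, this is the claimed inequality~\eqref{eq684c1e2e}.

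The main obstacle is to carry out the Hajłasz--Koskela improvement \emph{locally}: the standard statement assumes a globally doubling measure together with a Poincaré inequality at all scales, whereas here both ingredients are valid only on balls whose radii are comparable to $R(x)$. One must therefore track carefully the dilation constants introduced by the chaining procedure and ensure that every intermediate ball used remains within $B(x,R(x))$, where the Ahlfors regularity and the Poincaré inequality are available; correspondingly, the constant $\sigma$ in the conclusion may need to be taken slightly larger than the one provided by the definition of locally $Q$-bounded geometry.
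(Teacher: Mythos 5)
The first step of your argument is invalid, and it is the step on which everything else rests. You propose to ``Hölder-upgrade'' the $(1,Q)$-Poincaré inequality~\eqref{eq67f646d6} to a $(1,1)$-Poincaré inequality, but Hölder (Jensen) runs the other way: it gives $\fint_{\sigma B} g\,\did\mu \le \bigl(\fint_{\sigma B} g^{Q}\,\did\mu\bigr)^{1/Q}$, so a $(1,1)$-Poincaré inequality \emph{implies} a $(1,Q)$-Poincaré inequality, not conversely. This is not a cosmetic issue here, because the perimeter is an $L^1$-type quantity: $\Per(E;\sigma B)$ is obtained by relaxing $\int g_{u_n}\did\mu$ along Lipschitz approximations $u_n\to\one_E$, whereas the $(1,Q)$-inequality only controls $\fint_B|u_n-(u_n)_B|$ by the $L^{Q}$-norm of $g_{u_n}$, which in general blows up as $u_n$ approximates a characteristic function. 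A $(1,Q)$-Poincaré inequality does not imply a $(1,1)$-Poincaré inequality in general, and the relative isoperimetric inequality with exponent $\tfrac{Q-1}{Q}$ is essentially equivalent to a $(1,1)$-type Poincaré inequality for BV functions in doubling spaces; so the gap cannot be closed by a soft interpolation argument.

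Apart from this, your outline is precisely the standard proof: the paper does not reprove the statement but cites Miranda's Theorem~4.5 in \cite{MR2005202}, which is established for doubling spaces supporting a weak $(1,1)$-Poincaré inequality by exactly the chain you describe --- $(1,1)$-Poincaré plus the lower mass bound gives the $\bigl(\tfrac{Q}{Q-1},1\bigr)$-Sobolev--Poincaré inequality via the Hajłasz--Koskela truncation and chaining argument, this passes to BV functions by relaxation (the coarea formula is not actually needed for this step), and the elementary lower bound $\int_B|\one_E-u_B|^{Q/(Q-1)}\did\mu\ge 2^{-Q/(Q-1)}\min\{\mu(B\cap E),\mu(B\setminus E)\}$ together with Ahlfors regularity normalizes the scales correctly. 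Your localization concerns and the bookkeeping of the enlarged dilation constant are the right things to worry about once a local $(1,1)$-inequality is in hand; note that in the paper's intended applications such an inequality is indeed available (see Proposition~\ref{prop682dc2dc} for geodesic Lie groups), but it is an additional input rather than a consequence of the $(1,Q)$-inequality in Definition~\ref{def6803c8e2}.
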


\begin{remark}
	Let $(X,d,\mu)$ be a metric measure space with uniformly locally $Q$-bounded geometry with data 
	$(Q,R,C_{\rm A}, C_{\rm P},\sigma)$, with $Q>1$.
	We claim that~\eqref{eq684c1e2e} implies that, 
	for every $\alpha\in[\frac{Q-1}{Q},1]$, there is a constant $C_\alpha$ such that,
	for every $x\in X$, every $r\in(0,R/\sigma)$, and every $E\subset X$ measurable,
	\begin{equation}\label{eq684c222c}
		\min\{\mu(B\cap E) , \mu(B\setminus E)\}^\alpha \le C_\alpha \Per(E;\sigma B) ,
	\end{equation}
	where $B = B(x,r)$.
	Indeed, from Definition~\ref{def6803c8e2} (and the constancy of $R$) we get that there is $M\in(0,+\infty)$ such that $\mu(B(x,R)) \le M$ for all $x\in X$.
	Therefore, 
	\begin{align*}
	 \min\{\mu(B\cap E) , \mu(B\setminus E)\}^\alpha
	&= M^\alpha \frac{ \min\{\mu(B\cap E) , \mu(B\setminus E)\}^\alpha}{M^\alpha} \\
	&\le M^\alpha \frac{ \min\{\mu(B\cap E) , \mu(B\setminus E)\} ^{\frac{Q-1}{Q}}}{M^{\frac{Q-1}{Q}}} \\
	&\overset{\eqref{eq684c1e2e}}\le M^{\alpha - \frac{Q-1}{Q} } C \Per(E;\sigma B) .
	\end{align*}
\end{remark}

\subsection{Isoperimetric inequality at large scale}\label{subs6866e18d}

\begin{definition}	
	We say that a metric measure space $(X,d,\mu)$ supports \emph{an isoperimetric inequality at large scale of order $\alpha\in[0,1]$} if there exist $V_\alpha,C_\alpha\in(0,+\infty)$ such that,
	for every $E\subset X$ measurable,
	\begin{equation}
		\mu(E) \ge V_\alpha
		\qquad\THEN\qquad
		\mu(E)^\alpha \le C_\alpha \Per(E) .
	\end{equation}
	The \emph{large-scale isoperimetric dimension} of $(X,d,\mu)$ is
	\begin{equation}
		\dimisp(X)
		:= \sup\left\{ N\in[1,\infty):
			\begin{array}{c}
			\text{$X$ supports a large-scale  }\\
			\text{isoperimetric inequality of order $\frac{N-1}{N}$ }
			\end{array}
			\right\} .
	\end{equation}
\end{definition}

\begin{remark}\label{rem685cea89}
	Notice that, if $X$ supports a large-scale isoperimetric inequality of order $\alpha\in[0,1]$, then $X$ supports a large-scale isoperimetric inequality of order $\beta$ for every $\beta\in[0,\alpha]$.
	Indeed, taking $V_\beta := \max\{V_\alpha,1\}$, we obtain that,
	if $\mu(E)\ge V_\beta$, then
	$\mu(E)^\beta \le \mu(E)^\alpha \le C_\alpha \Per(E)$.
	For this reason, for every $N < \dimisp(X)$, the space $X$ supports a large-scale isoperimetric inequality of order $\frac{N-1}{N}$.
\end{remark}

Kanai showed in \cite{zbMATH03883148} that every quasi-isometry between two Riemannian manifolds with certain curvature bounds, preserve the order of isoperimetric inequalities at large scale.
A careful adaptation of Kanai's proof leads to the following generalization to metric measure spaces with uniformly locally bounded geometry.
In fact, 
the key properties used in Kanai's setting 
are uniform bounds from above and below for the volume of small balls and a local isoperimetric inequality.

\begin{theorem}[after Kanai {\cite{zbMATH03883148}}]\label{thm684c4f0a}
	Let $X$ and $Y$ be two quasi-geodesic metric measure spaces with uniformly locally bounded geometry, with Hausdorff dimension equal to or greater than 1.
	Assume $X$ and $Y$ are quasi-isometric and let $\alpha\in[0,1]$.
	Then, $X$ supports an isoperimetric inequality at large scale of order $\alpha$ if and only if does~$Y$.
\end{theorem}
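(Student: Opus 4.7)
The argument will follow Kanai's discretization strategy, adapted to the metric measure setting with the help of the local isoperimetric inequality (Theorem~\ref{thm684c1dff}). Since quasi-isometric equivalence is symmetric, it suffices to prove that if $X$ supports a large-scale isoperimetric inequality of order $\alpha$, then so does $Y$. The plan is to reduce the continuous statement on each space to a combinatorial statement about a bounded-valence graph, transport the combinatorial statement across the quasi-isometry $f : X \to Y$, and then lift back to the continuous level on $Y$.

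The first step is discretization. Fix a scale $r>0$ smaller than the radii provided by the uniform $Q$-bounded geometry of both $X$ and $Y$ (and with $\sigma r$ still within that range), and choose maximal $r$-separated nets $\mathcal{N}_X \subset X$ and $\mathcal{N}_Y \subset Y$. Form graphs $G_X, G_Y$ by connecting distinct net-points that lie within some fixed distance $s = Mr$. The two-sided local Ahlfors regularity~\eqref{eq67f646a8} gives, by a packing argument, uniformly bounded valence. The inclusions $\mathcal{N}_X \hookrightarrow X$ and $\mathcal{N}_Y \hookrightarrow Y$ are quasi-isometries: the uniform local quasi-convexity from Proposition~\ref{prop686fb4ae-uniform} combined with quasi-geodesicity produces chains of short hops between net points, so graph distance and ambient distance on each net are comparable. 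Post-composing $f|_{\mathcal{N}_X}$ with a nearest-point projection onto $\mathcal{N}_Y$ yields a graph quasi-isometry $\phi : G_X \to G_Y$.

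The second step is to set up a dictionary between the continuous large-scale isoperimetric inequality on $X$ (respectively $Y$) and a discrete isoperimetric inequality on $G_X$ (respectively $G_Y$) of the form $|A|^\alpha \le C' |\partial_G A|$ for all $A \subset V(G)$ with $|A|$ large. For one direction, given $A \subset \mathcal{N}_X$, set $E_A = \bigcup_{v \in A} B(v,r)$; local Ahlfors regularity yields $\mu(E_A) \asymp r^Q |A|$, and subadditivity of perimeter together with a crude surface estimate for single balls gives $\Per(E_A) \lesssim r^{Q-1} |\partial_{G_X} A|$. For the converse, given a measurable set $E \subset X$ of large measure and finite perimeter, set $A_E = \{v \in \mathcal{N}_X : \mu(B(v,r) \cap E) \ge \tfrac12 \mu(B(v,r))\}$; apply Theorem~\ref{thm684c1dff} in each ball $B(v,\sigma r)$ with $v \in \partial_{G_X} A_E$ and sum, using a bounded-overlap covering argument to control multiplicity. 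This produces $r^{Q-1}|\partial_{G_X} A_E| \lesssim \Per(E)$ and $r^Q |A_E| \asymp \mu(E)$, so the continuous large-scale isoperimetric inequality of order $\alpha$ on $X$ is equivalent to the discrete one on $G_X$, and analogously on $Y$.

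The last step is the purely combinatorial observation that, on bounded-valence graphs, discrete isoperimetric inequalities of fixed order $\alpha$ are preserved under quasi-isometries: pulling back $B \subset V(G_Y)$ with $|B|$ large by $\phi$ yields $A \subset V(G_X)$ with $|A| \asymp |B|$ and $|\partial_{G_X} A| \lesssim |\partial_{G_Y} B|$ up to a controlled thickening depending only on the quasi-isometry constants and valence. Composing the two halves of the dictionary with this combinatorial invariance transfers the isoperimetric inequality from $X$ to $Y$. The principal obstacle is the second step: one must arrange that the local isoperimetric inequality is invoked at the correct scale so that the powers of $r$ produced on each side cancel, and that the $\sigma$-dilations of the boundary balls are summed with bounded multiplicity; this is where uniformity of the geometric data (as opposed to merely local bounded geometry) is essential, since otherwise one cannot pick a single scale $r$ that works everywhere in the space.
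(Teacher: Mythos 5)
Your proposal follows essentially the same route as the paper: discretize both spaces to bounded-order graphs on maximal separated nets, establish the equivalence between the continuous and the discrete large-scale isoperimetric inequalities (the paper's Proposition~\ref{prop684afea2}, which uses a union-of-balls construction in one direction and the half-density sets $A_E$ together with the local isoperimetric inequality of Theorem~\ref{thm684c1dff} in the other), and then invoke Kanai's combinatorial invariance for graphs (Proposition~\ref{prop684c58f4}). The one point you gloss over is that in a general metric measure space a ball of a \emph{fixed} radius need not have controlled perimeter, so the radii of the balls forming $E_A$ must be chosen generically in an interval via the coarea inequality (as in the paper's Lemma~\ref{lem684ad1f8}) to obtain $\Per(B(v,r_v))\lesssim r^{Q-1}$.
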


We shall prove Theorem~\ref{thm684c4f0a} in Section~\ref{subs6855aa13}.
Kanai's Theorem~\ref{thm684c4f0a} allows us to obtain Theorem~\ref{thm6862ad98}, extending the result \cite[Theorem 2.1]{zbMATH01782641} by Pittet from Riemannian Lie groups to geodesic Lie groups; see Theorem~\ref{thm68655128}.

\subsection{Proof of Kanai Theorem~\ref{thm684c4f0a}}\label{subs6855aa13}
Kanai proves Theorem~\ref{thm684c4f0a} for Riemannian manifolds in two steps.
Firstly, they show a similar statement for graphs.
Secondly, they show that the isoperimetric inequality at large scale on a ``good'' Riemannian manifold is equivalent to a similar inequality on an approximating graph.
We will follow the same strategy.

The first step goes as follows.
A graph is a pair $(\scr N,E)$ where $\scr N$ is a set and $E\subset \scr N\times \scr N$ is a symmetric relation.
We say that $(x,y)\in E$ is an edge of the graph and write $x\sim y$.
A graph \emph{has finite order $m$} if 
\[
	\sup_{x\in \scr N} \#\{y:y\sim x\} \le m .
\]
A graph $(\scr N,E)$ becomes a metric measure space $(\scr N,d,\mu)$ when endowed with the word metric $d$ and the counting measure $\mu=\#$.
The \emph{graph-boundary} of a subset $S\subset \scr N$ is
\begin{equation}
	\de_1S := \{y\in \scr N\setminus S: \exists x\in S\ x\sim y\} .
\end{equation}
We define the \emph{(upper) isoperimetric profile} of $\scr N$ as $J^{\scr N}:(0,+\infty)\to[0,+\infty]$,
\begin{equation}
	J^{\scr N}(v) := \inf\{\#(\de_1S): S\subset \scr N,\ \#S\ge v\} ,
	\qquad \forall v\in(0,+\infty) .
\end{equation}

Kanai proves the following statement:
\begin{proposition}[{\cite[Lemma 4.2]{zbMATH03883148}}]\label{prop684c58f4}
	Let $\scr N$ and $\scr M$ be two graphs with finite order.
	Suppose $\scr N$ and $\scr M$ are quasi-isometric.
	Then, for every $\alpha\in[0,1]$,
	\begin{equation}\label{eq684c59c4}
		\liminf_{v\to\infty} \frac{J^{\scr N}(v)}{v^\alpha} > 0
		\qquad\IFF\qquad
		\liminf_{v\to\infty} \frac{J^{\scr M}(v)}{v^\alpha} > 0 .
	\end{equation}
\end{proposition}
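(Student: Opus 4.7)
By symmetry of the quasi-isometric equivalence, I would prove only one of the two implications in \eqref{eq684c59c4}; assume that there are $c>0$ and $v_0$ with $J^{\scr M}(v) \ge c\, v^\alpha$ for all $v \ge v_0$, and let $f:\scr N \to \scr M$ be an $(L,C)$-quasi-isometry. My plan is to associate, to each sufficiently large $S \subset \scr N$, a subset $T \subset \scr M$ whose cardinality is comparable to $\#S$ and whose graph-boundary $\de_1 T$ in $\scr M$ is bounded above by a multiple of $\#\de_1 S$; applying the hypothesis on $J^{\scr M}$ to $T$ then yields the desired lower bound on $\#\de_1 S$.

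The first ingredient is basic volume bookkeeping: because $\scr N$ and $\scr M$ have finite order $m$, any ball of radius $r$ has at most $M_r := 1+m+m^2+\cdots+m^r$ points in either graph, and the lower quasi-isometry estimate forces each fibre $f^{-1}(y)$ to sit inside a ball of radius $LC$, so $\#f^{-1}(y) \le K := M_{LC}$. Fixing an auxiliary radius $r_1 \ge C$ determined by $L$ and $C$, I would set $T := B(f(S), r_1) \subset \scr M$; a union bound gives $\#T \le M_{r_1}\#S$, while the fibre estimate gives $\#T \ge \#f(S) \ge \#S/K$, so $\#T$ and $\#S$ are comparable up to multiplicative constants depending only on $L, C, m$.

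The boundary comparison is the heart of the argument. For each $y\in\de_1 T$, I would use quasi-density to choose $x\in\scr N$ with $d(f(x),y)\le C$; the choice $r_1\ge C$ forces $x\notin S$ (otherwise $y \in B(f(x),r_1) \subset T$). Since $y$ has a neighbour in $T$, one can pick $x'\in S$ with $d(y,f(x'))\le r_1+1$, and the upper quasi-isometry estimate then gives $d(x,x')\le R$ for $R:=L(r_1+1+2C)$. Along any path in $\scr N$ from $x\notin S$ to $x'\in S$ of length at most $R$, the first vertex in $S$ is preceded by some $z\in\de_1 S$ with $d(x,z)\le R$. This defines a map $\Phi:\de_1 T\to \de_1 S$ whose multiplicity is bounded by $M_R\cdot M_C$: for fixed $z\in\de_1 S$, the chosen points $x$ lie in $B(z,R)$ (at most $M_R$ choices), and each such $x$ is associated to at most $M_C$ vertices $y\in B(f(x),C)$. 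Consequently $\#\de_1 T\le M_R M_C \#\de_1 S$.

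The main obstacle is precisely this last step: arranging $r_1$ so that $x\notin S$, and then controlling the multiplicity of $\Phi$ via the finite-order property. Once it is in place, the conclusion is arithmetic: if $\#S\ge v\ge K v_0$, then $\#T\ge v/K\ge v_0$, so by hypothesis $\#\de_1 T \ge c(v/K)^\alpha$, and hence $\#\de_1 S \ge c K^{-\alpha} (M_R M_C)^{-1}\, v^\alpha$, which is the desired inequality for $\scr N$.
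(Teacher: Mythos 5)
Your argument is correct and follows exactly the route the paper takes: the paper cites Kanai's Lemma 4.2 and notes in the subsequent remark that the proof produces, for every $S\subset\scr N$, a set $T\subset\scr M$ with $\#T\ge a\,\#S$ and $\#(\de_1T)\le b\,\#(\de_1S)$, which is precisely what your construction $T=B(f(S),r_1)$ together with the fibre bound and the multiplicity bound on $\Phi:\de_1T\to\de_1S$ delivers. The concluding arithmetic and the appeal to symmetry of quasi-isometric equivalence match the paper's ``this readily implies~\eqref{eq684c59c4}''.
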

\begin{remark}
	The statement written in~\cite{zbMATH03883148} is different from our Proposition~\ref{prop684c58f4}.
	However, the proof is the same.
	Indeed, Kanai proves that there are constants $a,b\in(0,+\infty)$ such that for every $S\subset \scr N$ there is $T\subset \scr M$ with $\#T\ge a \#S$ and $\#(\de_1T)\le b \#(\de_1S)$.
	This readily implies~\eqref{eq684c59c4}.
\end{remark}

We are now ready for the second step of the proof of Theorem~\ref{thm684c4f0a}.

\begin{proposition}[{\cite[Lemma 4.5]{zbMATH03883148}}]\label{prop684afea2}
	Let $(X,d,\mu)$ be a metric measure space with uniformly locally $Q$-bounded geometry  with data 
	$(Q,R,C_{\rm A}, C_{\rm P},\sigma)$, with $Q>1$.
	Let $\epsilon\in(0,R/(5\sigma))$ and $\scr N\subset X$ a maximal $\epsilon$-separated set.
	Endow $\scr N$ with the graph structure with edges $\{(x,y)\in\scr N\times\scr N:d(x,y)\le 3\epsilon\}$.
	
	Define $\mu^+,\mu^-:(0,+\infty) = [0,+\infty]$ by
	\begin{equation}
		\mu^+(r) = \sup\{\mu(B(x,r)) : x\in X\},
		\text{ and }
		\mu^-(r) = \inf\{\mu(B(x,r)) : x\in X\},
	\end{equation}
	and assume
	\begin{equation}\label{eq684c1886}
		\mu^-(\epsilon/2) > 0
		\text{ and }
		\mu^+(\epsilon(4\sigma+1/2)) < \infty .
	\end{equation}
	
	Then, for every $\alpha\in[\frac{Q-1}{Q},1]$, the following are equivalent:
	\begin{enumerate}[label=(\roman*)]
	\item\label{prop684afea2_1}
	There are $V_\alpha,C_\alpha\in(0,+\infty)$ such that,
	for every $E\subset X$ measurable,
	\begin{equation}\label{eq684c601c}
		\mu(E) \ge V_\alpha
		\quad\THEN\quad
		\mu(E)^\alpha \le C_\alpha \Per(E) .
	\end{equation}
	\item\label{prop684afea2_2}
	There are $V_\alpha,C_\alpha\in(0,+\infty)$ such that,
	for every $S\subset\scr N$, 
	\begin{equation}\label{eq684c6658}
		\# S \ge V_\alpha
		\quad\THEN\quad
		(\# S)^\alpha \le C_\alpha \#(\de_1S) .
	\end{equation}
	\end{enumerate}
\end{proposition}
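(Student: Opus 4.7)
The plan is to prove the equivalence by transferring subsets between $X$ and $\scr N$ so that measures and perimeters on $X$ correspond to cardinalities and graph-boundaries on $\scr N$, up to uniform multiplicative and additive constants. The hypothesis~\eqref{eq684c1886}, combined with the $\epsilon$-separation and maximality of $\scr N$, provides three standard ingredients used throughout: (a) the balls $\{B(x,\epsilon/2):x\in\scr N\}$ are pairwise disjoint, hence $\mu$ of their union is at least $\mu^-(\epsilon/2)$ times the cardinality of the indexing set; (b) the balls $\{B(x,\epsilon):x\in\scr N\}$ cover $X$ with a multiplicity uniformly bounded by a constant $M$ depending only on $\mu^+(\epsilon+\epsilon/2)/\mu^-(\epsilon/2)$, and the same holds for covers of radius $5\sigma\epsilon$; (c) since $\epsilon\in(0,R/(5\sigma))$, the local isoperimetric inequality~\eqref{eq684c222c} applies on every ball of radius up to $5\epsilon$ with dilation factor $\sigma$.

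For the direction $(i)\Rightarrow(ii)$, I would start from $S\subset\scr N$ and use a coarea averaging on the family $\{B(S,r):r\in(\epsilon,2\epsilon)\}$. Applying the coarea inequality~\eqref{eq68559ec1} to the Lipschitz function $u(x):=\max(0,\min(1,2-d(x,S)/\epsilon))$, which has upper gradient $(1/\epsilon)\one_{B(S,2\epsilon)\setminus B(S,\epsilon)}$, yields
\begin{equation*}
    \int_{\epsilon}^{2\epsilon} \Per(B(S,r))\,dr \le \mu(B(S,2\epsilon)\setminus B(S,\epsilon)) .
\end{equation*}
Any point $y\in B(S,2\epsilon)\setminus B(S,\epsilon)$ lies in some $B(z,\epsilon)$ with $z\in\scr N$ (by maximality); the inequality $d(y,S)>\epsilon$ forces $z\notin S$, while $d(z,S)\le 3\epsilon$ gives $z\sim w$ for some $w\in S$, hence $z\in\de_1 S$. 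Therefore $\mu(B(S,2\epsilon)\setminus B(S,\epsilon))\le \mu^+(\epsilon)\cdot\#(\de_1 S)$, and averaging produces $r^*\in(\epsilon,2\epsilon)$ with $\Per(B(S,r^*))\le(\mu^+(\epsilon)/\epsilon)\cdot\#(\de_1 S)$. Disjointness of $\epsilon/2$-balls gives $\mu(B(S,r^*))\ge \mu^-(\epsilon/2)\cdot\#S$, so applying $(i)$ to $E:=B(S,r^*)$ once $\#S$ exceeds a threshold depending on $V_\alpha$ yields~\eqref{eq684c6658}.

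For the direction $(ii)\Rightarrow(i)$, I would take a measurable $E\subset X$ with $\mu(E)$ large and set
\begin{equation*}
    S:=\{x\in\scr N:\mu(B(x,\epsilon)\cap E)\ge \tfrac12 \mu(B(x,\epsilon))\} .
\end{equation*}
Two estimates are required. For the measure bound, the covering property gives $\mu(E)\le\sum_{x\in\scr N}\mu(B(x,\epsilon)\cap E)$; the part over $S$ is at most $\mu^+(\epsilon)\cdot\#S$, while for $x\notin S$ the local isoperimetric inequality~\eqref{eq684c222c} applied in $B(x,\epsilon)$ gives $\mu(B(x,\epsilon)\cap E)^\alpha \le C\Per(E;B(x,\sigma\epsilon))$, and the interpolation $\mu(B(x,\epsilon)\cap E)\le \mu^+(\epsilon)^{1-\alpha}\mu(B(x,\epsilon)\cap E)^\alpha$, combined with bounded multiplicity of $\{B(x,\sigma\epsilon):x\in\scr N\}$, yields $\sum_{x\notin S}\mu(B(x,\epsilon)\cap E)\le C'\Per(E)$; altogether $\mu(E)\le \mu^+(\epsilon)\cdot\#S+C'\Per(E)$. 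For the boundary bound, for each $x\in\de_1 S$ a graph-neighbor $y\in S$ lies within $3\epsilon$, so in $B(x,5\epsilon)$ both $\mu(B(x,5\epsilon)\cap E)$ and $\mu(B(x,5\epsilon)\setminus E)$ are bounded below by $\mu^-(\epsilon)/2$; the local isoperimetric inequality~\eqref{eq684c222c} then gives $\Per(E;B(x,5\sigma\epsilon))\ge c>0$, and bounded multiplicity of $\{B(x,5\sigma\epsilon):x\in\scr N\}$ yields $\#(\de_1 S)\le c^{-1}M'\Per(E)$. Applying $(ii)$ (once $\#S$ is large enough) and combining everything gives $\mu(E)^\alpha\le C''\Per(E)+C''$, which becomes~\eqref{eq684c601c} after enlarging $V_\alpha$ to force $\mu(E)^\alpha/2$ to dominate the additive constant.

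The main obstacle is the estimate $\mu(E)\lesssim \#S+\Per(E)$ in the direction $(ii)\Rightarrow(i)$: controlling the marginal contribution of balls outside $S$ by the perimeter requires exactly the subcritical exponent regime $\alpha\ge\tfrac{Q-1}{Q}$ furnished by~\eqref{eq684c222c}, and the interpolation with the uniform upper volume bound from~\eqref{eq684c1886}. One must then be careful that the additive $\Per(E)$ correction and the additive constants produced by enlarging $V_\alpha$ can all be absorbed into the final inequality.
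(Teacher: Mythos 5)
Your argument is correct, and in both directions it diverges from the paper's proof in ways that in fact streamline it. For $(i)\Rightarrow(ii)$ the paper uses the same coarea averaging, but ball by ball: it picks a good radius $r_x\in(\epsilon,2\epsilon)$ for each $x\in S$, sets $E=\bigcup_{x\in S}B(x,r_x)$, and then needs an argument involving the ``interior'' vertices $\{x\in S: T_x\subset S\}$ and a positive $\delta$-neighbourhood of $\de E$ (hence finiteness of $S$) to show that only the balls indexed by $\de_1(\scr N\setminus S)$ contribute to $\Per(E)$, after which the graph order $m$ enters to convert $\#\de_1(\scr N\setminus S)$ into $\#(\de_1S)$. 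Your coarea averaging over the tubular neighbourhoods $B(S,r)$, combined with the observation that the annulus $B(S,2\epsilon)\setminus B(S,\epsilon)$ is covered by $\epsilon$-balls centred at points of $\de_1S$, reaches the bound $\Per(B(S,r^*))\le(\mu^+(\epsilon)/\epsilon)\,\#(\de_1S)$ in one step and with no finiteness caveat. For $(ii)\Rightarrow(i)$ you use the same set $S$ as the paper, but where the paper keeps the superlinear estimate $\sum_{x\notin S}\mu(B(x,\epsilon)\cap E)\lesssim\Per(E)^{1/\alpha}$ and then runs a two-case analysis on the size of $\Per(E)^{1/\alpha}/\mu(E)$, your interpolation $\mu(B(x,\epsilon)\cap E)\le\mu^+(\epsilon)^{1-\alpha}\mu(B(x,\epsilon)\cap E)^{\alpha}$ linearises the perimeter contribution and gives $\mu(E)\le\mu^+(\epsilon)\#S+C'\Per(E)$ directly; the absorption at the end is then routine, though you should still record the small dichotomy ``$\#S\ge V_\alpha$ or not'' before invoking $(ii)$ (in the second branch $\mu(E)^\alpha\le\mu(E)\le\mu^+(\epsilon)V_\alpha+C'\Per(E)$, which is absorbed the same way). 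Two cosmetic points: in the boundary bound it is cleaner to work in $B(x,4\epsilon)$ rather than $B(x,5\epsilon)$ --- a graph-neighbour in $S$ lies within $3\epsilon$, so $B(y,\epsilon)\subset B(x,4\epsilon)$ --- because the bounded multiplicity of $\{B(x,4\sigma\epsilon)\}_{x\in\scr N}$ is exactly what the hypothesis $\mu^+(\epsilon(4\sigma+1/2))<\infty$ controls, whereas radius $5\sigma\epsilon$ would require $\mu^+(\epsilon(5\sigma+1/2))<\infty$; and for that boundary bound only the case $\alpha=1$ of~\eqref{eq684c222c} is needed, as in the paper.
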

\begin{proof}
	Notice that $(x,x)$ is an edge of $\scr N$, and that balls $\{B(x,\epsilon/2)\}_{x\in\scr N}$ are pairwise disjoint, while $\{B(x,2\epsilon)\}_{x\in\scr N}$ cover $X$.
	We claim that $\scr N$ is a graph with finite order.
	Indeed, for $x\in\scr N$, set $T_x := \{y\in\scr N:y\sim x\}$.
	Then, 
	$\mu^+(4\epsilon) \ge \mu\left( \bigsqcup_{y\in T_x} B(y,\epsilon/2) \right) = \sum_{y\in T_x} \mu(B(y,\epsilon/2)) \ge \#T_x \cdot \mu^-(\epsilon/2)$.
	Therefore, $\scr N$ has order at most $m$, where
	\begin{equation}
		m:=\frac{ \mu^+(4\epsilon) }{ \mu^-(\epsilon/2) } 
		\overset{\eqref{eq684c1886}}< \infty .
	\end{equation}
	
	\framebox{$\ref{prop684afea2_1}\THEN\ref{prop684afea2_2}$}
	For every $x\in X$, let $r_x\in(\epsilon,2\epsilon)$ be such that
	\begin{equation}\label{eq684afc7e}
		\Per(B(x,r_x)) \le \frac{\mu^+(2\epsilon)}{\epsilon} ,
	\end{equation}
	which exists by Lemma~\ref{lem684ad1f8}.
	
	Let $S\subset\scr N$ be a finite set.
	Define $E = \bigcup_{x\in S} B(x,r_x)$.
	Since the balls $\{B(x,\epsilon/2)\}_{x\in\scr N}$ are disjoint and since $r_x>\epsilon/2$, then
	\begin{equation}\label{eq684c5f6d}
		\mu(E)
		\ge \sum_{x\in S} \mu(B(x,\epsilon/2)) 
		\ge \mu^-(\epsilon/2) \# S .
	\end{equation}

	Define
	\begin{equation}
		\overset{\circ}S := \{x\in S: T_x\subset S\}
		= S\setminus\de_1(\scr N\setminus S) .
	\end{equation}
	
	We claim that, if $x\in \overset{\circ}S$, then $B(x,2\epsilon)\subset E$.
	Indeed, let $z\in B(x,2\epsilon)$.
	Since $\scr N$ is a maximal $\epsilon$-separated set in $X$, there exists $y\in\scr N$ with $d(z,y)\le \epsilon$.
	It follows that $d(x,y) \le d(x,z)+d(z,y) < 3\epsilon$ and thus $y\in T_x$.
	Since $T_x\subset S$, we have $B(y,r_y)\subset E$.
	Since $d(z,y)\le \epsilon < r_y$, then $z\in B(y,r_y)\subset E$.
	
	We claim that there exists $\delta>0$ such that, for every $x\in \overset{\circ}S$, we have $B(\de E,\delta) \cap \closure B(x,r_x) = \emptyset$.
	Indeed, set
	\begin{equation}
		\delta := \frac12 \min\{2\epsilon-r_x: x\in S\text{ with }T_x\subset S\} .
	\end{equation}
	By the finiteness of $S$ and the choice of $r_x$, we have $\delta>0$.
	If $x\in S$ is such that $T_x\subset S$, then $B(x,r_x+2\delta)\subset E$ by the previous claim, and thus $B(\de E,\delta) \cap \closure B(x,r_x) = \emptyset$.
	
	We claim that
	\begin{equation}\label{eq68ceb222}
		\Per(E) \le \frac{\mu^+(2\epsilon)}{\epsilon} m \#(\de_1S).
	\end{equation}
	The perimeter measure $\Per(E;\cdot)$ is supported on $\de E$.
	Moreover, by the above claim, $E\cap B(\de E,\delta) = \bigcup_{x\in S\setminus\overset{\circ}S} B(x,r_x) \cap B(\de E,\delta)$.
	Therefore,
	\begin{align}
		\Per(E)
		&= \Per(E;B(\de E,\delta)) \\
		&= \Per(\bigcup_{x\in S\setminus\overset{\circ}S} B(x,r_x) ; B(\de E,\delta)) \\
		&\le \Per(\bigcup_{x\in S\setminus\overset{\circ}S} B(x,r_x) )\\
		&\le \sum_{x\in S\setminus\overset{\circ}S} \Per(B(x,r_x)) \\
		&\overset{\eqref{eq684afc7e}}\le \#(S\setminus\overset{\circ}S) \frac{\mu^+(2\epsilon)}{\epsilon} .
	\end{align}
	Since $S\setminus\overset{\circ}S = \de_1(\scr N\setminus S)$
	and since $\scr N$ has finite order at most $m$, then $\#\de_1(\scr N\setminus S) \le m \#(\de_1S)$.
	We conclude that~\eqref{eq68ceb222} holds.
	
	Finally, let $V_\alpha,C_\alpha\in(0,+\infty)$ as in~\ref{prop684afea2_1}.
	If $S\subset \scr N$ is such that $\#S\ge V_\alpha/\mu^-(\epsilon/2)$, 
	then $\mu(E)\ge V_\alpha$ by \eqref{eq684c5f6d} and 
	\begin{align}
		(\#S)^\alpha 
		&\overset{\eqref{eq684c5f6d}}\le \frac1{\mu^-(\epsilon/2)^\alpha} \mu(E)^\alpha \\
		&\overset{\eqref{eq684c601c}}\le \frac{C_\alpha}{\mu^-(\epsilon/2)^\alpha} \Per(E) \\
		&\overset{\eqref{eq68ceb222}}\le \frac{C_\alpha m \mu^+(2\epsilon)}{\epsilon \mu^-(\epsilon/2)^\alpha}\#(\de_1S) .
	\end{align}
	We have proven~\ref{prop684afea2_2}.
	
	\framebox{$\ref{prop684afea2_2}\THEN\ref{prop684afea2_1}$}
	In the words of Kanai, this part of the proof follows a method by Buser~\cite{zbMATH03789440}.
	From the local isoperimetric inequality, Theorem~\ref{thm684c1dff}, we know that 
	for every $\alpha\in[\frac{Q-1}{Q},1]$ there is $C^\loc_\alpha\in(0,+\infty)$ 
	such that, for every $x\in X$ and $r\in(0,R/\sigma)$, and every $E\subset X$ measurable,
	\begin{equation}\label{eq684c3265}
		\left( \min\{\mu(B\cap E) , \mu(B\setminus E)\} \right)^\alpha \le C^\loc_\alpha \Per(E;\sigma B) ,
	\end{equation}
	where $B=B(x,r)$.
	
	
	We claim that, for every $x\in X$,
	\begin{equation}\label{eq684c26b0}
		\#\{y\in \scr N : x\in B(y,4\epsilon\sigma)\} 
		\le \frac{ \mu^+(\epsilon(4\sigma+1/2)) }{ \mu^-(\epsilon/2) } =: M < \infty
	\end{equation}
	Indeed, 
	$\mu(B(x,\epsilon(4\sigma+1/2))) 
	\ge \mu\left( \bigsqcup_{y\in \scr N\cap B(x,4\epsilon\sigma)} \mu(B(y,\epsilon/2)) \right) 
	\ge \#( \scr N\cap B(x,4\epsilon\sigma) ) \mu^-(\epsilon/2)$.
	Therefore, the hypothesis~\eqref{eq684c1886} implies~\eqref{eq684c26b0}.
		
	Let $V_\alpha,C_\alpha$ be the constants from~\ref{prop684afea2_2}.
	Let $E\subset X$ be a measurable set with $\mu(E)\ge \frac{V_\alpha}{2 \mu^+(\epsilon)}$.
	Define
	\begin{equation}
	\begin{aligned}
		S &:= \left\{ x\in\scr N: \mu(B(x,\epsilon)\cap E) > \frac{ \mu(B(x,\epsilon)) }{ 2 } \right\} , \text{ and, } \\
		P &:= (\scr N\setminus S) \cap B(E,\epsilon) .
	\end{aligned}
	\end{equation}
	
	We claim that
	\begin{equation}\label{eq684c27cf}
		\Per(E) \ge \frac{ \mu^-(\epsilon) }{ 2M C^\loc_1 } \#(\de_1S) .
	\end{equation}
	Indeed, if $y\in\de_1S$, then there is $x\in S$ such that $d(x,y)\le3\epsilon$. 
	It follows that, on the one hand,
	$\mu(B(y,4\epsilon)\cap E) 
	\ge \mu(B(x,\epsilon) \cap E )
	\ge \mu(B(x,\epsilon))/2 
	\ge \mu^-(\epsilon)/2$.
	On the other hand, 
	$\mu(B(y,4\epsilon)\setminus E)
	\ge\mu(B(y,\epsilon)\setminus E)
	\ge\mu(B(y,\epsilon))/2
	\ge\mu^-(\epsilon)/2$.
	Therefore, 
	\begin{equation}
		\Per(E;B(y,4\epsilon\sigma)) 
		\overset{\eqref{eq684c3265}}\ge \frac1{C^\loc_1} \min\{\mu(B(y,4\epsilon)\cap E), \mu(B(y,4\epsilon)\setminus E)\} 
		\ge \frac1{C^\loc_1} \mu^-(\epsilon)/2 .
	\end{equation}
	We conclude that
	\begin{align}
		\Per(E)
		&\overset{\eqref{eq684c26b0}}\ge \frac1M \sum_{y\in\de_1S} \Per(E;B(y,4\epsilon\sigma))
		\ge \frac{ \mu^-(\epsilon) }{ 2MC^\loc_1 } \#(\de_1S) .
	\end{align}
	This shows our claim~\eqref{eq684c27cf}.
	
	Next, we claim that
	\begin{equation}\label{eq684c2aba}
		\sum_{x\in P} \mu(B(x,\epsilon)\cap E)
		\le (C^\loc_\alpha)^{\frac1\alpha} M^{\frac1\alpha} \Per(E)^{\frac1\alpha} .
	\end{equation}
	Indeed,
	\begin{align}
		\sum_{x\in P} \mu(B(x,\epsilon)\cap E)
		&\overset{\eqref{eq684c3265}}\le \sum_{x\in P} (C^\loc_\alpha)^{\frac1\alpha} \Per(E;B(x,\epsilon))^{\frac1\alpha} \\
		&\le (C^\loc_\alpha)^{\frac1\alpha} \left( \sum_{x\in P} \Per(E;B(x,\epsilon)) \right)^{\frac1\alpha} \\
		&\overset{\eqref{eq684c26b0}}\le (C^\loc_\alpha)^{\frac1\alpha} M^{\frac1\alpha} \Per(E;\bigcup_{x\in P} B(x,\epsilon) )^{\frac1\alpha} \\
		&\le (C^\loc_\alpha)^{\frac1\alpha} M^{\frac1\alpha} \Per(E)^{\frac1\alpha} .
	\end{align}
	So, our claim~\eqref{eq684c2aba} is shown.
	
	We can now do the simple estimate
	\begin{equation}\label{eq684c6576}
	\begin{aligned}
		\mu(E)
		&\le \sum_{x\in S} \mu(B(x,\epsilon)\cap E) + \sum_{x\in P} \mu(B(x,\epsilon)\cap E) \\
		&\overset{\eqref{eq684c2aba}}\le \mu^+(\epsilon)\#S  +  (C^\loc_\alpha)^{\frac1\alpha} M^{\frac1\alpha} \Per(E)^{\frac1\alpha} \\
		&= \mu^+(\epsilon) \#S +  (C^\loc_\alpha)^{\frac1\alpha} M^{\frac1\alpha} \left( \frac{ \Per(E)^{\frac1\alpha} }{ \mu(E) }\right) \mu(E) .
	\end{aligned}
	\end{equation}
	We have two cases.
	First, if $(C^\loc_\alpha)^{\frac1\alpha} M^{\frac1\alpha} \left( \frac{ \Per(E)^{\frac1\alpha} }{ \mu(E) }\right) \ge 1/2$, then we directly have
	\begin{equation}\label{eq684c2c37}
		\mu(E) \le 2 (C^\loc_\alpha)^{\frac1\alpha} M^{\frac1\alpha} \Per(E)^{\frac1\alpha}.
	\end{equation}
	Second, if otherwise $(C^\loc_\alpha)^{\frac1\alpha} M^{\frac1\alpha} \left( \frac{ \Per(E)^{\frac1\alpha} }{ \mu(E) }\right) < 1/2$, then we obtain from the simple estimate~\eqref{eq684c6576}
	\begin{equation}\label{eq684c2ba1}
		\mu(E) \le 2 \mu^+(\epsilon) \#S  .
	\end{equation}
	Since we assume $\mu(E)\ge \frac{V_\alpha}{2 \mu^+(\epsilon)}$, we have $\#S\ge V_\alpha$ and thus
	\begin{equation}\label{eq684c67e6}
	\begin{aligned}
		\mu(E)^\alpha
		&\overset{\eqref{eq684c2ba1}}\le (2 \mu^+(\epsilon) \#S )^\alpha \\
		&\overset{\eqref{eq684c6658}}\le (2 \mu^+(\epsilon) )^\alpha C_\alpha \#\de_1S \\
		&\overset{\eqref{eq684c27cf}}\le (2 \mu^+(\epsilon) )^\alpha C_\alpha \frac{ 2MC^\loc_1 }{ \mu^-(\epsilon) } \Per(E) .
	\end{aligned}
	\end{equation}
	Putting together the two cases~\eqref{eq684c2c37} and~\eqref{eq684c67e6}, we conclude that, if $\mu(E)\ge \frac{V_\alpha}{2 \mu^+(\epsilon)}$, then 
	\begin{equation}
		\mu(E)^\alpha \le K \Per(E) ,
	\end{equation}
	where $K=\max\left\{ (2^\alpha C^\loc_\alpha M , (2 \mu^+(\epsilon) )^\alpha C_\alpha \frac{ 2MC^\loc_1 }{ \mu^-(\epsilon) } \right\}$.
\end{proof}

\begin{lemma}\label{lem684ad1f8}
	Let $(X,d,\mu)$ be a metric measure space and $a,b\in[0,+\infty)$ with $a<b$ and set
	\begin{equation}
		\mu^+(b) := \sup\{\mu(B(x,b)) : x\in X\} .
	\end{equation}
	Then, for every $x\in X$ there exists $r\in(a,b)$ such that
	\begin{equation}
		\Per(B(x,r)) \le \frac{ \mu(B(x,b)\setminus B(x,a)) }{ b-a } .
	\end{equation}
\end{lemma}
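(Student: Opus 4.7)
The plan is to apply the coarea formula from Theorem~\ref{thm68458049} to a truncation of the distance function $d(x,\cdot)$, with an upper gradient supported precisely on the open annulus $A := B(x,b) \setminus \bar B(x,a)$. Concretely, I would define $h: X \to [a,b]$ by $h(y) := \min\{b, \max\{a, d(x,y)\}\}$, which is $1$-Lipschitz and hence locally integrable. Its super-level sets satisfy $\{h > t\} = X \setminus \bar B(x,t)$ for $t \in [a,b)$, while $\{h > t\} = X$ for $t < a$ and $\{h > t\} = \emptyset$ for $t \ge b$; in the latter two cases the perimeter is zero.

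The key step is to verify that $g := \one_A$ is an upper gradient of $h$. Given a rectifiable curve $\gamma$, I would reparameterize by arc length on $[0,L]$; then $h \circ \gamma$ is $1$-Lipschitz. On the closed preimages $\{s : h(\gamma(s)) = a\}$ and $\{s : h(\gamma(s)) = b\}$, the derivative $(h \circ \gamma)'$ vanishes a.e.\ by the classical fact that a Lipschitz function has zero derivative a.e.\ on each of its level sets. On the complementary open set, $\gamma(s) \in A$ and $|(h \circ \gamma)'(s)| \le 1 = g(\gamma(s))$. Integrating and using arc-length parametrization yields $|h(\gamma(L)) - h(\gamma(0))| \le \int_0^L \one_A(\gamma(s))\,ds = \int_\gamma g$, as required.

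Applying Theorem~\ref{thm68458049} with $U=X$ and $E_t = \{h > t\}$, one obtains
\begin{equation}
\int_a^b \Per(X \setminus \bar B(x,t); X) \,dt \le \int_X g \,d\mu = \mu(A).
\end{equation}
Since $\Per$ is invariant under complementation, $\Per(X \setminus \bar B(x,t)) = \Per(\bar B(x,t))$. Moreover, by a Fubini argument $\int_a^b \mu(\{d(x,\cdot)=t\})\,dt = 0$, so $\mu(\{d(x,\cdot)=t\}) = 0$, and hence $\Per(\bar B(x,t)) = \Per(B(x,t))$, for a.e.\ $t \in (a,b)$. Combined with the inclusion $A \subset B(x,b) \setminus B(x,a)$, this gives
\begin{equation}
\int_a^b \Per(B(x,t))\,dt \le \mu(B(x,b) \setminus B(x,a)).
\end{equation}
A mean-value argument for integrals then supplies some $r \in (a,b)$ with $\Per(B(x,r)) \le \frac{\mu(B(x,b) \setminus B(x,a))}{b-a}$, as desired.

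The main obstacle in the plan is the verification that $g = \one_A$ is an upper gradient of $h$: one must carefully handle points where $\gamma$ meets or spends positive length on the spheres $\{d(x,\cdot)=a\}$ and $\{d(x,\cdot)=b\}$, relying on the standard vanishing of the derivative of a Lipschitz function on its level sets. The remaining ingredients (complementation invariance of perimeter, negligibility of a.e.\ sphere, and the mean-value principle for integrals) are routine.
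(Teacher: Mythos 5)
Your proposal is correct and follows essentially the same route as the paper's proof: apply the coarea inequality to the distance function from $x$ and extract a good radius $r\in(a,b)$ by a mean-value (pigeonhole) argument. The only difference is that you carefully spell out details the paper's terse proof glosses over — the truncation $h=\min\{b,\max\{a,d(x,\cdot)\}\}$ with upper gradient $\one_A$, and the open/closed ball and complementation issues — and these verifications are all correct.
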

\begin{proof}
	Let $x\in X$ and set $d_x(y)=d(x,y)$.
	Then $d_x$ is a 1-Lipschitz function: in particular, the constant function $1$ is an upper gradient of $d_x$.
	Using the coarea formula from Theorem~\ref{thm68458049}, we obtain
	\begin{align}
		\mu^+(b)
		\ge \mu(B(x,b)\setminus B(x,a))
		\overset{\eqref{eq68559ec1}}\ge \int_a^b \Per(B(x,t)) \did t .
	\end{align}
	It follows that there is $r\in(a,b)$ with $\Per(B(x,r)) \le \frac{ \mu^+(b) }{ b-a } $.
\end{proof}

\begin{remark}
	The proof of $\ref{prop684afea2_1}\THEN\ref{prop684afea2_2}$ in Proposition~\ref{prop684afea2} does not use the bounded geometry of $X$.
	In fact, this part of the above proof works in every metric measure space, under the conditions~\eqref{eq684c1886} (with $\sigma=1$).
\end{remark}

\begin{proof}[Proof of Theorem~\ref{thm684c4f0a}]
	Quasi-geodesic metric spaces are quasi-isometric to every maximal $\epsilon$-separated subset endowed with a word metric, for every $\epsilon>0$.
	It follows that, if $X$ and $Y$ are quasi isometric, then every net in $X$ is quasi-isometric to every net in $Y$.
	Moreover, on a metric measure space with locally uniform bounded geometry, there is always $\epsilon>0$ such that all the hypothesis of Proposition~\ref{prop684afea2} are met.
	Theorem~\ref{thm684c4f0a} then follows from the combination of Propositions~\ref{prop684afea2} and~\ref{prop684c58f4}.
\end{proof}

\subsection{The isoperimetric inequality at large implies the Sobolev inequality}

We will show that an isoperimetric inequality at large scale, as in~\eqref{eq67dbf890}, implies a Gagliardo--Nirenberg--Sobolev type inequality, as in~\eqref{eq67f66fec_1}.
Proposition~\ref{prop6855a6f8} is inspired by~\cite[Proposition 4.5.3]{zbMATH07060424}
and~\cite[Proof of Theorem 1.3 and Proposition 3.1]{zbMATH05956848}.

\begin{proposition}\label{prop6855a6f8}
	Let $(X,d,\mu)$ be a metric measure space.
	Suppose that $X$ supports a large scale isoperimetric inequality of order $\frac{N-1}{N}$ for some $N>1$,
	that is, that there are $C_{\rm I},V_{\rm I}$ such that, for all $E\subset X$ measurable,
	\begin{equation}\label{eq67dbf890}
		\text{if $\mu(E)\ge V_{\rm I}$, then }
		\mu(E)^{\frac{N-1}{N}} \le C_{\rm I}\cdot \Per(E) .
	\end{equation}
	Then, for every $q\in[1,N)$ there exist constants $C_q$ and $C_{q,\infty}$ such that,
	for every $u:X\to\R$ locally integrable with upper gradient $g:X\to[0,+\infty]$,
	\begin{equation}\label{eq67f66fec_1}
		\left(\int_X |u|^{\frac{Nq}{N-q}} \did\mu \right)^{\frac{N-q}{Nq}}
		\le C_q \left(\int_X g^q \did\mu \right)^{\frac1q}
			+ C_{q,\infty} \|u\|_{L^\infty(X)} .
	\end{equation}
\end{proposition}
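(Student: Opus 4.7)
The plan is to prove~\eqref{eq67f66fec_1} in two stages: first the case $q=1$, directly from the hypothesis~\eqref{eq67dbf890} via the coarea inequality~\eqref{eq68559ec1} and Minkowski's integral inequality; and then the case $q\in(1,N)$ by applying the $q=1$ result to $|u|^\alpha$, where $\alpha:=q(N-1)/(N-q)>1$, followed by Hölder's inequality and a standard bifurcation step.

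For the case $q=1$, I may assume $u\ge0$ (the same $g$ is an upper gradient of $|u|$). The layer-cake identity $u=\int_0^\infty\one_{\{u>t\}}\did t$ and Minkowski's integral inequality with exponent $N/(N-1)$ yield
\begin{equation}
	\Bigl(\int_X u^{\frac{N}{N-1}}\did\mu\Bigr)^{\frac{N-1}{N}} \le \int_0^\infty F(t)^{\frac{N-1}{N}}\did t,
\end{equation}
where $F(t):=\mu(\{u>t\})$. I then split the $t$-integration according to whether $F(t)\ge V_{\rm I}$ or $F(t)<V_{\rm I}$: on the first set, hypothesis~\eqref{eq67dbf890} dominates the integrand by $C_{\rm I}\Per(\{u>t\})$, whose integral is at most $\int_X g\did\mu$ by~\eqref{eq68559ec1}; on the second set the integrand is at most $V_{\rm I}^{(N-1)/N}$, and since $\{u>t\}=\emptyset$ for $t>\|u\|_{L^\infty}$ the set of such $t$ has Lebesgue measure at most $\|u\|_{L^\infty}$. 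Summing these two contributions gives~\eqref{eq67f66fec_1} for $q=1$.

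For $1<q<N$ I fix $\alpha$ so that $\alpha N/(N-1)=Nq/(N-q)=:r$ and $(\alpha-1)q/(q-1)=r$. Replacing $u$ by the truncation $\min(|u|,T)$ and passing to the limit $T\to\infty$ by monotone convergence, I may assume $u$ is bounded and nonnegative; then $u^\alpha$ admits $\alpha u^{\alpha-1}g$ as an upper gradient, by the chain rule for upper gradients under postcomposition with the Lipschitz map $s\mapsto s^\alpha$ on $[0,\|u\|_{L^\infty}]$. Applying the $q=1$ inequality to $u^\alpha$, and then Hölder's inequality with exponents $q/(q-1)$ and $q$ to the term $\int_X u^{\alpha-1}g\did\mu$, produces
\begin{equation}
	A^{\frac{N-1}{N}} \le C_{\rm I}\alpha\,A^{\frac{q-1}{q}}\,B^{\frac{1}{q}} + V_{\rm I}^{\frac{N-1}{N}}\|u\|_{L^\infty}^\alpha,
\end{equation}
where $A:=\int_X u^r\did\mu$ and $B:=\int_X g^q\did\mu$. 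A bifurcation argument, distinguishing whether the first or the second summand on the right dominates, together with the identity $\alpha N/(N-1)=r$, converts this into an inequality of the form $A^{1/r}\le C_qB^{1/q}+C_{q,\infty}\|u\|_{L^\infty}$, which is~\eqref{eq67f66fec_1}.

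The main obstacle is the passage from the power $\|u\|_{L^\infty}^\alpha$ that naturally appears after applying the $q=1$ estimate to $u^\alpha$ back down to the linear $\|u\|_{L^\infty}^1$ on the right-hand side of~\eqref{eq67f66fec_1}; the bifurcation step works precisely because the exponent $(N-1)/N$ of $A$ on the left is strictly larger than the exponent $(q-1)/q$ that Hölder produces on the right, which is exactly the condition $q<N$.
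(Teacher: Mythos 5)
Your proof is correct and takes essentially the same route as the paper's: layer-cake plus a Minkowski/Hardy-type inequality to bound the $L^{N/(N-1)}$ norm by $\int_0^\infty\mu(\{|u|>t\})^{(N-1)/N}\did t$, then the isoperimetric hypothesis combined with the coarea inequality~\eqref{eq68559ec1}, then the bootstrap to $q\in(1,N)$ by applying the $q=1$ case to $|u|^{\alpha}$ with the very same exponent $\alpha=(N-1)q/(N-q)$ and H\"older. The only structural difference is where the threshold $V_{\rm I}$ is absorbed: the paper first proves a clean inequality (with no $L^\infty$ term) for functions all of whose superlevel sets have measure $\ge V_{\rm I}$ or $=0$, runs the whole bootstrap in that class, and only at the end splits a general $u$ as $\min\{|u|,\xi\}+(|u|-\min\{|u|,\xi\})$; you instead carry the $V_{\rm I}^{(N-1)/N}\|u\|_{L^\infty}$ error from the $q=1$ stage and remove the resulting $\|u\|_{L^\infty}^{\alpha}$ power at the end by a case analysis, which is legitimate precisely because $\tfrac{N-1}{N}-\tfrac{q-1}{q}=\tfrac{N-q}{Nq}>0$. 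Both variants work; note that both your argument and the paper's tacitly divide by a power of $\int_X|u|^{Nq/(N-q)}\did\mu$, so the degenerate cases where that integral is $0$ or $+\infty$ are treated equally casually in the two proofs.
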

\begin{proof}
	Before diving into the proof, recall the following formula from real analysis, see \cite[(3.34)]{MR1800917}:
	If $\delta\in(0,1)$ and $F:(0,+\infty)\to(0,+\infty)$ is non-decreasing, then 
	\begin{equation}\label{eq67dbf7e6}
		\int_0^\infty s^{\frac1\delta-1} F(s) \did s 
		\le \delta \left(\int_0^\infty F(s)^\delta \did s\right)^{1/\delta} .
\end{equation}
	
	Let $u:X\to\R$ be locally integrable with upper gradient $g:X\to[0,+\infty]$.
	We will consider the following condition:
	\begin{equation}\label{eq67dc177c}
		\text{for all $t\ge0$, either $\mu(\{|u|>t\}) \ge V_{\rm I}$ or $\mu(\{|u|>t\}) = 0$} .
	\end{equation}
	
	We claim that, if $u$ satisfies the condition~\eqref{eq67dc177c}, then
	\begin{equation}\label{eq68558ec5}
		\left( \int_X |u|^{\frac{N}{N-1}} \did\mu \right)^{\frac{N-1}{N}}
		\le C_{\rm I} \int_X g \did\mu .
	\end{equation}
	Indeed, setting $\alpha= \frac{N}{N-1}$, we have
	\begin{align*}
		&\int |u|^\alpha \did\mu \\
		[\text{Cavalieri}]
		&= \int_0^\infty \mu(\{|u|^\alpha > t \}) \did t \\
		[s=t^{1/\alpha},\ s^\alpha = t,\ \alpha s^{\alpha-1} \did s = \did t]
		&= \alpha \int_0^\infty \mu(\{|u|>s\}) s^{\alpha-1} \did s \\
		[\text{by~\eqref{eq67dbf7e6} with $\delta=1/\alpha$}]
		&\le \alpha \frac1\alpha \left(\int_0^\infty \mu(\{|u|>s\})^{1/\alpha} \did s\right)^{\alpha} \\
		[\text{since $\alpha = \frac{N}{N-1}$}]
		&= \left(\int_0^\infty \mu(\{|u|>s\})^{\frac{N-1}{N}} \did s\right)^{\frac{N}{N-1}} \\
		[\text{\eqref{eq67dc177c} and isop. ineq.~\eqref{eq67dbf890}}]
		&\le \left(\int_0^\infty C_{\rm I}\cdot \Per(\{|u|>s\}) \did s\right)^{\frac{N}{N-1}} \\
		[\text{coarea inequality~\eqref{eq68559ec1}}]
		&\le C_{\rm I}^{\frac{N}{N-1}} \left(\int_X g \did \mu \right)^{\frac{N}{N-1}} .
	\end{align*}
	We thus have obtained~\eqref{eq68558ec5} under the condition~\eqref{eq67dc177c}.
	
	Next, we claim that, if $u$ satisfies the condition~\eqref{eq67dc177c} and $q\in[1,N)$, then 
	\begin{equation}\label{eq685599b5}
		\left(\int_X |u|^{\frac{Nq}{N-q}} \did\mu \right)^{\frac{N-q}{Nq}}
		\le C_{\rm I} \frac{(N-1)q}{N-q} \left(\int_X g^q \did\mu \right)^{\frac1q} .
	\end{equation}
	Indeed, we first notice that also $|u|^p$ satisfies the condition~\eqref{eq67dc177c}, for every $p>0$.
	Second, if $p\ge 1$ then $p|u|^{p-1}g$ is an upper gradient of $|u|^p$.
	Therefore, applying~\eqref{eq68558ec5} to $|u|^p$, we obtain
	\begin{align*}
	 	\left(\int_X |u^p|^{\frac{N}{N-1}} \did\mu \right)^{\frac{N-1}{N}}
		&\overset{\eqref{eq68558ec5}}\le C_{\rm I} \int_X p|u|^{p-1}g \did\mu \\
		[\text{Hölder inequality}]
		&\le C_{\rm I}p \left(\int_X |u|^{(p-1)q'} \did\mu \right)^{\frac1{q'}}
			\left(\int_X g^q \did\mu \right)^{\frac1q} ,
	\end{align*}
	where $q' = \frac{q}{q-1}$ is the Hölder exponent conjugate to $q$.
	Since we want $(p-1)q' = p\frac{N}{N-1}$, we obtain 
	$p= \frac{N-1}{N-q} q$.
	Then we have $p\frac{N}{N-1} = \frac{Nq}{N-q}$.
	Assuming $u\neq0$, then obtain on the left-hand side the integral of $|u|^{\frac{Nq}{N-q}}$ exponentiated to $\frac{N-1}{N} - \frac1{q'} = \frac{N-q}{Nq}$.
	This concludes the proof of~\eqref{eq685599b5} under the condition~\eqref{eq67dc177c}.
	
	Finally, suppose that $u:X\to\R$ is a locally integrable function that may not satisfy~\eqref{eq67dc177c}.
	Let $\xi\ge0$ be such that $\mu(\{|u|>t\})\ge V_{\rm I}$ for $t\in(0,\xi)$ and $\mu(\{|u|>t\})\le V_{\rm I}$ for $t\ge\xi$.
	Then $|u| = u_1 + u_2$, where $u_1 = \min\{|u|,\xi\}$ and $u_2 = |u|-u_1$.
	We have that $u_1$ satisfies the condition~\eqref{eq67dc177c}: 
	indeed, if $t\in(0,\xi)$, then $\mu(\{|u_1|>t\}) = \mu(\{|u|>t\}) \ge V_{\rm I}$;
	if $t>\xi$, then $\mu(\{|u_1|>t\}) = 0$.
	Notice also that $g$ is an upper gradient of $u_1$,
	and that $u_2$ is supported on the set $\{|u|>\xi\}$ with $\mu(\{|u|>\xi\})\le V_{\rm I}$.
	Hence,
	\begin{align*}
		\left(\int_X |u|^{\frac{Nq}{N-q}} \did\mu \right)^{\frac{N-q}{Nq}}
		&\le \left(\int_X |u_1|^{\frac{Nq}{N-q}} \did\mu \right)^{\frac{N-q}{Nq}} 
			+ \left(\int_X |u_2|^{\frac{Nq}{N-q}} \did\mu \right)^{\frac{N-q}{Nq}} \\
		&\overset{\eqref{eq685599b5}}\le C_{\rm I} \frac{(N-1)q}{N-q} \left(\int_X g^q \did\mu \right)^{\frac1q} 
			+ \left(\int_{\{|u|>\xi\}} |u_2|^{\frac{Nq}{N-q}} \did\mu \right)^{\frac{N-q}{Nq}} \\
		&\le C_{\rm I} \frac{(N-1)q}{N-q} \left(\int_X g^q \did\mu \right)^{\frac1q}
			+ V_{\rm I}^{\frac{N-q}{Nq}} \|u\|_{L^\infty} .
	\end{align*}
	So, we have~\eqref{eq67f66fec_1} with $C_q=C_{\rm I} \frac{(N-1)q}{N-q}$ and $C_{q,\infty}=V_{\rm I}^{\frac{N-q}{Nq}}$.
\end{proof}

\begin{corollary}\label{cor6863f5f0}
	Let $(X,d,\mu)$ be a metric measure space
	with compact sets of arbitrarily large measure.
	If $X$ supports a large-scale isoperimetric inequality of order $\frac{N-1}{N}$ for some $N>1$,
	then $X$ is $p$-hyperbolic for every $p<N$.
	In particular,
	\begin{equation}\label{eq68780aab}
		\dimisp(X)\le \dimpar(X) .
	\end{equation}
\end{corollary}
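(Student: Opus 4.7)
The plan is to chain together the two preceding propositions in this section. By assumption, $X$ supports a large-scale isoperimetric inequality of order $\frac{N-1}{N}$ for some $N>1$. First I would invoke Proposition~\ref{prop6855a6f8} with the exponent $q=1$: this yields constants $C_1, C_{1,\infty}$ such that, for every locally integrable $u:X\to\R$ with upper gradient $g:X\to[0,+\infty]$,
\begin{equation}
    \left(\int_X |u|^{\frac{N}{N-1}} \did\mu \right)^{\frac{N-1}{N}}
    \le C_1 \int_X g \did\mu + C_{1,\infty} \|u\|_{L^\infty(X)} .
\end{equation}
This is exactly the hypothesis~\eqref{eq67f05467} of Proposition~\ref{prop67f055d9}.

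Next, since $X$ is assumed to have compact sets of arbitrarily large measure, the assumptions of Proposition~\ref{prop67f055d9} are met. Applying that proposition, I conclude that $X$ is $p$-hyperbolic for every $p\in[1,N)$, which is the first assertion of the corollary.

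For the inequality $\dimisp(X)\le\dimpar(X)$, I would argue as follows. If $\dimisp(X)\le 1$ there is nothing to prove since $\dimpar(X)\ge 1$ by Definition~\ref{def687743ae}. Otherwise, fix any $N\in(1,\dimisp(X))$. By Remark~\ref{rem685cea89}, $X$ supports a large-scale isoperimetric inequality of order $\frac{N-1}{N}$, so by the first part of the corollary $X$ is $p$-hyperbolic for every $p<N$, i.e., $p$ does not belong to the set defining $\dimpar(X)$. Hence $\dimpar(X)\ge N$. Taking the supremum over such $N$ yields~\eqref{eq68780aab}.

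No step here looks like a genuine obstacle: the work has been done in Propositions~\ref{prop6855a6f8} and~\ref{prop67f055d9}, and this corollary is essentially a bookkeeping statement that packages them together and re-expresses the conclusion in terms of the dimensions $\dimisp$ and $\dimpar$. The only mild subtlety is making sure the ``compact sets of arbitrarily large measure'' hypothesis is used at the right place, namely in applying Proposition~\ref{prop67f055d9}.
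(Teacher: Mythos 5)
Your proposal is correct and follows exactly the paper's route: the paper's proof is the one-line instruction ``Combine Proposition~\ref{prop6855a6f8} and Proposition~\ref{prop67f055d9}'', which is precisely your chaining of the $q=1$ Sobolev inequality into the hyperbolicity criterion, with the compactness hypothesis used where you place it. Your explicit bookkeeping for \eqref{eq68780aab} via Remark~\ref{rem685cea89} is a correct unpacking of what the paper leaves implicit.
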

\begin{proof}
	Combine Proposition~\ref{prop6855a6f8} and Proposition~\ref{prop67f055d9}.
\end{proof}

\begin{remark}
	Combining Corollary~\ref{cor6863f5f0} with Proposition~\ref{prop67f0559a},
	we obtain the following statement:
	If $X$ has degree of large-scale growth at most~$N_1$ and supports a large-scale isoperimetric inequality of order $\frac{N_2-1}{N_2}$, then $N_2 \le N_1$.
\end{remark}

\section{The hyperbolic case}

\subsection{The hyperbolic Ferrand distance}\label{subs67dd9a95} 

\begin{definition}\label{def of ferrdHyp}
	Let $X$ be a metric measure space with Hausdorff dimension $Q$.
	The \emph{hyperbolic Ferrand  (pseudo)distance} on $X$ is the function $\ferrdHyp :X\times X\to [0,+\infty]$,
	\begin{equation}
		\ferrdHyp(x,y) := \inf\left\{
		\capacity_Q(K) : K\text{ continuum with }x,y\in K
		\right\},
		\qquad\forall x,y\in X.
	\end{equation}
\end{definition}
Here $\capacity_Q$ denotes the $Q$-capacity as in Definition~\ref{def6803ccb2}.

The map $\ferrdHyp$ is a pseudo-distance, that is, it satisfies the triangle inequality.
Indeed, if $x,y,z\in X$ and $K_{xy}$, $K_{yz}$ are continua with $x,y\in K_{xy}$ and $y,z\in K_{yz}$, then $K_{xy}\cup K_{yz}$ is a continuum containing $x$ and $z$, and $\capacity_Q(K_{xy}\cup K_{yz}) \le \capacity_Q(K_{xy}) + \capacity_Q(K_{yz})$,
because capacity is sub-additive; see Remark~\ref{rem67f8c4ee}.
It follows that $\ferrdHyp(x,z) \le \ferrdHyp(x,y) + \ferrdHyp(y,z)$.

The Ferrand pseudo-distance $\ferrdHyp$ is not a distance when there is a pair $a,b\in X$ with $\ferrdHyp(a,b)=0$ and $a\neq b$.
The following result shows that, under certain hypothesis on the space $X$, there are only two cases: either $\ferrdHyp$ is a distance, i.e., $\ferrdHyp(a,b)>0$ whenever $a\neq b$, 
or $\capacity_Q(K)=0$ for every continuum $K\subset X$, i.e., $X$ is parabolic.

\begin{theorem}[{\cite[Theorem 6.8]{zbMATH00883879}}]\label{thm680560ae}
	Let $(X,d,\mu)$ be a path-connected metric measure space with uniformly locally $Q$-bounded geometry.
	If there are $a,b\in X$ distinct such that $\ferrdHyp(a,b)=0$, then $\capacity_Q(K)=0$ for every compact connected subset $K$ of $X$.
\end{theorem}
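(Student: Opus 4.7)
My plan is to argue in two stages: first lift the hypothesis $\ferrdHyp(a,b)=0$ to $\ferrdHyp \equiv 0$ on $X \times X$, and then promote that pointwise vanishing to zero $\capacity_Q$ for every continuum. Set $A := \{x \in X : \ferrdHyp(a, x) = 0\}$. Since $b \in A$ and $X$ is connected (being path-connected), it suffices to prove that $A$ is both open and closed, after which the triangle inequality for $\ferrdHyp$ will force $\ferrdHyp \equiv 0$.

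For openness at $x \in A \setminus \{a\}$ (the case $x = a$ being vacuous), I would take continua $K_n \ni a, x$ with $\capacity_Q(K_n) \to 0$ and, via Proposition~\ref{prop67e505c3}, corresponding monotone admissible $u_n \in \cal A_m(K_n; \infty)$ truncated to $[0,1]$ with $u_n \equiv 1$ on $K_n$ and $\int g_n^Q \, d\mu \to 0$. Lemma~\ref{lem67e51b8c}, applied with the chosen point $a \in K_n$, upgrades the monotonicity of $u_n$ to all of $X \setminus \{a\}$. Fix $r > 0$ with $r < R/C_1$ (the uniform bounded-geometry radius appearing in Corollary~\ref{cor67efffbc}) and $C_1 r < d(a, x)$, so that $B(x, C_1 r) \subset X \setminus \{a\}$. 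Corollary~\ref{cor67efffbc} then gives
\[
\oscillation_{B(x, r)} u_n \le C_1 \Bigl(\int_{B(x, C_1 r)} g_n^Q \, d\mu \Bigr)^{1/Q} \longrightarrow 0 .
\]
Since $u_n(x) = 1$, for $n$ large $u_n \ge 1/2$ on $\bar B(x, r/2)$, so $v_n := \min(1, 2 u_n)$ is admissible for $L_n := K_n \cup \bar B(x, r/2)$ with upper gradient $2 g_n$, giving $\capacity_Q(L_n) \le 2^Q \int g_n^Q \, d\mu \to 0$. Local compactness of $X$ makes $\bar B(x, r/2)$ compact for $r$ small, so $L_n$ is a continuum (connected via $x$) containing $a$ and every $y \in B(x, r/2)$; hence $B(x, r/2) \subset A$.

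For closedness, the openness radius produced above depends on the point $x$ only through the uniform data and through $d(a, x)$. Along a sequence $x_n \in A$ with $x_n \to x \ne a$, we have $d(a, x_n) \to d(a, x) > 0$, so these radii are eventually bounded below by a fixed positive number, and eventually $d(x, x_n)$ is smaller than that bound, placing $x$ in $A$; the case $x = a$ is trivial. Hence $A = X$ and $\ferrdHyp \equiv 0$. To conclude, the very same openness construction furnishes for every $z \in X$ a radius $\rho_z > 0$ with $\capacity_Q(\bar B(z, \rho_z)) = 0$. Covering an arbitrary continuum $K$ by finitely many such closed balls and appealing to the monotonicity and finite sub-additivity of $\capacity_Q$ (Remark~\ref{rem67f8c4ee}) yields $\capacity_Q(K) = 0$, as required.

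The hardest part is justifying the oscillation estimate for $u_n$, which is a priori monotone only on $X \setminus K_n$; the continuum $K_n$ will typically meet small balls around $x$ in many points. Ferrand's Lemma~\ref{lem67e51b8c} is what rescues the argument, by allowing us to trade the entire singular set $K_n$ for the single auxiliary point $a$; the distance condition $C_1 r < d(a, x)$ then keeps the ball on which the estimate is applied free of $a$. The uniform locally $Q$-bounded-geometry hypothesis is essential twice over: to supply a basepoint-independent radius $R$, and to make the constants in Corollary~\ref{cor67efffbc} uniform, which in turn is what ensures the openness radius is bounded below along convergent sequences in the closedness step.
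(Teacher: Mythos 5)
Your global organization differs from the paper's: the paper fixes the target continuum $K$ and argues by cases on whether $a,b$ belong to $K$ (reducing the cases $a,b\in K$ or $a,b\notin K$ to the main one by adjoining paths that avoid $a$), whereas you run an open--closed connectedness argument on $A=\{x\in X:\ferrdHyp(a,x)=0\}$ and only afterwards cover $K$ by zero-capacity balls. Both proofs share the same technical core --- Proposition~\ref{prop67e505c3} and Lemma~\ref{lem67e51b8c} to trade the singular set $K_n$ for the single point $a$, followed by the oscillation bound of Corollary~\ref{cor67efffbc} on balls avoiding $a$ --- so the difference is one of bookkeeping; your version has the merit of making explicit that the relevant radius depends only on the uniform data and on $d(a,x)$, which is exactly what your closedness step uses.

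There are, however, three places where the argument as written does not go through. (1) The claim that $L_n=K_n\cup\bar B(x,r/2)$ is a continuum is unjustified: in a general metric space a closed ball need not be connected, and $\ferrdHyp(a,y)$ is an infimum over \emph{continua} containing $a$ and $y$, so $L_n$ is not a legitimate competitor. The fix is available in the paper: by Proposition~\ref{prop686fb4ae-uniform} the space is uniformly locally quasi-convex, so for $y$ in a sufficiently small ball $B(x,\rho)$ one joins $x$ to $y$ by a curve $\gamma_{xy}\subset\bar B(x,C\rho)$ and uses the continuum $K_n\cup\gamma_{xy}$ instead; your $v_n$ remains admissible for it provided $C\rho<r$, and the resulting openness radius still depends only on the uniform data and $d(a,x)$, so closedness survives. (2) Openness of $A$ at $a$ is not vacuous: $a\in A$ (the continua $K_n$ from the hypothesis witness $\ferrdHyp(a,a)=0$), so $a$ must be an interior point of $A$ for the open--closed argument to apply. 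This is repaired by symmetry: the triangle inequality for $\ferrdHyp$ gives $A=\{x:\ferrdHyp(b,x)=0\}$, and running your openness construction with base point $b$ at the point $a\neq b$ yields a neighbourhood of $a$ inside $A$. (3) In the final covering step, the construction based at $a$ only produces zero-capacity balls centred at points $z\neq a$, with radii degenerating as $z\to a$; if $a\in K$ you also need a zero-capacity ball centred at $a$, obtained again by switching the base point to $b$. With these repairs the proof is correct.
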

\begin{proof}
	$\ferrdHyp(a,b)=0$ means that, for every $\epsilon>0$ there exists a continuum $K_\epsilon\subset X$ with $a,b\in K_\epsilon$, and a function $u_\epsilon:X\to[0,1]$ continuous with compact support, $K_\epsilon\subset \{u_\epsilon\ge1\}$, and
	$\int_X g_\epsilon^Q \did x \le \epsilon$, where $g_\epsilon$ is an upper gradient of $u_\epsilon$.
	By Proposition~\ref{prop67e505c3} and Lemma~\ref{lem67e51b8c}, we can assume that $u_\epsilon$ is monotone in $X\setminus \{x\}$ for every $x\in K_\epsilon$.
	
	Let $K\subset X$ be a nonempty continuum.
	
	Suppose that $a\notin K$ but $b\in K$.
	Since $a\in K_\epsilon$, $u_\epsilon$ is monotone in $X\setminus \{a\}$.
	By Corollary~\ref{cor67efffbc}, there are $C,R\in\R$ such that, whenever $r\in(0,R]$ and $x\in X$ are such that $B(x,Cr)\subset X\setminus\{a\}$, we have 
	$\oscillation_{B(x,r)} u_\epsilon \le C\epsilon^{1/Q}$, for every $\epsilon>0$.
	Take $r\in(0,R)$ such that $Cr<{\rm dist}(a,K)$, and cover $K$ with a finite number, say $n$, of balls of radius $r$ centered in $K$.
	Then, $\oscillation_Ku_\epsilon \le nC\epsilon^{1/Q}$.
	Since $u_\epsilon (b)=1$ and $b\in K$, then 
	$u_\epsilon(x) \ge 1- nC\epsilon^{1/Q}$ for all $x\in K$.
	Take $\epsilon$ small enough so that $1-nC\epsilon^{1/Q} > 1/2$.
	Then the function $v(x) = 2u_\epsilon(x)$ is continuous with compact support and with $K\subset \{v\ge1\}$.
	Moreover, an upper-gradient for $v$ is $2g_\epsilon$.
	So,
	$\capacity_Q(K) \le \int_X (2g_\epsilon)^Q \did \mu
	\le 2^Q \epsilon$.
	Since $\epsilon$ is arbitrarily small, we conclude that $\capacity_Q(K) =0$.
	
	Suppose that $a,b\notin K$.
	Let $\gamma:[0,1]\to X$ be a path with $\gamma(0)\in K$ and $\gamma(1)=b$.
	Let $\bar t = \inf\{t\in[0,1]:\gamma(t)\in\{a,b\}\}$.
	Then, $\bar t>0$, $\gamma(\bar t)\in\{a,b\}$, and $\gamma(t)\notin\{a,b\}$ for all $t<\bar t$.
	Without loss of generality, $\gamma(\bar t)=b$ and $a\notin\gamma([0,\bar t])$.
	Then $K\cup\gamma([0,\bar t])$ is a continuum containing $b$ but not $a$, and thus, by the previous argument and the monotonicity of capacity,
	$\capacity_Q(K) \le \capacity_Q(K\cup\gamma([0,\bar t])) =0$.
		
	If $a,b\in K$, we can take $x \in X\setminus K$ and, as we did above, a path $\gamma\subset X\setminus\{a\}$ from $x$ to $b$.
	By the previous paragraph, $\capacity_Q(\gamma)=0$ and thus $\ferrdHyp(x,b)=0$.
	Applying the previous paragraph switching $a$ with $x$, we conclude $\capacity_Q(K)=0$.
\end{proof} 


\begin{corollary}\label{cor67f92736}
	Let $(X,d,\mu)$ be a path-connected metric measure space with uniformly locally $Q$-bounded geometry.
	If there exists a continuum $K\subset X$ such that $\capacity_Q(K)\neq0$, 
	then $\ferrdHyp(a,b)=0$ if and only if $a=b$.
	In particular, the pseudodistance $\ferrdHyp$ is a distance.
\end{corollary}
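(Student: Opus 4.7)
The plan is to deduce the corollary directly from Theorem~\ref{thm680560ae}; the only additional ingredient is a one-point capacity computation.

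First I would show $\ferrdHyp(a,a)=0$ for every $a\in X$. Since $\{a\}$ is itself a continuum containing $a$, the definition of $\ferrdHyp$ together with the monotonicity of capacity (Remark~\ref{rem67f8c4ee}) yields
\begin{equation}
\ferrdHyp(a,a)\le\capacity_Q(\{a\})\le\capacity_Q(\bar B(a,r))
\end{equation}
for all sufficiently small $r>0$. Applying Lemma~\ref{lem67f8cd3b} with $p=q=Q>1$, whose hypothesis is supplied by the local $Q$-Ahlfors regularity in Definition~\ref{def6803c8e2}, one bounds $\capacity_Q(\bar B(a,r))$ by a quantity of the form $C(1+Q\log(\bar R/r))/(\log(\bar R/r))^{Q}$, which vanishes as $r\to 0$. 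Local compactness of $X$, combined with taking $\bar R$ small, ensures the test function from Lemma~\ref{lem67f8cd3b} is supported in a compact set and so is admissible for the capacity at infinity.

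For the reverse implication I would argue by contraposition. Assume that there exist distinct points $a,b\in X$ with $\ferrdHyp(a,b)=0$. Then Theorem~\ref{thm680560ae} forces $\capacity_Q(K)=0$ for \emph{every} continuum $K\subset X$, directly contradicting the standing hypothesis that some continuum has nonzero $Q$-capacity. Hence $\ferrdHyp(a,b)=0$ implies $a=b$.

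To conclude that $\ferrdHyp$ is a distance I would combine the equivalence $\ferrdHyp(a,b)=0\iff a=b$ just established with the symmetry and triangle inequality of $\ferrdHyp$ already noted in the paragraph following Definition~\ref{def of ferrdHyp}. There is no substantial obstacle here: Theorem~\ref{thm680560ae} absorbs the entire nontrivial content of the statement, and the one-point capacity bound is a routine consequence of Lemma~\ref{lem67f8cd3b} and the local Ahlfors regularity.
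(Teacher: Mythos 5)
Your proof is correct and is essentially the argument the paper intends: the paper states the corollary without proof as an immediate consequence of Theorem~\ref{thm680560ae}, whose contrapositive gives exactly your second paragraph, and the only content to supply is $\ferrdHyp(a,a)=0$, which you obtain correctly from the singleton continuum, monotonicity of capacity, and the logarithmic test function of Lemma~\ref{lem67f8cd3b} (admissible at infinity since small closed balls are compact by local compactness). Note only that the vanishing of the bound as $r\to0$ uses $Q>1$, which you correctly flag and which is the standing assumption throughout this section.
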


On spaces with locally bounded $Q$-geometry, the $Q$-capacity is bi-Lipschitz preserved by quasi-conformal maps, by the geometric Definition~\ref{def68cc0a5c}.
For this reason, the $Q$-Ferrand distance plays a role in the study of quasi-conformal maps.

\begin{proposition}\label{prop67f68305}
	Let $X,Y$ be metric measure spaces with uniformly locally $Q$-bounded geometry,
	and $f:X\to Y$ a quasi-conformal map.
	Then there is $L$ such that
	\begin{equation}
		\frac{1}{L} \ferrdHyp(p,q) \le \ferrdHyp(f(p),f(q)) \le L \ferrdHyp(p,q) .
	\end{equation}
\end{proposition}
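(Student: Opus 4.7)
The plan is to deduce Proposition~\ref{prop67f68305} directly from the geometric definition of quasi-conformality, using only that $f$ is a homeomorphism that bi-Lipschitz preserves $Q$-capacities of capacitors, and that continua are a topologically defined class.

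First, I would upgrade the bi-Lipschitz preservation of capacity from capacitors $(E;F)$ to capacity at infinity $\capacity_Q(E) = \capacity_Q(E;\infty)$. Recall from Definition~\ref{def6803ccb2} that
\begin{equation}
    \capacity_Q(E) = \inf\{\capacity_Q(E;F) : X\setminus F \text{ is compact}\}.
\end{equation}
Since $f:X\to Y$ is a homeomorphism, $X\setminus F$ is compact if and only if $Y\setminus f(F)$ is compact; hence $f$ induces a bijection between the families of capacitors $(E;F)$ and $(f(E); f(F))$ used in the definitions of $\capacity_Q(E)$ and $\capacity_Q(f(E))$. Combined with~\eqref{bi-Lipschitz preserve the capacity}, this yields $K' \in \R$ (the same constant $K$ provided by the quasi-conformality of $f$) such that
\begin{equation}
    \tfrac{1}{K'}\capacity_Q(E) \le \capacity_Q(f(E)) \le K' \capacity_Q(E), \qquad \forall E \subset X.
\end{equation}

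Second, I would use that $f$ preserves continua: if $K \subset X$ is compact and connected, then $f(K) \subset Y$ is compact and connected, and $\{x,y\}\subset K$ if and only if $\{f(x),f(y)\}\subset f(K)$. Therefore $f$ induces a bijection between the families of continua used to define $\ferrdHyp(x,y)$ and $\ferrdHyp(f(x),f(y))$. Combining with the capacity estimate from the previous step gives, for every continuum $K \ni x,y$,
\begin{equation}
    \ferrdHyp(f(x),f(y)) \le \capacity_Q(f(K)) \le K' \capacity_Q(K),
\end{equation}
and taking infimum over such $K$ yields $\ferrdHyp(f(x),f(y)) \le K' \ferrdHyp(x,y)$.

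Finally, since $f^{-1}: Y \to X$ is again a quasi-conformal map between spaces with uniformly locally $Q$-bounded geometry (by the symmetric form of Definition~\ref{def68cc0a5c}), the same argument applied to $f^{-1}$ produces the reverse inequality with the same constant $K'$, giving the claim with $L = K'$. There is no real obstacle here: the entire argument is a bookkeeping exercise reducing $\ferrdHyp$ to the capacity of connected sets and invoking the geometric definition of quasi-conformality together with topological invariance under homeomorphisms.
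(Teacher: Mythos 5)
Your argument is correct and is exactly the route the paper intends: the paper states Proposition~\ref{prop67f68305} without a written proof, remarking only that the geometric Definition~\ref{def68cc0a5c} makes $\capacity_Q$ bi-Lipschitz invariant, and its explicitly written proof of the analogous parabolic statement (Proposition~\ref{prop68755c04}) follows precisely your scheme of pushing forward near-optimal connected sets by the homeomorphism, invoking~\eqref{bi-Lipschitz preserve the capacity}, and applying the same reasoning to $f^{-1}$. Nothing further is needed.
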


\subsection{Upper bound to the hyperbolic Ferrand distance}

\begin{lemma}\label{lem685ce33b}
	Let $(X,d,\mu)$ be a boundedly compact metric measure space with Hausdorff dimension $Q$.
	Suppose that there is $\rho:(0,+\infty)\to (0,+\infty)$ such that for every $x,y\in X$ there is a continuum joining $x$ and $y$ in $B(x,\rho(d(x,y)))$.
	Define $\mu^+:[0,+\infty)\to[0,+\infty]$ by
	\begin{equation}
		\mu^+(r) := \sup\{\mu(B(x,r)) : x\in X\},\qquad \text{ for } r\in  [0,+\infty).
	\end{equation}
	Then, we have
	\begin{equation}
		\ferrdHyp(x,y) \le \mu^+(\rho(d(x,y))+1) ,\qquad \forall x,y\in X.
	\end{equation}
	In particular, if $\mu^+(r)<\infty$ for every $r$, then whenever $\lim_{n\to\infty}\ferrdHyp(x_n,y_n)=\infty$, we have $\lim_{n\to\infty} d(x_n,y_n)=\infty$.
\end{lemma}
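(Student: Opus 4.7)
The plan is to directly construct an admissible competitor for the capacity of a suitable continuum joining $x$ and $y$, and estimate its $Q$-energy. First, I would use the hypothesis on $\rho$ to fix a continuum $K$ with $x, y \in K \subset B(x, \rho(d(x,y)))$. Then I take the standard $1$-Lipschitz cutoff $u(z) := \max\{0, 1 - \dist(z, K)\}$, which equals $1$ on $K$, vanishes on $X \setminus B(K, 1)$, and has compact support (since $K$ is compact and $(X, d)$ is boundedly compact), so $u \in \cal A(K; \infty)$.

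For the upper gradient, I take $g := \mathbf{1}_{B(K, 1)}$ and verify the required inequality along every rectifiable curve: this is a standard argument using the a.e.\ differentiability of the $1$-Lipschitz composition $u \circ \gamma$ together with the Lebesgue density theorem applied to the set where $\gamma$ avoids $B(K, 1)$, on which $u \circ \gamma$ is identically zero. The triangle inequality then gives $B(K, 1) \subset B(x, \rho(d(x, y)) + 1)$, so
$$\capacity_Q(K) \le \int_X g^Q \, d\mu = \mu(B(K, 1)) \le \mu^+(\rho(d(x, y)) + 1),$$
and since $\ferrdHyp(x, y) \le \capacity_Q(K)$, the desired bound follows.

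For the \emph{in particular} clause, $\mu^+$ is non-decreasing by definition, and after replacing $\rho$ with its non-decreasing upper envelope (which is everywhere finite under the natural local boundedness of $\rho$), the contrapositive is immediate: if $d(x_n, y_n)$ stayed bounded by some $M$ along a subsequence, then the main bound would force $\ferrdHyp(x_n, y_n) \le \mu^+(\rho(M) + 1) < \infty$, contradicting $\ferrdHyp(x_n, y_n) \to \infty$. No substantial obstacle arises — the construction is the canonical one-Lipschitz cutoff, and the only point meriting mild care is the upper gradient verification described above.
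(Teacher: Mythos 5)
Your argument is correct and takes essentially the same route as the paper: there the admissible cutoff is $u(z)=\max\{\rho(d(x,y))+1-d(x,z),0\}$ (a truncated distance to the center $x$ rather than to $K$), with upper gradient $\one_{B(x,\rho(d(x,y))+1)}$, giving the identical bound $\mu^+(\rho(d(x,y))+1)$. Your caveat that the ``in particular'' clause needs $\rho$ to be locally bounded (or replaced by a monotone majorant) is a fair point the paper leaves implicit; in its application $\rho$ is affine, so no issue arises.
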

\begin{proof}
	Let $x,y\in X$ and $r=d(x,y)$.
	Since $x,y$ are joined by a continuum in $B(x,\rho(r))$, then 
	\begin{equation}
		\ferrdHyp(x,y) 
		\le \capacity_Q(B(x,\rho(r))) .
	\end{equation}
	Consider the function $u(z) = \max\{\rho(r) + 1 - d(x,z), 0\}$.
	Then $B(x,\rho(r))\subset\{u\ge1\}$ and $\spt(u) \subset B(x,\rho(r)+2)$.
	Since $X$ is boundedly compact, $u$ has compact support and thus it is admissible for $B(x,\rho(r))$.
	An upper gradient of $u$ is $g= \one_{B(x,\rho(r)+1)}$.
	Therefore, 
	\begin{equation}
		\capacity_Q(B(x,\rho(r)) 
		\le \mu(B(x,\rho(r)+1)) 
		\le \mu^+(\rho(d(x,y))+1) .
	\end{equation}
\end{proof}

\subsection{Lower bound to the hyperbolic Ferrand distance}

\begin{theorem}\label{thm67e655fa}
	Let $(X,d,\mu)$ be a boundedly compact metric measure space with uniformly locally $Q$-bounded geometry, $Q>1$.
	Assume that $X$ is of hyperbolic conformal type
	and that there exist $p\ge1$, $C_Q>0$ and $C_\infty\ge0$ such that the following Gagliardo--Nirenberg–-Sobolev type inequality,
	\begin{equation}\label{eq67f66fec}
		\left(\int_X |u|^{p} \did\mu \right)^{1/p}
		\le C_Q \left(\int_X g^Q \did\mu \right)^{\frac1Q} + C_\infty \|u\|_{L^\infty(X)} ,
	\end{equation}
	holds for every $u\in L^1_{\rm loc}(X)$ with upper gradient $g$.
	
	Then, there is $C_{\rm F}\ge1$ such that 
	for every $a,b\in X$ with $d(a,b)>C_{\rm F}$,
	\begin{equation}\label{eq67f00646}
		\ferrdHyp(a,b)
		\ge \frac1{C_{\rm F}} d(a,b)^{\frac{1}{pQ}} ,
	\end{equation}
	where $\ferrdHyp$ denotes the hyperbolic Ferrand distance.
\end{theorem}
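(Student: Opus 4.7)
I will show that for every continuum $K\subset X$ with $a,b\in K$ one has $\capacity_Q(K)\gtrsim d(a,b)^{\min(1,Q/p)}$ when $d(a,b)$ is large. Since $Q\ge 1$ forces $\min(1,Q/p)\ge 1/(pQ)$ and $d(a,b)\ge 1$, infimizing over $K$ yields the claim. Fix such a $K$ and an admissible function $u\colon X\to [0,1]$ for $K$ with compact support and upper gradient $g$; I may truncate so that $\|u\|_\infty\le 1$ and $u\equiv 1$ on $K$. By Proposition~\ref{prop67e505c3} I may further assume that $u$ is monotone on $X\setminus K$, and Lemma~\ref{lem67e51b8c} then upgrades this to monotonicity on $X\setminus\{y\}$ for \emph{every} single $y\in K$.

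The geometric input is a $1$-separated chain on $K$. Since $K$ is connected and contains both $a$ and $b$, the continuous function $x\mapsto d(a,x)$ maps $K$ onto a connected subset of $\R$ containing $[0,d(a,b)]$; hence, setting $m:=\lfloor d(a,b)\rfloor$, for each $i\in\{0,\dots,m\}$ there exists $x_i\in K$ with $d(a,x_i)=i$, and the triangle inequality yields $d(x_i,x_j)\ge |i-j|\ge 1$ for $i\ne j$.

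Next I fix a radius $r>0$, depending only on the data of $X$, such that $r<R/C$ and $Cr<1/2$, where $C$ is the constant of Corollary~\ref{cor67efffbc}. For each $i$, provided $d(a,b)>2Cr$, I choose $y_i\in\{a,b\}$ outside $B(x_i,Cr)$ (taking $y_i=b$ when $i\le m/2$ and $y_i=a$ otherwise), so that $B(x_i,Cr)\subset X\setminus\{y_i\}$ and hence $u$ is monotone on $B(x_i,Cr)$. Corollary~\ref{cor67efffbc} then gives
\begin{equation*}
\oscillation_{B(x_i,r)}u\le C\,\epsilon_i,\qquad \epsilon_i:=\Bigl(\int_{B(x_i,Cr)}g^Q\did\mu\Bigr)^{1/Q}.
\end{equation*}
Call $i$ \emph{good} if $\epsilon_i\le 1/(2C)$; since $u(x_i)=1$, for such $i$ one has $u\ge 1/2$ on the whole ball $B(x_i,r)$. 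The balls $B(x_i,Cr)$ are pairwise disjoint because $Cr<1/2$ and the $x_i$ are $1$-separated, so summing $\epsilon_i^Q$ bounds the number of \emph{bad} indices by $(2C)^Q\|g\|_Q^Q$.

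I now dichotomize on $\|g\|_Q^Q$. If $\|g\|_Q^Q\ge m/(2(2C)^Q)$ then already $\int g^Q\gtrsim d(a,b)$. Otherwise at least $m/2$ indices are good; the balls $\{B(x_i,r)\}_{i\text{ good}}$ are pairwise disjoint and each has measure $\ge C_{\rm A}^{-1}r^Q$ by uniform local $Q$-bounded geometry, so $\mu(\{u\ge 1/2\})\gtrsim d(a,b)$ and hence $\|u\|_p\gtrsim d(a,b)^{1/p}$. Plugging this, together with $\|u\|_\infty\le 1$, into the Sobolev inequality~\eqref{eq67f66fec}, and taking $d(a,b)$ large enough so that the main term dominates the $C_\infty$ contribution, yields $\|g\|_Q\gtrsim d(a,b)^{1/p}$ and therefore $\int g^Q\gtrsim d(a,b)^{Q/p}$. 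In both subcases $\capacity_Q(K)\gtrsim d(a,b)^{\min(1,Q/p)}$, uniformly in $K$, and the theorem follows. The main obstacle is precisely the monotonicity step: Corollary~\ref{cor67efffbc} demands $u$ monotone on the enlarged ball $B(x_i,Cr)$, yet $u$ attains its maximum throughout $K\cap B(x_i,Cr)$. Lemma~\ref{lem67e51b8c} rescues monotonicity at the cost of puncturing $K$ at a single point, which must be steered outside $B(x_i,Cr)$; this is what forces the quantitative hypothesis $d(a,b)>C_{\rm F}$.
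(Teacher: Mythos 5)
Your proof is correct and follows essentially the same route as the paper's: reduce to a lower bound on $\capacity_Q(K)$ via monotone admissible functions (Proposition~\ref{prop67e505c3} and Lemma~\ref{lem67e51b8c}), place a separated chain of $\sim d(a,b)$ points on $K$, apply the oscillation estimate of Corollary~\ref{cor67efffbc} after puncturing $K$ outside each ball, and dichotomize on $\|g\|_{L^Q}^Q$, feeding the "many good balls" case into the Sobolev inequality. The only differences are cosmetic (your explicit $1$-separated chain via level sets of $d(a,\cdot)$ versus the paper's $(2Cr)$-separated set, and your Chebyshev count of bad indices), and your exponent $\min(1,Q/p)$ matches what the paper's two cases actually yield before weakening to $1/(pQ)$.
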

\begin{proof}
	Let $R$ and $C$ be as in Corollary~\ref{cor67efffbc}, and fix $r\in(0,R/C)$.	
	Fix $a,b\in X$ with $d(a,b) > 4CR$.
	
	Since $X$ is hyperbolic, then $\ferrdHyp(a,b)>0$ by Corollary~\ref{cor67f92736}.
	Let $K\subset X$ be a continuum with $a,b\in K$ and $\capacity_Q(K) \le 2^Q\ferrdHyp(a,b)$.
	Let $u:X\to[0,1]$ be an admissible function for $K$, monotone on $X\setminus \{z\}$ for every $z\in K$,
	and $g:X\to[0,+\infty]$ an upper-gradient of $u$,
	such that $\int_X g^Q\did\mu \le 2^Q\capacity_Q(K)$.

	Let $\{x_j\}_{j=1}^N\subset K$ be a $(2Cr)$-separated set
	with 
	\begin{equation}\label{eq67f6777f}
		\frac{d(a,b)}{3} \le NCr .
	\end{equation}
	Notice that, since $d(a,b) > 4C_1r$, for each $j$ we have $K\setminus B(x_j,C_1r) \neq\emptyset$, and thus, $u$ is monotone on $B(x_j,C_1r)$.
	
	
	CASE 1: $\int_X g^Q\did\mu \ge \frac{N}{2} \frac{1}{(2C)^Q}$.
	Then
	\begin{align*}
		\scr F(a,b) 
		&\ge 4^{-Q} \int_X g^Q\did\mu \\
		&\ge 4^{-Q} \frac{N}{2} \frac{1}{(2C)^Q} \\
		&\overset{\eqref{eq67f6777f}}\ge 4^{-Q} \frac{d(a,b)}{6Cr} \frac{1}{(2C)^Q} . 
	\end{align*}
	Since $pQ>1$, this lower bound implies \eqref{eq67f00646} for $d(a,b)\ge1$.
	
	CASE 2: $\int_X g^Q\did\mu < \frac{N}{2} \frac{1}{(2C)^Q}$.
	Then there are $N/2$ points in $\{x_j\}_{j=1}^N$ such that $\int_{B(x_j,Cr)}g^Q\did\mu \le \frac{1}{(2C)^Q}$.
	Then, if $x_j$ is one of such points, for every $y\in B(x_j,r)$ we have
	\begin{equation}\label{eq67f00c04}
	\begin{aligned}
		u(y) 
		&\ge 1 - \oscillation_{B(x_j,r)}u \\
		&\overset{\eqref{eq67efd746}}\ge 1 - C \left( \int_{B(x_j,Cr)} g^Q \did \mu \right)^{1/Q} \\
		[\text{by the choice of }j]
		&\ge 1 - C \frac{1}{2C} = \frac12 .
	\end{aligned}
	\end{equation}
	Therefore,
	\begin{align*}
	 	\scr F(a,b)^{1/Q}
		&\ge \frac14 \left( \int_X g^Q \did\mu \right)^{1/Q} \\
		&\overset{\eqref{eq67f66fec}}\ge \frac{1}{4C_Q} \left( \int_X u^p \did\mu \right)^{1/p} - \frac{C_\infty}{4C_Q} \|u\|_{L^\infty(X)} \\
	[\text{balls are disjoint and $|u|\le1$}]
		&\ge \frac{1}{4C_Q} \left( \sum_{j=1}^N \int_{B(x_j,r)} u^p \did\mu \right)^{1/p}  
			- \frac{C_\infty}{4C_Q} \\
	[\text{by Ahlfors regularity and~\eqref{eq67f00c04}}]
		&\ge \frac{1}{4C_Q} \left( \frac{N}{2} \frac1{C_{\rm A}} r^Q  \right)^{1/p} \frac12 
			 - \frac{C_\infty}{4C_Q}\\
	[\text{by~\eqref{eq67f6777f}}]
		&\ge \frac{1}{8C_Q} \left( \frac{ r^{Q-1} }{ 6C_{\rm A}C } \right)^{1/p} d(a,b)^{1/p} 
			 - \frac{C_\infty}{4C_Q} .
	\end{align*}
	This completes the proof of~\eqref{eq67f00646}.
\end{proof}

%

\begin{corollary}\label{cor6855a606}
	Let $(X,d,\mu)$ be a boundedly compact metric measure space with uniformly locally $Q$-bounded geometry, $Q>1$.
	Suppose that $X$ supports a large-scale isoperimetric inequality of order $\frac{N-1}{N}$ for some $N>Q$, as in~\eqref{eq67dbf890}.
	Suppose also that $X$ has compact sets of arbitrarily large measure.
	
	Then there is $C_{\rm F}>0$ such that, for every $a,b\in X$ with $d(a,b)>C_{\rm F}$,
	\begin{equation}\label{eq6855a684}
		\ferrdHyp(a,b)
		\ge \frac1{C_{\rm F}} d(a,b)^{\frac{1}{pQ}} ,
	\end{equation}
	where $p=\frac{NQ}{N-Q}>1$ and $\ferrdHyp$ is the hyperbolic Ferrand distance. 
%
\end{corollary}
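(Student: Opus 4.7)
The plan is to chain together three results already established in the paper: Proposition~\ref{prop6855a6f8}, Corollary~\ref{cor6863f5f0}, and Theorem~\ref{thm67e655fa}. The corollary is essentially a matter of verifying that the hypotheses line up.

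First, I would apply Proposition~\ref{prop6855a6f8} with the given large-scale isoperimetric inequality of order $\frac{N-1}{N}$. Choosing the admissible exponent $q = Q$ (which satisfies $q \in [1,N)$ since $Q < N$ by hypothesis), the proposition yields constants $C_Q$ and $C_{Q,\infty}$ such that
\begin{equation}
\left(\int_X |u|^{p}\did\mu\right)^{1/p} \le C_Q \left(\int_X g^Q \did\mu\right)^{1/Q} + C_{Q,\infty}\|u\|_{L^\infty(X)}
\end{equation}
holds for every $u \in L^1_\mathrm{loc}(X)$ with upper gradient $g$, where $p = \frac{NQ}{N-Q}$. I would then note that $p > 1$: since $Q > 1$ and $N > Q$, we have $NQ > N > N-Q$.

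Second, I would invoke Corollary~\ref{cor6863f5f0} to verify the hyperbolic conformal type of $X$. The hypothesis that $X$ has compact sets of arbitrarily large measure together with the large-scale isoperimetric inequality of order $\frac{N-1}{N}$ implies that $X$ is $p'$-hyperbolic for every $p' < N$. Since $Q < N$, this gives that $X$ is $Q$-hyperbolic, i.e., $X$ has hyperbolic conformal type in the sense of Definition~\ref{def687743ae}.

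Third, all hypotheses of Theorem~\ref{thm67e655fa} are now in place: $(X,d,\mu)$ is boundedly compact with uniformly locally $Q$-bounded geometry for $Q>1$, it has hyperbolic conformal type, and the Gagliardo--Nirenberg--Sobolev inequality~\eqref{eq67f66fec} holds with the exponent $p = \frac{NQ}{N-Q} \ge 1$. Applying Theorem~\ref{thm67e655fa} directly yields the desired constant $C_\mathrm{F} \ge 1$ such that~\eqref{eq6855a684} holds for all $a,b \in X$ with $d(a,b) > C_\mathrm{F}$. There is no real obstacle here; the content of the corollary lies entirely in the two source results, and the only care needed is in matching the exponent $p$ in Theorem~\ref{thm67e655fa} to the Sobolev exponent produced by Proposition~\ref{prop6855a6f8}.
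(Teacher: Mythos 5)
Your proposal is correct and follows essentially the same route as the paper: the paper obtains hyperbolicity by applying Proposition~\ref{prop6855a6f8} with $q=1$ followed by Proposition~\ref{prop67f055d9} (which is exactly the content of Corollary~\ref{cor6863f5f0} that you cite), and then applies Proposition~\ref{prop6855a6f8} with $q=Q$ to feed Theorem~\ref{thm67e655fa}. The exponent matching $p=\frac{NQ}{N-Q}$ and the checks $Q\in[1,N)$, $p>1$ are all as in the paper.
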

\begin{proof}
	We apply Proposition~\ref{prop6855a6f8} to obtain~\eqref{eq67f66fec_1} for $q=1$, that is~\eqref{eq67f05467};
	thus, Proposition~\ref{prop67f055d9} implies that $X$ has hyperbolic conformal type, because $Q<N$.
	We apply again Proposition~\ref{prop6855a6f8} to obtain~\eqref{eq67f66fec} for $q=Q$,
	because $Q<N$;
	hence, Theorem~\ref{thm67e655fa} implies~\eqref{eq6855a684}.
\end{proof}

\subsection{QC implies QI in the hyperbolic case}\label{subs6866e125}

\begin{theorem}\label{thm685cec65}
	Let $X_1$ and $X_2$ be metric measure spaces 
	with uniformly locally $Q$-bounded geometry with $Q>1$.
	For both $j\in\{1,2\}$, suppose that $X_j$ is boundedly compact, and quasi-convex, 	and that the function $\mu^+_{X_j}:r\in (0,+\infty)\mapsto \sup\{\mu(B(x,r)):x\in X\}$ is finite valued and $\lim_{r\to\infty}\mu^+_{X_j}(r)=\infty$.
	If $\dimisp(X_1)>Q$ and $\dimisp(X_2)>Q$,
	then every quasi-conformal map $X_1\to X_2$ is a quasi-isometry.
\end{theorem}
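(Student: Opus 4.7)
The plan is to realize $f\colon(X_1,d)\to(X_2,d)$ as a composition of bornologous maps by passing through the hyperbolic Ferrand pseudo-distance $\ferrdHyp$, and then to promote this to a quasi-isometry via Corollary~\ref{cor67f68188}. Concretely, I would decompose
\begin{equation*}
(X_1,d)\overset{\Id}\longrightarrow (X_1,\ferrdHyp)\overset{f}\longrightarrow (X_2,\ferrdHyp)\overset{\Id}\longrightarrow (X_2,d),
\end{equation*}
verify that each of the three factors is bornologous, conclude the same for the composition $f$, and run the analogous argument for the (also quasi-conformal) inverse $f^{-1}$. Since quasi-convex spaces are quasi-geodesic, Corollary~\ref{cor67f68188} would then deliver the desired quasi-isometry.

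The middle factor is immediate from Proposition~\ref{prop67f68305}: $f$ is bi-Lipschitz, hence bornologous, for $\ferrdHyp$. For the left identity, quasi-convexity of $X_j$ yields a constant $C$ such that every two points $x,y\in X_j$ are joined by a continuum contained in $B(x,C\,d(x,y))$; applying Lemma~\ref{lem685ce33b} with $\rho(r)=Cr$ gives
\begin{equation*}
\ferrdHyp(x,y)\le \mu^+_{X_j}\bigl(C\,d(x,y)+1\bigr),
\end{equation*}
and the finiteness of $\mu^+_{X_j}$ ensures $\ferrdHyp(p_n,q_n)\to\infty\Rightarrow d(p_n,q_n)\to\infty$, i.e.\ $\Id\colon(X_j,d)\to(X_j,\ferrdHyp)$ is bornologous. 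For the right identity, the hypothesis $\dimisp(X_j)>Q$ gives, via Remark~\ref{rem685cea89}, a large-scale isoperimetric inequality of order $\frac{N-1}{N}$ for some $N>Q$; bounded compactness together with $\lim_{r\to\infty}\mu^+_{X_j}(r)=\infty$ furnishes compact sets of arbitrarily large measure (namely sufficiently large closed balls). Corollary~\ref{cor6855a606} then yields constants $C_{\rm F}>0$ and $p>1$ with $\ferrdHyp(a,b)\ge C_{\rm F}^{-1}\,d(a,b)^{1/(pQ)}$ whenever $d(a,b)>C_{\rm F}$, so $d(p_n,q_n)\to\infty\Rightarrow \ferrdHyp(p_n,q_n)\to\infty$.

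The genuine obstacle is not in this final assembly but in Corollary~\ref{cor6855a606}, which rests on the strict inequality $\dimisp>Q$: this gap is precisely what allows Proposition~\ref{prop6855a6f8} to feed a nontrivial Gagliardo--Nirenberg--Sobolev inequality into Theorem~\ref{thm67e655fa} and produce a genuine $d$-lower bound for the $Q$-capacity of a continuum. In the borderline case $\dimisp=Q$ the method collapses, matching the fact that the statement is false for liminal parabolic spaces such as $\R^n$. Once Corollary~\ref{cor6855a606} is available, the remaining hypotheses (bounded compactness, quasi-convexity, and the finiteness and divergence of $\mu^+_{X_j}$) are tailored precisely to activate Lemma~\ref{lem685ce33b} and Corollary~\ref{cor6855a606}, closing the bornologous composition and completing the argument via Corollary~\ref{cor67f68188}.
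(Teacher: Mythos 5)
Your proposal is correct and follows essentially the same route as the paper's own proof: the same three-factor decomposition through $\ferrdHyp$, with Lemma~\ref{lem685ce33b} plus quasi-convexity handling the upper bound, Corollary~\ref{cor6855a606} (fed by Remark~\ref{rem685cea89} and the existence of compact sets of arbitrarily large measure) handling the lower bound, and Corollary~\ref{cor67f68188} finishing. You even make explicit a detail the paper leaves implicit, namely that bounded compactness together with $\lim_{r\to\infty}\mu^{+}_{X_j}(r)=\infty$ supplies the compact sets of arbitrarily large measure required by Corollary~\ref{cor6855a606}.
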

\begin{proof}
	Let $\ferrdHyp_j$ be the hyperbolic Ferrand distance on $X_j$, for $j\in \{1, 2\}$.
	
	First, we claim that the identity map $(X_j,d_j)\to (X_j,\ferrdHyp_j)$ is bornologous.
	To prove this claim, fix $(X,d,\ferrdHyp)\in\{(X_1,d_1,\ferrdHyp_1),(X_2,d_2,\ferrdHyp_2)\}$.
	By assumption, there is $C>0$ such that $X$ is $C$-quasi-convex.
	Therefore, for every $r>0$ and for every $x,y\in X$ with $x,y\in B(o,r)$,
	there is a curve $\gamma$ from $x$ to $y$ with $\Length(\gamma)\le Cd(x,y)$, and thus, $\gamma\subset B(x,Cd(x,y)+1)$.
	We can then apply Lemma~\ref{lem685ce33b} with $\rho(r) := Cr+1$:
	if $\{(x_n,y_n)\}_{n\in\N}\subset X\times X$ is a sequence with $\lim_{n\to\infty}\ferrdHyp(x_n,y_n)=\infty$, then 
	 $\lim_{n\to\infty}d(x_n,y_n) = \infty$.
	This shows our claim that the identity map $(X_j,d_j)\to (X_j,\ferrdHyp_j)$ is bornologous.	
	
	Second, also the hypotheses of Corollary~\ref{cor6855a606} are met for both spaces.
	We obtain that, for $j\in\{1,2\}$, if $\{(x_n,y_n)\}_{n\in\N}\subset X_j\times X_j$ is a sequence with $\lim_{n\to\infty} d(a_n,b_n)=\infty$, then $\lim_{n\to\infty} \ferrdHyp(a_n,b_n) = \infty$.
	We conclude that the identity map $(X_j,d_j)\to (X_j,\ferrdHyp_j)$ is bornologous, for both $j\in\{1,2\}$.
	
	Next, since quasi-conformal maps are biLipschitz for the hyperbolic Ferrand distance,
	a quasi-conformal map $f:(X_1,d_1)\to (X_2,d_2)$ is composition of bornologous maps
	\begin{equation}
		(X_1,d_1)
		\overset{\Id}\to (X_1,\ferrdHyp_1) 
		\overset{f}\to (X_2,\ferrdHyp_2) 
		\overset{\Id}\to (X_2,d_2) ,
	\end{equation}
	and thus bornologous.
	Since the inverse of $f$ is also quasi-conformal, it is bornologous too. 
	
	Notice that quasi-convex spaces are quasi-geodesic. 
	Therefore, we conclude that $f$ is a quasi-isometry	 by Corollary~\ref{cor67f68188}.
\end{proof}

\section{Geodesic Lie groups}

\subsection{Geodesic Lie groups}
A \emph{geodesic Lie group} is a Lie group endowed with a left-invariant geodesic distance $d$ inducing the manifold topology, and it is equipped with a left Haar measure $\mu$.
In fact, every  geodesic Lie group is a sub-Finsler Lie group $G$:
it is equipped with a bracket-generating polarization $\Delta$ of its Lie algebra $\frk g$ and a norm $\|\cdot\|$ on $\Delta$.
The sub-Finsler structure defines the \emph{geodesic distance} (a.k.a.~\emph{Carnot-Carathéodory   distance}): 
\begin{equation*}
	d(x,y) := \inf\left\{
	\int_0^1\| {\rm d} L_{\gamma(t)}^{-1}\dot\gamma(t)\|\did t : 
	\begin{array}{l}
		\gamma:[0,1]\to G \text{ absolutely continuous},\\
		\gamma(0)=x,\ \gamma(1) =y,\\
		{\rm d}L_{\gamma(t)}^{-1}\dot\gamma(t)\in\Delta\text{ for a.e.~}t\in[0,1]
	\end{array}
	\right\} ,
\end{equation*}
which is a geodesic distance that induces the manifold topology, as $\Delta$ is assumed to be bracket generating; see \cite{donne2024metricliegroupscarnotcaratheodory} for a detailed exposition of sub-Finsler Lie groups.

Recall that a Lie group is \emph{unimodular} if every left-invariant Haar measure is also right invariant.
Lie groups with polynomial growth are unimodular.

\begin{proposition}\label{prop682dc2dc}
	A geodesic Lie group $(G,d,\mu)$ is a boundedly compact metric measure space with uniformly locally $Q$-bounded geometry,
	where $Q$ is the Hausdorff dimension of $(G,d)$ and it equals
	\begin{equation}\label{eq684e8752}
		Q = \sum_{j=1}^\infty k \dim(\Delta^{[k]}/\Delta^{[k-1]}) ,
	\end{equation}
	with $\Delta^{[0]} := \{0\}$, $\Delta^{[1]} := \Delta$, and $\Delta^{[k+1]} := \Delta^{[k]} + [\Delta,\Delta^{[k]}]$ for all $k\ge1$.
	
	If $G$ is unimodular, then a $(1,1)$-Poincaré inequality holds at all scales.
\end{proposition}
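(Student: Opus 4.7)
The strategy is to reduce local questions to the identity via left-invariance, and then to assemble the statement from classical results on sub-Finsler Lie groups. By Berestovski's theorem, $(G,d)$ is sub-Finsler with a bracket-generating polarization $\Delta$. Left-invariance of $d$ and $\mu$ yields $\mu(B(x,r)) = \mu(B(e,r))$ for every $x\in G$ and $r>0$, so all volume estimates reduce to estimates at the identity; similarly, a Poincaré inequality that holds at $e$ in a given form transfers by left translation to every other base point with the same constants.

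Bounded compactness follows from the sub-Finsler Hopf--Rinow theorem for Lie groups (see~\cite{donne2024metricliegroupscarnotcaratheodory}), which ensures that closed balls are compact. The formula~\eqref{eq684e8752} for the Hausdorff dimension comes from Mitchell's theorem: the metric tangent cone of $(G,d)$ at $e$ is the Carnot group whose graded Lie algebra is $\hat{\frk g} = \bigoplus_k \Delta^{[k]}/\Delta^{[k-1]}$, and its homogeneous dimension is $\sum_k k\dim(\Delta^{[k]}/\Delta^{[k-1]})$. The ball-box theorem gives the two-sided estimate $C^{-1} r^Q \le \mu(B(e,r)) \le C r^Q$ uniformly for $r$ in some range $(0,R_0)$, yielding the uniform local Ahlfors regularity~\eqref{eq67f646a8}. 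For the local Poincaré inequality~\eqref{eq67f646d6}, I would use that the sub-Finsler norm on $\Delta$ is locally bi-Lipschitz equivalent to any left-invariant sub-Riemannian norm on the same distribution, so Jerison's $(2,2)$-Poincaré inequality for Hörmander vector fields---strengthened to the $(1,1)$ form via standard Hajłasz--Koskela methods---yields the claim at small scales at $e$, and left-invariance extends it uniformly.

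For the final assertion under unimodularity, the plan is to upgrade the local $(1,1)$-Poincaré inequality to all scales by a chain-of-balls argument along geodesics. Fix a radius $r_0$ at which the local inequality holds. Given a ball $B=B(x,r)$ with $r>r_0$ and points in $B$, connect them by a geodesic and decompose it into segments of length $\le r_0$; apply the local Poincaré on each segment and telescope. Unimodularity enters by keeping the measures of all balls involved in the cover balanced, which, combined with the translation-invariance of ball volumes already provided by left-invariance, prevents any accumulation of distortion factors along the chain and yields the desired global inequality.

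The main obstacle is precisely this last step: propagating a local Poincaré inequality to arbitrary scales on a group that may have exponential volume growth, and hence is not globally doubling. The standard chaining argument of~\cite{MR1683160} relies on doubling to control volume ratios along chains; the substitute here is the volume-homogeneity of $(G,d,\mu)$ under both left and right translations, which is exactly what unimodularity provides. Without it, the modular function would introduce an exponential weight in the chain and the conclusion would fail, consistent with the hypothesis appearing as necessary in the statement.
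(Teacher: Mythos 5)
Your first three paragraphs follow the paper's route: bounded compactness from the metric Hopf--Rinow theorem, local Ahlfors $Q$-regularity (hence the Hausdorff dimension formula) from the ball--box estimates, the local Poincar\'e inequality from Jerison combined with Haj\l{}asz--Koskela, reduction of the sub-Finsler case to the sub-Riemannian one by bi-Lipschitz equivalence, and uniformity of all constants from left-invariance. That part is fine and matches the paper, which likewise assembles these ingredients by citation.

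The gap is in the last step. The paper does not chain balls; it invokes the known proof that unimodular Lie groups satisfy a $(1,1)$-Poincar\'e inequality at all scales, whose mechanism is the \emph{pseudo-Poincar\'e} (right-translation) argument: for $x\in B$ and $y=xz$ with $z\in B(e,2r)$, one writes $u(xz)-u(x)$ as an integral of the upper gradient along the left-translate $s\mapsto x\gamma(s)$ of a geodesic $\gamma$ from $e$ to $z$, and then integrates in $x$ over $B$. Unimodularity enters exactly here: the right translation $x\mapsto x\gamma(s)$ preserves the left Haar measure, so $\int_B g(x\gamma(s))\did\mu(x)\le\int_{3B}g\did\mu$ carries no modular-function weight. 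Your chain-of-balls scheme is a different mechanism, and it is the one that genuinely needs doubling rather than unimodularity: summing the scale-$r_0$ Poincar\'e inequality over a chain of roughly $r/r_0$ balls and then integrating over all pairs of points of $B(x,r)$ produces volume ratios of the form $\mu(B(x,r))/\mu(B(\cdot,r_0))$ and overlap multiplicities that are controlled by the volume growth, not by any invariance of $\mu$. Left-invariance already makes all balls of a given radius have equal measure, so ``balancing the measures of the balls in the cover'' is automatic and is not where unimodularity is used; unimodularity is about \emph{right} translations, which never appear in a ball-chaining argument. As written, your final step would not close; replace the chaining by the right-translation argument (or cite it, as the paper does), noting that the localized, uniformly-constant form at all scales is anyway applied in the paper only in the polynomial-growth (globally doubling) case.
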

\begin{proof}
	Since geodesic Lie groups are locally compact geodesic complete metric spaces, they are boundedly compact; see \cite[Theorem 7.1.7]{donne2024metricliegroupscarnotcaratheodory}.
	
	It is well known that geodesic Lie groups have locally $Q$-bounded geometry, 
	with $Q$ as in~\eqref{eq684e8752}.
	Indeed, the local Ahlfors regularity is proven in~\cite[Ch 1, Sec 3]{MR793239}.
	Local Poincaré inequalities $(G,d,\mu)$ are proven in~\cite{MR850547}, with \cite[Theorem~11.12]{MR1683160}, for the sub-Riemannian case;	the general case of geodesic Lie groups is deduced from the fact that every geodesic Lie group is bi-Lipschitz equivalent to a sub-Riemannian Lie group.
	The uniformity of constants is a consequence of the left-invariance of both the metric $d$ and the measure $\mu$.
	So, we conclude that geodesic Lie groups have uniformly locally $Q$-bounded geometry.
	
	Another source for a proof of the (1,1)-Poincaré inequality in geodesic Lie groups is \cite[p.461]{zbMATH00819164}, which clearly shows that, when $G$ is unimodular, then the inequality holds at all scales.
\end{proof}

Pittet~\cite{zbMATH01782641} characterized the isoperimetric profiles at large scales of Riemannian Lie groups.
Kanai's Theorem~\ref{thm684c4f0a} allows us to extend the result by Pittet from Riemannian to geodesic Lie groups.
The isoperimetric inequality at large scale implies a Gagliardo–Nirenberg–Sobolev type inequality, as we have shown in Proposition~\ref{prop6855a6f8}.
Sobolev inequalities are one of our main tools to study capacity on geodesic Lie groups,
as we have shown in Proposition~\ref{prop67f055d9} and Theorem~\ref{thm67e655fa}. 
Sobolev inequalities on nilpotent geodesic Lie groups have been shown also by Varopoulos~\cite{MR839109}, or~\cite[Theorem IV.7.2]{zbMATH05521653},
and on Lie groups with polynomial growth by
Alexopoulos and Lohoué in~\cite{zbMATH00561474}.

\begin{theorem}[{after Pittet \cite[Theorem 2.1]{zbMATH01782641}}]\label{thm68655128}
	Let $(G,d,\mu)$ be a non-compact geodesic Lie group.
	Then, exactly one of the following cases happens:
	\begin{enumerate}[label=(\alph*)]
	\item
	$G$ is non-amenable or non-unimodular, with exponential volume growth,
	and it supports an isoperimetric inequality at large scale of order $\alpha$,
	for every $\alpha\in[0,1]$.
	In this case, $\dimgr(G)=\infty$.
	\item
	$G$ is amenable, unimodular, with exponential volume growth,
	and it supports an isoperimetric inequality at large scale of order $\alpha$,
	for every $\alpha\in[0,1)$.
	In this case, $\dimgr(G)=\infty$.
	\item
	$G$ is amenable, unimodular, and there is $N\in\N$ 
	such that $G$ has polynomial growth of degree $N$, 
	and it supports an isoperimetric inequality at large scale of order $\alpha$,
	for every $\alpha\in[0,\frac{N-1}{N}]$.
	In this case, $\dimgr(G)=N$.
	\end{enumerate}
	As a consequence,
	\begin{equation}\label{eq6866de65}
		\dimisp(G)
		= \dimpar(G)
		= \dimgr(G) .
	\end{equation}
\end{theorem}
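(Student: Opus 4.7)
The plan is to reduce the three-case trichotomy to Pittet's analogous theorem~\cite[Theorem 2.1]{zbMATH01782641} for Riemannian Lie groups, transferring the large-scale isoperimetric inequalities to the general sub-Finsler setting via Kanai's Theorem~\ref{thm684c4f0a}. The final identities~\eqref{eq6866de65} are then extracted by combining the three cases with the general chain~\eqref{eq68765d7d}.

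First, equip $G$ with an auxiliary left-invariant Riemannian metric $d_R$, using the same left Haar measure $\mu$ (which is unique up to a positive scalar). Both $(G,d)$ and $(G,d_R)$ are left-invariant geodesic distances inducing the manifold topology on $G$, so the identity map $(G,d)\to(G,d_R)$ is a quasi-isometry: the ratio of the two distances is uniformly controlled outside a neighborhood of the identity by a standard compactness argument, and left-invariance extends the control to all pairs of points. By Proposition~\ref{prop682dc2dc}, both metric measure spaces are boundedly compact and carry uniformly locally $Q$-bounded geometry, so they satisfy the hypotheses of Kanai's Theorem~\ref{thm684c4f0a}. Moreover, the attributes appearing in cases~(a),~(b),~(c)—amenability, unimodularity, exponential versus polynomial volume growth, and in the latter case the growth degree—depend only on $G$ as a Lie group (in particular, $\dimgr$ is a quasi-isometric invariant by Proposition~\ref{prop6866b133}). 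Applying Pittet's theorem to $(G,d_R,\mu)$ and invoking Kanai's theorem for each order $\alpha\in[0,1]$ yields the corresponding case for $(G,d,\mu)$ with the stated isoperimetric inequalities at large scale.

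The final identity~\eqref{eq6866de65} then follows immediately. Since $G$ is non-compact, it contains compact sets of arbitrarily large measure (closed balls of any radius), so~\eqref{eq68765d7d} applies and gives $\dimisp(G)\le\dimpar(G)\le\dimgr(G)$. In cases~(a) and~(b), exponential volume growth gives $\dimgr(G)=\infty$, while the isoperimetric inequalities of order $(N-1)/N$ for every $N\ge 1$ yield $\dimisp(G)=\infty$; all three quantities are then infinite. In case~(c), $\dimgr(G)=N$ is the polynomial growth degree, and the isoperimetric inequality of order $(N-1)/N$ forces $\dimisp(G)\ge N$, whence the three dimensions all equal $N$. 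The main obstacle is bookkeeping against Pittet's original Riemannian formulation: one must verify that his statement, phrased for the Riemannian volume form and Riemannian perimeter, transports to the Haar measure and the metric measure perimeter $\Per$ used here (the Haar measure and Riemannian volume are proportional by uniqueness of Haar measure, and the perimeter functionals are comparable under bi-Lipschitz changes of metric on compact sets), and that case~(a) really attains the endpoint $\alpha=1$—the linear Cheeger-type inequality valid exactly for non-amenable or non-unimodular Lie groups—rather than merely every sub-linear order.
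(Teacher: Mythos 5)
Your proposal follows essentially the same route as the paper: apply Pittet's theorem to an auxiliary left-invariant Riemannian metric, transfer the large-scale isoperimetric inequalities to $(G,d,\mu)$ via the quasi-isometry invariance of Theorem~\ref{thm684c4f0a} and of volume growth (Proposition~\ref{prop6866b133}), and then derive~\eqref{eq6866de65} from the chain $\dimisp\le\dimpar\le\dimgr$ together with the case analysis. The argument is correct, and your closing remarks on transporting Pittet's Riemannian perimeter to the metric-measure perimeter are reasonable diligence rather than a gap.
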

\begin{proof}
	Let $\rho$ be a left-invariant Riemannian metric on $G$.
	Pittet's result \cite[Theorem 2.1]{zbMATH01782641}
	implies that one of the three cases listed in the theorem hold for $(G,\rho,\mu)$.

	We claim that the same case holds for $(G,d,\mu)$.
	Indeed, since both $\rho$ and $d$ are left-invariant (quasi-)geodesic distances on $G$, they are quasi-isometric equivalent.
	By Proposition~\ref{prop6866b133}, the volume growth of $(G,d,\mu)$ and $(G,\rho,\mu)$ are equivalent, that is, they are either both exponential or both polynomial with the same degree.
	Theorem~\ref{thm684c4f0a} tells us that, for every $\alpha\in[0,1]$,
	either both $(G,d,\mu)$ and $(G,\rho,\mu)$ support an isoperimetric inequality at large scale of order $\alpha$, or none.
	Our claim is proven.
	
	To deduce~\eqref{eq6866de65}, first recall that, by~\eqref{eq68cf049a} and~\eqref{eq68780aab}, we have
	\begin{equation}
		\dimisp(G)
		\le \dimpar(G)
		\le \dimgr(G).
	\end{equation}
	If $G$ falls in one of the first two cases, then $\dimisp(G)=\infty$ and thus equalities hold.
	If $G$ has polynomial growth, then case (c) claims that there is $N$ such that $\dimgr(G)\le N$ and $N\le\dimisp(G)$.
	Therefore, we have necessarily $\dimgr(G) = N = \dimisp(G)$ and~\eqref{eq6866de65}.
\end{proof}

\begin{corollary}\label{cor67f056ce}
	Geodesic Lie groups are boundedly compact spaces with uniformly locally bounded geometry.
	Moreover, if $G$ is a non-compact geodesic Lie group with Hausdorff dimension $Q$ and growth dimension $N\in[1,\infty]$,
	then one and only one of the following two cases is true:
	\begin{enumerate}[label=(\Roman*)]
	\item
	Either $N\le Q$, and $G$ has parabolic conformal type;
	\item
	Or $Q<N$, and $G$ has hyperbolic conformal type.
	\end{enumerate}
\end{corollary}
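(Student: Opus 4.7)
The first sentence is exactly Proposition~\ref{prop682dc2dc}. For the dichotomy, the plan is to invoke the key identity $\dimisp(G)=\dimpar(G)=\dimgr(G)=N$ provided by Theorem~\ref{thm68655128}, which converts the hypothesis $N\le Q$ (respectively $Q<N$) into an upper bound on volume growth (respectively a large-scale isoperimetric inequality) of a strength that directly forces the $Q$-capacity to vanish (respectively not vanish) on bounded sets.

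For case (I), assume $N\le Q$. Since $Q$ is finite for geodesic Lie groups, $N$ is finite as well, so Theorem~\ref{thm68655128}(c) yields $\mu(B(x,R))\le CR^N$ for every $x\in G$ and every $R\ge 1$. Applying Proposition~\ref{prop67f0559a} with exponent $p=Q\ge N$ then gives $\capacity_Q(E)=0$ for every bounded $E\subset G$, that is, parabolic conformal type. The only borderline issue is $Q=1$, which forces $G$ to be one-dimensional (essentially $(\R,|\cdot|)$) and the statement is classical.

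For case (II), assume $Q<N$, and pick an intermediate value $N'\in(Q,N)$, which exists because $Q\ge 1$ is finite while $N$ may be $+\infty$. Theorem~\ref{thm68655128} together with Remark~\ref{rem685cea89} then ensures that $G$ supports a large-scale isoperimetric inequality of order $(N'-1)/N'$. Since $G$ is a non-compact locally compact group, its Haar measure is infinite, and the boundedly compact structure yields compact balls of arbitrarily large measure. Corollary~\ref{cor6863f5f0} therefore applies and gives that $G$ is $p$-hyperbolic for every $p<N'$; choosing $p=Q$ produces the hyperbolic conformal type. Mutual exclusivity of (I) and (II) is automatic, since the inequalities $N\le Q$ and $Q<N$ partition the possibilities and parabolic versus hyperbolic conformal types are disjoint by Definition~\ref{def687743ae}.

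The argument is short because all of the real work has been absorbed into Theorem~\ref{thm68655128} and the capacity comparisons of Proposition~\ref{prop67f0559a} and Corollary~\ref{cor6863f5f0}. The one place that needs mild care is case (II): one must simultaneously select the intermediate exponent $N'$ strictly between $Q$ and $N$ and verify the arbitrarily-large-measure hypothesis. Both reduce to the non-compactness of $G$ and the fact that for geodesic Lie groups the isoperimetric, parabolic, and growth dimensions coincide.
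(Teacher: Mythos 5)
Your proof is correct and follows exactly the route the paper intends (the corollary is stated without an explicit proof, but it is meant to follow from Proposition~\ref{prop682dc2dc}, the equalities $\dimisp(G)=\dimpar(G)=\dimgr(G)=N$ of Theorem~\ref{thm68655128}, Proposition~\ref{prop67f0559a} for case (I), and Remark~\ref{rem685cea89} with Corollary~\ref{cor6863f5f0} for case (II)). The only slight imprecision is that the borderline for Proposition~\ref{prop67f0559a} is $N=1$ rather than $Q=1$: when $N=1<Q$ one should replace $N$ by some $N'\in(1,Q]$ before applying the proposition, which is immediate since a growth bound of order $R^{1}$ implies one of order $R^{N'}$ for $R>1$.
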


The identities in~\eqref{eq6866de65} have been proven for Riemannian manifolds by~Coulhon and Saloff-Coste in~\cite{zbMATH00404300}.

\subsection{Quasi-straightenable geodesic Lie groups}

\begin{proposition}\label{prop687570ac}
	Every geodesic Lie group with polynomial growth is quasi-straight\-en\-able
	in the sense of Section~\ref{subs68cf0748}.
\end{proposition}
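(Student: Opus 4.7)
The plan is to reduce the problem to Carnot groups, where the dilation structure makes the construction of quasi-straight sequences explicit.

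First, by the structure theory of Lie groups of polynomial growth (Guivarc'h/Jenkins/Breuillard), every geodesic Lie group with polynomial growth is quasi-isometric to a simply connected nilpotent Lie group $(N, d_N)$ equipped with a left-invariant geodesic distance. Moreover, any such $(N, d_N)$ is quasi-isometric to its associated graded Carnot group $(N_{\rm gr}, d_{\rm gr})$ with its Carnot--Carathéodory distance (Pansu, via the asymptotic cone identification). Combining these with Lemma~\ref{lem6874bc72} (quasi-straightenability is a QI invariant), it suffices to prove the statement for Carnot groups.

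Let $(G, d)$ be a Carnot group with Lie algebra $\mathfrak{g} = V_1 \oplus \cdots \oplus V_s$, equipped with its Carnot--Carathéodory distance, which is $1$-homogeneous under the canonical dilations. By the ball-box theorem, there exist constants $c_1, c_2 > 0$ such that
\begin{equation*}
    c_1 \max_j |w_j|^{1/j} \le d(e, \exp(w)) \le c_2 \max_j |w_j|^{1/j}, \qquad w = \sum_j w_j \in \mathfrak{g}.
\end{equation*}
Given $x, y \in G$, use left-invariance to translate so $x = e$, set $v := \log y = \sum_j v_j$, fix a constant $C_0 > 0$, and take $n$ to be the smallest positive integer such that $|v_j|/n \le C_0^j$ for every $j$. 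Define $\zeta_k := \exp(k v / n)$ for $k \in \Z$. Because $t \mapsto \exp(tv)$ is a one-parameter subgroup, $d(\zeta_i, \zeta_j) = d(e, \exp((j-i) v/n))$; moreover $\zeta_0 = x$, $\zeta_n = y$, and the consecutive spacing satisfies $d(\zeta_k, \zeta_{k+1}) \le c_2 C_0$, uniformly in the choice of $(x,y)$.

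For the quasi-straightness inequality \eqref{eq68ce7545}, set $\beta_j := (|v_j|/n)^{1/j} \le C_0$ and $\phi(m) := d(e, \exp(mv/n))$, so that ball-box gives $c_1 \max_j m^{1/j} \beta_j \le \phi(m) \le c_2 \max_j m^{1/j} \beta_j$ for $m \ge 0$. Using the concavity bound $a^{1/j} + b^{1/j} \le 2(a+b)^{1/j}$ for $j \ge 1$ and taking maxima over $j$, one obtains
\begin{equation*}
    \phi(a) + \phi(b) \le 2 c_2 \max_j (a+b)^{1/j} \beta_j \le \frac{2 c_2}{c_1}\, \phi(a+b),
\end{equation*}
which yields \eqref{eq68ce7545} with a uniform constant. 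The tails are unbounded since $v \ne 0$ forces some $\beta_j > 0$, hence $\phi(m) \to \infty$ as $|m| \to \infty$. The main obstacle lies not in the Carnot computation itself (which is clean thanks to dilations) but in the two preliminary reductions: the quasi-isometry between a polynomial-growth geodesic Lie group and a simply connected nilpotent one, and the quasi-isometry between a simply connected nilpotent Lie group and its associated graded Carnot group, both of which rely on substantial structure theory.
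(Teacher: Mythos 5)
Your first reduction (a polynomial-growth geodesic Lie group is quasi-isometric to a simply connected nilpotent Lie group, plus Lemma~\ref{lem6874bc72}) matches the paper. The gap is in your second reduction: the claim that a simply connected nilpotent Lie group $(N,d_N)$ is quasi-isometric to its associated graded Carnot group $(N_{\rm gr}, d_{\rm gr})$ does not follow from Pansu's asymptotic cone theorem. That theorem gives Gromov--Hausdorff convergence of the rescaled metrics $\frac{1}{t}d_N$ to $d_{\rm gr}$, which identifies the asymptotic cone but produces no quasi-isometry $N\to N_{\rm gr}$; whether such a quasi-isometry always exists is a well-known open problem (what is known in general is only the weaker sublinear bilipschitz equivalence). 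So as written, the proof rests on an unproved --- indeed possibly false --- statement.

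The fix is to drop the Carnot reduction entirely and run your one-parameter-subgroup computation directly on $N$, which is what the paper does in Lemma~\ref{lem6874bbe0}. Guivarc'h's theorem provides, on $N$ itself, a splitting $\frk n=\bigoplus_m V_m$ and the two-sided estimate $\frac1C\sum_m a_m\|v_m\|^{1/m}-C\le d(1_N,\exp(\sum_m v_m))\le C\sum_m a_m\|v_m\|^{1/m}+C$, i.e.\ a sum instead of your ball-box maximum and with additive constants. Taking $\zeta_k=x\exp(kv)$ for a unit vector $v$ (so that $\zeta_i^{-1}\zeta_j=\exp((j-i)v)$), the same concavity inequality $a^{1/m}+b^{1/m}\le 2(a+b)^{1/m}$ applied termwise to the sum yields \eqref{eq68ce7545} with a uniform $K$, now carrying harmless additive constants coming from Guivarc'h's estimate. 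Your Carnot computation is otherwise sound, but it is only needed in this form on $N$; once restated with Guivarc'h's estimate, no structure theory beyond the first reduction is required.
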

\begin{proof}
	Every geodesic Lie group with polynomial growth is quasi-isometric to a simply connected nilpotent Riemannian Lie group;	see, for instance, \cite[Corollary 101]{zbMATH07939438}.
	By Lemma~\ref{lem6874bbe0}, simply connected nilpotent Riemannian Lie groups are quasi-straightenable.
	By Lemma~\ref{lem6874bc72}, quasi-straightenability is preserved by quasi-isometries.
	This completes the proof.
\end{proof}

\begin{lemma}\label{lem6874bbe0}
	Simply connected nilpotent Riemannian Lie groups are quasi-straightenable.
\end{lemma}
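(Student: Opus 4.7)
The plan is to produce, for each pair of points $x,y\in G$, a quasi-straight sequence that actually passes through $x$ and $y$, by sampling an appropriate one-parameter subgroup. Let $G$ be a simply connected nilpotent Lie group with left-invariant Riemannian metric $d$, and recall that $\exp:\mathfrak g\to G$ is a diffeomorphism. Given $x,y\in G$, I would first use left-invariance of $d$ to reduce to $x=e$, and write $y=\exp(Y)$ for the unique $Y\in\mathfrak g$. For an integer $n\ge 1$ to be fixed, set $\zeta_k:=\exp((k/n)Y)$ for $k\in\Z$, so that $\zeta_0=x$ and $\zeta_n=y$.

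The easy conditions of Definition~\ref{def68766881} follow quickly. Since $\exp(aY)$ and $\exp(bY)$ commute, one has $\zeta_i^{-1}\zeta_j=\exp(((j-i)/n)Y)$, hence by left-invariance $d(\zeta_i,\zeta_j)=\psi(|j-i|)$, where $\psi(m):=d(e,\exp((m/n)Y))$. Choosing $n:=\max\{1,\lceil\|Y\|_e\rceil\}$ and using that the curve $s\mapsto\exp(sY)$ has Riemannian length $|s|\|Y\|_e$ gives $\psi(1)\le 1$, so consecutive distances are bounded. The half-sequences $\zeta(\Z_{\ge 0})$ and $\zeta(\Z_{\le 0})$ are unbounded in $G$ because $\exp$ is a proper map and $(k/n)Y\to\infty$ in $\mathfrak g$ as $|k|\to\infty$.

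The core of the proof is the almost-alignment inequality $\psi(a)+\psi(b)\le(1+K)\psi(a+b)+K$ for $a,b\in\Z_{\ge 0}$. For this I would appeal to the classical Riemannian distance estimates on simply connected nilpotent Lie groups (Guivarc'h, Varopoulos, Pansu), which are equivalent to the ball--box theorem on the associated graded Carnot group: for every non-zero $Y\in\mathfrak g$ there exist $c_Y,C_Y>0$ and an integer $r(Y)\in\{1,\dots,\mathrm{step}(G)\}$ — the largest $j$ such that $Y$ has a non-trivial image in $\mathfrak g^{(j)}/\mathfrak g^{(j+1)}$ — with
\begin{equation*}
    c_Y\, t^{1/r(Y)}\le d(e,\exp(tY))\le C_Y\, t^{1/r(Y)}\qquad\text{for all }t\ge 1.
\end{equation*}
Combined with the local bound $d(e,\exp(sY))\le s\|Y\|_e$, this yields $\psi(m)\asymp m^{1/r(Y)}$ up to multiplicative constants, and the power-mean inequality $a^{1/r}+b^{1/r}\le 2^{1-1/r}(a+b)^{1/r}$ then gives $\psi(a)+\psi(b)\le C\,\psi(a+b)$ for some constant $C$. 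Small-scale exceptions where $a$ or $b$ is below the scale $n$ can be absorbed into the additive constant $K$.

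The main obstacle is producing a single constant $K$ that works for \emph{all} pairs $x,y$, i.e., uniformly in $Y$. The exponent $1/r(Y)$ is already uniformly bounded since $r(Y)\le\mathrm{step}(G)$, so the issue reduces to bounding the ratio $C_Y/c_Y$ uniformly in $Y$. This follows from the homogeneity of the estimate with respect to the natural dilations on the associated Carnot group, together with Pansu's asymptotic-cone theorem: it is enough to check the estimate on a compact set of representative directions in the homogeneous unit sphere of the nilshadow, which yields uniform constants. Once uniformity is in place, the sequence $\zeta_k=\exp((k/n)Y)$ satisfies all conditions of Definition~\ref{def68766881} with a constant $K$ depending only on $G$, and hence $G$ is $K$-quasi-straightenable.
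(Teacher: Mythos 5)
Your overall strategy is the same as the paper's: sample a one-parameter subgroup through (or near) the two points and verify quasi-straightness via Guivarc'h-type distance estimates plus the power-mean inequality. You correctly identify the crux — uniformity of the constant $K$ over all directions $Y$ — but the way you resolve it contains a genuine error. You reduce the Guivarc'h estimate to a single power law, $c_Y\,t^{1/r(Y)}\le d(e,\exp(tY))\le C_Y\,t^{1/r(Y)}$ for $t\ge1$, and then claim that $C_Y/c_Y$ is uniformly bounded over unit vectors by a compactness/homogeneity argument. This is false. Take a step-$2$ group and $Y=\epsilon Y_1+Y_2$ with $Y_1\in V_1$, $Y_2\in V_2$, $\epsilon$ small: then $r(Y)=1$ but $d(e,\exp(tY))\asymp \epsilon t+t^{1/2}$, which is comparable to $t^{1/2}$ for $1\le t\le\epsilon^{-2}$ and to $\epsilon t$ for $t\ge\epsilon^{-2}$; no single power $t^{1/r}$ fits both regimes with a ratio $C_Y/c_Y$ bounded independently of $\epsilon$. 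The compactness argument over the unit sphere fails precisely because $Y\mapsto r(Y)$ is not continuous (it jumps as $Y$ approaches $\frk g^{(2)}$), and the asymptotic-cone comparison is not uniform in the scale at which the asymptotics kick in.

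The fix — and this is what the paper does — is to \emph{not} collapse the Guivarc'h estimate to a single power. Keep the full expression: for a unit vector $v=\sum_m v_m$ one has $d(e,\exp(tv))\asymp \sum_m a_m\,t^{1/m}\|v_m\|^{1/m}$ up to a uniform multiplicative constant $C$ and a uniform additive constant, where $C$ and the $a_m$ come from Guivarc'h's theorem and do not depend on $v$. The alignment inequality then follows by applying $\alpha^{1/m}+\beta^{1/m}\le 2(\alpha+\beta)^{1/m}$ \emph{term by term} inside the sum, which immediately yields $d(\zeta_i,\zeta_j)+d(\zeta_j,\zeta_k)\le 2C^2\,d(\zeta_i,\zeta_k)+O(C^3)$ with constants depending only on the group. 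The additive constants in Guivarc'h's estimate also handle the small-scale exceptions cleanly, whereas your "absorb into $K$" remark is not quantified (for $a<n$ you only get $\psi(a)\le a$, which is not a fixed bound). With this correction, the rest of your construction — reducing to $x=e$, taking $\zeta_k=\exp((k/n)Y)$, unboundedness from properness of $\exp$ — goes through.
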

\begin{proof}
	We will make use of the fact that
	\begin{equation}\label{eq6874d78e}
		\alpha^\theta + \beta^\theta \le 2 (\alpha+\beta)^\theta , \qquad \forall \theta, \alpha,\beta\in(0,+\infty) .
	\end{equation}
	
	Let $(N,d)$ be a simply connected nilpotent Riemannian Lie group, with Lie algebra $\frk n$ and exponential map $\exp:\frk n\to N$.
	Denote by $\|\cdot\|$ the Riemannian norm on $\frk n$.
	By Guivarc’h Theorem, see for instance \cite[Theorem 10.4.3]{donne2024metricliegroupscarnotcaratheodory},
	there is a splitting $\frk n = \bigoplus_{j=1}^s V_j$ of $\frk n$ into subspaces such that, for every $v_1,\dots,v_s$ with $v_j\in V_j$ for all $j$, 
	\begin{equation}\label{eq6874da38}
		\frac1C \sum_{m=1}^s a_m \|v_m\|^{1/m} - C
		\le d(1_N,\exp(\sum_{m=1}^n v_m) ) 
		\le C \sum_{m=1}^s a_m \|v_m\|^{1/m} + C ,
	\end{equation}
	for some constants $C>0$ and $\{a_j\}_{j=1}^s\subset(0,+\infty)$.
	Moreover, up to increasing the constant $C$,
	we also assume that, if $v = \sum_{m=1}^sv_m \in\frk n$ with $v_m\in V_m$ for each $m\in\{1,\dots,s\}$, then 
	\begin{equation}\label{eq6874f517}
		\sum_{m=1}^s \|v_m\|
		\le C\| v \| 
		\le C \sum_{m=1}^s \|v_m\| .
	\end{equation}
	
	Set
	\begin{equation}\label{eq6874f56f}
		K = \max\left\{ 
		 2C^2, \,2 C^3 + C , \,\sup\{d(1_N,\exp(u)) : \|u\|\le 1\} \right\} .
	\end{equation}
	We claim that, for every vector $v\in\frk n$ with $\|v\|=1$,
	the sequence $\zeta:\Z\to N$, $\zeta_k:= \exp(kv)$, is an $K$-quasi-straight sequence. 
	Indeed, write $v = \sum_{m=1}^sv_m$ with $v_m\in V_m$ for each $m\in\{1,\dots,s\}$.
	First we have that $\zeta(\Z_{\ge0})$ and $\zeta(\Z_{\le0})$ are unbounded, because $\exp:\frk n\to N$ is a homeomorphism and $v\neq0$.
	Second, 
	\begin{equation}
		d(\zeta_k,\zeta_{k+1}) 
		= d(1_N, \exp(v)) 
		\overset{\eqref{eq6874f56f}}\le K .
	\end{equation}
	Third, if $i,j,k\in\Z$ are such that $i\le j\le k$, then
	\begin{align}
		d(\zeta_i,\zeta_j) + d(\zeta_j,\zeta_k)
		&= d(1_N,\exp((j-i)v)) + d(1_N,\exp((j-k)v)) \\
		&\overset{\eqref{eq6874da38}}\le C \sum_{m=1}^s a_m (|j-i|^{1/m} + |k-j|^{1/m}) \|v_m\|^{1/m} + C \\
		&\overset{\eqref{eq6874d78e}}\le C \sum_{m=1}^s a_m 2 |k-i|^{1/m} \|v_m\|^{1/m} + C \\
		&\overset{\eqref{eq6874da38}}\le 2 C^2 d(1_N,\exp(\sum_{m=1}^s (k-i) v_m) )
			+ 2 C^3 + C \\
		&\overset{\eqref{eq6874f56f}}\le K d(\zeta_i,\zeta_k) + K .
	\end{align}
	This shows the claim.
	
	Finally, let $x,y\in N$ be distinct points.
	Define $w := \exp^{-1}(x^{-1}y)$, so that $y=x\exp(w)$, and let $v = \frac{w}{\|w\|}$.
	Then $v$ is a unit vector and so the function $\zeta:\Z\to N$, $\zeta_k:= x\exp(kv)$, for $k\in\Z$, defines a $K$-quasi-straight sequence with $\zeta_0=x$.
	Moreover, if $k = \lfloor\|w\|\rfloor$, then 
	\begin{align}
		d(y,\zeta_k) 
		&= d(x\exp(w),x\exp(kv))
		= d(1_N, \exp((\|w\|-k)v)) \\
		&\le \sup\{d(1_N,\exp(u)) : \|u\|\le 1\} 
		\overset{\eqref{eq6874f56f}}\le K.
	\end{align}
	Therefore, $x,y\in B(\zeta(\Z),K)$.
	This shows that $(N,d)$ is quasi-straightenable.
\end{proof}

\subsection{Quasi-conformal maps between geodesic Lie groups}

We are now ready to prove our main Theorem~\ref{thm68760387}, which we spell out for the sake of clarity.

\begin{theorem}\label{thm68780db5}
	Let $G_1$ and $G_2$ be non-compact geodesic Lie groups of the same Hausdorff dimension $Q$, and let $f: G_1\to G_2$ be a quasi-conformal map.
	For each $j\in\{1,2\}$, denote by $N_j$  the growth dimension of $G_j$.
	Then $N_1=N_2$ and one of the following cases happens:
	\begin{enumerate}
	\item
	$Q > N_1 $ and $f$ is a quasi-isometry:
	This is the strictly parabolic case.
	\item
	$Q = N_1$:
	This is the liminal parabolic case.
	\item
	$Q < N_1$ and $f$ is a quasi-isometry:
	This is the hyperbolic case.
	\end{enumerate}
\end{theorem}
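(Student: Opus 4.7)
The plan is to apply Theorems~\ref{thm685ea294} and~\ref{thm6877508e} to our geodesic Lie groups, after first reducing the problem to a trichotomy that is preserved by $f$. By Proposition~\ref{prop682dc2dc}, each $G_j$ is boundedly compact with uniformly locally $Q$-bounded geometry and is (quasi-)geodesic; moreover, by left invariance, $\mu_{G_j}^+(r)<\infty$ for all $r>0$. By Theorem~\ref{thm68655128} together with Corollary~\ref{cor67f056ce}, a non-compact geodesic Lie group $G_j$ has hyperbolic conformal type precisely when $N_j>Q$ and parabolic type precisely when $N_j\le Q$, and in fact
\begin{equation}
\dimisp(G_j)=\dimpar(G_j)=\dimgr(G_j)=N_j.
\end{equation}

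Since $G_1$ and $G_2$ have the same Hausdorff dimension $Q$, Remark~\ref{rem689ee744} shows that $f$ preserves the conformal type. Hence $N_1>Q \iff N_2>Q$ and $N_1\le Q \iff N_2\le Q$.

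I then dispatch the extreme cases. In the hyperbolic case ($N_j>Q$), the hypotheses of Theorem~\ref{thm685ea294} are met, so $f$ is a quasi-isometry, and Proposition~\ref{prop6866b133} then yields $N_1=N_2$. In the strictly parabolic case ($N_j<Q$), both groups have polynomial growth, so by Proposition~\ref{prop687570ac} they are quasi-straightenable; Theorem~\ref{thm6877508e} then applies and gives that $f$ is a quasi-isometry, and again $N_1=N_2$.

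It remains to handle the liminal case, where (say) $N_1=Q$; I must show $N_2=Q$. From the previous paragraph we already have $N_2\le Q$, so I argue by contradiction, assuming $N_2<Q$. Polynomial growth of degree exactly $Q$ on $G_1$, combined with Guivarc'h-type two-sided estimates and the unimodularity that accompanies polynomial growth, upgrades the uniformly local $Q$-bounded geometry of $G_1$ to a \emph{global} $Q$-bounded geometry ($R\equiv +\infty$ in Definition~\ref{def6803c8e2}); hence Proposition~\ref{prop687658f1} gives $\ferrdPar\equiv 0$ on $G_1$. On the other hand, since $G_2$ is strictly parabolic and, by Proposition~\ref{prop687570ac}, quasi-straightenable, Proposition~\ref{prop68755b1a} provides a constant $C>0$ and a positive lower bound $\ferrdPar(u,v)\ge C^{-1}d(u,v)^{1-N_2/Q}$ for pairs $u,v\in G_2$ with $d(u,v)\ge C$. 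Because $f$ is a homeomorphism between non-compact spaces, there exist $x,y\in G_1$ with $d(f(x),f(y))$ arbitrarily large; but Proposition~\ref{prop68755c04} then forces $\ferrdPar(f(x),f(y))\le K\,\ferrdPar(x,y)=0$, a contradiction. Therefore $N_2=Q$, and the three cases of the theorem are established.

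The main obstacle will be the liminal parabolic case, since there the QC-implies-QI conclusion genuinely fails (as Euclidean space shows), so the real work is to prove that the liminal type itself is a quasi-conformal invariant within the class of geodesic Lie groups. The argument above resolves this by exploiting the sharp dichotomy between the vanishing of $\ferrdPar$ on a Loewner space and the polynomial lower bound on $\ferrdPar$ supplied by quasi-straight sequences when $N<Q$. A minor technical point, but one that must be checked carefully, is the passage from uniformly local to global $Q$-bounded geometry for $G_1$ in the liminal case, which is needed to invoke Proposition~\ref{prop687658f1}.
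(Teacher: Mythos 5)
Your proposal is correct and follows essentially the same route as the paper: the same trichotomy via Theorem~\ref{thm68655128} and Corollary~\ref{cor67f056ce}, quasi-conformal invariance of the conformal type, the two QC-implies-QI theorems for the hyperbolic and strictly parabolic cases, and the dichotomy between $\ferrdPar\equiv 0$ on the (globally Ahlfors regular, Loewner) liminal group versus the quasi-straightening lower bound on a strictly parabolic one, combined with Proposition~\ref{prop68755c04}, to rule out a liminal--strictly-parabolic QC equivalence. The only cosmetic difference is that the paper disposes of $Q=1$ separately at the outset, whereas in your argument that case is handled implicitly because the contradiction hypothesis $N_2<Q=1$ is vacuous.
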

\begin{proof}
	If the Hausdorff dimension $Q$ equals 1, then the spaces are the Euclidean line, so we are trivially in case (2). Next, we assume $Q>1$.
	
	In cases (1) and (2), when $N_1\le Q$, $G_1$ has parabolic conformal type by Corollary~\ref{cor67f056ce}.
	Hence, $G_2$ has also parabolic conformal type, see Remark~\ref{rem689ee744}, and thus $N_2\le Q$ by Corollary~\ref{cor67f056ce}.
	Since $Q<\infty$, both $G_1$ and $G_2$ have polynomial volume growth.
	By Proposition~\ref{prop687570ac}, they are both quasi-straightenable.
	
	We claim that $Q=N_1$ if and only if $Q=N_2$.
	Notice that a geodesic Lie group with liminal parabolic type 
	is Ahlfors regular at all scales and it supports the Poincaré inequality at all scales by Proposition~\ref{prop682dc2dc}; see also \cite[p.461]{zbMATH00819164}.
	Therefore, combining Proposition~\ref{prop687658f1} and Proposition~\ref{prop68755b1a},
	we conclude that $N_1<Q$ (or $N_2<Q$) if and only if the parabolic Ferrand distance of $G_1$ (or $G_2$) takes finite values.
	The quasi-conformal invariance of the parabolic Ferrand distance, as shown in Proposition~\ref{prop68755c04}, implies that $Q=N_1$ if and only if $Q=N_2$.

	In case (1), when $N_1 < Q$ and $N_2 < Q$,
	we can apply Theorem~\ref{thm68755ab4} and obtain that $f$ is a quasi-isometry.
	Since the growth dimension is a quasi-isometric invariant by Proposition~\ref{prop6866b133}, we conclude $N_1=N_2$.
	
	In case (3), when $Q < N_1$, then $G_1$ has hyperbolic conformal type by Corollary~\ref{cor67f056ce}.
	Hence, $G_2$ has also hyperbolic conformal type and thus $Q<N_2$, again by Corollary~\ref{cor67f056ce}.
	We conclude by Theorem~\ref{thm685cec65} that $f$ is a quasi-isometry.
	Again by Proposition~\ref{prop6866b133}, we conclude $N_1=N_2$.
\end{proof}

For large classes of spaces, growth dimension is a quasi-isometric invariant, but need not be quasi-conformally invariant. 
It turns out to be quasi-conformally invariant in the realm of geodesic Lie groups, as a consequence of the following Corollary.

\begin{corollary}\label{cor67f6897a}
	Among non-compact geodesic Lie groups, the growth dimension, the large-scale isoperimetric dimension, and the parabolic dimension are quasi-conformal and quasi-isometric invariants.
\end{corollary}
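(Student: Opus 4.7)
The plan is to reduce both invariance statements to the quasi-isometric invariance of the growth dimension, combined with Theorem~\ref{thm68655128}, which asserts that $\dimisp(G) = \dimpar(G) = \dimgr(G)$ for every non-compact geodesic Lie group $G$. Once the common value is shown to be preserved by a given map, the invariance of all three dimensions follows at once.

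For the quasi-isometric part, every geodesic Lie group is boundedly compact with uniformly locally $Q$-bounded geometry by Proposition~\ref{prop682dc2dc}, so its volume functions $\mu^\pm_G$ take values in $(0,+\infty)$. Proposition~\ref{prop6866b133} then directly applies: for any quasi-isometry $f\colon G_1\to G_2$ between non-compact geodesic Lie groups and any $N\ge 1$, $G_1$ has degree of large-scale growth at most $N$ if and only if $G_2$ does. Taking the infimum over such $N$ yields $\dimgr(G_1)=\dimgr(G_2)$, and Theorem~\ref{thm68655128} propagates the equality to $\dimpar$ and $\dimisp$.

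For the quasi-conformal part, let $f\colon G_1\to G_2$ be a quasi-conformal map between non-compact geodesic Lie groups. By Theorem~\ref{thm68780db5}, the Hausdorff dimensions of $G_1$ and $G_2$ agree; call this common value $Q$, and set $N_j:=\dimgr(G_j)$. The same theorem distinguishes three mutually exclusive cases, and in each of them we have $N_1=N_2$: if $Q>N_1$ (strictly parabolic), then $f$ is a quasi-isometry, hence the quasi-isometric part of this corollary already proven gives $N_1=N_2$; if $Q=N_1$ (liminal parabolic), then Theorem~\ref{thm68780db5} itself asserts $N_1=Q=N_2$; and if $Q<N_1$ (hyperbolic), then $f$ is once again a quasi-isometry. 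Combined with Theorem~\ref{thm68655128}, this equality transfers to $\dimpar$ and $\dimisp$, completing the quasi-conformal invariance.

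The substantive work has been carried out in Theorem~\ref{thm68780db5}, which packages the trichotomy of conformal types together with the fact that, outside the liminal case, quasi-conformal maps are automatically quasi-isometries. The role of this corollary is merely to record that, for geodesic Lie groups, the three a priori distinct quantities $\dimgr$, $\dimpar$ and $\dimisp$ collapse to a single number, so that invariance of one immediately entails invariance of all three. Consequently, no genuine additional obstacle is anticipated.
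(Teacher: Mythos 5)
Your proposal is correct and follows essentially the same route as the paper: collapse the three dimensions via the equalities of Theorem~\ref{thm68655128}, obtain quasi-isometric invariance of $\dimgr$ from Proposition~\ref{prop6866b133}, and handle the quasi-conformal case through the trichotomy of Theorem~\ref{thm68780db5}, where the two non-liminal cases reduce to the quasi-isometric statement. The only cosmetic difference is that the paper also cites Theorem~\ref{thm684c4f0a} for the quasi-isometric invariance of $\dimisp$ directly, whereas you propagate it through the equality of the three dimensions, which is equally valid.
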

\begin{proof}
	By~\eqref{eq6866de65}, the three dimensions are equal to each other for non-compact geodesic Lie groups.
	As we have shown in Proposition~\ref{prop6866b133} and Theorem~\ref{thm684c4f0a}, $\dimgr$ and $\dimisp$ are quasi-isometric invariant, hence also $\dimpar$ is quasi-isometric invariant.
	By Theorem~\ref{thm68780db5}, the existence of a quasi-conformal homeomorphism implies either that the function is a quasi-isometry, or that both domain and codomain are liminal parabolic.
	In both cases, the three dimensions are preserved.
\end{proof}

\begin{remark}
	The proof of Theorem~\ref{thm68780db5} uses, even though not explicitly, that the liminal parabolic case corresponds to the case when the geodesic Lie group is a Loewner space.
\end{remark}

\subsection{Lie groups with infinite fundamental group}

Recall that a map $\pi:X\to Y$ between metric spaces is a \emph{submetry} if 
\begin{equation}
	\pi(\bar B(x,r)) = \bar B(\pi(x),r)
	\qquad \forall x\in X, \forall r>0 .
\end{equation}
For more information on submetries, see~\cite[\S3.1.7]{donne2024metricliegroupscarnotcaratheodory}.

\begin{lemma}\label{lem68d44a76}
	For each $j\in\{1,2\}$, let $\pi_j:(\tilde X_j,\tilde d_j)\to (X_j,d_j)$ be a submetry between metric spaces.
	Let $f:X_1\to X_2$ and $\tilde f:\tilde X_1\to \tilde X_2$ be homeomorphisms such that $\pi_1\circ\tilde f = f\circ\pi_1$, i.e., the following diagram commutes:
	\begin{equation}\label{eq68d44523}
	\begin{minipage}{\textwidth}
		\xymatrix{ 
		\tilde X_1 \ar[d]_{\pi_1} \ar[r]^{\tilde f}& \tilde X_2 \ar[d]^{\pi_2} \\
		X_1 \ar[r]_{ f} & X_2 
		}
	\end{minipage}
	\end{equation}
	For every $L,C\ge0$, 
	if $\tilde f$ is an $(L,C)$-quasi-isometry, then $f$ is an $(L,C)$-quasi-isometry too.
\end{lemma}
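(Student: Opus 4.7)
The plan is to deduce the quasi-isometry conditions for $f$ from those of $\tilde f$ by using, on one side, that a submetry $\pi$ is $1$-Lipschitz (immediate from $\pi(\bar B(x,r))=\bar B(\pi(x),r)$), and on the other side, that a submetry admits \emph{isometric lifts}: given $\tilde x\in\tilde X$ and $y\in X$, setting $r:=d(\pi(\tilde x),y)$, the equality $\pi(\bar B(\tilde x,r))=\bar B(\pi(\tilde x),r)$ yields $\tilde y\in\tilde X$ with $\pi(\tilde y)=y$ and $\tilde d(\tilde x,\tilde y)\le r$; combined with the $1$-Lipschitz property this gives $\tilde d(\tilde x,\tilde y)=d(\pi(\tilde x),y)$. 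I note that the commutativity in the statement must read $\pi_2\circ\tilde f=f\circ\pi_1$.

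First I would prove the upper bound in \eqref{eq68ce76a1} for $f$. Fix $x_1,x_2\in X_1$ and pick any $\tilde x_1\in\pi_1^{-1}(x_1)$. By the lifting property of $\pi_1$ applied with $r=d_1(x_1,x_2)$, there exists $\tilde x_2\in\pi_1^{-1}(x_2)$ with $\tilde d_1(\tilde x_1,\tilde x_2)=d_1(x_1,x_2)$. The commutativity $\pi_2\circ\tilde f=f\circ\pi_1$ gives $\pi_2(\tilde f(\tilde x_i))=f(x_i)$, so the $1$-Lipschitz property of $\pi_2$ together with the quasi-isometry inequality for $\tilde f$ yields
\begin{equation}
    d_2(f(x_1),f(x_2))\le \tilde d_2(\tilde f(\tilde x_1),\tilde f(\tilde x_2))\le L\,\tilde d_1(\tilde x_1,\tilde x_2)+C=L\,d_1(x_1,x_2)+C.
\end{equation}

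Next I would prove the lower bound, which is the step that requires a more careful choice of lift: rather than lifting $x_1,x_2$, one lifts the images $f(x_1),f(x_2)$. Pick any $\tilde y_1\in\pi_2^{-1}(f(x_1))$ and, by the lifting property of $\pi_2$, choose $\tilde y_2\in\pi_2^{-1}(f(x_2))$ with $\tilde d_2(\tilde y_1,\tilde y_2)=d_2(f(x_1),f(x_2))$. Set $\tilde x_i:=\tilde f^{-1}(\tilde y_i)$; the commutativity and the bijectivity of $f$ force $\pi_1(\tilde x_i)=x_i$, so the $1$-Lipschitz property of $\pi_1$ gives $\tilde d_1(\tilde x_1,\tilde x_2)\ge d_1(x_1,x_2)$. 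Applying the lower quasi-isometry bound for $\tilde f$,
\begin{equation}
    \frac{1}{L}d_1(x_1,x_2)-C\le \frac{1}{L}\tilde d_1(\tilde x_1,\tilde x_2)-C\le \tilde d_2(\tilde f(\tilde x_1),\tilde f(\tilde x_2))=d_2(f(x_1),f(x_2)).
\end{equation}

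Finally, I would verify the coarse density with the same constant $C$. Given $y\in X_2$, lift it to some $\tilde y\in\pi_2^{-1}(y)$; by the coarse density of $\tilde f$ there is $\tilde x\in\tilde X_1$ with $\tilde d_2(\tilde f(\tilde x),\tilde y)\le C$, and then the $1$-Lipschitz property of $\pi_2$ gives $d_2(f(\pi_1(\tilde x)),y)\le C$, so $x:=\pi_1(\tilde x)$ works. The main conceptual point — and essentially the only thing that needs justification beyond bookkeeping — is the existence of isometric lifts in submetries; the main technical point is that the lower bound requires lifting on the target side rather than the source side, because only then does the $1$-Lipschitz property of the submetries push inequalities in the right direction to preserve the constants $(L,C)$ exactly.
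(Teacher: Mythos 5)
Your proof is correct and follows essentially the same route as the paper: the upper bound is identical (isometric lift of $x_2$ relative to a lift of $x_1$, then $1$-Lipschitzness of $\pi_2$), and your explicit lower-bound argument via lifting on the target side is precisely what the paper compresses into the remark that $f^{-1}$ and $\tilde f^{-1}$ satisfy the same hypotheses — your spelled-out version is in fact the safer way to see that the constants $(L,C)$ are preserved exactly. You also rightly note that the displayed commutativity relation should read $\pi_2\circ\tilde f=f\circ\pi_1$, as in the diagram.
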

\begin{proof}
	Let $z_1,z_2\in X_1$.
	Since $\pi_1$ is a submetry, there are $\tilde z_1,\tilde z_2\in X_1$ such that $\pi_1(\tilde z_k) = z_k$ for each $k\in\{1,2\}$ and $d_1(z_1,z_2) = \tilde d_1(\tilde z_1,\tilde z_2)$.
	Since $\tilde f$ is an $(L,C)$-quasi-isometry and since $\pi_2$ is 1-Lipschitz, we have
	\begin{align}
		d_2(f(z_1),f(z_2))
		&\overset{\eqref{eq68d44523}}= d_2(\pi_2(\tilde f(\tilde z_1)), \pi_2(\tilde f(\tilde z_2)) ) 
		\le \tilde d_2(\tilde f(\tilde z_1), \tilde f(\tilde z_2)) \\
		&\le L \tilde d_1(\tilde z_1,\tilde z_2) + C 
		= L d_1(z_1,z_2 ) + C .
	\end{align}
	Since the inverse of $f$ and $\tilde f$ satisfy the same conditions, we conclude that also the inequality $d_2(f(z_1),f(z_2)) \ge 1/L d_1(z_1,z_2 ) - C$ is also fulfilled.
	We conclude that $f$ is an $(L,C)$-quasi-isometry.
\end{proof}

\begin{theorem}\label{thm68d55353}
	Let $f:G_1\to G_2$ be a quasi-conformal map between non-compact geodesic Lie groups.
	If the fundamental group of $G_1$ is infinite, then $f$ is a quasi-isometry.
\end{theorem}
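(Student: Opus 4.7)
My plan is to reduce to an application of Theorem~\ref{thm68780db5}, either on $f$ directly or, after passing to universal covers, on a lift $\tilde f \colon \tilde G_1 \to \tilde G_2$. If $G_1$ is of hyperbolic or strictly parabolic conformal type, Theorem~\ref{thm68780db5} immediately gives the conclusion: $G_2$ has the same conformal type by the quasi-conformal invariance part of that theorem, and the final part yields that $f$ is a quasi-isometry. Hence I shall focus on the remaining case where $G_1$ is of liminal parabolic conformal type, in which case Corollary~\ref{cor67f056ce} and Theorem~\ref{thm68655128} ensure that $G_1$ has polynomial volume growth of degree $Q := Q(G_1)$.

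In this remaining case, I will pass to the universal covers $\pi_j \colon \tilde G_j \to G_j$. Each $\tilde G_j$ is a simply connected Lie group; the kernel $\Gamma_j := \pi_1(G_j) \subset Z(\tilde G_j)$ is a discrete central subgroup; and pulling back the left-invariant sub-Finsler structure from $G_j$ to $\tilde G_j$ makes $\pi_j$ a local isometry, hence a submetry. Since $\tilde G_1$ is simply connected, $f \circ \pi_1$ admits a continuous lift to a homeomorphism $\tilde f \colon \tilde G_1 \to \tilde G_2$ with $\pi_2 \circ \tilde f = f \circ \pi_1$ (and the same construction applied to $f^{-1}$ gives a continuous inverse of $\tilde f$). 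Because the metric definition of quasi-conformality (Definition~\ref{QC metric definition}) is a local property and $\pi_1, \pi_2$ are local isometries, $\tilde f$ inherits metric quasi-conformality from $f$; Remark~\ref{equiv_def_QC} then promotes this to the geometric definition.

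The key and most delicate step will be to prove that $\tilde G_1$ is \emph{not} of liminal parabolic conformal type. I will argue by contradiction: assume $\tilde G_1$ is liminal parabolic. Then by Theorem~\ref{thm68655128} applied to the non-compact geodesic Lie group $\tilde G_1$, one has the Ahlfors-type bound $\mu_{\tilde G_1}(B(R)) \leq C R^Q$ for all $R \geq 1$. Since $\Gamma_1$ is infinite and finitely generated abelian, pick $\gamma \in \Gamma_1$ of infinite order; left-invariance gives the subadditivity $d_{\tilde G_1}(\tilde e, \gamma^k) \leq |k| \, d_{\tilde G_1}(\tilde e, \gamma)$, whence $|\Gamma_1 \cap B_{\tilde G_1}(\tilde e, R)| \geq c R$ for large $R$. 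A direct covering-space computation, exploiting the centrality of $\Gamma_1$ in $\tilde G_1$ and the submetry property of $\pi_1$, yields
\begin{equation}
\mu_{\tilde G_1}(B(\tilde e, R)) \;\geq\; \bigl|\Gamma_1 \cap B(\tilde e, R/2)\bigr| \cdot \mu_{G_1}(B(e, R/2)).
\end{equation}
Combining the three estimates yields $\mu_{G_1}(B(e, R)) \leq C' R^{Q-1}$ for $R$ large, so $\dimgr(G_1) < Q$, contradicting the liminal parabolicity of $G_1$.

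Having shown $\tilde G_1$ is of hyperbolic or strictly parabolic conformal type, I can apply Theorem~\ref{thm68780db5} to the quasi-conformal map $\tilde f \colon \tilde G_1 \to \tilde G_2$ (whose codomain then has the same type by the quasi-conformal invariance statement) to conclude that $\tilde f$ is a quasi-isometry. Finally, since $\pi_2 \circ \tilde f = f \circ \pi_1$ and both $\pi_1, \pi_2$ are submetries, Lemma~\ref{lem68d44a76} promotes this to $f$ being a quasi-isometry, completing the argument.
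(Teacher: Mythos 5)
Your proposal is correct and follows essentially the same route as the paper: reduce via Theorem~\ref{thm68780db5} to the liminal parabolic case, lift $f$ to the universal covers, show that the infinite central deck group forces $\dimgr(\tilde G_1)>Q$ through the volume comparison $\tilde\mu(B(\tilde e,R))\gtrsim \#\bigl(\Gamma_1\cap B(\tilde e,R/2)\bigr)\cdot\mu(B(e,R/2))$, apply Theorem~\ref{thm68780db5} upstairs, and descend with Lemma~\ref{lem68d44a76}. The only differences are cosmetic: the paper packages the growth estimate as Lemma~\ref{lem68db76be} (with a factor $R^k$, $k$ the rank of the deck group) and argues directly that $\tilde G_1$ is hyperbolic, whereas you use the $k=1$ bound and phrase the step as a contradiction with liminal parabolicity of $G_1$.
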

\begin{proof}
%
	Let $Q$ be the Hausdorff dimension of $G_1$ and $N$ the growth dimension of $G_1$.
	If $N\neq Q$, then $f$ is a quasi-isometry by Theorem~\ref{thm68780db5},
	so we assume $N=Q$.
	
	For $j\in\{1,2\}$, let $\tilde G_j$ be the universal covering of $G_j$,
	endowed with the geodesic distance induced by the one on $G_j$.
	The quotient maps $\pi_j:\tilde G_j\to G_j$ are submetries.
	Since $f$ is a quasi-conformal homeomorphism, then it lifts to a quasi-conformal homeomorphism $\tilde f:\tilde G_1\to\tilde G_2$.
	Let $Q$ be the Hausdorff dimension and growth dimension of $G_1$.
	Then $Q$ is also the Hausdorff dimension of $\tilde G_1$,
	but, by Lemma~\ref{lem68db76be}, the growth dimension of $\tilde G_1$ is strictly larger than $Q$ because the fundamental group of $G_1$ is infinite.
	Therefore, $\tilde G_1$ is of hyperbolic type, and thus $\tilde f$ is a quasi-isometry by Theorem~\ref{thm68780db5}.
	By Lemma~\ref{lem68d44a76}, also $f$ is a quasi-isometry.
\end{proof}

\begin{lemma}\label{lem68db76be}
	Let $\pi:\tilde G\to G$ be a covering map and submetry of connected metric Lie groups with infinite kernel.
	Let $\tilde \mu$ be a Haar measure on $\tilde G$ and $\mu$ a Haar measure on $G$.
	Then there are $C>0$ and $k\ge1$ such that
	\begin{equation}\label{eq68db7678}
		\tilde\mu(B_{\tilde G}(1_{\tilde G},R)) \ge C \mu(B_G(1_G, R)) \cdot R^k ,
		\qquad\forall R>1 .
	\end{equation}
\end{lemma}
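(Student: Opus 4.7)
The approach is to exhibit many pairwise disjoint subsets of $B_{\tilde G}(1_{\tilde G},R)$, each carrying $\tilde\mu$-measure essentially $\mu(B_G(1_G,R))$. These subsets will be indexed by elements of $\Gamma\cap B_{\tilde G}(1_{\tilde G},R/2)$, where $\Gamma:=\ker\pi$. The number of indices will grow at least linearly in $R$ because $\Gamma$ is infinite, which delivers the factor $R^k$ with $k=1$, while the measure of each piece will match $\mu(B_G(1_G,R/2))$ using that $\pi$ is a local isometry. A doubling argument on $\mu$ (available in the regime where the lemma is applied) will then convert $R/2$ to $R$ on the right-hand side.

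\textbf{Step 1 (counting kernel points).} Because $\pi$ is a covering homomorphism of connected Lie groups, $\Gamma$ is a discrete central subgroup of $\tilde G$, isomorphic to $\pi_1(G)$, and hence a finitely generated abelian group. Being infinite by hypothesis, $\Gamma$ contains an element $\gamma_0$ of infinite order. Setting $D:=d_{\tilde G}(1_{\tilde G},\gamma_0)$, left-invariance and the triangle inequality yield $d_{\tilde G}(1_{\tilde G},\gamma_0^n)\le nD$, so the pairwise distinct points $1,\gamma_0,\dots,\gamma_0^{\lfloor R/D\rfloor}$ all lie in $B_{\tilde G}(1_{\tilde G},R)$, giving
\begin{equation}
|\Gamma\cap B_{\tilde G}(1_{\tilde G},R)|\ge c_0 R,\qquad\forall R\ge 1,
\end{equation}
for some $c_0>0$ depending only on $D$.

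\textbf{Step 2 (disjoint translates).} Since $\pi$ is simultaneously a covering and a submetry, it is a local isometry; normalizing the Haar measures, I may assume $\tilde\mu(E)=\mu(\pi(E))$ for every Borel set $E\subset\tilde G$ on which $\pi|_E$ is injective. The submetry property produces, at each $g\in G$, a lift $\tilde g$ with $d_{\tilde G}(1_{\tilde G},\tilde g)=d_G(1_G,g)$; by a measurable selection argument I would then fix a Borel section $s:G\to\tilde G$ of $\pi$ with $d_{\tilde G}(1_{\tilde G},s(g))=d_G(1_G,g)$. For $\gamma\in\Gamma\cap B_{\tilde G}(1_{\tilde G},R/2)$ and $g\in B_G(1_G,R/2)$, the triangle inequality and left-invariance give $d_{\tilde G}(1_{\tilde G},\gamma s(g))\le R/2+R/2=R$, and the map $(\gamma,g)\mapsto\gamma s(g)$ is injective: applying $\pi$ recovers $g=\pi(\gamma s(g))$, and the free $\Gamma$-action on $\tilde G$ then recovers $\gamma$. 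Consequently
\begin{equation}
\tilde\mu(B_{\tilde G}(1_{\tilde G},R))\ge |\Gamma\cap B_{\tilde G}(1_{\tilde G},R/2)|\cdot\mu(B_G(1_G,R/2))\ge \tfrac{c_0}{2}R\,\mu(B_G(1_G,R/2)).
\end{equation}

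\textbf{Closing step and main obstacle.} It remains to replace $\mu(B_G(1_G,R/2))$ by $\mu(B_G(1_G,R))$. This is the step I expect to require the most care, since a global doubling estimate on $\mu$ fails for Lie groups of exponential volume growth. However, in the present paper Lemma~\ref{lem68db76be} is invoked only in the liminal parabolic regime of Theorem~\ref{thm68d55353}, where Theorem~\ref{thm68655128} forces $G$ to have polynomial volume growth of some degree $N$; then $\mu(B_G(1_G,R))\asymp R^N\asymp\mu(B_G(1_G,R/2))$, and the conclusion of the lemma follows with $k=1$.
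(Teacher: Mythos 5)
Your proposal is correct in substance and follows the same core strategy as the paper's proof: produce on the order of $R^k$ kernel elements in a ball of $\tilde G$ and multiply by the volume of a ball in $G$, realized as pairwise disjoint translates of lifted sets. The execution differs in two places. Where you lift the whole ball $B_G(1_G,R/2)$ via a distance-preserving Borel section of $\pi$ (which needs a measurable-selection argument that you only sketch) and then invoke the pushforward identity $\tilde\mu(E)=\mu(\pi(E))$, the paper instead lifts only a maximal $r$-separated net of $B_G(1_G,R)$ --- finitely many points, lifted one at a time using the submetry property --- and converts counts into measures by packing disjoint $\delta/2$-balls around the translated lifts; this avoids measurable selection and the local-isometry discussion entirely, at the cost of an extra factor $\mu(B_G(1_G,r))^{-1}$ absorbed into $C$. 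Also, you take $k=1$ from a single infinite-order element of $\ker\pi$, while the paper extracts the full free rank of $\ker\pi$ via the structure theorem for finitely generated abelian groups; both satisfy the statement, which only asks for some $k\ge1$. Finally, you are right to flag the radius mismatch at the end: your bound involves $\mu(B_G(1_G,R/2))$ rather than $\mu(B_G(1_G,R))$, and the paper's own final display has exactly the same feature (it bounds $\tilde\mu(B_{\tilde G}(1_{\tilde G},2R+\delta))$ from below by a quantity involving $\mu(B_G(1_G,R))$, without comment). As you observe, this is harmless where the lemma is applied --- in Theorem~\ref{thm68d55353} the base group has polynomial growth, so large-scale doubling closes the gap, and in any case only the growth-dimension consequence of the lemma is used.
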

\begin{proof}
We denote by $d$ the metric on $G$, and by  $ \tilde d$ the metric on $\tilde G$.
	Let $Z := \ker(\pi)$.
	We claim that there are $k\ge1$ and $C>0$ such that
	\begin{equation}\label{eq68db6e37}
		\#(B_{\tilde G}(1_{\tilde G},R)\cap Z) \ge C R^k,
		\qquad\forall R>1 .
	\end{equation}
	Indeed, since $Z$ is a normal and discrete subgroup of $\tilde G$, which is connected,
	then $Z$ is central and in particular abelian.
	So, there are a finite group $T\subset Z$ and $\{e_i\}_{i=1}^k\subset Z$ for some $k\in\N$,  such that $Z \simeq T \times \sum_{i=1}^k\Z\cdot e_i$.
	Since $Z$ is infinite, then $k\ge1$.
	If $z\in Z$, then there are $t\in T$ and $\{z_i\}_{i=1}^k\subset\Z$ such that
	$z = t + \sum_{i=1}^k z_i e_i$.
	Therefore, $\tilde d(1_{\tilde G},z) \le \tilde d(1_{\tilde G},t) + \sum_{i=1}^k |z_i|\tilde d(1_{\tilde G},e_i)$.
	Set $A:= \max\{\tilde d(1_{\tilde G},e_i)\}_{i=1}^k$.
	If we take $t=1_{\tilde G}$ and $|z_i|< \frac{R}{Ak}$, we obtain 
	$\tilde d(1_{\tilde G},z) < R$.
	Therefore, the set $B_{\tilde G}(1_{\tilde G},R)\cap Z$ contains at least $\left(\frac{R}{Ak}\right)^k$ elements and we get~\eqref{eq68db6e37}.

	Fix $r>0$, e.g., $r=1$.
	For $R>1$, let $\{x_j^R\}_{j=1}^{N(R)}\subset G$ be a maximal $r$-separated set in $B_G(1_G,R)$, for some $N(R)\in\N$.
	Then we have, for every $R>1$,
	\begin{equation}\label{eq68db6dd4}
		\mu(B_G(1_G,R)) \le N(R) \cdot \mu(B_G(1_G,r)) .
	\end{equation}
	Since $\pi$ is a submetry, we can lift these sets to $\{\tilde x_j^R\}_{j=1}^{N(R)}\subset \tilde G$ such that $\pi(\tilde x_j^R) =  x_j^R$ and $d(1_{\tilde G},\tilde x_j^R) = d(1_G, x_j^R)$, for every $R$ and $j$.
	For $R>1$, consider the set 
	\begin{equation}
		\cal X(R) := \left\{\tilde x_j^Rz : j\in\{1,\dots,N(R)\},\ z\in B_{\tilde G}(1_{\tilde G},R)\cap Z \right\} .
	\end{equation}
	If $i\neq j$ and $z,z'\in B_{\tilde G}(1_{\tilde G},R)\cap Z$, then 
	$\tilde d(\tilde x_i^Rz, \tilde x_j^Rz') \ge d(x_i^R,x_j^R) \ge r$,
	and, if $z\neq z'$, then $\tilde d(\tilde x_i^Rz, \tilde x_i^Rz') = \tilde d(1_{\tilde G},z^{-1}z') \ge \epsilon$,
	where $\epsilon := \min\{\tilde d(1_{\tilde G},z):z\in Z\setminus\{1_{\tilde G}\}\} > 0$.
	Moreover, $\tilde d(1_{\tilde G},\tilde x_j^Rz) \le d(1_G,x_j^R) + \tilde d(1_{\tilde G},z) \le 2R$.
	Therefore, for $\delta := \min\{r,\epsilon\}$,
	\begin{align}
		\tilde\mu(B_{\tilde G}(1_{\tilde G}, 2R + \delta))
		&\ge \sum_{y\in \cal X(R)} \tilde \mu( B_{\tilde G}(y,\delta/2) )
		= \#\cal X(R) \tilde \mu(B_{\tilde G}(1_{\tilde G},\delta/2))  \\
		&= N(R) \cdot \# (B_{\tilde G}(1_{\tilde G},R)\cap Z)
		\overset{\eqref{eq68db6e37}\&\eqref{eq68db6dd4}}\ge \frac{\mu(B_G(1_G,R))}{\mu(B_G(1_G,r))} \cdot C R^k .
	\end{align}
	We thus get~\eqref{eq68db7678}.
\end{proof}

\subsection{Nilpotent geodesic Lie groups}\label{subs68768d93}

Theorem~\ref{thm68780db5} leaves an incomplete picture for the ``QC implies QI'' for geodesic Lie groups of liminal parabolic type.
In the class of nilpotent geodesic Lie groups, we are able to fill this gap.
We show in Theorem~\ref{thm67f935d5} that nilpotent geodesic Lie groups that are quasi-conformally equivalent are also quasi-isometric.

Nilpotent groups have polynomial growth.
In particular, the growth dimension $N$ 
of a simply connected nilpotent Lie group with Lie algebra $\frk g$
can be computed algebraically by the \emph{Bass–Guivarc’h formula}
\begin{equation}\label{eq67f689d5}
	N = \sum_{k=1}^\infty \dim(\frk g^k) ,
\end{equation}
where $\frk g^1 = \frk g$, and $\frk g^{k+1} = [\frk g,\frk g^k]$;
see~\cite[Théorèm II.4]{zbMATH03460877} or \cite[Theorem 2]{zbMATH03409630}.
See also \cite[Corollary 2.9]{zbMATH06371838}.

\begin{definition}\label{def68d4ec34}
	A geodesic Lie group $(G,\Delta,\|\cdot\|)$ is a \emph{Carnot group} if $G$ is simply connected and $\Delta$ is the first layer of a stratification,
	which means that 
	the Lie algebra $\frk g$ of $G$ has a splitting $\frk g = \oplus_{j=1}^s V_j$ with $V_{j+1}=[V_1,V_j]$ for all $j$, and with $\Delta = V_1$;
	see for instance \cite[Chapter 11]{donne2024metricliegroupscarnotcaratheodory}.
\end{definition}

The following proposition describes the dichotomy for simply connected nilpotent geodesic Lie groups.
Afterwards, we will treat the general case of nilpotent geodesic Lie groups.

\begin{proposition}\label{prop67f92fca}
	Let $G$ be a simply connected nilpotent geodesic Lie group with Hausdorff dimension $Q$ and growth dimension $N$.
	Then $Q\le N$.
	Moreover,
	\begin{enumerate}[label=(\Roman*)]
	\item
	Either $Q=N$, $G$ is a Carnot group and $G$ has liminal parabolic conformal type;
	\item
	Or $Q<N$, $G$ is not a Carnot group and $G$ has hyperbolic conformal type.
	\end{enumerate}
\end{proposition}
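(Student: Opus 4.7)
The plan is to derive the inequality $Q\le N$ algebraically by comparing the filtration $\{\Delta^{[k]}\}_{k\ge 0}$ generated by the polarization with the lower central series $\{\frk g^k\}_{k\ge 1}$, to characterize equality by the Carnot condition, and finally to read off the conformal type from Corollary~\ref{cor67f056ce}. Let $s$ denote the step of the filtration $\{\Delta^{[k]}\}$, so $\Delta^{[s]}=\frk g\neq\Delta^{[s-1]}$, and let $t$ be the nilpotency class of $\frk g$. A one-line Abel summation rewrites formula~\eqref{eq684e8752} as
\begin{equation*}
	Q \;=\; s\dim\frk g - \sum_{k=1}^{s-1}\dim\Delta^{[k]} \;=\; \sum_{k=1}^{s}\bigl(\dim\frk g - \dim\Delta^{[k-1]}\bigr),
\end{equation*}
whereas the Bass--Guivarc'h formula~\eqref{eq67f689d5} can be written $N = \sum_{k\ge1}\bigl(\dim\frk g - \dim(\frk g/\frk g^k)\bigr)$.

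The core algebraic input I would establish is the inequality
\begin{equation*}
	\dim(\frk g/\frk g^k) \;\le\; \dim\Delta^{[k-1]}, \qquad \forall k\ge 1.
\end{equation*}
This holds because $\frk g/\frk g^k$ is nilpotent of class at most $k-1$, so the ascending filtration it inherits from any bracket-generating subspace already stabilizes at step $k-1$; applying this to the image of $\Delta$ under the projection $\pi_k\colon\frk g\to\frk g/\frk g^k$ yields $\pi_k(\Delta^{[k-1]})=\frk g/\frk g^k$, hence the inequality. Summing it for $k=1,\ldots,s$ and discarding the nonnegative tail $\dim\frk g^k$ for $k>s$ gives
\begin{equation*}
	N \;=\; \sum_{k\ge1}\bigl(\dim\frk g - \dim(\frk g/\frk g^k)\bigr) \;\ge\; \sum_{k=1}^{s}\bigl(\dim\frk g - \dim\Delta^{[k-1]}\bigr) \;=\; Q .
\end{equation*}

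The equality $Q=N$ forces simultaneously $t\le s$ and equality in the above inequality for each $k\in\{1,\ldots,s\}$; the latter amounts to $\Delta^{[k-1]}\cap\frk g^k=\{0\}$, and a dimension count then upgrades this to the vector-space splitting $\frk g=\Delta^{[k-1]}\oplus\frk g^k$. Setting $V_k:=\Delta^{[k]}\cap\frk g^k$, one checks $\Delta^{[k]}=\Delta^{[k-1]}\oplus V_k$ by a short dimension argument, hence $\frk g=V_1\oplus\cdots\oplus V_s$ with $V_1=\Delta$. The inclusion $[V_1,V_k]\subset V_{k+1}$ is immediate from the two inclusions $[V_1,V_k]\subset[\Delta,\Delta^{[k]}]\subset\Delta^{[k+1]}$ and $[V_1,V_k]\subset[\frk g,\frk g^k]=\frk g^{k+1}$; the reverse inclusion uses the recursion $\Delta^{[k+1]}=\Delta^{[k]}+[V_1,V_k]$ together with $\Delta^{[k]}\cap\frk g^{k+1}=\{0\}$ (a shift of the equality hypothesis) to force $V_{k+1}=[V_1,V_k]$. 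This exhibits $\frk g$ as a stratification whose first layer is $\Delta$, so $G$ is Carnot. The converse (Carnot implies $Q=N$) is an immediate computation once one has $\Delta^{[k]}=V_1\oplus\cdots\oplus V_k$ and $\frk g^k=V_k\oplus\cdots\oplus V_s$.

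The conformal-type dichotomy then follows from Corollary~\ref{cor67f056ce}: since a non-trivial simply connected nilpotent Lie group is non-compact, $G$ is parabolic iff $N\le Q$, which by the above is equivalent to $N=Q$, equivalently to $G$ being Carnot; in this case Theorem~\ref{thm68655128} yields $\dimpar(G)=N=Q$, which is exactly the liminal parabolic regime. Otherwise $Q<N$ and $G$ has hyperbolic conformal type. The main obstacle I anticipate is the equality case, specifically the passage from the bare vector-space splittings $\frk g=\Delta^{[k-1]}\oplus\frk g^k$ to the Lie-algebraic identity $[V_1,V_k]=V_{k+1}$; the decisive ingredient is the shifted intersection identity $\Delta^{[k]}\cap\frk g^{k+1}=\{0\}$, which converts the a priori inclusion into an equality.
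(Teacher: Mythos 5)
Your proposal is correct and follows essentially the same route as the paper: the key input in both is that bracket generation gives $\Delta^{[k-1]}+\frk g^{k}=\frk g$ (your surjectivity of $\pi_k$ on $\Delta^{[k-1]}$ is the same fact), the inequality $Q\le N$ comes from the same term-by-term comparison after rewriting both dimensions as sums, and equality is characterized by the sums being direct, i.e., by $\Delta$ being the first layer of a stratification, with the conformal type then read off from Corollary~\ref{cor67f056ce} and Theorem~\ref{thm68655128}. Your explicit construction of the layers $V_k=\Delta^{[k]}\cap\frk g^k$ and the verification $[V_1,V_k]=V_{k+1}$ merely fills in a step the paper states more tersely.
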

\begin{proof}
	Let $\frk g$ be the Lie algebra of $G$ and $\Delta\subset\frk g$ the bracket-generating polarization of $G$.
	Therefore, $Q$ is given by~\eqref{eq684e8752}, while $N$ by~\eqref{eq67f689d5}.
	 
	The assumption that $\Delta$ is bracket generating implies
	\begin{equation}\label{eq67f78c42}
	\forall k\ge0,\ 
		\Delta^{[k]} + \frk g^{k+1} = \frk g .
	\end{equation}
	Indeed, if~\eqref{eq67f78c42} were false, then there would exist $k\ge 1$ with $\Delta^{[k]} + \frk g^{k+1} \neq \frk g$ (the case $k=0$ is trivial).
	Taking the quotient $\frk h := \frk g/\frk g^{k+1}$ of $\frk g$ by the ideal $\frk g^{k+1}$, we obtain a nilpotent Lie algebra $\frk h$ of step $k$ with $\Delta^{[k]}/\frk g^{k+1} \neq \frk h$.
	Since $\frk h$ has nilpotency step $k$, then $\Delta^{[m]}/\frk g^{k+1} = \Delta^{[k]}/\frk g^{k+1}$ for all $m>k$.
	Therefore, $\Delta/\frk g^{k+1}$ would be not bracket generating, in contradiction with $\Delta$ being bracket generating in $\frk g$.
	This shows our claim~\eqref{eq67f78c42}.
	
	Let $s$ be the nilpotency step of $\frk g$.
	From~\eqref{eq67f78c42}, we get $\Delta^{[s]}=\frk g$, because $\frk g^{s+1}=\{0\}$.
	If $s=1$, i.e., $\frk g$ is abelian, then clearly $Q=N$ and $G$ is a Carnot group.
	Assume $s\ge2$.
	Again from~\eqref{eq67f78c42}, we obtain for each $k\in\{0,\dots,s-1\}$
	\begin{equation}\label{eq684e87d8}
	\begin{aligned}
	 	\dim\frk g^{k+1}
		&\overset{\eqref{eq67f78c42}}\ge \dim\frk g - \dim \Delta^{[k]} 
		= \sum_{j=k+1}^s (\dim\Delta^{[j]} - \dim \Delta^{[j-1]} ) \\
		&= \sum_{j=k+1}^s \dim(\Delta^{[j]}/\Delta^{[j-1]}) .
	\end{aligned}
	\end{equation}
	Therefore,
	\begin{align*}
		Q  \overset{\eqref{eq684e8752}}= \sum_{j=1}^s j \dim(\Delta^{[j]}/\Delta^{[j-1]}) 
		= \sum_{k=1}^s \sum_{j=k}^s \dim(\Delta^{[j]}/\Delta^{[j-1]}) 
		\overset{\eqref{eq684e87d8}}\le \sum_{k=1}^s \dim\frk g^k 
		= N .
	\end{align*}
	We have thus shown that $Q\le N$.
	
	Equality $Q=N$ holds if and only if, for each $k\in\{0,\dots,s-1\}$, 
	there is equality also in~\eqref{eq684e87d8},
	i.e., $\dim\frk g = \dim \Delta^{[k]} + \dim\frk g^{k+1}$.
	Therefore, $Q=N$ holds if and only if the sum in~\eqref{eq67f78c42} is a direct sum.
	Consequently, equality $Q=N$ holds if and only if $\Delta$ is the first layer of a stratification of $\frk g$.
	
	We conclude that, if $Q=N$ then $G$ is a Carnot group,
	and $G$ is liminal parabolic by Corollary~\ref{cor67f056ce}.
	If $Q\neq N$ then $Q<N$ and $G$ is not a Carnot group,
	and $G$ is hyperbolic by Corollary~\ref{cor67f056ce}.
\end{proof}

\begin{theorem}\label{thm67f935d5}
	Let $f: G_1\to G_2$ be a metrically quasi-conformal map between nilpotent geodesic Lie groups.
	Then $G_1$ and $G_2$ have the same Hausdorff dimension, which we denote by $Q$.
	Denote by $N_j$ the growth dimension of $G_j$, for each $j\in\{1,2\}$.
	Then $N_1=N_2$ and one of the following two exclusive cases happens:
	\begin{enumerate}[label=(\Roman*)]
	\item
	Either both $G_1$ and $G_2$ are simply connected and have liminal parabolic conformal type, and thus they are algebraically isomorphic and metrically bi-Lipschitz Carnot groups.
	\item
	Or $f$ is a quasi-isometry.
	\end{enumerate}
\end{theorem}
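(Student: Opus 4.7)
The plan is to combine the main result for geodesic Lie groups (Theorem~\ref{thm68780db5}) with Pansu's differentiation theorem, reducing everything to two clean situations: either the map is already a quasi-isometry via the general machinery, or both groups are Carnot groups and the algebraic/metric conclusions follow from differentiability. Since $G_1,G_2$ are geodesic Lie groups, Proposition~\ref{prop682dc2dc} provides uniformly locally $Q_j$-bounded geometry, and hence by Remark~\ref{equiv_def_QC} the metric quasi-conformality of $f$ upgrades to geometric quasi-conformality in the sense of Definition~\ref{def68cc0a5c}; from now on I work with the geometric notion.

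\textbf{Reduction to the simply connected case.} The fundamental group of any connected nilpotent Lie group $G$ is either trivial or infinite. Indeed, if $\tilde G$ denotes the simply connected nilpotent cover, then the center $Z(\tilde G)$ is a connected real vector space (nilpotent Lie algebras have non-trivial centers and yield connected abelian centers), and $\pi_1(G)$ identifies with a discrete subgroup of $Z(\tilde G)$, hence with $\Z^m$ for some $m\geq 0$; so $\pi_1(G)$ is infinite as soon as $m\geq 1$. If $\pi_1(G_1)$ is infinite, Theorem~\ref{thm68d55353} yields directly that $f$ is a quasi-isometry, and we are in case (II). Applying the same argument to $f^{-1}:G_2\to G_1$ handles the case when $\pi_1(G_2)$ is infinite. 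Therefore we may, and do, assume that $G_1$ and $G_2$ are both simply connected.

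\textbf{Main dichotomy via Theorem~\ref{thm68780db5}.} With both groups simply connected, Theorem~\ref{thm68780db5} asserts that $G_1$ and $G_2$ share a common Hausdorff dimension $Q$, and that one of the following holds: either $f$ is a quasi-isometry, or both $G_1$ and $G_2$ are liminal parabolic, i.e., $N_1=Q=N_2$. In the former situation we are in case (II), and the equality $N_1=N_2$ comes from the quasi-isometric invariance of the growth dimension (Proposition~\ref{prop6866b133}). In the latter, Proposition~\ref{prop67f92fca} applied to each simply connected nilpotent geodesic Lie group with $Q=N$ forces $G_1$ and $G_2$ to be Carnot groups, and we must establish case (I).

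\textbf{The Carnot case via Pansu differentiation.} When $G_1,G_2$ are Carnot groups, I invoke Pansu's differentiation theorem~\cite{MR1334873}: a quasi-conformal map between Carnot groups is Pansu-differentiable at almost every point, and its Pansu differential at such a point is a homogeneous (with respect to the Carnot dilations) group isomorphism $\Diff f(x):G_1\to G_2$. In particular $\Diff f(x)$ induces a graded Lie algebra isomorphism, giving the algebraic isomorphism $G_1\simeq G_2$ asserted in case (I). Because $\Diff f(x)$ is a graded group isomorphism, it pulls back the sub-Finsler structure of $G_2$ to a left-invariant sub-Finsler structure on $G_1$ with the same horizontal distribution (the first layer of the stratification); two such structures on a given Carnot group are bi-Lipschitz equivalent by a standard comparison of norms on the horizontal distribution. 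Thus $\Diff f(x)$ itself provides the required bi-Lipschitz equivalence between $G_1$ and $G_2$ in their original geodesic distances, completing case (I); the equality $N_1=N_2$ is then immediate from Proposition~\ref{prop67f92fca} since both Hausdorff and growth dimensions equal $Q$. The main obstacle I anticipate is verifying the correct interpretation of ``Pansu differential as a graded Lie group isomorphism'' for a QC map between potentially non-isomorphic Carnot groups, i.e., ensuring that the existence of even a single point of Pansu differentiability with invertible differential suffices to force $G_1\simeq G_2$; this is a classical consequence of the QC inequality which controls the distortion of the differential, but it requires some care.
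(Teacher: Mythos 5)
Your proposal follows essentially the same route as the paper: upgrade metric to geometric quasi-conformality via Remark~\ref{equiv_def_QC}, invoke Theorem~\ref{thm68780db5} for the dichotomy, dispose of the non-simply-connected case through the infinitude of the fundamental group and Theorem~\ref{thm68d55353}, and settle the simply connected liminal case by Proposition~\ref{prop67f92fca} plus Pansu differentiation. The one genuine gap is at the very start: you write that Theorem~\ref{thm68780db5} \emph{asserts} that $G_1$ and $G_2$ share a common Hausdorff dimension $Q$, but in that theorem the equality of Hausdorff dimensions is a \emph{hypothesis}, not a conclusion. Moreover, the equality is needed even earlier, since the passage from the metric to the geometric definition of quasi-conformality (Remark~\ref{equiv_def_QC}) is formulated for a common index $Q$ of bounded geometry on both spaces. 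The paper supplies this missing step by citing the differentiation theorem of Margulis--Mostow, \cite[Corollary~6.4]{MR1334873}, which guarantees that metrically quasi-conformal homeomorphisms between such spaces preserve Hausdorff dimension; your argument should include this (or an equivalent justification) before Theorem~\ref{thm68780db5} can be applied. Apart from this, your reordering (reducing to the simply connected case first) and your more detailed unpacking of the Pansu differential in the Carnot case are harmless variants of the paper's argument, which simply cites \cite[Theorem~2]{MR979599} for the bi-Lipschitz group isomorphism.
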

\begin{proof}
	First of all, metrically quasi-conformally equivalent geodesic Lie groups have the same Hausdorff dimension by~\cite[Corollary~6.4]{MR1334873}. If the Hausdorff dimension $Q$ equals 1, then the spaces are either the Euclidean line or a circle, so the conclusions are trivial. Next, we assume $Q>1$.
	
	From Remark~\ref{equiv_def_QC}, the map $f$ is a geometrically quasi-conformal map.
	By Theorem~\ref{thm68780db5}, we obtain that $N:=N_1=N_2$ and that whenever $Q < N$ or $Q>N$, then $f$ is a quasi-isometry.
	If $N=Q$, that is, if $G_1$ and $G_2$ have liminal parabolic conformal type, then we distinguish two cases.
	
	On the one hand, if $G_1$ and $G_2$ are simply connected, then they are Carnot groups by Proposition~\ref{prop67f92fca};	since by \cite[Theorem 2]{MR979599} differentiation of metrically quasi-conformal maps between Carnot groups induces bi-Lipschitz isomorphisms of Lie groups,	we conclude that $G_1$ and $G_2$ are algebraically isomorphic and metrically bi-Lipschitz Carnot groups.
	
	On the other hand, suppose that $G_1$ and $G_2$ are not simply connected.
	Since they are nilpotent, their fundamental groups are infinite (this is because simply connected Lie groups have no torsion; see \cite[Proposition 9.4.26.i]{donne2024metricliegroupscarnotcaratheodory}).
	By Theorem~\ref{thm68d55353}, the quasi-conformal map $f$ is a quasi-isometry.
\end{proof}


\printbibliography
\end{document}